\tikzset{%
	>={Latex[width=2mm,length=2mm]},
	base/.style = {
		rectangle, 
		rounded corners, 
		draw=black,
		minimum width=6cm, 
		minimum height=1.5cm,
		text centered
	},
	start/.style = {base, fill=blue!20},
	stop/.style = {base, fill=red!20},
	process/.style = {base, minimum width=6.5cm, fill=orange!15},
	every picture/.style = {line width=1pt}
}
\newcounter{mycount}
\theoremstyle{plain}
\newtheorem{theorem}[mycount]{Theorem}
\newtheorem{corollary}[mycount]{Corollary}
\newtheorem{lemma}[mycount]{Lemma}
\newtheorem{proposition}[mycount]{Proposition}
\newtheorem{remark}{Remark}
\theoremstyle{definition}
\newtheorem{definition}{Definition}
\theoremstyle{example}
\theoremstyle{remark}
\numberwithin{equation}{section}
\numberwithin{figure}{section}
\def\dd{\,\mathrm{d}}
\def\le{\leqslant}
\def\ge{\geqslant}
\def\tr#1{\lfloor #1\rfloor}
\def\cl#1{\lceil #1\rceil}
\def\lpa#1{\bigl({#1}\bigr)}
\def\Lpa#1{\Bigl({#1}\Bigr)}
\def\llpa#1{\biggl({#1}\biggr)}
\def\ve{\varepsilon}
\DeclareRobustCommand{\Eulerian}{\genfrac\langle\rangle{0pt}{}}
\def\l@subsection{\@tocline{2}{0pt}{2pc}{6pc}{}} 
\renewcommand*\env@matrix[1][\arraystretch]{%
	\edef\arraystretch{#1}%
	\hskip -\arraycolsep
	\let\@ifnextchar\new@ifnextchar
	\array{*\c@MaxMatrixCols c}}
\def\with{\mbox{\quad with\quad}}
\def\and{\mbox{\quad and \quad}}
\begin{document}

\title[Asymptotics of Fishburn matrices and their 
generalizations]{Asymptotics and statistics on Fishburn matrices and 
their generalizations}

\let\origmaketitle\maketitle
\def\maketitle{
  \begingroup
  \def\uppercasenonmath##1{} 
  \let\MakeUppercase\relax 
  \origmaketitle
  \endgroup
}

\author{Hsien-Kuei Hwang and Emma Yu Jin}
\thanks{The research of the first author is partially supported by
FWF-MOST (Austria-Taiwan) grant MOST 104-2923-M-009-006-MY3, and by
an Investigator Award from Academia Sinica under the Grant
AS-IA-104-M03. The second author was partially supported by FWF-MOST 
(Austrian-Taiwanese) Project I 2309-N35 and FWF Project P 32305.}

\address{Institute of Statistical Science, Academia Sinica, Taipei,
115, Taiwan}
\email{\textbf{hkhwang@stat.sinica.edu.tw}}

\address{Fakult\"at f\"ur Mathematik, Universit\"{a}t Wien, 1090
Vienna, Austria}
\email{\textbf{yu.jin@univie.ac.at}}

\maketitle

\begin{abstract}
A direct saddle-point analysis (without relying on any modular forms or functional equations) is developed to establish the
asymptotics of Fishburn matrices and a large number of other variants
with a similar sum-of-finite-product form for their (formal) general
functions. In addition to solving some conjectures, the application
of our saddle-point approach to the distributional aspects of
statistics on Fishburn matrices is also examined with many new limit
theorems characterized, representing the first of their kind for such
structures.
	
\end{abstract}


\section{Motivations and Background}

Fishburn matrices, introduced in the 1970s in the context of interval
orders (in order theory) and directed graphs (see \cite{Andresen1976,
Fishburn1985, Greenough1976, Greenough1976a}), are nonnegative,
upper-triangular ones without zero row or column. They have been
later found to be bijectively equivalent to several other
combinatorial structures such as ${\bf (2+2)}$-free posets, ascent
sequences, pattern-avoiding permutations, pattern-avoiding inversion
sequences, Stoimenow matchings, and regular chord diagrams; see, for
instance, \cite{Bousquet-Melou2010, Dukes2010, Fu2020, Jelinek2012,
Levande2013} and Section~\ref{S:comb} for more information.

In addition to their rich combinatorial connections and modeling
capibilities, the corresponding asymptotic enumeration and the finer
distributional properties are equally enriching and challenging, as
we will explore in this paper. In particular, while the asymptotics
of some classes of Fishburn matrices were known (see, for example,
\cite{Bringmann2014, Zagier2001}), the stochastic aspects of the
major characteristic statistics on random Fishburn matrices have 
remained open up to now.

Zagier, in his influential paper \cite{Zagier2001} on Vassiliev
invariants and quantum modular forms, derived the asymptotic
approximation to the number of Fishburn matrices whose entries sum 
to $n$
\begin{align}\label{E:zag}
	\begin{split}
	    [z^n]\sum_{k\ge0}
		\prod_{1\le j\le k}\lpa{1-(1-z)^j}
		&= c \rho^{n}\, n^{n+1}
		\left(1+O\lpa{n^{-1}}\right),
	\end{split}
\end{align}
(see OEIS \cite{oeis2019} sequence
\href{https://oeis.org/A022493}{A022493}, the Fishburn numbers),
where $(c,\rho):=\lpa{\tfrac{12\sqrt{6}} {\pi^2}\,
e^{\frac{\pi^2}{12}}, \tfrac6{e\pi^2}}$. Here $[z^n]f(z)$ denotes the
coefficient of $z^n$ in the (formal) Taylor expansion of $f$. For
conciseness of notation and readability, \emph{all constant pairs
$(c,\rho)$ throughout this paper are generic and may not be the same
at each occurrence}; their values will be locally specified.

That the asymptotic approximation \eqref{E:zag} is remarkable can be
viewed in various perspectives. First, the Taylor coefficients of the
inner product on the left-hand side of \eqref{E:zag} alternate in
sign, so it is unclear if the coefficient of $z^n$ in the
sum-of-product expression is positive for all positive $n$, much less
its large factorial growth order shown on the right-hand side. Second,
since $\frac{6}{\pi^2}<1$, the right-hand side of \eqref{E:zag} is
exponentially smaller than $n!$, which equals $[z^n]\prod_{1\le j\le
n}\lpa{1-(1-z)^j}$. More precisely, we will prove that (see
Lemma~\ref{L:max-phi} and Proposition \ref{P:poly})
\[
    \max_{1\le k\le n}
    \Bigl|[z^n]\prod_{1\le j\le k}\lpa{1-(1-z)^j}\Bigr|
    = \max_{1\le k\le n}
    [z^n]\prod_{1\le j\le k}\lpa{(1+z)^j-1}
    =\Theta\lpa{n^{n+\frac12}\hat{\rho}^n},
\]
meaning that the left-hand side is exactly of order
$n^{n+\frac12}\hat{\rho}^n$ with $\hat{\rho}
:=\frac{12}{e\pi^2}=2\rho$. This shows that there is indeed a heavy
\emph{exponential cancellation} involved in the sum on the left-hand
side of (\ref{E:zag}). Third, in addition to the connection to
linearly independent Vassiliev invariants, the Fishburn numbers have
now been known to enumerate many different combinatorial objects; see
Section~\ref{S:comb}, OEIS sequence
\href{https://oeis.org/A022493}{A022493} and \cite{Andrews2014,
Bousquet-Melou2010, Bringmann2014, Claesson2011, Dukes2011,
Dukes2010, Fishburn1970, Fishburn1985, Jelinek2012, Jelinek2015,
Kitaev2011, Levande2013, Yan2011, Zagier2001} for more information.
Finally, Zagier's proof of \eqref{E:zag} relies crucially on an
unusual pair of identities
\begin{equation}\label{E:glaisher}
    \left\{
    \begin{split}
    	e^{-\frac{z}{24}}\sum_{k\ge0}
    	\prod_{1\le j\le k}\lpa{1-e^{-jz}}
    	&=\sum_{n\ge0}\frac{T_n}{n!}
    	\left(\frac{z}{24}\right)^n,\\
        \sum_{n\ge0}\frac{T_n}{(2n+1)!}\,z^{2n+1}
        &:=\frac{\sin (2z)}{2\cos (3z)}, 
    \end{split}
	\right.
\end{equation}
where the integers $T_n$'s, known as Glaisher's $T$-numbers (see
\href{https://oeis.org/A022493}{A002439}), are defined by the second
identity of (\ref{E:glaisher}). The first one, due to Zagier, is a
consequence of the relation between the generating function in
(\ref{E:zag}) and the ``half derivative'' of the Dedekind
eta-function, partial summation, Euler's pentagonal number theorem,
functional equations, Dirichlet series and Mellin transform
techniques; see \cite{Zagier2001, Zagier2010}. The asymptotics of
$T_n$ is then readily computed by, say using the singularity analysis
(see \cite{Flajolet2009}) on the right-hand side of the second
identity, which, unlike the formal nature of the first, is analytic
in $|z|<\frac{\pi}{6}$. What appears to be more important in
subsequent developments is that $T_n$ is essentially the value
defined by the analytic continuation of some Dirichlet series at
$-2n-1$, and the study of the identities \eqref{E:glaisher} is thus
closely connected to algebraic and analytic number theory, in
addition to their hypergeometric $q$-series nature and resurgence
aspect \cite{Costin2011}. Some similar pairs of relations such as 
\eqref{E:glaisher} are now known; see, for example, 
\cite{Andrews2001, Hikami2006a, Ono2004}. 

\emph{Here and throughout this paper, the asymptotic relation}
\begin{align}\label{E:a-cong-b}
    a_n = b_n \lpa{1+O\lpa{n^{-1}}}\;
    \text{ is abbreviated as }\;a_n \simeq b_n.
\end{align}


Subsequently in \cite{Bringmann2014}, Bringmann-Li-Rhoades established
the asymptotic approximation to the number of primitive row-Fishburn
matrices with entries summing to $n$,
\begin{align}\label{E:blr}
    [z^n]\sum_{k\ge0}
	\prod_{1\le j\le k}\lpa{(1+z)^j-1}
	\simeq c \rho^{n}\, n^{n+\frac12},
	\with (c,\rho):=\lpa{\tfrac{12}{\pi^{3/2}}\,
	e^{-\frac{\pi^2}{24}}, \tfrac{12}{e\pi^2}},
\end{align}
which confirmed a conjecture by Jel\'{i}nek (Conjecture 5.3 of
\cite{Jelinek2012}). Here a primitive row-Fishburn matrix is a binary
upper-triangular one without zero rows. Their proof to deriving
\eqref{E:blr} relies on various properties of the function
($\sigma(q)$ in \cite{Bringmann2014})
\begin{equation}\label{E:ramanujan}
    R(q) := \sum_{k\ge0}\frac{q^{\frac12k(k+1)}}
    {(1+q)\cdots(1+q^k)}
    = 1+\sum_{k\ge0}(-1)^kq^{k+1}
    \prod_{1\le j\le k}\lpa{1-q^j},
\end{equation}
first appeared in Ramanujan's lost notebook, with many unusual
properties discovered since Andrews's paper \cite{Andrews1986}; see 
\cite{Andrews1988,Cohen1988} and 
\href{https://oeis.org/A003406}{A003406} for more information. Very 
roughly, since  
\begin{align*}
    [z^n]\sum_{k\ge0}\prod_{1\le j\le k}\lpa{(1+z)^j-1}
    = \frac{(-1)^n}2[z^n]R(1-z),
\end{align*}
according to Equation (2.3) of \cite{Bringmann2014} and $e^{-z} = 1-z
+ O(|z|^2)$ for small $|z|$, the approach begins by working out the
asymptotics of $[z^n]R\lpa{e^{-z}}$. The bridge between $[z^n]R(1-z)$
and $[z^n]R\lpa{e^{-z}}$ can then be linked through a direct change of
variables and straightforward arguments because $z$ is very close to
zero (the arguments used in \cite{Zagier2001} and
\cite{Bringmann2014} relying instead on the asymptotics of the
Stirling numbers of the first kind); see Section~\ref{S:cov} for more
details.

The asymptotics of $[z^n]R\lpa{e^{-z}}$ is derived by first defining 
the Dirichlet series 
\[
    D(s) := \sum_{n\ge1}n^{-s} [q^{n-1}]R\lpa{q^{24}},
\]
which can be meromorphically continued into the whole $s$-plane. 
Since, by standard Mellin transform techniques (see, e.g., 
\cite{Flajolet1995}),
\[
    [z^n]e^{-z}R\lpa{e^{-24z}}
    = \frac{(-1)^n}{n!}\,D(-n),
\]
the crucial asymptotics of $D(-n)$ needed is then derived by the 
functional equation satisfied by certain function defined on $D(s)$ 
(similar to that satisfied by Riemann's zeta function); see
\cite{Bringmann2014,Cohen1988} and Section~\ref{S:ae} for more
details.

However, a lot more on the asymptotics and statistics of Fishburn
matrices and related structures have remained unknown, which include
several conjectures and open problems \cite{Bringmann2014,
Jelinek2012, Stoimenow1998, Zagier2001} that are of great interest to
the combinatorics and modular-form community. As apparently general
asymptotic techniques are still lacking, we aim to address this gap 
by developing a \emph{direct, self-contained} approach to deriving
\eqref{E:zag} and \eqref{E:blr} in a systematic way without relying
on any functional equations (satisfied by Dirichlet series) or
identities such as \eqref{E:glaisher} that are not available in more
general contexts with a similar sum-of-finite-product form for the
generating functions.

Our approach is based instead on a fine, double saddle-point analysis
and has the additional advantages of being applicable to a large
number of problems whose (formal) generating functions assume a
similar form, some of which are listed as follows. 
\begin{itemize}
	
\item[--] Derive the asymptotics of Fishburn and row-Fishburn
matrices whose entries belong to any multiset of
nonnegative integers containing particularly $0$; in particular,
\eqref{E:zag} and \eqref{E:blr} by Zagier \cite{Zagier2001} and
Bringmann-Li-Rhoades \cite{Brightwell2011}, respectively, are
reproved; our scheme is also applicable to more than two dozens of
other OEIS sequences; see Section \ref{S:A}.

\item[--] Prove a conjecture of Jel\'{i}nek in \cite{Jelinek2012} 
concerning the asymptotics of self-dual Fishburn matrices; see Corollary \ref{C:Jelinek}.

\item[--] Establish the limit distributions of some typical
statistics in random Fishburn matrices, which solves particularly an
open problem raised by Bringmann-Li-Rhoades \cite{Bringmann2014} and
Jel\'{i}nek \cite{Jelinek2012}; see Theorem \ref{T:lf-stat} and Section \ref{S:B}.

\item[--] Determine the typical shapes of random Fishburn matrices,
which exhibit an unexpected change of limit laws from normal to
Poisson when the smallest nonzero entry is $2$; see Theorem \ref{T:phase} and Section \ref{S:jelinek}.

\end{itemize}

Our approach is best illustrated through the prototypical (rational) 
sequence 
\begin{align}\label{E:s1}
	a_n
	:= [z^n]\sum_{k\ge0}
	\prod_{1\le j\le k}\lpa{e^{jz}-1}
	=\frac{(-1)^n}2[z^n]R\lpa{e^{-z}},
\end{align}
where $R$ is defined in \eqref{E:ramanujan}, for which we will show 
\emph{inter alia} that
\begin{align}\label{E:an}
	a_n
	= c\rho^{n}n^{n+\frac12}\Lpa{1+
	\frac{\nu_1}{n}+\frac{\nu_2}{n^2}
	+O\lpa{n^{-3}}},
	\with 
	(c,\rho) := \lpa{\tfrac{12}{\pi^{3/2}},
	\tfrac{12}{e\pi^2}},
\end{align}
where $\nu_1 = \frac{24-\pi^2}{288}$ and $\nu_2 = \frac12\nu_1^2$;
see \eqref{E:znAz-ae} for an asymptotic expansion. Here the integer
sequence $\{a_nn!\}$ corresponds to
\href{https://oeis.org/A158690}{A158690} in the OEIS.
\emph{Throughout this paper, we do not distinguish between ordinary
and exponential generating functions, and focus only on the large-$n$
asymptotics of the coefficients, so whether the sequence is integer
or not is immaterial for our purposes.}

Once the asymptotics \eqref{E:an} is available, we extend our 
approach to the sequences of the form
\begin{align}\label{E:gen-form}
	[z^n]\sum_{k\ge0}d(z)^{k+\omega_0}
	\prod_{1\le j\le k}\left(e(z)^{j+\omega}-1\right)^\alpha,
\end{align}
for $\alpha\in\mathbb{Z}^+$ and $\omega_0,\omega\in\mathbb{C}$. Here 
the generating functions $d(z)$ and $e(z)$ satisfy $d(0)>0, e(0)=1$ 
and $e'(0)\ne0$. 

Our result (Theorem~\ref{T:gen}) for the general form
\eqref{E:gen-form} will not only be applied to derive the large-$n$
asymptotics of many sequences in the literature and the OEIS (see 
Section~\ref{S:A}), but also be sufficient to re-derive (\ref{E:zag}) 
because of the identity (in the sense of formal power series) due to 
Andrews and Jel\'{i}nek \cite{Andrews2014}
\begin{align*}
    \sum_{k\ge0}\prod_{1\le j\le k}\lpa{1-(1-z)^j}
    =\sum_{k\ge0}(1-z)^{-k-1}\prod_{1\le j\le k} 
    \left((1-z)^{-j}-1\right)^2.
\end{align*}
This and other examples of similar types are collected in
Section~\ref{S:A}. 

In addition to its usefulness in univariate asymptotics, our
formulation \eqref{E:gen-form} is also effective in dealing with the
limiting distributions of various statistics (bivariate asymptotics)
on random Fishburn matrices with or without restriction on their
entries, which describe the typical shape of random Fishburn
matrices.

More precisely, we consider upper-triangular matrices whose entries
belong to $\Lambda$, a multiset of nonnegative integers with the
generating function $\Lambda(z) = 1+\lambda_1z +\lambda_2z^2+\cdots$.
We then define two classes of matrices: (i)
$\Lambda$\emph{-row-Fishburn} ones without zero row, and (ii)
$\Lambda$\emph{-Fishburn} ones without zero row or zero column. The
statistics examined and their limit laws are summarized in
Table~\ref{T:laws}, where we assume a uniform distribution on the set
of all possible such matrices with the same entry-sum.

\begin{table}[!ht]
\renewcommand*{\arraystretch}{1.7}	
\begin{center}
	\begin{tabular}{c|cc}\hline
		\multicolumn{1}{c}{}
        $\lambda_1>0$\qquad
		& Random $\Lambda$-row-Fishburn matrices 
        & Random $\Lambda$-Fishburn matrices \\ \hline
		First row sum & Zero-Truncated-Poisson$(\log 2)$ &
		Normal$(\log n,\log n)$  \\
		Diagonal sum & Normal$(\log n,\log n)$ 
		& Normal$(2\log n,2\log n)$\\
		$\frac12\lpa{n-\text{\#(1s)}}$ & 
		$\begin{cases}
			\mathrm{Poisson}\lpa{\frac{\lambda_2\pi^2}
			{12\lambda_1^2}}, & \text{if }\lambda_2>0\\
			\mathrm{degenerate}, & \text{if }\lambda_2=0
		\end{cases}$
		 & 
		$\begin{cases}
			\text{Poisson}\lpa{\frac{\lambda_2\pi^2}
			{6\lambda_1^2}}, & \text{if }\lambda_2>0\\
			\text{degenerate}, & \text{if }\lambda_2=0
		\end{cases}$
		 \\ \hline
	\end{tabular}
\end{center}
\medskip
\caption{The various asymptotic distributions of the three statistics
in large random $\Lambda$-row-Fishburn and $\Lambda$-Fishburn
matrices with entries belonging to a given multiset of nonnegative
integers $\Lambda$ (containing $0$ exactly once and $1$ at least
once). Here $n$ is the sum of all entries in the matrix.}
\label{T:laws}	
\end{table}

In particular, when $\Lambda$ is the set of nonnegative integers, the
first row-size in random Fishburn matrices also arises in many
different contexts under different guises, the first being in the
form of leftmost chord in regular linearized chord diagrams
\cite{Stoimenow1998}; see Section~\ref{S:sfm} for more information.
Our limit results thus have many different interpretations and
implications.

The proof of these limit laws requires the full power of our setting  
\eqref{E:gen-form} where some parameters or coefficients are 
themselves complex variables, as well as the Quasi-Powers Framework
(see \cite{Flajolet2009, Hwang1994, Hwang1998}), which is a simple 
synthetic scheme for deriving asymptotic normality and some of its 
quantitative refinements. 

From Table~\ref{T:laws} we see that in a typical random
$\Lambda$-Fishburn matrix (when all matrices of the same entry-sum
are equally likely), entries equal to $1$ are ubiquitous, those to
$2$ appear like a Poisson distribution, and the rest is
asymptotically negligible. Thus such random matrices have little
variation as far as the distribution of entries is concerned. In other
words, a random $\Lambda$-Fishburn matrix is asymptotically close to
its primitive counterpart in which only $0$ and $1$ are allowed as
entries. Regarding a Fishburn matrix of size $n$ as an integer
partition of $n$ arranged on upper-triangular square matrices without
zero row or column, we see that the number of $1$s in random Fishburn
matrices is very different from the number of $1$s in random integer
partitions, which has an exponential distribution in the limit.



What happens if we drop the omnipresent entry $1$, assuming that all
$\Lambda$-Fishburn matrices (of the same size) whose smallest nonzero
entries are $2$ are equally likely? The resulting random matrices
turn out to be more interesting, exhibiting less expected behaviors.
More precisely, we extend further our study in
Section~\ref{S:jelinek} to the situation when $\lambda_1=0$ and 
$\lambda_2>0$ in $\Lambda$-Fishburn matrices, which has a very
similar analytic context to the self-dual (or persymmetric) Fishburn
matrices when $\lambda_1>0$; the latter was considered in
\cite{Jelinek2012} for the cases when $\Lambda=\{0,1\}$ and $\Lambda =
\mathbb{Z}_{\ge0}$. We adopt the same framework \eqref{E:gen-form}
and address the asymptotics when $\lambda_1=0$ and $\lambda_2>0$.
Such a formulation is, as in the case of $\lambda_1>0$, not only
useful for the asymptotic enumeration of matrices of large size, but
also practical in characterizing the finer stochastic behaviors of
the random matrices, whether they are Fishburn with $\lambda_1=0$ and
$\lambda_2>0$ or self-dual Fishburn with $\lambda_1>0$.

\begin{table}[!ht]
\renewcommand*{\arraystretch}{1.7}	
\begin{center}
	\begin{tabular}{c|cc}\hline
        \multicolumn{1}{c}{}
		& \makecell{Random $\Lambda$-Fishburn matrices
        \\ with $\lambda_{2i-1}=0$ for $1\le i\le m$
        \\ and $\lambda_2,\lambda_{2m+1}>0$}
        & \makecell{Random self-dual $\Lambda$-Fishburn \\
        matrices with $1$s ($\lambda_1>0$)} \\ \hline
		First row sum & Normal$(\log n,\log n)$ &
		Normal$(\log n,\log n)$  \\
		Diagonal sum & Normal$(2\log n,2\log n)$ 
		& $2\,\cdot$ Normal$(\log n,\log n)$\\
        \makecell{\# smallest \\ nonzero entries}
		& $\begin{cases}
            \frac12n-\frac32\cdot\text{Normal}(\tau\sqrt{n},
            \tau\sqrt{n}),\\
            \qquad \text{if }m=1\\
            \frac12n-2\cdot
            \text{Poisson}(\frac{\lambda_4\pi^2}{6\lambda_2^2}),\\
            \qquad \text{ if }m\ge2, \lambda_4>0, n \text{ even}\\
            \frac12(n-2m-1)-2\cdot
            \text{Poisson}(\frac{\lambda_4\pi^2}{6\lambda_2^2}),\\
            \qquad \text{ if }m\ge2, \lambda_4>0, n \text{ odd}\\
			\text{degenerate, if }\lambda_3=\lambda_4=0
		\end{cases}$
		 & 
		$\begin{cases}
			n-2\cdot\text{Poisson}\lpa{\frac{\lambda_2}
			{\lambda_1}\log 2}\\
            \quad\; *\;4\cdot\text{Poisson}\lpa{\frac{\lambda_2\pi^2}
            {12\lambda_1^2}},\\
            \qquad \text{if }\lambda_2>0\\
			\text{degenerate, if }\lambda_2=0
		\end{cases}$
		 \\ \hline
	\end{tabular}
\end{center}
\medskip
\caption{The asymptotic distributions of the three statistics in large
random $\Lambda$-Fishburn and self-dual $\Lambda$-Fishburn matrices
with entries belonging to a given multiset of nonnegative integers
$\Lambda$ (containing $0$ exactly once and with or without $1$s). 
Here $n$ is the sum of all entries in the matrix, and $\tau := 
\frac{\lambda_3\pi}{2\sqrt{3}\lambda_2^{3/2}}$. The symbol $X*Y$ 
stands for the convolution of two distributions.}
\label{T:laws-2}	
\end{table}

While the logarithmic behaviors in the first row sum and the diagonal
sum are similar as in Table~\ref{T:laws}, the limit laws of the
occurrences of the smallest nonzero entries seem less predicted,
notably in the case when $2$ is the smallest nonzero entry. Roughly, 
the periodicity resulted from the prevalent factor $2$ in the class 
of $\Lambda$-Fishburn matrices without using $1$ as entries does 
change drastically the behavior from being bounded Poisson to normal 
with mean and variance both asymptotic to $\tau\sqrt{n}$ (or indeed a
Poisson distribution with unbounded mean $\tau\sqrt{n}$; see
Section~\ref{S:no-1}).

Our formulation and results include as a special case the asymptotic
approximation to self-dual Fishburn numbers (\ref{E:selfdualfish}),
confirming a conjecture in \cite{Jelinek2012}; see 
Sections~\ref{S:gf} and \ref{S:jelinek}.

This paper is structured as follows. In the next section, we outline
the background on the Fishburn matrices, and then derive the
generating functions that will be analyzed in later sections. Then we
describe the saddle-point method in detail in Section~\ref{S:basis}
which will then be used and modified throughout this paper, with the
finer asymptotic expansions briefly discussed in Section~\ref{S:ae}.
The general framework \eqref{E:gen-form} is examined in detail in
Section~\ref{S:gf} by extending the saddle-point analysis of
Section~\ref{S:basis}. Asymptotics of restricted Fishburn matrices
as well as other univariate examples are collected and discussed in
Section~\ref{S:A}. We then turn to bivariate asymptotics in
Section~\ref{S:B} and study the asymptotic distributions of the
statistics on random Fishburn matrices as those given in
Table~\ref{T:laws}. The extension of \eqref{E:gen-form} to the case
when $e_1=0$ and $e_2>0$ is examined in Section~\ref{S:jelinek},
together with univariate and bivariate applications (as shown in 
Table~\ref{T:laws-2}). We then conclude this paper in 
Section~\ref{S:conclusions} with some perspectives on how our 
approach may be further extended to other frameworks.

\textbf{Notations.} As mentioned at the beginning of this section, 
$(c,\rho)$ is used generically and will always be locally defined. 
Other generic and mostly local symbols include $c_i$, $c(\cdot)$, 
$\ve$, $f$, and $a_n$; their values will be specified whenever 
ambiguities may occur.

\section{Fishburn matrices and related combinatorial objects}
\label{S:comb}

We describe Fishburn matrices in this section, together with some of
their variants and generalizations. We also derive the bivariate
generating functions for some statistics that will be examined in more
detail in later sections.

Throughout this paper, the \emph{size} of a matrix is defined to be
the sum of all its entries. Similarly, we write the size of a row or
a column or the diagonal to represent their respective sum.

\begin{definition}[Fishburn matrix]
A Fishburn matrix is an upper-triangular square one with nonnegative
integer entries such that no row or no column consists solely of 
zeros.
\end{definition}
For example, all $15$ Fishburn matrices of size $4$ are depicted in 
Figure~\ref{F:fishburn4}.
\begin{figure}[!h]
	\begin{center}
		$$\begin{pmatrix}[0.8]
		4\\
		\end{pmatrix}
		\begin{pmatrix}[0.8]
		1 \, 2 \\
		0 \, 1
		\end{pmatrix}\begin{pmatrix}[0.8]
		2 \, 1 \\
		0 \, 1
		\end{pmatrix}\begin{pmatrix}[0.8]
		1 \, 1 \\
		0 \, 2
		\end{pmatrix}\begin{pmatrix}[0.8]
		2 \, 0 \\
		0 \, 2
		\end{pmatrix}\begin{pmatrix}[0.8]
		3 \, 0 \\
		0 \, 1
		\end{pmatrix}\begin{pmatrix}[0.8]
		1 \, 0 \\
		0 \, 3
		\end{pmatrix}$$
		$$\begin{pmatrix}[0.8]
		1 \, 1 \, 0\\
		0 \, 1 \, 0\\
		0 \, 0 \, 1
		\end{pmatrix}\begin{pmatrix}[0.8]
		1 \, 0 \, 1\\
		0 \, 1 \, 0\\
		0 \, 0 \, 1
		\end{pmatrix}\begin{pmatrix}[0.8]
		1 \, 0 \, 0\\
		0 \, 1 \, 1\\
		0 \, 0 \, 1
		\end{pmatrix}\begin{pmatrix}[0.8]
		2 \, 0 \, 0\\
		0 \, 1 \, 0\\
		0 \, 0 \, 1
		\end{pmatrix}\begin{pmatrix}[0.8]
		1 \, 0 \, 0\\
		0 \, 2 \, 0\\
		0 \, 0 \, 1
		\end{pmatrix}\begin{pmatrix}[0.8]
		1 \, 0 \, 0\\
		0 \, 1 \, 0\\
		0 \, 0 \, 2
		\end{pmatrix}\begin{pmatrix}[0.8]
		1 \, 1 \, 0\\
		0 \, 0 \, 1\\
		0 \, 0 \, 1
		\end{pmatrix}\begin{pmatrix}[0.8]
		1 \, 0 \, 0 \, 0\\
		0 \, 1 \, 0 \, 0\\
		0 \, 0 \, 1 \, 0\\
		0 \, 0 \, 0 \, 1
		\end{pmatrix}$$
	\end{center}   
	\caption{All $15$ Fishburn matrices of size $4$. The 
		occurrence of $1$ is seen to be predominant.}
	\label{F:fishburn4} 
\end{figure}

As a succinct representation tool for interval orders (see
\cite{Fishburn1985, Greenough1976}), Fishburn matrices (called
IO-matrices in \cite{Greenough1976}, characteristic matrices in
\cite{Fishburn1983, Fishburn1985}, and composition matrices in
\cite{Dukes2011a}) offer not
only algorithmic but also combinatorial advantages, and over the
years their study was largely enriched by the corresponding
developments in combinatorial enumeration and bijections, following
notably the paper by Bousquet-M\'elou-Claesson-Dukes-Kitaev
\cite{Bousquet-Melou2010}. In particular, the useful database OEIS
\cite{oeis2019} played a key role in linking the various structures
in different areas some of which will be briefly described later.

Closer to our interest here, the enumeration of Fishburn matrices of
a given \emph{dimension} was already investigated in the early papers
\cite{Andresen1976, Greenough1976}, and recursive algorithms were
later proposed for computing matrices of a given size (see 
e.g.~\cite{Haxell1987, Stoimenow1998}), culminating in the definitive
work of Zagier \cite{Zagier2001}, where, through the proper use of
generating functions, effective asymptotic approximations
\eqref{E:zag} for Fishburn matrices of large size are derived.

\subsection{Fishburn matrices and their variants}

Recall that the Fishburn numbers 
(\href{https://oeis.org/A022493}{A022493}) count Fishburn 
matrices of a given size and can be computed by the generating 
function
\begin{align*}
	\sum_{k\ge0}\prod_{1\le j\le k}
	\lpa{1-(1-z)^j}=1+z+2z^2+5z^3+15z^4+53z^5+217z^6+\cdots.
\end{align*}

From a combinatorial viewpoint, the Fishburn numbers also enumerate 
several seemingly unrelated structures some of which are listed as
follows; see \cite{Bousquet-Melou2010, Claesson2011, Dukes2011,
Dukes2010, Fishburn1970, Fishburn1985, Fu2020, Jelinek2012,
Kitaev2011, Levande2013, Yan2011} for the bijective and algebraic
proofs of these equinumerosity.

\begin{itemize}

\item \emph{Ascent sequences of length $n$}, which are sequences of
nonnegative integers $(x_1,x_2,\ldots,x_n)$ such that for each $i$,
$0\le x_i\le 1+|\{j:1\le j\le i-2, \, x_j<x_{j+1}\}|$.

\item \emph{$({\bf2-1})$-avoiding inversion sequences of length $n$}:
these are sequences $x=(x_1,x_2,\ldots,x_n)$ such that $0\le x_i<i$
and there exists \emph{no} $i<j$ such that $x_i=x_j+1$.

\item \emph{$(2|3\bar{1})$-avoiding permutations of $n$ elements},
which are permutations $\pi$ without subsequence
$\pi_i\pi_{i+1}\pi_j$ such that $\pi_i-1=\pi_j$ and $\pi_i<\pi_{i+1}$.

\item \emph{$(\bf 2+\bf 2)$-free posets of $n$ elements}: these are
posets $(P,\prec)$ with \emph{interval representations}, namely,
for each $x\in P$, a real closed interval $[\ell_x,r_x]$ is
associated to $x$ such that $x\prec y$ in $P$ exactly when
$r_x<\ell_y$.

\item \emph{Stoimenow matchings of length $2n$}: A matching of the
set $[2n]=\{1,2,\ldots,2n\}$ is a partition of $[2n]$ into subsets
(called \emph{arcs}) of size exactly two. A Stoimenow matching is one
without nested pair of arcs such that either the openers or the
closers are next to each other.

\item \emph{Regular linearized chord diagrams of length $2n$}: A
regular linear chord diagram is a fixed-point free involution $\tau$
on the set $[2n]$ such that if $[i,i+1]\subset [\tau(i+1),\tau(i)]$
whenever $\tau(i+1)<\tau(i)$. 

\end{itemize}

Two variants of Fishburn matrices, \emph{row-Fishburn matrices}
and \emph{self-dual Fishburn matrices} were studied by Jel\'{i}nek
\cite{Jelinek2012} during his study on refined enumeration of
self-dual interval orders. \emph{Row-Fishburn matrices} are
upper-triangular ones with non-negative integer entries such that
no row is composed solely of zeros. The corresponding 
generating function satisfies
\begin{align}\label{E:rowfish}
	\sum_{k\ge0}\prod_{1\le j\le k}
	\left((1-z)^{-j}-1\right)
	=1+z+3z^2+12z^3+61z^4+380z^5+2815z^6+\cdots,
\end{align}
where the coefficient of $z^n$ equals the number of row-Fishburn 
matrices of size $n$.

Furthermore, a matrix is \emph{primitive} if all entries are either
$0$ or $1$. Substituting $z$ by $\frac z{1+z}$ leads to the
generating function for the \emph{primitive row-Fishburn number}
(\href{https://oeis.org/A179525}{A179525})
\begin{align}\label{E:prirowfish}
	\sum_{k\ge0}\prod_{1\le j\le k}\left((1+z)^j-1\right)
	=1+z+2\,z^2+7\,z^3+33\,z^4+197\,z^5+1419\,z^6+\cdots.
\end{align}
Reversely, (\ref{E:rowfish}) can be obtained from
\eqref{E:prirowfish} by substituting $z$ with $\frac{z}{1-z}$. 

On the other hand, a Fishburn matrix is \emph{self-dual} if it is
persymmetric, or symmetric with respect to the northeast-southwest
diagonal. The generating function of primitive self-dual Fishburn matrices counted by the size is (see also \cite{Jelinek2012})
\[\sum_{k\ge0}(1+z)^{k+1}
\prod_{1\le j\le k}\left((1+z^2)^{j}-1\right)
=1+z+z^2+2\,z^3+3\,z^4+6\,z^5+13\,z^6+\cdots,
\]
and the one of all self-dual Fishburn matrices is 
\[
    \sum_{k\ge0}(1-z)^{-k-1}
    \prod_{1\le j\le k}\lpa{(1-z^2)^{-j}-1}
    = 1+z+2\,z^2+3\,z^3+7\,z^4+13\,z^5+33\,z^6+\cdots,
\]
so that $7$ out of the $15$ Fishburn matrices of size 4 are self-dual; see Figure~\ref{F:fishburn4}. 

\subsection{Fishburn matrices with entry restrictions}

We now extend the matrices by allowing more flexible entries. 
Let $\Lambda$ be a multiset of nonnegative integers with the 
generating function
\begin{align}\label{E:Lz}
	\Lambda(z) := 1+\sum_{\lambda\in\Lambda} z^\lambda
	= 1+\lambda_1z+\lambda_2z^2+\cdots.
\end{align}
Assume \emph{throughout this paper except in Sections~\ref{S:8-1}, 
\ref{S:8-3} and \ref{S:8-4} that $\lambda_1>0$}, so that 
$\{0,1\}\in\Lambda$. 

\begin{definition}[$\Lambda$-Fishburn matrix] 
An upper-triangular matrix is called \emph{$\Lambda$-Fishburn} if
every row and column has non-zero size, and all entries lie in the
set $\Lambda$.
\end{definition}

\begin{definition}[$\Lambda$-row-Fishburn matrix] 
An upper-triangular matrix is called \emph{$\Lambda$-row-Fishburn} if
all entries lie in the set $\Lambda$ without zero row. 
\end{definition}

\begin{proposition} \label{P:gf-f}
The number of $\Lambda$-row-Fishburn matrices of size $n$ is given
by
\begin{align}\label{E:gf-lrf}
	[z^n]\sum_{k\ge0}\prod_{1\le j\le k}\lpa{\Lambda(z)^j-1},
\end{align}
and that of $\Lambda$-Fishburn matrices by 
\begin{align}\label{E:gf-lf}
	[z^n]\sum_{k\ge0}\prod_{1\le j\le k}
	\lpa{1-\Lambda(z)^{-j}}
	&= [z^n]\sum_{k\ge0}\Lambda(z)^{k+1}
	\prod_{1\le j\le k}\lpa{\Lambda(z)^j-1}^2.
\end{align}
\end{proposition}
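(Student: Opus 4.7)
For \eqref{E:gf-lrf}, I would perform a direct row-by-row decomposition. An upper-triangular $k\times k$ matrix has rows of sizes $k, k-1, \ldots, 1$; a row of size $j$ with entries in $\Lambda$ contributes $\Lambda(z)^j$ to the size generating function, and the ``no zero row'' condition replaces $\Lambda(z)^j$ by $\Lambda(z)^j - 1$. Multiplying over $j = 1, \ldots, k$ and summing over $k \ge 0$ yields \eqref{E:gf-lrf}.

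For the first form in \eqref{E:gf-lf}, I would apply inclusion--exclusion over the set $C$ of forced-zero columns within the $\Lambda$-row-Fishburn family. After forcing columns in $C$ to be zero, row $i$ retains $r_i := (k-i+1) - |C \cap [i,k]|$ free entries, and ``no zero row'' requires $r_i \ge 1$ for all $i$; a short check shows that this collapses to the single condition $k \notin C$ (for $i=k$ it forces $k\notin C$, and every other constraint is then automatic). The $\Lambda$-Fishburn generating function then equals
\[
    \sum_{k\ge 0}\sum_{C \subseteq [k-1]}(-1)^{|C|}\prod_{i=1}^{k}\lpa{\Lambda(z)^{r_i} - 1}.
\]
The sequence $(r_i)_{i=1}^k$ is weakly decreasing (it decreases by $1$ at each $i \notin C$ and stays constant at each $i \in C$), so it takes all integer values from $k - |C|$ down to $1$; its multiplicity profile is a composition $(n_1, \ldots, n_K)$ of $k$ into $K := k - |C|$ positive parts. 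Reparameterizing the double sum in $(k, C)$ by $(K, (n_v))$, exchanging the order of summation, and evaluating each inner series via $\sum_{n\ge 1}(-1)^{n-1}x^n = x/(1+x)$ with $x = \Lambda(z)^v - 1$ collapses the inner factor to $(\Lambda(z)^v - 1)/\Lambda(z)^v = 1 - \Lambda(z)^{-v}$, producing the first form in \eqref{E:gf-lf}.

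The equivalence of the two forms in \eqref{E:gf-lf} then follows from the formal power series identity (in the variable $u$)
\[
    \sum_{k\ge 0}\prod_{j=1}^{k}\lpa{1-u^j}
    = \sum_{k\ge 0}u^{-k-1}\prod_{j=1}^{k}\lpa{u^{-j}-1}^2,
\]
namely the Andrews--Jel\'inek identity cited in the excerpt (originally with $u = 1-z$), under the substitution $u = \Lambda(z)^{-1}$; this substitution is a legitimate formal power series operation in $z$ because $\Lambda(z)^{-1}$ has constant term $1$, and it immediately converts the first form to the second.

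The main obstacle is the combinatorial bookkeeping in the inclusion--exclusion step: identifying that ``$r_i \ge 1$ for all $i$'' reduces to the single condition $k \notin C$, and then recognizing that the weakly-decreasing profile $(r_i)$ is encoded by a composition so that the geometric-series summation can be carried out cleanly. Once these conceptual moves are in place, the compact product form $\prod_{j=1}^{k}(1 - \Lambda(z)^{-j})$ emerges automatically, and the second form in \eqref{E:gf-lf} is a costless algebraic corollary.
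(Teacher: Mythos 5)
Your proposal is correct, and for the row-Fishburn formula \eqref{E:gf-lrf} your row-by-row decomposition coincides with the paper's. Where you genuinely diverge is in the derivation of the first (product) form of \eqref{E:gf-lf}: the paper simply \emph{quotes} from Jel\'inek the primitive-case formula $\sum_{k\ge0}\prod_{1\le j\le k}\lpa{1-(1+z)^{-j}}$ and replaces $1+z$ by $\Lambda(z)$, whereas you re-derive it from scratch by inclusion--exclusion over zero columns inside the row-Fishburn family, then reparameterize the pair $(k,C)$ by the composition $(n_1,\dots,n_K)$ recording the multiplicities of the weakly decreasing sequence $(r_i)$, and finally collapse each inner alternating sum via $\sum_{n\ge1}(-1)^{n-1}x^n=x/(1+x)$ with $x=\Lambda(z)^v-1$. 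This argument is sound: the condition ``$r_i\ge1$ for all $i$'' does reduce to $k\notin C$ exactly as you claim, the map $(k,C)\mapsto(K,(n_v))$ is indeed a bijection, and the rearrangements are legitimate formal-power-series manipulations since each term $\prod_{i}\lpa{\Lambda(z)^{r_i}-1}$ has order $k$ in $z$, so only finitely many terms contribute to any fixed coefficient. Your route is more self-contained (it does not lean on a cited formula for the primitive case) at the price of a bit more combinatorial bookkeeping; the paper's route is shorter given the reference. For the equivalence of the two forms of \eqref{E:gf-lf} you and the paper both invoke the Andrews--Jel\'inek identity --- you in the one-variable specialization stated earlier in the paper, the paper in the three-variable form \eqref{E:Andrews} with $u=1$, $s=t=1-\Lambda(z)$ --- so that step is essentially the same.
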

\begin{proof}
The first generating function \eqref{E:gf-lrf} follows from the 
definition of $\Lambda$-row-Fishburn matrices. For a 
$\Lambda$-row-Fishburn matrix of dimension $k$, and for any $j$, 
$1\le j\le k$, the generating function of the $(k-j+1)$-st row 
counted by the size (variable $z$) is $\Lambda(z)^j-1$. As a result, 
the generating function for $\Lambda$-row-Fishburn matrices of 
dimension $k$ is given by $\prod_{1\le j\le k}(\Lambda(z)^j-1)$. 
Summing over all $k$ leads to \eqref{E:gf-lrf}.

On the other hand, the generating function for primitive Fishburn 
matrices is given by (including the constant $1$ for the empty 
matrix; see \cite{Jelinek2012})
\begin{align*}
	\sum_{k\ge0}\prod_{1\le j\le k}\lpa{1-(1+z)^{-j}}.
\end{align*}
Substituting $1+z$ by $\Lambda(z)$ yields the generating function for 
$\Lambda$-Fishburn matrices in the general case.

The right-hand side of the identity \eqref{E:gf-lf} follows from the  
following $q$-identity due to Andrews and Jel\'{i}nek
\cite{Andrews2014} 
\begin{equation}\label{E:Andrews}
\begin{split}
	&\sum_{k\ge0}u^k\prod_{1\le j\le k}
	\left(1-\frac{1}{(1-s)(1-t)^{j-1}}\right)\\
	&\qquad=\sum_{k\ge0}(1-s)(1-t)^k
	\prod_{1\le j\le k}\lpa{\lpa{1-(1-s)(1-t)^{j-1})}
	\lpa{1-u(1-t)^j}},
\end{split}	
\end{equation}
by substituting $u=1$ and $s=t=1-\Lambda(z)$ on both sides.
\end{proof}

\subsection{Statistics on Fishburn matrices}
\label{S:sfm}

The study of statistics on the Fishburn structures traces back to the
work by Andresen and Kjeldsen \cite{Andresen1976} in the context of
transitively directed graphs (see also \cite{Jelinek2012}), where
they studied the numbers of primitive Fishburn matrices counted by
the dimension and by the size of the first row (with the notation
$\xi(n,k)$ in \cite{Andresen1976}).

Stoimenow \cite{Stoimenow1998} found a recursive formula for the
numbers of regular linearized chord diagram with a given length of
the leftmost chord. Subsequently, it was discovered
\cite{Bousquet-Melou2010, Fu2020, Jelinek2012, Kitaev2017,
Levande2013, Yan2011} that these numbers are equivalent to the
following ones:

\begin{itemize}

\item the sum of entries in the first row (or the last column) of
Fishburn matrices of size $n$;

\item the number of minimal (or maximal) elements in $(2+2)$-free
posets of size $n$;

\item the maximal entries (or right-to-left minimal entries, or the
number of zeros) in ascent sequences of length $n$;

\item the maximal entries in $({\bf2-1})$-avoiding inversion
sequences of length $n$;

\item the length of the initial run of openers in Stoimenow matchings
of length $[2n]$;

\item the length of the initial decreasing run in
$(2|3\bar{1})$-avoiding permutations of length $n$;

\item the number of left-to-right minima (or left-to-right maxima; or
right-to-left maxima) in $(2|3\bar{1})$-avoiding permutations of
length $n$.

\end{itemize}

These statistics are classified as Stirling statistics; see
\cite{Fu2020}. In parallel, the Eulerian statistics have also been
intensively studied but the corresponding limiting distributions have
remained open and will be addressed elsewhere.

\subsection{Bivariate generating functions for Fishburn
matrices with entry restrictions}

We study the asymptotic distributions of the following three
random variables on random $\Lambda$-Fishburn and
$\Lambda$-row-Fishburn matrices, assuming a uniform distribution on
the set of all size-$n$ matrices: the size of the first row, the size 
of the diagonal, and the number of occurrences of $1$.

The approach we use to characterize the corresponding limit laws
relies heavily on the corresponding bivariate generating functions
and our double saddle-point analysis. We derive the required 
generating functions in this subsection. We use the convention that 
$f(z,v)$ is the bivariate generating function for the quantity $X$ if 
$[z^nv^m]f(z,v)$ denotes the number of $\Lambda$-row-Fishburn 
matrices of size $n$ with $X=m$. 

\begin{proposition}[Statistics on $\Lambda$-row-Fishburn matrices] 
We have the following bivariate generating functions with $z$ marking 
the matrix size and $v$ marking respectively

\begin{enumerate}[(i)]

\item the size of the first row 
\begin{equation}\label{E:gf-fr}
	1+\sum_{k\ge0}\lpa{\Lambda(vz)^{k+1}-1}
	\prod_{1\le j\le k}\lpa{\Lambda(z)^j-1},
\end{equation}

\item the size of the (main) diagonal (or the last column)
\begin{equation}\label{E:gf-dg}
	\sum_{k\ge0}\prod_{1\le j\le k}
	\lpa{\Lambda(vz)\Lambda(z)^{j-1}-1}, \and 
\end{equation}
\item the number of $1$s
\begin{equation}\label{E:gf-1}
	\sum_{k\ge0}\prod_{1\le j\le k}
	\lpa{(\Lambda(z)+\lambda_1(v-1)z)^{j}-1}.
\end{equation}
\end{enumerate}
\end{proposition}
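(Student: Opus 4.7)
The plan is to derive all three bivariate generating functions by refining, statistic by statistic, the row-by-row decomposition that already underlies the univariate formula~\eqref{E:gf-lrf}. Recall from the proof of Proposition~\ref{P:gf-f} that in a $\Lambda$-row-Fishburn matrix of dimension $k$ the $(k-j+1)$-st row consists of exactly $j$ entries drawn from $\Lambda$, whose joint generating function under the nonzero-row constraint is $\Lambda(z)^j-1$. Any statistic that is a sum of contributions localized in a single row, or even in a single entry of a row, can therefore be tracked by altering only the factors in which the relevant entries appear.

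For part (i), I would single out the first row of a dimension-$d$ matrix, which corresponds to the factor with $j=d$, and replace $\Lambda(z)^d-1$ by $\Lambda(vz)^d-1$ while leaving the factors $\Lambda(z)^i-1$ for $1\le i\le d-1$ intact; summing over $d\ge1$, adding the empty matrix as a separate summand $1$, and reindexing by $k=d-1$ then produces \eqref{E:gf-fr}.

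For part (ii), the diagonal entry $a_{k-j+1,k-j+1}$ of the $(k-j+1)$-st row is one of that row's $j$ entries (the leftmost one; symmetrically, the last-column entry $a_{k-j+1,k}$ is the rightmost one, which explains the parenthetical in the statement). I would accordingly split off one $\Lambda(z)$ factor in every row and replace it by $\Lambda(vz)$, turning the row factor into $\Lambda(vz)\Lambda(z)^{j-1}-1$; the subtracted $1$ still encodes the unique all-zero row, since $\Lambda(vz)\Lambda(z)^{j-1}$ has constant term $1$. Taking the product over $1\le j\le k$ and summing over $k\ge0$ yields \eqref{E:gf-dg}. For part (iii), marking each entry equal to $1$ by $v$ is effected by the substitution $\Lambda(z)\mapsto\Lambda(z)+\lambda_1(v-1)z$, which turns the coefficient $\lambda_1 z$ of $\Lambda(z)$ into $\lambda_1 vz$ and leaves every other coefficient unchanged; performing this substitution uniformly inside every factor $\Lambda(z)^j-1$ delivers \eqref{E:gf-1}.

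The only real (and minor) subtlety is the bookkeeping in part (i), where the first row must be extracted before the reindexing and the empty matrix inserted by hand; once the row interpretation of the factors $\Lambda(z)^j-1$ is accepted, parts (ii) and (iii) reduce to a single uniform substitution applied inside every row, and no further combinatorial argument is needed.
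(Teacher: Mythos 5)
Your proof is correct and follows essentially the same row-by-row decomposition as the paper: part (i) matches the paper's argument exactly (with the cosmetic change of working with dimension $d$ and reindexing rather than starting from dimension $k+1$), and the paper simply omits the details of (ii) and (iii) as ``similar,'' which you have correctly filled in by marking the leftmost entry of each row for the diagonal and performing the substitution $\Lambda(z)\mapsto\Lambda(z)+\lambda_1(v-1)z$ for the number of $1$s. No gaps.
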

\begin{proof}
Given a $\Lambda$-row-Fishburn matrix of dimension $k+1$, the
generating function for the first row size (marked by $vz$) is
$\Lambda(vz)^{k+1}-1$, the remaining $k$ rows contributing
$\prod_{1\le j\le k}\lpa{\Lambda(z)^j-1}$, as in the proof of
\eqref{E:gf-lrf}. The proofs for the other two parameters are similar
and thus omitted.
\end{proof}

For $\Lambda$-Fishburn matrices, the proof is less straightforward
and we need a fine-tuned version of Jel\'\i nek's Theorem 2.1 in 
\cite{Jelinek2012} in order to enumerate both the first row and the 
diagonal sizes. 

Let $\mathcal{P}$ denote the set of primitive (binary) 
$\Lambda$-Fishburn matrices. Define first
\[
\begin{split}
    &G_k(t,u,v,w,x,y)\\
	&\qquad :=
    \sum_{(M_{i,j})_{k\times k}\in\mathcal{P}}
    t^{M_{k,k}} u^{\sum_{2\le j<k}M_{j,k}}
    v^{\sum_{2\le j<k}M_{j,j}}
    w^{\sum_{1<i<j<k}M_{i,j}}
    x^{M_{1,k}} y^{\sum_{1\le j<k}M_{1,j}},
\end{split}
\]
so that $t$ marks the lower-right corner (which is always $1$), $u$
the size of the last column except the two ends, $v$ the size of the
(main) diagonal except the two ends, $w$ the size of all interior
cells, $x$ the upper-right corner, and $y$ the size of the first row
except the upper-right corner.

\begin{lemma} The generating function $F(s,t,u,v,w,x,y)
    :=\sum_{k\ge2}G_{k}(t,u,v,w,x,y)s^k$ satisfies  
\begin{equation}\label{E:F-super-gf}
    \begin{split}
        &F(s,t,u,v,w,x,y)\\
        &\qquad =t\sum_{k\ge0}
        \frac{s^{k+2}y(1+x)(1+y)^k}{(1+u)(1+v)(1+w)^{k}-1}
        \prod_{0\le j\le k}\frac{(1+u)(1+v)(1+w)^j-1}
        {1+s\lpa{(1+u)(1+w)^j-1}}.
    \end{split}
\end{equation}    
\end{lemma}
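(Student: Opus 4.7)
The plan is to mimic the strategy of Jel\'{i}nek's Theorem~2.1 in \cite{Jelinek2012}, which treats primitive Fishburn matrices refined only by dimension and first-row sum, and to extend that argument so as to also keep track of three new boundary markers: $u$ for the interior last column, $v$ for the interior diagonal, and $x$ for the upper-right corner. The first observation is that a primitive $\Lambda$-Fishburn matrix of dimension $k+2$ must have $M_{1,1}=1$ (column~$1$ contains only this entry) and $M_{k+2,k+2}=1$ (row~$k+2$ contains only this entry), which immediately extracts the outer factors $y$ and $t$ in the claimed formula.

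Next I would decompose the matrix into three pieces: the first row, the last column, and the interior $k\times k$ upper-triangular block $N=(M_{i,j})_{2\le i\le j\le k+1}$. The $k$ non-corner first-row entries contribute $(1+y)^k$, the upper-right corner contributes $(1+x)$, the $k$ non-corner last-column entries contribute $(1+u)^k$, the diagonal of $N$ contributes $(1+v)^k$, and the strictly-above-diagonal interior entries of $N$ each contribute $(1+w)$. However these boundary choices are coupled to $N$ by two families of constraints: for each $i\in\{2,\dots,k+1\}$, either row $i$ of $N$ is nonzero or $M_{i,k+2}=1$; and symmetrically, for each $j\in\{2,\dots,k+1\}$, either column $j$ of $N$ is nonzero or $M_{1,j}=1$. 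The problem thus reduces to enumerating weighted upper-triangular binary matrices of dimension $k$ whose nonzero-row and nonzero-column requirements are prescribed by the boundary.

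The heart of the proof will be a column-by-column transfer argument on $N$, following Jel\'{i}nek, in which the state after stage $j$ is the number of pending rows still waiting to receive a~$1$. The $j$-th stage processes the new interior column of $N$ together with the associated boundary entries, contributing a factor $(1+u)(1+v)(1+w)^j-1$ after absorbing the no-zero-column constraint (the $-1$), while the geometric sum over when a pending row is eventually killed produces the denominator $1+s\bigl((1+u)(1+w)^j-1\bigr)$. Collecting these factors across $j=0,1,\dots,k$ gives the advertised product, with the extra outer denominator $(1+u)(1+v)(1+w)^k-1$ correcting for the distinguished role of the last interior column (which lies under the upper-right corner). Multiplying by $s^{k+2}\,t\,y\,(1+x)(1+y)^k$ and summing over $k\ge0$ then reconstructs $F(s,t,u,v,w,x,y)$.

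The main obstacle will be the two-sided bookkeeping: the first row covers columns of $N$ while the last column covers rows of $N$, and these two coverage mechanisms interact through the pending-row state in the transfer argument. I expect this coupling, together with identifying the exact geometric series that encodes when a pending row is killed, to be the most delicate point; the remainder should follow either by induction on $k$ or by matching the coefficient of $s^{k+2}$ on both sides, which I would sanity-check at $k=0$ and $k=1$ against the direct enumeration of $2\times 2$ and $3\times 3$ primitive matrices.
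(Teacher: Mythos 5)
Your overall plan — peel off the forced boundary cells and then enumerate the interior in the spirit of Jel\'inek — is sensible, but the proposal bypasses the ingredient on which the paper's proof actually rests. After noting $G_1=x$ and $G_k(t,u,v,w,x,y)=t\,G_k(1,u,v,w,x,y)$ for $k\ge2$, the paper extends Jel\'inek's Lemma~2.8 to the substitution recurrence
\[
G_{k+1}(1,u,v,w,x,y)=G_k(u+v+uv,\,u,\,v+w+vw,\,w,\,x+y+xy,\,y)-v\,G_k(1,u,v,w,x,y),
\]
and deduces \eqref{E:F-super-gf} by iterating it. Each unrolling step applies $v\mapsto(1+v)(1+w)-1$ and $x\mapsto(1+x)(1+y)-1$, which is the source of the blocks $(1+u)(1+v)(1+w)^j-1$ and the powers $(1+y)^k$; the trailing term $-v\,G_k$, carried through the iteration and summed as a series in $s$, is what produces the $s$-dependent denominators. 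Because that correction enters with a minus sign, the denominator takes the alternating form $1+s(\cdots)$ rather than $1-s(\cdots)$. Your account, a ``geometric sum over when a pending row is eventually killed,'' would naturally produce the latter, so the sign is a concrete signal that the pending-rows story cannot be what generates the formula.

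Several of your intermediate inventories also do not match the target. The non-corner last-column entries and the diagonal of $N$ never enter as free factors $(1+u)^k$ and $(1+v)^k$; they appear only inside the combined row blocks $(1+u)(1+v)(1+w)^j-1$, and $(1+y)^k$ turns out free precisely because of the iterated substitution $x\mapsto(1+x)(1+y)-1$, not because the off-corner first-row entries are unconstrained. The outer division by $(1+u)(1+v)(1+w)^k-1$ merely cancels the $j=k$ factor of the product; it is not a correction for ``the last interior column.'' Finally, a column-by-column scan with the count of pending rows as the sole state would yield a transfer-matrix expression, whereas \eqref{E:F-super-gf} is a sum over $k$ in which $s$ appears both in $s^{k+2}$ and inside every product factor, so $[s^{k+2}]$ of the right-hand side collects contributions from many values of the summation index — exactly the signature of an iterated one-dimension-at-a-time recurrence rather than a scan. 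To close the gap, you should state and prove the recurrence displayed above via Jel\'inek's insertion construction (now augmented with the markers $u,v,x$), then unroll it and sum the resulting series.
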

\begin{proof} (Sketch)
By definition, it is clear that $G_1=x$ and $G_{k}(t,u,v,w,x,y)
=tG_{k}(1,u,v,w,x,y)$ for $k\ge 2$. By the recursive
construction used in \cite[Lemma 2.8]{Jelinek2012}, we derive the 
recurrence relation 
\begin{equation*} 
   \begin{split}
       &G_{k+1}(1,u,v,w,x,y)\\
       &\qquad=G_k(u+v+uv,u,v+w+vw,w,x+y+xy,y)
       -vG_k(1,u,v,w,x,y)\\
       &\qquad=(u+v+uv)G_k(1,u,v+w+vw,w,x+y+xy,y)
       -vG_k(1,u,v,w,x,y),
   \end{split} 
\end{equation*}
for $k\ge 2$. Then from this and the iterative arguments used in 
\cite{Jelinek2012}, we deduce \eqref{E:F-super-gf}; see 
\cite{Jelinek2012} for details. 
\end{proof}

\begin{proposition}[Statistics on $\Lambda$-Fishburn matrices] 
\label{P:lfm-stat}
We have the following bivariate generating functions with $z$ marking 
the matrix size and $v$ marking respectively

\begin{enumerate}[(i)]

\item the size of the first row (or the last column)
\begin{equation} \label{E:gf-f-fr} 
\begin{split}
    &\sum_{k\ge 0}\prod_{1\le j\le k}
    \lpa{1-\Lambda(vz)^{-1}\Lambda(z)^{1-j}}\\
	&\qquad=\Lambda(vz)\sum_{k\ge 0}\Lambda(z)^k
	\prod_{1\le j\le k}\lpa{\lpa{\Lambda(vz)\Lambda(z)^{j-1}-1}
	\lpa{\Lambda(z)^j-1}},
\end{split}\end{equation}

\item the size of the (main) diagonal
\begin{equation} \label{E:gf-f-dg} 
\begin{split}
	&1+\Lambda(vz)+(\Lambda(vz)-1)^2
	\sum_{k\ge 0}\prod_{1\le j\le k}
	\lpa{\Lambda(vz)-\Lambda(z)^{-j}}\\
	&\qquad =\Lambda(vz)
	\sum_{k\ge 0}\Lambda(z)^k
	\prod_{1\le j\le k}\lpa{\Lambda(vz)\Lambda(z)^{j-1}-1}^2, 
    \and 
\end{split}
\end{equation}
\item the number of $1$s
\begin{equation} \label{E:gf-f-1} 
\begin{split}
	&\sum_{k\ge0}\prod_{1\le j\le k}
	\lpa{1-(\Lambda(z)+\lambda_1(v-1)z)^{-j}}\\
	&\qquad=\sum_{k\ge0}(\Lambda(z)+\lambda_1(v-1)z)^{k+1}
	\prod_{1\le j\le k}
	\lpa{(\Lambda(z)+\lambda_1(v-1)z)^j-1}^2.
\end{split}\end{equation}
\end{enumerate}
\end{proposition}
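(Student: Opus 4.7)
My plan is to treat the three identities in Proposition~\ref{P:lfm-stat} along a common template: derive one side (usually the LHS) from a combinatorial decomposition that refines either Proposition~\ref{P:gf-f} or the super generating function $F$ of the preceding Lemma, and then invoke the Andrews--Jel\'{i}nek identity \eqref{E:Andrews} with a suitable substitution to convert it into the other side.

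Part (iii) is essentially cosmetic: every entry equal to $1$ contributes the monomial $\lambda_1 z$ to the weight of a $\Lambda$-Fishburn matrix in \eqref{E:gf-lf}, so marking each such entry by an extra factor $v$ corresponds to the substitution $\Lambda(z)\mapsto\Lambda(z)+\lambda_1(v-1)z$ in both sides of \eqref{E:gf-lf}, which directly produces \eqref{E:gf-f-1}. For part (i) I would specialize $F$ at $s=1$, $t=u=v=w=\Lambda(z)-1$, $x=y=\Lambda(vz)-1$, so that each $1$ of the underlying primitive shape is inflated into a nonzero element of $\Lambda$ with first-row entries additionally weighted by $v$; adding the $k=0,1$ boundary contributions and simplifying produces the LHS of \eqref{E:gf-f-fr}, while the equivalence with the RHS is the instance of \eqref{E:Andrews} obtained by setting $u=1$, $1-s=\Lambda(vz)$, $1-t=\Lambda(z)$. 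For part (ii) the same two-step plan applies, but with the Andrews--Jel\'{i}nek substitution $u=\Lambda(vz)$, $1-s=\Lambda(vz)\Lambda(z)$, $1-t=\Lambda(z)$; under this choice the two sides of \eqref{E:Andrews} become $\sum_k\prod_j\lpa{\Lambda(vz)-\Lambda(z)^{-j}}$ and $\Lambda(vz)\Lambda(z)\sum_k\Lambda(z)^k\prod_j\lpa{\Lambda(vz)\Lambda(z)^j-1}^2$ respectively, which, after a single index shift and the inclusion of the boundary prefactor $\Lambda(vz)+(\Lambda(vz)-1)^2(\cdots)$, match the two sides of \eqref{E:gf-f-dg}.

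The main obstacle lies in the combinatorial step of (ii). The super generating function $F$ packages $M_{1,1}$ together with the remaining first-row entries inside the single variable $y$, whereas marking the diagonal uniformly requires an independent weight on $M_{1,1}$; this forces a refinement of the recurrence of \cite[Lemma~2.8]{Jelinek2012} in which $y$ is split into an $M_{1,1}$-variable and a non-diagonal first-row variable, followed by a solution of the refined recurrence in closed form by the same iterative unfolding as in the proof of the Lemma. Once this refinement is in place, the specialization weighting every diagonal entry by $\Lambda(vz)-1$ and every off-diagonal entry by $\Lambda(z)-1$ delivers the LHS of \eqref{E:gf-f-dg}, and the Andrews--Jel\'{i}nek step is then routine algebra parallel to part (i).
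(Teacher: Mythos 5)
Your treatment of (iii) coincides with the paper's (substitution $\Lambda(z)\mapsto\Lambda(z)+\lambda_1(v-1)z$ in \eqref{E:gf-lf}), and your Andrews--Jel\'{i}nek specializations are all correct: $u=1$, $1-s=\Lambda(vz)$, $1-t=\Lambda(z)$ for (i), and $u=\Lambda(vz)$, $1-s=\Lambda(vz)\Lambda(z)$, $1-t=\Lambda(z)$ for (ii) indeed recover the respective right-hand sides after the indicated index shift. For the left-hand sides, however, you deviate from the paper, and the deviation introduces a gap in (i) and an unnecessary detour in (ii).

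For (i) the paper simply quotes the known primitive bivariate generating function
$\sum_{k\ge 0}\prod_{1\le j\le k}\lpa{1-(1+vz)^{-1}(1+z)^{1-j}}$
(from \cite{Fu2020, Kitaev2011}) and substitutes $1+vz\mapsto\Lambda(vz)$, $1+z\mapsto\Lambda(z)$. You instead propose to get this by specializing the super generating function $F$ and ``adding the $k=0,1$ boundary contributions and simplifying.'' That last word hides a real step: the specialization of \eqref{E:F-super-gf} does not telescope into the product form $\prod_j\lpa{1-\Lambda(vz)^{-1}\Lambda(z)^{1-j}}$ by elementary algebra. The paper explicitly flags this alternative route as requiring, in addition to \eqref{E:Andrews}, the identity \cite[Eq.\ (1)]{Andrews2014} (a Rogers--Fine type identity) to convert the $F$-form into the product form. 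Your sketch of (i) is therefore missing a key ingredient.

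For (ii) you correctly notice that $y$ in the definition of $G_k$ bundles $M_{1,1}$ together with the off-diagonal first-row entries, so a diagonal-marking specialization cannot weight $M_{1,1}$ independently. But you then propose to refine the recurrence of \cite[Lemma~2.8]{Jelinek2012}, split $y$ into two variables, and re-solve the refined recurrence in closed form. That is far heavier than necessary, and the paper does something much slicker: for any Fishburn matrix of dimension $k\ge 2$, upper-triangularity forces $M_{1,1}$ to be the sole entry of column~$1$ and $M_{k,k}$ the sole entry of row~$k$, so both are forced to equal $1$ in the primitive case. One can therefore leave $y=z$ (so $M_{1,1}$ gets no $v$), and simply pay the missing $v$-factor back by hand on the always-present corner $M_{k,k}$, i.e.\ set $t=v^2z$ rather than $t=vz$. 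This gives $F(1,v^2z,z,vz,z,z,z)=(vz)^2\sum_{k\ge 0}\prod_{1\le j\le k}\lpa{1+vz-(1+z)^{-j}}$, which telescopes by direct cancellation in the $j$-product, and the desired left-hand side of \eqref{E:gf-f-dg} then follows by substitution. You should replace the ``refine the super-GF'' plan by this observation; as written your (ii) would eventually work but needlessly re-derives a closed form that is already within reach of the Lemma as stated.
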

\begin{proof}
	
\begin{enumerate}[(i)]
		
\item It is known that the generating function for the size of the
first row (marked by $v$) in primitive Fishburn matrices is given by (see \cite{Fu2020, Kitaev2011}) 
\begin{align*}
	\sum_{k\ge 0}\prod_{1\le j\le k}
	(1-(1+vz)^{-1}(1+z)^{1-j}).
\end{align*}
Substituting $1+vz$ by $\Lambda(vz)$ and $1+z$ by $\Lambda(z)$ gives
the left-hand side of (\ref{E:gf-f-fr}), which, equals, by
substituting $u=1$, $s=1-\Lambda(vz)$ and $t=1-\Lambda(z)$ in
(\ref{E:Andrews}), the same generating function on the right-hand
side of \eqref{E:gf-f-fr}. Alternatively, one can derive
(\ref{E:gf-f-fr}) by using \eqref{E:F-super-gf}, Andrew-Jel\'\i nek
identity \eqref{E:Andrews}, the identity \cite[Eq.\ (1)]{Andrews2014}
and substitutions.

\item For the size of the diagonal, we have, again, by 
\eqref{E:F-super-gf}, the generating function 
\begin{align*}
    1+vz+F(1,v^2z,z,vz,z,z,z)
	=1+vz+(vz)^2\sum_{k\ge0}
    \prod_{1\le j\le k}\lpa{1+vz-(1+z)^{-j}}.
\end{align*}
The same substitutions $1+vz\mapsto \Lambda(vz)$ and 
$1+z\mapsto\Lambda(z)$ give the left-hand side of \eqref{E:gf-f-dg}. 
Applying now \eqref{E:Andrews} with $u=1+vz$, $s=1-(1+vz)(1+z)$ and 
$t=-z$, and then using the same substitutions, we obtain the 
right-hand side of \eqref{E:gf-f-dg}. 

\item The generating functions \eqref{E:gf-f-1} for the number of $1$s
follow from substituting $\Lambda(z)$ by $\Lambda(z)
+\lambda_1(v-1)z$ in \eqref{E:gf-lf}.

\end{enumerate}
\end{proof}

\section{Asymptotics of the prototype sequence \href{https://oeis.org/A158690}{A158690}}\label{S:basis}

Consider the sequence $a_n := [z^n] A(z)$, where 
\begin{align}\label{A158690}
    A(z):=\sum_{k\ge0}A_k(z), \with 
    A_k(z):=\prod_{1\le j\le k}\lpa{e^{jz}-1},
\end{align}
which is used as the running and prototypical example of our analytic 
approach. The sequence $\{n!a_n\}_{n\ge0}$ equals 
\href{https://oeis.org/A158690}{A158690} and can be generated (in 
addition to \eqref{A158690} by many different forms (see 
\cite{Bringmann2014,Andrews1986}), showing partly the diversity and 
structural richness of the sequence 
\begin{align*}
    A(z) &= \sum_{k\ge0}
    \prod_{1\le j\le k}\lpa{1-e^{-(2j-1)z}}\\
    &= \sum_{k\ge0}e^{-(k+1)z}
    \prod_{1\le j\le k}\lpa{1-e^{-2jz}} \\
    &=\sum_{k\ge0}e^{(2k+1)z}
    \prod_{1\le j\le 2k}\lpa{e^{jz}-1}\\
    &=\frac12\llpa{1+\sum_{k\ge0} e^{(k+1)z}
	\prod_{1\le j\le k} \lpa{e^{jz}-1}}.
\end{align*}    
Among these series forms, we work on \eqref{A158690} because it 
is simpler and $A_k(z)$ contains only positive 
Taylor coefficients. 

\begin{theorem}\label{T:eg1}
Define $A(z)$ by (\ref{A158690}). Then as $n$ tends to infinity,
\begin{align}\label{E:A}
    a_n := [z^n]A(z)
    \simeq c\rho^{n} n^{n+\frac{1}{2}},
    \with  (c,\rho)
    := \lpa{\tfrac{12}{\pi^{3/2}},\tfrac{12}{e\pi^2}}.
\end{align}
\end{theorem}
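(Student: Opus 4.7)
The plan is a double saddle-point analysis. Because $A_k(z) = \prod_{j=1}^{k}(e^{jz}-1)$ has lowest-order term $z^{k}$, the sum defining $A(z)$ is effectively finite, so $a_n = \sum_{k=0}^{n}[z^n]A_k(z)$ with no formal-series convergence issues. For each $k$, I would extract $[z^n] A_k(z)$ by a Hayman-type saddle-point on $|z|=r_k$, with $r_k$ determined by $r_k A_k'(r_k)/A_k(r_k) = n$, and then evaluate $\sum_k$ by Laplace's method, localized at a dominant $k^{*}$ of order $n$.

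The central computation replaces the sums $\sum_{j=1}^{k} je^{jr}/(e^{jr}-1)$ and $\sum_{j=1}^{k}\log(e^{jr}-1)$ by their continuous analogues via Euler--Maclaurin, parametrized by $u:=kr$. Introducing
\[
    F(u) := \int_0^u \frac{xe^x}{e^x-1}\dd x \and
    L(u) := \int_0^u \log(e^x-1)\dd x,
\]
the saddle equation becomes $F(u) = nr$, i.e., $k = nu/F(u)$. The logarithm sum is handled by splitting $\log(e^{jr}-1) = \log(jr) + \log\phi(jr)$ with $\phi(x):=(e^x-1)/x$, using Stirling for $\sum_j\log(jr) = k\log r + \log k!$ and Euler--Maclaurin for the smooth remainder, so that after cancellations
\[
    \log\frac{A_k(r_k)}{r_k^{n}} = n\lpa{\Phi(u) + \log n}
    + \tfrac{1}{2}\log\lpa{2\pi k\,\phi(u)} + o(1),
    \with \Phi(u) := L(u)/F(u) - \log F(u).
\]

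To locate the critical point, differentiation under the integral yields the clean identity $L(u)+F(u) = u\log(e^u-1)$, which factors $\Phi'(u)=0$ as $\log(e^u-1)\cdot[F(u)-uF'(u)] = 0$. Since $F(u)-uF'(u)<0$ on $(0,\infty)$ (immediate from Taylor expansion at $0$), the unique critical point is $u^{*}=\log 2$, at which $L(u^{*})=-F(u^{*})$. The substitution $y=e^x-1$ reduces $F(\log 2)$ to $\int_0^1 \log(1+y)/y\dd y = \pi^{2}/12$, giving $\Phi(u^{*}) = -1 - \log(\pi^{2}/12) = \log(12/(e\pi^{2})) = \log\rho$ exactly. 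A further computation gives $\Phi''(u^{*}) = -2\alpha/F(u^{*})^{2}$ with $\alpha := 2(\log 2)^{2} - \pi^{2}/12 > 0$, and a parallel integration-by-parts calculation (involving $G(u):=\int_0^u x^2 e^x/(e^x-1)^2\dd x$, with $G(\log 2)=\pi^{2}/6-2(\log 2)^{2}$) yields the Hayman variance $b(r_{k^{*}}) \sim 12 n\alpha/\pi^{2}$. The Laplace width of the $k$-summation is $\sigma^{2}=72n\alpha/\pi^{4}$, and the ratio $\sigma^{2}/b = 6/\pi^{2}$ is $n$-independent. With $\phi(\log 2)=1/\log 2$ and $k^{*}=12n\log 2/\pi^{2}$, the Euler--Maclaurin prefactor gives $\sqrt{2\pi k^{*}\phi(u^{*})}=\sqrt{24n/\pi}$, and assembly produces $a_n \simeq \rho^n n^n \cdot \sqrt{24n/\pi} \cdot \sqrt{6}/\pi = (12/\pi^{3/2})\rho^{n} n^{n+1/2}$.

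The main obstacle is the meticulous bookkeeping of subexponential factors: the Euler--Maclaurin correction $\tfrac12\log(2\pi k\phi(u^{*}))$ is what supplies the extra $\sqrt{n}$, and the delicate cancellation $\sigma^{2}/b = 6/\pi^{2}$ (a constant independent of $n$) is what pins down $c=12/\pi^{3/2}$; any sloppiness in Euler--Maclaurin would corrupt this prefactor, though not the rate $\rho^{n}$. A secondary technicality is uniform control of the Hayman approximation as $k$ ranges near $k^{*}$, needed to justify replacing $\sum_k$ by a Laplace integral over a Gaussian window of width $\sqrt{n}$; this I would handle by standard tail estimates showing that contributions from $|k-k^{*}| \gg \sqrt{n\log n}$ are exponentially negligible, with the finer $(1+O(n^{-1}))$ error in $\simeq$ coming from the routine next-order terms in the Hayman and Laplace expansions.
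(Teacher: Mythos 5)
Your argument is correct and follows the same double saddle-point architecture as the paper (inner Cauchy/Hayman saddle at each $k$, outer Laplace sum over $k$, with tails in both $k$ and $\theta$ controlled), and all your constants check out. The genuine streamlining in your proposal is the parametrization: the paper works with the pair $(q,\varrho)=(k/n,\,nr)$ coupled by the constraint $I(q\varrho)=\varrho$ --- solving $\log(e^{q\varrho}-1)=0$ together with the constraint, then giving a separate concavity argument for $q\mapsto\phi(q,\varrho(q))$ in Lemma~\ref{L:max-phi} --- whereas you collapse to the single variable $u=kr$, use the identity $L(u)+F(u)=u\log(e^{u}-1)$ to rewrite the exponent as $\Phi(u)=L(u)/F(u)-\log F(u)$, and factor $\Phi'(u)=\log(e^{u}-1)\,[F(u)-uF'(u)]/F(u)^{2}$. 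This simultaneously pins down the unique critical point $u^{*}=\log 2$ and (since $F-uF'<0$) the sign of $\Phi'$ on either side, in one stroke. The curvature $\Phi''(u^{*})=-2\alpha/F(u^{*})^{2}$ with $\alpha=2(\log 2)^{2}-\pi^{2}/12$, the Hayman variance $b\sim 12n\alpha/\pi^{2}$, the Laplace width $72n\alpha/\pi^{4}$ (the paper's $\sigma^{2}n$), the prefactor $\sqrt{2\pi k^{*}\phi(u^{*})}=\sqrt{24n/\pi}$, and the assembly $c=\sqrt{24/\pi}\cdot\sqrt{6}/\pi=12/\pi^{3/2}$ all agree with the paper. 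One small slip: $F(u)-uF'(u)<0$ for all $u>0$ is not ``immediate from Taylor expansion at $0$'' (that only gives it locally near $u=0$); it holds globally because $g(x)=x/(1-e^{-x})$ is strictly increasing, so $F(u)=\int_{0}^{u}g(x)\,dx<u\,g(u)=uF'(u)$, which is essentially the paper's argument in different clothes.
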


Our approach consists in computing the asymptotics of $a_{n,k}$ by
the saddle-point method (see \cite{Flajolet2009}) for each $1\le
k<n$, and then summing $a_{n,k}$ over all $k$ (in turn involving
another application of saddle-point method); indeed, due to high
concentration near the maximum, only a small neighborhood of $k$ near
$\mu n$, $\mu := \frac{12}{\pi^2}\log 2\approx 0.84$, will contribute
to the dominant asymptotics \eqref{E:A}. Thus we are in the context
of a double saddle-point method.

More precisely, we begin with the expression
\begin{align*}
    a_n = \sum_{1\le k\le n}a_{n,k}
    = \sum_{1\le k\le n}\frac{r^{-n}}{2\pi i}
    \int_{-\pi}^{\pi} e^{-in\theta}A_k(re^{i\theta}) \dd \theta,
\end{align*}
and follow the procedures outlined below. 
\begin{itemize}

\item Find the positive solution pair $(k,r)$ of the equation 
$(\partial_kr^{-n}A_k(r),\partial_rr^{-n}A_k(r))=(0,0)$, so as to 
identify the terms $a_{n,k}$ that reach the maximum modulus for each 
fixed $n$; see Lemma~\ref{L:max-phi}. 

\item Once the range of $k\sim \mu n$ is identified, show, by simple 
saddle-point bound for Taylor coefficients, that the contribution to 
$a_n$ of $a_{n,k}$ from the range $|k-\mu n|\ge n^{\frac58}$ is 
asymptotically negligible; see Proposition~\ref{P:poly}. 

\item In the central range $|k-\mu n|\le n^{\frac 58}$, the 
integral $\int_{n^{-\frac38}\le |\theta|\le \pi}$ is asymptotically
negligible; see Proposition~\ref{P:central-o}. 

\item Then inside the ranges $|k-\mu n|\le n^{\frac58}$ and 
$\int_{|\theta|\le n^{-\frac38}}$, compute the asymptotic 
approximation \eqref{E:A} by local expansions and term-by-term 
integration; see Section~\ref{S:spm4}. 

\item These procedures can be applied to get refined expansions if 
desired. 
\end{itemize}

For all these purposes, it turns out that a precise asymptotic 
approximation to $\log A_k(r)$ will largely simplify the analysis. 
Since we will also need asymptotics of the derivatives of $\log 
A_k(r)$, we propose a complex-variable version so as to avoid 
repeated use of the Euler-Maclaurin formula. 

\subsection{Euler-Maclaurin formula and asymptotic expansions}

We apply the Euler-Maclaurin formula to approximate the various 
sums encountered in this paper, which for completeness is included as
follows.
\begin{lemma}[Euler-Maclaurin formula] Assume that $\varphi$ is 
$m$-times continuously differentiable over the interval $[a,b]$, 
$m\ge1$. Then
\begin{align}\label{E:emf}
    \begin{split}
        \sum_{j=a+1}^b \varphi(j)
        &= \int_a^b \varphi(t) \,\mathrm{d} t 
        +\frac{\varphi(b)-\varphi(a)}2
        +\sum_{\ell=1}^{\tr{m/2}}\frac{B_{2\ell}}{(2\ell)!}
        \lpa{\varphi^{(2\ell-1)}(b)-\varphi^{(2\ell-1)}(a)}  \\  
        &\qquad +\frac{(-1)^{m+1}}{m!}
        \int_a^b \varphi^{(m)}(t)
        B_m(\{t\})\mathrm{d} t,
    \end{split}
\end{align}
where $\{x\}$ denotes the fractional part of $x$, the $B_\ell$'s 
and the $B_n(t)$'s are Bernoulli numbers and polynomials, 
respectively. 
\end{lemma}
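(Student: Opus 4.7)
The plan is to derive this formula by iterated integration by parts on unit subintervals, leveraging the defining properties of the Bernoulli polynomials $B_n$ and their $1$-periodic extensions $B_n(\{\cdot\})$. First I would reduce to the case where $a$ and $b$ are integers (the general case follows by essentially the same argument on the endpoint fractional pieces), derive the identity on $[j,j+1]$ for each integer $j$ in $\{a,\ldots,b-1\}$, and then sum over $j$.

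On a single unit interval $[j,j+1]$, I would start from $\int_j^{j+1}\varphi(t)\,\mathrm{d}t$ and integrate by parts using the antiderivative $B_1(\{t\}) = t - j - \tfrac12$ of the constant $1$, which produces the boundary contribution $\tfrac12(\varphi(j)+\varphi(j+1))$ together with an integral involving $\varphi'(t) B_1(\{t\})$. Iterating with the relations $B_\ell'(t) = \ell B_{\ell-1}(t)$ and $B_\ell(0)=B_\ell(1)=B_\ell$ for $\ell\ge 2$ generates at stage $\ell$ a boundary term $\frac{B_\ell}{\ell!}\lpa{\varphi^{(\ell-1)}(j+1)-\varphi^{(\ell-1)}(j)}$ plus a new integral remainder involving $B_\ell(\{t\})\varphi^{(\ell)}(t)$. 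Because $B_{2\ell+1}=0$ for $\ell\ge 1$, only the even-indexed boundary terms survive, producing the sum over $1\le\ell\le\tr{m/2}$ in \eqref{E:emf}; the sign $(-1)^{m+1}$ in the remainder integral is tracked through the $m$-fold integration by parts.

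Summing the resulting unit-interval identity over $j = a, \ldots, b-1$ telescopes the interior boundary contributions $\varphi^{(2\ell-1)}(j+1)-\varphi^{(2\ell-1)}(j)$, leaving only the global endpoint values at $a$ and $b$. The piecewise integrals $\int_j^{j+1}\varphi^{(m)}(t) B_m(\{t\})\,\mathrm{d}t$ concatenate into $\int_a^b \varphi^{(m)}(t)B_m(\{t\})\,\mathrm{d}t$ because $B_m(\{\cdot\})$ is by definition the $1$-periodic extension of $B_m$ restricted to $[0,1)$. Rearranging with the identity $\sum_{j=a+1}^b\varphi(j) = \sum_{j=a}^{b-1}\varphi(j+1)$ converts the trapezoidal average $\tfrac12(\varphi(a)+\varphi(b))$ (that would appear for the symmetric sum $\sum_{j=a}^b\varphi(j)$) into the half-difference $\tfrac12(\varphi(b)-\varphi(a))$ stated in \eqref{E:emf}.

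Since the Euler-Maclaurin formula is a classical textbook result, no real obstacle arises here; the only bookkeeping subtleties are verifying the sign convention $(-1)^{m+1}$ in the remainder through an $m$-fold induction and correctly separating the $\ell=1$ case (where $B_1(1)-B_1(0)=1$ rather than the vanishing $B_\ell(1)-B_\ell(0)=0$ for $\ell\ge 2$) from the even-indexed sum, so that its contribution is absorbed into the half-boundary term rather than the $\tfrac{B_{2\ell}}{(2\ell)!}$-series.
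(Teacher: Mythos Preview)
Your proof sketch is correct and follows the standard textbook derivation via iterated integration by parts on unit subintervals. However, the paper does not actually prove this lemma: it is stated ``for completeness'' as a classical result and left without proof, so there is no paper proof to compare against. Your approach is the canonical one and would serve perfectly well if a proof were needed.
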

When $\varphi$ is infinitely differentiable (which is the case for 
all functions considered in this paper), we can push the expansion 
to any $m>0$ depending on the required error, making the error term 
under control.

The following expansion is crucial in our saddle-point analysis. 
Let 
\[
    L_k(z) := \log A_k(z) 
	= \sum_{1\le j\le k}\log\lpa{e^{jz}-1}.
\] 
\begin{proposition} For $k\to\infty$, we have 
\begin{equation}\label{E:Lkz}
\begin{split}
	L_k(z)
	&= k\log\lpa{e^{kz}-1}-\frac{I(kz)}{z}
	+\frac12\log\frac{2\pi\lpa{e^{kz}-1}}{z}
	+ \frac{z\lpa{e^{kz}+1}}
	{24\lpa{e^{kz}-1}}\\
	&\qquad+\sum_{2\le j<m}\frac{B_{2j}}{(2j)!}
	\cdot\frac{z^{2j-1}e^{-kz}E_{2j-2}
	\lpa{e^{-kz}}}{\lpa{1-e^{-kz}}^{2j-1}}
	+O\lpa{k^{1-2m}+|z|^{2m-1}},
\end{split}	
\end{equation}
uniformly for $k|z|\le 2\pi-\ve$ when $|\arg z|\le \pi-\ve$, where 
\begin{align*}
	I(z) := \int_0^z\frac{t}{1-e^{-t}}\,\dd t,
\end{align*}
and $E_n(x) = \sum_{0\le j<n}\Eulerian{n}{j}x^j$ denote the 
polynomials of Eulerian numbers.
\end{proposition}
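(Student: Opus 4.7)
The plan is to isolate the singularity of the summand at $j=0$ by writing
\begin{equation*}
    \log(e^{jz}-1) = \log j + \log z + \log\phi(jz),
    \qquad \phi(u) := \frac{e^u-1}{u},
\end{equation*}
so that $\phi$ is entire with $\phi(0)=1$ and $\log\phi(tz)$ is analytic in a neighbourhood of $t=0$. This decomposes $L_k(z) = \log k! + k\log z + \sum_{1\le j\le k}\log\phi(jz)$: I apply Stirling's series to $\log k!$ and the Euler--Maclaurin formula \eqref{E:emf} with $a=0,\,b=k$ to the last sum (legitimate precisely because $\log\phi(tz)$ is smooth at $t=0$). The leading integral I evaluate by substituting $u=tz$ and integrating by parts once, obtaining
\begin{equation*}
    \int_0^k \log\phi(tz)\dd t
    = k\log(e^{kz}-1) - k\log(kz) + k - \frac{I(kz)}{z}.
\end{equation*}
Combining this with the Stirling main terms $k\log k - k + \tfrac12\log(2\pi k)$, the piece $k\log z$, and the Euler--Maclaurin boundary term $\tfrac12\log\phi(kz)$, the $k\log k + k\log z - k\log(kz)$ pieces cancel and $\tfrac12\log(2\pi k)+\tfrac12\log\phi(kz)$ collapses into $\tfrac12\log\frac{2\pi(e^{kz}-1)}{z}$, producing the first three terms of \eqref{E:Lkz}.

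For the Bernoulli corrections I use $\frac{d}{du}\log\phi(u) = \frac{1}{1-e^{-u}}-\frac{1}{u}$ together with
\begin{equation*}
    \frac{d^n}{du^n}\frac{1}{1-e^{-u}}
    = \frac{(-1)^n e^{-u}E_n(e^{-u})}{(1-e^{-u})^{n+1}}, \qquad n\ge 1,
\end{equation*}
which I verify by induction on $n$ via the substitution $v=e^{-u}$ and the Eulerian polynomial recurrence $E_{n+1}(v)=(1+nv)E_n(v)+v(1-v)E_n'(v)$. Consequently for every $\ell\ge 1$,
\begin{equation*}
    \left.\frac{d^{2\ell-1}}{dt^{2\ell-1}}\log\phi(tz)\right|_{t=k}
    = \frac{z^{2\ell-1}e^{-kz}E_{2\ell-2}(e^{-kz})}{(1-e^{-kz})^{2\ell-1}}
      - \frac{(2\ell-2)!}{k^{2\ell-1}}.
\end{equation*}
At $t=0$ the functional identity $\phi(-u)=e^{-u}\phi(u)$ shows that $\log\phi(u)-u/2$ is an even function of $u$, so all odd derivatives of $\log\phi$ at $0$ vanish for orders $\ge 3$, while the first equals $1/2$. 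Multiplying by $B_{2\ell}/(2\ell)!$ and summing, the polar pieces $(2\ell-2)!/k^{2\ell-1}$ cancel \emph{exactly} the Stirling corrections $B_{2\ell}/(2\ell(2\ell-1)k^{2\ell-1})$, leaving the Eulerian-polynomial series in \eqref{E:Lkz} for $\ell\ge 2$. The $\ell=1$ contribution, combined with the Stirling $1/(12k)$ term, simplifies algebraically to the separately-displayed fourth term $\frac{z(e^{kz}+1)}{24(e^{kz}-1)}$.

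The remainder is controlled via the integral form in \eqref{E:emf}. On the region $k|z|\le 2\pi-\varepsilon$, $|\arg z|\le\pi-\varepsilon$, the point $tz$ stays at distance bounded below (by a constant depending only on $\varepsilon$) from the nearest singularities of $\log\phi$ at $u=\pm 2\pi i$, so Cauchy's estimates yield $|\partial_t^{2m}\log\phi(tz)|=O(|z|^{2m})$ uniformly in $t\in[0,k]$; hence the Euler--Maclaurin tail contributes $O(k|z|^{2m})=O(|z|^{2m-1})$, which together with the Stirling tail $O(k^{1-2m})$ yields the claimed uniform error. \textbf{The main obstacle} I expect is the bookkeeping in the second step: matching the polar $1/u^{2\ell-1}$-pieces arising from differentiating $\log\phi$ at $u=kz$ against the Stirling corrections so that they cancel term by term, and recasting the $1/(1-e^{-u})$-derivatives into their Eulerian-polynomial form---an elementary but sign- and index-sensitive computation.
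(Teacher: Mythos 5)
Your proposal is correct and takes essentially the same approach as the paper: the paper's decomposition $L_k(z)=\log k! - \sum_{1\le j\le k}\log\frac{j}{e^{jz}-1}$ is identical to yours (since $\log\frac{j}{e^{jz}-1}=-\log z-\log\phi(jz)$), followed by Stirling for $\log k!$ and Euler--Maclaurin for the sum, with the integral evaluated by parts to produce $I(kz)/z$. Your version is somewhat more explicit than the paper's (full Eulerian-polynomial identification of the higher derivatives via the recurrence, term-by-term cancellation of the polar $1/k^{2\ell-1}$ pieces against the Stirling tail, and the parity argument for the vanishing boundary terms at $0$), whereas the paper carries out only the $m=2$ case in detail and cites the derivative formula $\partial_z^m\log(e^{xz}-1)$ for the general pattern.
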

\begin{proof}
For simplicity and for later use, we compute only the first few terms
by working out $m=2$, as the general form follows from the relation
\begin{align*}
    \partial_z^m \log\lpa{e^{xz}-1}
	= (-1)^{m-1}\frac{x^me^{-xz}E_{m-1}
	\lpa{e^{-xz}}}{\lpa{1-e^{-xz}}^m}
	\qquad(m\ge2);
\end{align*}
see \cite{Szekeres1953} for similar analysis.

Since $\log\lpa{e^{jz}-1}$ is undefined at $j=0$, we split the sum 
into two parts:
\begin{align*}
    L_k(z) =\log k!
	-\sum_{1\le j\le k} \log\frac{j}{e^{jz}-1}.
\end{align*}
By the Euler-Maclaurin formula \eqref{E:emf}, we find that
\begin{align*}
	\sum_{1\le j\le k}\log\frac{j}{e^{jz}-1}
	&=\int_{0}^k\log
	\frac{x}{e^{xz}-1}\dd x
	+\frac{1}{2}\log\frac{kz}{e^{kz}-1}\\
    &\quad \quad
	+\frac{1}{12}\Lpa{\frac{1}{k}
	+\frac{z}{2}-\frac{z}{1-e^{-kz}}}
    +O\lpa{k^{-2}+ |z|^2}.
\end{align*}
By an integration by parts, we see that  
\[
    \int_{0}^k\log\frac{x}{e^{xz}-1}\dd x
	= k\log\frac{k}{e^{kz}-1}-k
	+\frac{I(kz)}{z}. 
\]
The first few terms of \eqref{E:Lkz} then follow from this and 
Stirling's formula for $\log k!$.

For the error term, by \eqref{E:emf} with $m=2$ and $B_2(x) 
= x^2-x+\frac16$, we have
\begin{align*}
    R_2 
    &:= \int_0^k \Lpa{\frac{z^2e^{xz}}
    {(e^{xz}-1)^2}-\frac1{x^2}}
    \lpa{\{x\}^2-\{x\}+\tfrac16}\dd x.
\end{align*}
Since $k|z|\le 1$, we see that 
\[    
    R_2 
    = O\Lpa{\int_0^{1/|z|}|z|^2\dd x}
    =O(|z|).
\]
On the other hand, if $1\le k|z|\le 2\pi-\ve$, then
\begin{align*}
    R_2 = O\left(|z|+ \int_{1/|z|}^k
    \left(\frac{|z|^2e^{\Re(xz)}}
    {|e^{xz}-1|^2}+\frac1{x^2}\right)\dd x\right)
    = O\lpa{|z|+k^{-1}},
\end{align*}
as required, where $\Re(z)$ denotes the real part of $z$. 
This proves \eqref{E:Lkz}.
\end{proof}	

Note that 
\[
    I(z):= \int_0^{z}
    \frac{t}{1-e^{-t}}\dd t
	= \frac{z^2}2+\text{dilog}\lpa{e^{-z}},
\]
where $\text{dilog}(1-z):=\sum_{k\ge1}\frac{z^k}{k^2}$ denotes the 
dilogarithm function. Also $n![z^n]\text{dilog}\lpa{e^{-z}} = 
B_{n-1}$, the Bernoulli numbers. 

The main reason of stating this complex-variable version for $L_k(z)$
is that termwise differentiation with respect to $z$ is allowed by
analyticity in compact domain (or Cauchy's integral formula for
derivatives), leading to an asymptotic expansion for all higher
derivatives of $L_k(z)$; see, e.g., \cite{Olver1974,Wong2001}. In
this way, we obtain, for example, the following approximations, which
will be needed below.

\begin{corollary} Uniformly as $k\to\infty$ and $k|z|\le 2\pi-\ve$ 
    when $|\arg(z)|\le\pi-\ve$, 
\begin{equation}\label{E:Lk-1}
\begin{split}
    zL_k'(z)
    &= \sum_{1\le j\le k}\frac{jz}{1-e^{-jz}}
    = \frac{I(kz)}z+\frac{kz-1+e^{-kz}}
    {2\lpa{1-e^{-kz}}}+O\lpa{k^{-1}+|z|}.
\end{split}
\end{equation}
\end{corollary}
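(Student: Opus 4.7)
The plan is to differentiate the asymptotic expansion (\ref{E:Lkz}) for $L_k(z)$ term by term with respect to $z$, using the fact that both $L_k(z)$ and every explicit term on the right-hand side are analytic in the prescribed domain, so the error $E(z):=L_k(z)-[\text{main terms}]$ is also analytic there; Cauchy's integral formula then converts the uniform bound on $E$ into a derivative bound easily within what is claimed.

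Taking $m=2$ in (\ref{E:Lkz}), the main terms are $T_1=k\log(e^{kz}-1)$, $T_2=-I(kz)/z$, $T_3=\frac12\log(2\pi(e^{kz}-1)/z)$, and $T_4=z(e^{kz}+1)/(24(e^{kz}-1))$, with error $O(k^{-3}+|z|^3)$. Using $I'(w)=w/(1-e^{-w})$ from the definition of $I$, direct differentiation gives
\[
T_1' = \frac{k^2}{1-e^{-kz}},\quad
T_2' = \frac{I(kz)}{z^2}-\frac{k^2}{1-e^{-kz}},\quad
T_3' = \frac{k}{2(1-e^{-kz})}-\frac{1}{2z}.
\]
The crucial observation is the exact cancellation $T_1'+T_2'=I(kz)/z^2$, which eliminates the leading $k^2$ contribution. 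Multiplying the sum $T_1'+T_2'+T_3'$ by $z$ and combining the constant $-\tfrac12$ with $\tfrac{kz}{2(1-e^{-kz})}$ over a common denominator produces precisely the stated main part
\[
z(T_1'+T_2'+T_3')=\frac{I(kz)}{z}+\frac{kz-1+e^{-kz}}{2(1-e^{-kz})}.
\]

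It then remains to check that $zT_4'$ and $zE'(z)$ are both $O(k^{-1}+|z|)$. For $T_4$, a short computation gives $T_4'=\frac{1}{24}\cdot\frac{e^{2kz}-1-2kze^{kz}}{(e^{kz}-1)^2}$; Taylor-expanding the numerator at $kz=0$ shows it is $O((kz)^3)$ while the denominator is $\asymp(kz)^2$, so $T_4'=O(kz)$ for small $kz$, and $T_4'=O(1)$ when $|kz|$ stays in any compact subinterval of $(0,2\pi-\ve]$; in either regime $zT_4'=O(|z|)$ uniformly. For $E(z)$, apply Cauchy's integral formula on a circle around $z$ of radius $r\asymp\min(|z|\sin\ve,1/k)$, which remains in the domain where (\ref{E:Lkz}) holds; this yields $|E'(z)|=O((k^{-3}+|z|^3)/r)$ and hence $|zE'(z)|=O(k^{-2}+|z|^3)$, well inside the claimed $O(k^{-1}+|z|)$. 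The one place requiring care is the transitional range $|z|\asymp 1/k$ where neither the sector-width constraint $|\arg z|\le\pi-\ve$ nor the outer constraint $k|z|\le2\pi-\ve$ dominates the choice of Cauchy radius; but a single $r=c/k$ with small $c=c(\ve)$ suffices throughout, completing the proof.
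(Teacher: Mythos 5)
Your proposal is correct and follows exactly the route the paper indicates (but does not spell out): differentiate the expansion \eqref{E:Lkz} term by term, observe the exact cancellation of the $k^2/(1-e^{-kz})$ contributions from $T_1'$ and $T_2'$, and control the error derivative via Cauchy's integral formula on circles that remain inside the sector and the disk $k|w|\le 2\pi-\ve'$. The only small slip is the final claim that a single radius $r=c/k$ works throughout: for $|z|\ll 1/k$ such a circle exits the sector $|\arg w|\le\pi-\ve'$, so the adaptive choice $r\asymp\min(|z|\sin\ve, 1/k)$ you gave first is the one actually needed — but since either choice yields $|zE'(z)|=O(k^{-3}+|z|^3)$, far inside the claimed $O(k^{-1}+|z|)$, the conclusion is unaffected.
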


\begin{corollary} Let $m\ge2$. Then 
\begin{equation}\label{E:Lk-m}
\begin{split}
    z^mL_k^{(m)}(z)
    &= (-1)^{m-1}z^m\sum_{1\le j\le k}
	\frac{j^m e^{-jz}E_{m-1}
    \lpa{e^{-jz}}}{\lpa{1-e^{-jz}}^m}\\
    &= z^m\partial_z^{m-1}\left(\frac{I(kz)}{z^2} 
    +\frac{kz-1+e^{-kz}}
    {2z\lpa{1-e^{-kz}}}\right)+O\lpa{k^{-1}+|z|},
\end{split}
\end{equation}
uniformly as $k\to\infty$, $k|z|\le2\pi-\ve$ when $|\arg 
z|\le\pi-\ve$. 
\end{corollary}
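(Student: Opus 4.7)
The plan is to derive (\ref{E:Lk-m}) by differentiating the earlier expansions in two stages: first, establish the identity on the left by differentiating the defining series for $L_k(z)$ termwise; second, establish the asymptotic formula on the right by differentiating (\ref{E:Lk-1}) and controlling the resulting error via Cauchy's integral formula.

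For the first equality, I apply $\partial_z^m$ term-by-term to $L_k(z) = \sum_{1\le j\le k}\log(e^{jz}-1)$, invoking the identity
\[
    \partial_z^m \log(e^{xz}-1) = (-1)^{m-1}\frac{x^m e^{-xz} E_{m-1}(e^{-xz})}{(1-e^{-xz})^m} \qquad (m\ge 2),
\]
already cited in the proof of (\ref{E:Lkz}). This identity is proved by an easy induction on $m$: the base case $m=2$ is a direct computation with $E_1 \equiv 1$, and the inductive step follows from the Eulerian-polynomial recurrence $E_m(y)=(1+(m-1)y)E_{m-1}(y)+y(1-y)E_{m-1}'(y)$. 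Summing over $j$ and multiplying by $z^m$ produces the first equality of (\ref{E:Lk-m}).

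For the second equality, set $G_k(z) := I(kz)/z + (kz-1+e^{-kz})/(2(1-e^{-kz}))$ and $\Phi_k(z) := zL_k'(z) - G_k(z)$, so that (\ref{E:Lk-1}) gives $|\Phi_k(z)| = O(k^{-1}+|z|)$ uniformly on the domain $D_\ve := \{z : k|z|\le 2\pi-\ve,\; |\arg z|\le\pi-\ve\}$. Dividing by $z$ and differentiating $m-1$ times splits $L_k^{(m)}(z)$ as a main term $\partial_z^{m-1}(G_k(z)/z)$ plus an error $\partial_z^{m-1}(\Phi_k(z)/z)$; after multiplication by $z^m$, the main term matches the stated expression. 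To bound the error I apply Cauchy's integral formula on a circle $|w-z|=c|z|$ with $c>0$ chosen small enough (depending only on $\ve$) that the closed disk lies in $D_{\ve/2}$, on which the estimate $|\Phi_k(w)|=O(k^{-1}+|w|)=O(k^{-1}+|z|)$ still holds. This yields
\[
    \left|\partial_z^{m-1}(\Phi_k(z)/z)\right|
    \le \frac{(m-1)!}{(c|z|)^{m-1}}\max_{|w-z|=c|z|}\left|\frac{\Phi_k(w)}{w}\right|
    = O\!\left(\frac{k^{-1}+|z|}{|z|^m}\right),
\]
since $|w|\asymp|z|$ on the contour; multiplying by $z^m$ converts this into the desired $O(k^{-1}+|z|)$ remainder.

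The main — though essentially routine — obstacle is the geometric check that the Cauchy contour lies inside the domain of validity uniformly in $z\in D_\ve$ and $k$. For $z$ close to either the outer boundary $k|z|=2\pi-\ve$ or the angular boundary $|\arg z|=\pi-\ve$, the constant $c$ must be chosen so that every $w = z + c|z|e^{i\theta}$ still satisfies $k|w|\le 2\pi-\ve/2$ and $|\arg w|\le\pi-\ve/2$; this can be arranged by taking $c$ as a function of $\ve$ alone, independent of $k$ and $z$. Once such a $c$ is fixed the Cauchy bound is uniform, the error transfers through multiplication by $z^m$, and (\ref{E:Lk-m}) follows.
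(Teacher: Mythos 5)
Your proposal is correct and follows exactly the route the paper sketches: the first equality is termwise differentiation via the stated Eulerian-polynomial formula for $\partial_z^m\log(e^{xz}-1)$, and the second is obtained by differentiating \eqref{E:Lk-1} and controlling the error term by Cauchy's integral formula over a contour contained in a slightly larger sector (the paper explicitly attributes this to ``analyticity in compact domain (or Cauchy's integral formula for derivatives)''). Your additional care in checking that the contour radius $c$ can be taken depending only on $\ve$, so that the bound from \eqref{E:Lk-1} applies uniformly on the contour, is precisely the routine verification the paper leaves implicit.
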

In particular, we see that each $r^mL_k^{(m)}(r)$ is asymptotically 
of linear order when $kr=O(1)$. 

\subsection{Saddle-point method. I: Identifying the central range}

A very simple uniform estimate for $a_{n,k}$ is readily obtained 
by the saddle-point bound for positive Taylor coefficients (see 
\cite[Sec.~VIII.2]{Flajolet2009}).

\begin{lemma} For $1\le k<n$
\begin{align}\label{E:ank-sd1}
	a_{n,k} \le r^{-n} A_k(r),
\end{align}
where $r>0$ is chosen to be the saddle-point, namely, the unique 
positive solution of the equation
\begin{align}\label{E:saddle0}
    rL_k'(r) = \frac{rA_k'(r)}{A_k(r)}
    =\sum_{1\le j\le k}\frac{jr}{1-e^{-jr}} = n.
\end{align}
\end{lemma}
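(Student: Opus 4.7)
The plan is to prove \eqref{E:ank-sd1} in two short, essentially independent steps: first establish the Cauchy-style upper bound $a_{n,k}\le r^{-n}A_k(r)$ for \emph{every} $r>0$, and then argue that the saddle-point equation $rL_k'(r)=n$ selects a unique admissible $r$ on $(0,\infty)$ in the regime $1\le k<n$.

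For the bound itself, I would observe that each factor $e^{jz}-1=\sum_{m\ge1}j^mz^m/m!$ has nonnegative Taylor coefficients, so the finite product $A_k(z)$ is a power series with nonnegative coefficients. This makes the trivial dominance $|A_k(re^{i\theta})|\le A_k(r)$ hold on every circle $|z|=r>0$. Cauchy's coefficient formula then yields
\[a_{n,k}=\frac{r^{-n}}{2\pi}\int_{-\pi}^{\pi}e^{-in\theta}A_k(re^{i\theta})\dd\theta\le r^{-n}A_k(r),\]
valid for every $r>0$ — in particular for the saddle-point value specified in the lemma. This is the classical saddle-point bound for Taylor coefficients of a series with nonnegative terms.

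It remains to identify the saddle-point $r$. Setting $\Phi_k(r):=-n\log r+L_k(r)$, one has $r\Phi_k'(r)=rL_k'(r)-n$, so minimizing the upper bound over $r>0$ amounts to solving $g(r):=rL_k'(r)=n$. To verify existence and uniqueness I would rewrite $g(r)=\sum_{1\le j\le k}f(jr)$ with $f(x):=x/(1-e^{-x})$. A direct computation gives $f'(x)=\bigl(1-(1+x)e^{-x}\bigr)/(1-e^{-x})^2$, and the elementary inequality $e^x>1+x$ for $x>0$ makes the numerator strictly positive; hence each summand $f(jr)$ is strictly increasing in $r$ and so is $g$. Since $f(x)\to1$ as $x\to0^+$ and $f(x)\sim x$ as $x\to\infty$, one has $g(0^+)=k$ and $g(\infty)=\infty$, so the image of $g$ is precisely $(k,\infty)$. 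Because the hypothesis is $1\le k<n$, the equation $g(r)=n$ admits a unique positive solution $r$, which is the desired saddle-point; moreover $\Phi_k'$ changes sign from negative to positive exactly there, so this $r$ is also the unique minimizer of the bound, a posteriori justifying it as the right choice.

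The statement is genuinely elementary and there is no substantive obstacle. The only items requiring a moment of care are (a) positivity of the Taylor coefficients of $A_k(z)$, which reduces the modulus bound on $|z|=r$ to a triviality, and (b) the strict monotonicity of $g(r)$ together with its limit at $0^+$, which is precisely what restricts the constructive range of the saddle-point method to $n>k$ and matches the hypothesis $1\le k<n$ of the lemma.
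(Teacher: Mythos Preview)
Your proof is correct and follows the same approach as the paper, which treats the bound as an immediate instance of the saddle-point bound for series with nonnegative coefficients (citing Flajolet--Sedgewick) and justifies existence of $r$ by the one-line observation that $x/(1-e^{-x})\ge1$ is monotonically increasing. You have simply spelled out in full the details that the paper leaves implicit.
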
 
Such an $r$ obviously exists for $n\ge1$ and $1\le k< n$ because 
$x/(1-e^{-x})\ge1$ is monotonically increasing with $x\ge0$. Also 
$r\to\infty$ when $k=o(n)$ and $r\to0$ when $k\to n$. In particular, 
$r=0$ when $k=n$. 

The simple bound \eqref{E:ank-sd1} is sufficient to give not only the
factorial term $n^n$ but also the right exponential one
$\lpa{\frac{12}{e\pi^2}}^n$ in \eqref{E:A}; see Lemma~\ref{L:max-phi}.

\begin{lemma} \label{L:max-phi} 
For $1\le k=qn<n$
\begin{align}\label{E:ank-sd2}
    a_{n,k} = O\lpa{n^{n+\frac12} e^{\phi(q,\varrho)n}},
    \with \phi(q,\varrho) := -\log\varrho + q \log(e^{q\varrho}-1)-1,
\end{align}    
subject to the condition $I(q\varrho) = \varrho$. The maximum value 
of $\phi(q,\varrho)$ for $q\in[0,1]$ is reached when
\begin{align}\label{E:mu-xi}
    (q,\varrho) 
    = (\mu,\xi) 
    := \Lpa{\frac{12}{\pi^2}\log 2, \frac{\pi^2}{12}}, 
    \with e^{\phi(\mu,\xi)} = \frac{12}{e\pi^2}.
\end{align}
\end{lemma}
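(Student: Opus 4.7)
The plan is to derive the bound (\ref{E:ank-sd2}) by applying the saddle-point bound (\ref{E:ank-sd1}) with a well-chosen radius, and then to reduce the constrained maximization of $\phi$ to a single-variable problem. For $k=qn$, I would take $r := \varrho/n$ where $\varrho>0$ is defined implicitly by $I(q\varrho) = \varrho$. This is essentially the true saddle-point of $r\mapsto r^{-n}A_k(r)$: by (\ref{E:Lk-1}), the dominant piece $I(kr)/r$ of $rL_k'(r)$ equals $n$ precisely when $I(q\varrho) = \varrho$, and the scaling keeps $kr = q\varrho$ in the regime where the expansion (\ref{E:Lkz}) for $L_k$ applies.

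Substituting $k=qn$, $r=\varrho/n$, and $kr = q\varrho$ into (\ref{E:Lkz}), all subdominant pieces (the $\tfrac12\log\lpa{2\pi(e^{q\varrho}-1)/\varrho}$ prefactor, the Bernoulli correction, and the $O(k^{-1}+r)$ tail) collapse into $O(1)$, leaving
\[
    L_k(r) - n\log r = n\log n + n\lpa{-\log\varrho + q\log(e^{q\varrho}-1) - 1} + \tfrac12\log n + O(1).
\]
Exponentiating yields $a_{n,k} = O\lpa{n^{n+1/2}\,e^{\phi(q,\varrho)n}}$, the first assertion of the lemma.

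For the maximization, I pass to the single variable $u := q\varrho$. The constraint gives $\varrho = I(u)$ and $q = u/I(u)$; the expansions $I(u) = u + u^2/4 + O(u^3)$ near $0$ and $I(u)\sim u^2/2$ at infinity show that $u\mapsto q$ is a decreasing bijection of $(0,\infty)$ onto $(0,1)$. The reduced function
\[
    \psi(u) := -\log I(u) + \frac{u\log(e^u-1)}{I(u)} - 1
\]
has, upon using $I'(u) = ue^u/(e^u-1)$, the cleanly factored derivative
\[
    \psi'(u) = \frac{\log(e^u-1)}{I(u)}\llpa{1 - \frac{uI'(u)}{I(u)}}.
\]
The second factor is strictly negative on $(0,\infty)$: since $I'(u)=u/(1-e^{-u})$ is strictly increasing (equivalent to the elementary inequality $(1+u)e^{-u}<1$ for $u>0$), the average slope $I(u)/u = \frac{1}{u}\int_0^u I'(s)\,\dd s$ is strictly less than $I'(u)$. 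Hence $\psi'(u) = 0$ forces $\log(e^u-1) = 0$, i.e.\ $u = \log 2$, and $\psi'$ changes sign there from $+$ to $-$, so this is the unique maximum.

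Finally, the classical dilogarithm identity $\mathrm{Li}_2(\tfrac12) = \tfrac{\pi^2}{12} - \tfrac{(\log 2)^2}{2}$, combined with $I(z) = z^2/2 + \mathrm{dilog}(e^{-z})$, gives $I(\log 2) = \pi^2/12 = \xi$, hence $q = (\log 2)/\xi = \mu$, and directly $\phi(\mu,\xi) = -\log(\pi^2/12) + \mu\cdot 0 - 1 = \log\lpa{12/(e\pi^2)}$, as required. The main non-routine step I anticipate is the monotonicity of $I'$, which supplies the key inequality ruling out any second critical point of $\psi$; the remaining arguments are routine substitutions into the expansion (\ref{E:Lkz}) and the dilogarithm evaluation.
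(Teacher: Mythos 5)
Your proof of the bound \eqref{E:ank-sd2} follows the same route as the paper: choose $r=\varrho/n$ with $I(q\varrho)=\varrho$, substitute $kr=q\varrho$ into \eqref{E:Lkz}, and absorb subdominant terms into $O(1)$. The maximization step, however, is genuinely different. The paper keeps $q$ as the variable, treats $\varrho=\varrho(q)$ implicitly, computes the total second derivative $\partial_q^2\phi(q,\varrho(q))=\varrho/(1-q^2\varrho-e^{-q\varrho})$, and shows it is negative everywhere on the constraint curve, giving concavity. You instead reparametrize by $u:=q\varrho$, reducing the constrained problem to a single unconstrained variable with $\varrho=I(u)$, $q=u/I(u)$, and obtain the cleanly factored first derivative $\psi'(u)=\frac{\log(e^u-1)}{I(u)}\bigl(1-\frac{uI'(u)}{I(u)}\bigr)$. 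The second factor being negative then forces the unique critical point $u=\log 2$, with an explicit sign change confirming a maximum. Both arguments ultimately rest on the same elementary fact, phrased slightly differently: the paper's inequality $\varrho<\frac{q^2\varrho^2}{1-e^{-q\varrho}}$ and your $I(u)<uI'(u)$ are the same statement that $I'(t)=t/(1-e^{-t})$ is strictly increasing. Your version buys a more transparent calculation (a product-form first derivative rather than a second derivative along a constraint curve), at the mild cost of only getting unimodality rather than concavity; both suffice for the lemma. One small exposition gap: you assert that the expansions of $I$ at $0$ and $\infty$ ``show that $u\mapsto q$ is a decreasing bijection,'' but the endpoint behavior alone does not give monotonicity. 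The monotonicity of $q(u)=u/I(u)$ is exactly the inequality $I(u)<uI'(u)$ that you prove two lines later, and it is also needed to justify that $\varrho(q)$ is well-defined for each $q\in(0,1)$; stating this dependency explicitly would tighten the argument, but there is no missing idea.
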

\begin{proof}
We begin with the first-order approximation to $rL_k'(r)$ already 
derived in \eqref{E:Lk-1} with the saddle-point $|z|=r=\frac\varrho 
n$ and $k=qn$,
\[
    \frac{\varrho} nL_k'\Lpa{\frac{\varrho}{n}}
    \sim \frac n{\varrho}\,I(q \varrho),
\]
which leads to the approximate saddle-point equation $I(q\varrho) = \varrho$. Furthermore, we have, by \eqref{E:Lkz}, 
\begin{align*}
    \log\lpa{r^{-n}A_k(r)}
    &= n\log n +n \phi(q,\varrho) + \tfrac12\log n +O(1),
\end{align*}
where $ \phi(q,\varrho) := -\log\varrho + q \log(e^{q\varrho}-1)-1$,
in view of $I(q\varrho) = \varrho$. We next look for the pair of
$(q,\varrho)$ such that the maximum value of $\phi(q,\varrho)$ is
reached.

The only positive solution pair of the equations $(\partial_q \phi, 
\partial_\varrho \phi)=(0,0)$, or, equivalently,  
\[
    \log\lpa{e^{q\varrho}-1} = 0, \and 
	I(q\varrho))= \varrho,
\]
is given by the pair \eqref{E:mu-xi}, which is one of the sources of
the ubiquitous factor $\frac{\pi^2}{6}=\zeta(2)$ in this paper. It
remains to prove $\phi(q,\varrho)$ is maximal for $q \in [0,1]$ only
when (\ref{E:mu-xi}) occurs. Now, by viewing $w=w(q)$ as a function
of $q$, we see that, when $(q,w)$ satisfies the condition $I(qw)=w$,
\[
    \partial^2_q \phi(q,w)
	= \frac{w}{1-q^2w-e^{-qw}}.
\] 
We now prove that $\partial_q^2\phi(q,w)<0$ for all pairs $(q,w)$ 
such that $I(qw)=w$. First, the function $t\mapsto 
\frac{t}{1-e^{-t}}$ is motononically increasing for $t\ge0$; then, 
with $w\ne0$, 
\[
    w = \int_0^{qw}\frac{t}{1-e^{-t}}\,\dd t
	<\frac{qw}{1-e^{-qw}}\cdot qw = \frac{q^2w^2}{1-e^{-qw}},
\]
implying that 
\[
    w\Lpa{1-\frac{q^2w}{1-e^{-qw}}}
	= \frac{w(1-q^2w-e^{-qw})}{1-e^{-qw}}<0.
\]
Thus the function $q\mapsto \partial^2_q\phi(q,w)$ is always negative
for all pairs $(q,w)$ such that $I(qw)=w$, showing that
$q\mapsto\phi(q,w)$ is concave downward when $w$ satisfies $I(qw)=w$;
see Figure~\ref{F:concave}. This proves the lemma.
\end{proof}

\begin{figure}[!ht]
\begin{center}
\includegraphics[height=3.5cm]{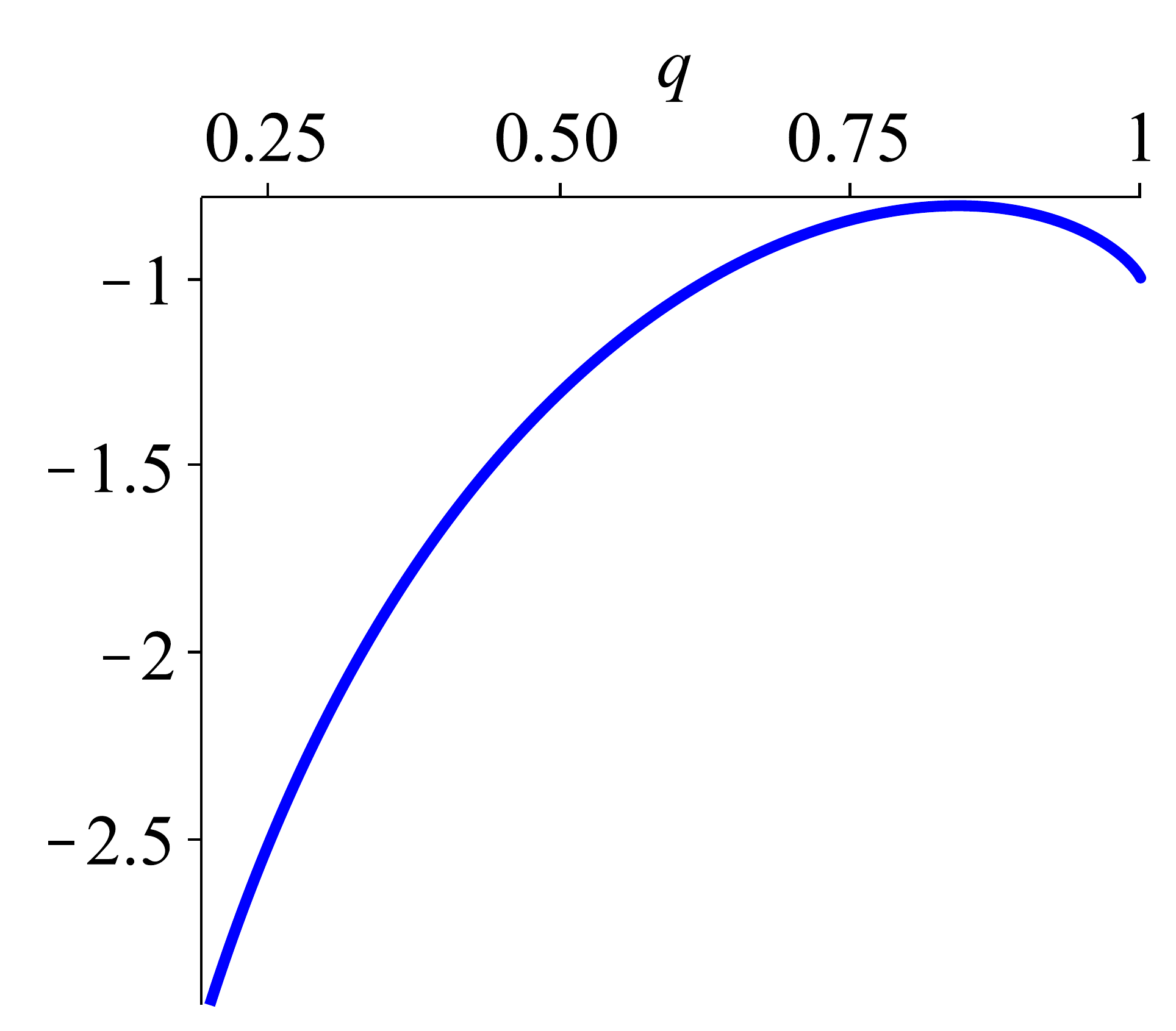}\;\;
\includegraphics[height=3.5cm]{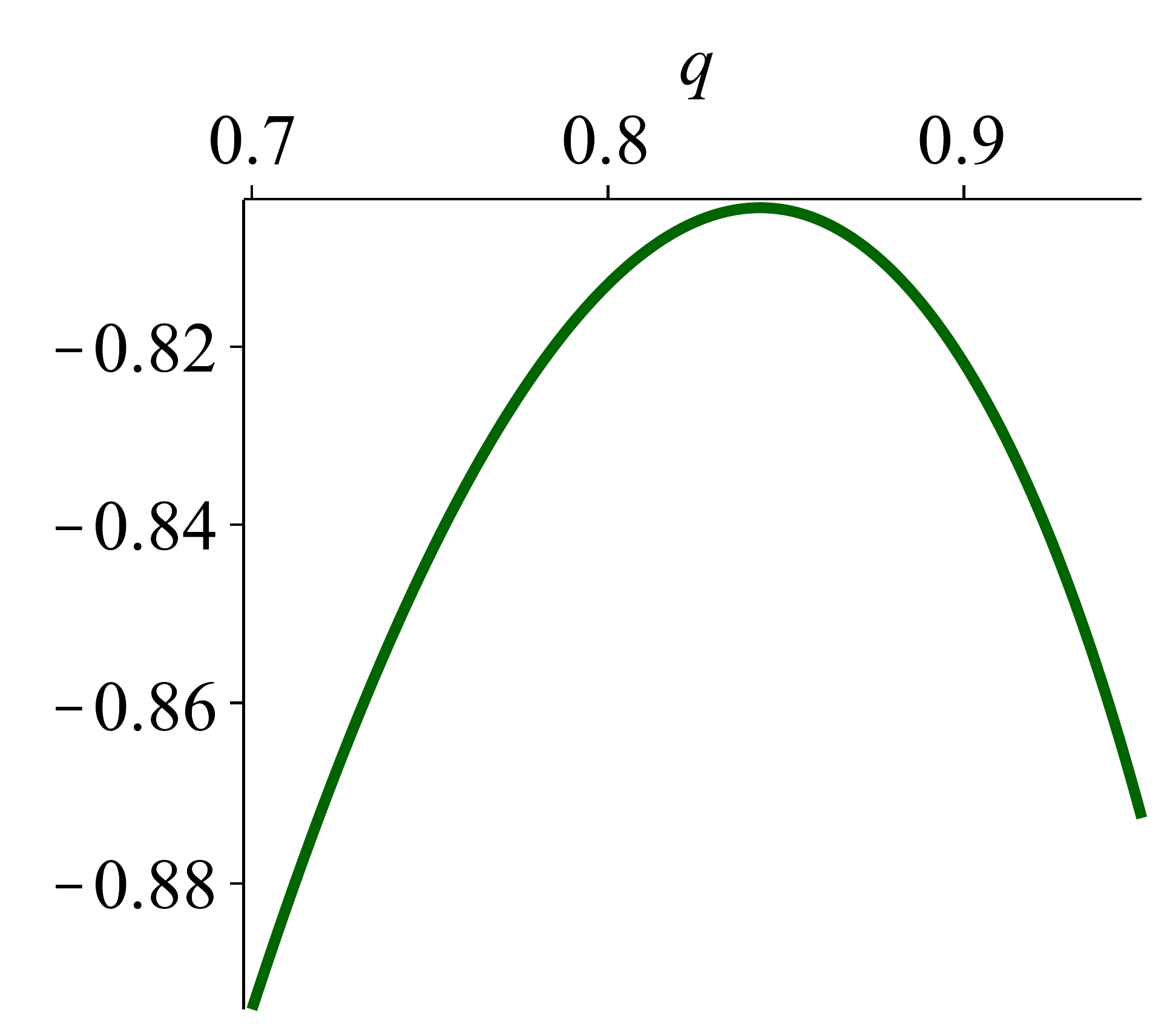} 	
\end{center}
\medskip
\caption{The concavity of the function $\phi(q,\varrho)$ when
$\varrho=\varrho(q)$ satisfies $I(q\varrho) = \varrho$ for
$q\in[0.2,1]$ (left) and $q\in[0.7,0.95]$ (right).} \label{F:concave}
\end{figure}


\subsection{Saddle-point method. II: Negligibility of summands 
outside the central range}

Define 
\begin{align}\label{E:sigma}
    \sigma := \pi^{-2}\sqrt{6\lpa{24(\log 2)^2-\pi^2}}
	\approx 0.31988.
\end{align}
\begin{proposition}\label{P:poly}
Write $k=\mu n+x\sigma\sqrt{n}$ where $\mu$ and $\sigma$ are given in
\eqref{E:mu-xi} and \eqref{E:sigma}, respectively. Then uniformly for
$x=o\lpa{n^{\frac16}}$,
\begin{align}\label{E:ank-sd4}
	a_{n,k} = O\lpa{\rho^n n^{n+\frac12}e^{-\frac12 x^2}},
	\with \rho:= \tfrac{12}{e\pi^2}.
\end{align}	
In particular when $k_\pm := \mu n \pm \sqrt{2}\,\sigma n^{\frac58}$, 
\begin{align}\label{E:negli-1}
    \llpa{\sum_{1\le k< k_-}
    +\sum_{k_+< k\le n}}\;a_{n,k}
    = O\lpa{\rho^n n^{n+\frac32} e^{-n^{\frac14}}}.
\end{align}
\end{proposition}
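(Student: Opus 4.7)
The plan is to combine the saddle-point bound \eqref{E:ank-sd1}, namely $a_{n,k} \le r^{-n}A_k(r)$, with the expansion already developed in the proof of Lemma~\ref{L:max-phi}, which supplies
\[
    \log\lpa{r^{-n}A_k(r)} = n\log n + \tfrac12\log n + n\,\phi(q,\varrho) + O(1),
\]
where $q = k/n$ and $\varrho=\varrho(q)$ is determined by the implicit relation $I(q\varrho) = \varrho$. Writing $g(q) := \phi(q,\varrho(q))$, Lemma~\ref{L:max-phi} shows that $g$ is smooth and strictly concave on its domain with unique maximum $g(\mu) = \log\rho$, and furnishes the closed-form expression $g''(q) = \varrho/(1 - q^2\varrho - e^{-q\varrho})$. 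Evaluating at $(q,\varrho) = (\mu,\xi)$ and using $\mu\xi = \log 2$ gives the key identity
\[
    g''(\mu) = \frac{\xi}{\tfrac12 - \mu\log 2}
    = \frac{\pi^4}{6\lpa{\pi^2 - 24(\log 2)^2}} = -\frac{1}{\sigma^2},
\]
by the definition \eqref{E:sigma} of $\sigma$.

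For the pointwise estimate \eqref{E:ank-sd4}, I would Taylor expand $g$ at $\mu$ to obtain
\[
    n\,g(q) - n\log\rho = -\frac{n(q-\mu)^2}{2\sigma^2} + O\lpa{n|q-\mu|^3}.
\]
Substituting $q - \mu = x\sigma/\sqrt{n}$ converts the main term into $-x^2/2$ and the error into $O(x^3/\sqrt{n})$, which is $o(1)$ uniformly for $x = o(n^{1/6})$. Exponentiation, together with the $n^{n+\frac12}$ prefactor already extracted, then yields \eqref{E:ank-sd4}.

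For the tail bound \eqref{E:negli-1}, the local quadratic estimate must be upgraded to a uniform one valid across the whole admissible range of $q$. Since $g''$ is continuous and $g''(\mu) < 0$, there is a neighborhood $|q-\mu| \le \delta$ on which $g''(q) \le -1/(2\sigma^2)$; outside this neighborhood, compactness together with the uniqueness of the maximizer of $g$ (from Lemma~\ref{L:max-phi}) supplies a strict gap $g(q) - g(\mu) \le -c_0$ for some $c_0 > 0$. Combining the two regimes produces a uniform majorization $g(q) - \log\rho \le -c\min\lpa{(q-\mu)^2,\,1}$ for some $c > 0$. Applied to any $k$ with $|k - \mu n| \ge \sqrt{2}\sigma n^{5/8}$, i.e.\ $|q-\mu| \ge \sqrt{2}\sigma n^{-3/8}$, this yields $n\lpa{g(q) - \log\rho} \le -c' n^{1/4}$ for some $c' > 0$; summing the at-most-$n$ contributions and adjusting constants gives \eqref{E:negli-1}. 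The main technical point is this uniform quadratic majorization of $g$ on the full parameter range; once it is in place, both assertions follow routinely from the saddle-point bound and the computation of $g''(\mu)$.
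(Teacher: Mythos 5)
Your proposal is correct and follows essentially the same strategy as the paper: expand $\phi$ along the constraint curve near $(\mu,\xi)$ to extract the $-\tfrac12 x^2$ term, then use concavity to dispatch the tail. Your direct computation $g''(\mu) = -1/\sigma^2$ from the formula in Lemma~\ref{L:max-phi} is a cleaner way of encoding the paper's step of solving the saddle-point equation for $\varrho=\varrho(q)$ as a power series in $x/\sqrt{n}$ (the expansion \eqref{E:refw}) and substituting into $\phi$; the two computations are equivalent.

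One small remark on the tail bound: the paper invokes concavity of $g$ (``monotonicity of $\phi$'') to bound every term with $k<k_-$ or $k>k_+$ by the value at the nearer endpoint $k_\pm$, and then applies the pointwise estimate \eqref{E:ank-sd4} at $x=\sqrt{2}\,n^{1/8}$. This sidesteps the boundary behavior of $g$. Your compactness argument as stated needs one extra sentence, because $g(q)\to-\infty$ as $q\to 0^+$ and $g(q)\to -1<\log\rho$ as $q\to 1^-$, so ``compactness of $[0,1]$ plus continuity plus unique maximizer'' does not directly yield the gap; you should either verify these boundary limits separately, or simply replace compactness by the global concavity already proved in Lemma~\ref{L:max-phi}, which gives $g(q)\le g(\mu\pm\delta)<g(\mu)$ for all $|q-\mu|\ge\delta$ at no extra cost. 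With that adjustment your argument is complete and matches the paper's.
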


\begin{proof}    
Assume first 
\begin{align}\label{E:qx}
    q:=\mu + \frac{\sigma x}{\sqrt{n}},
\end{align}
where $\mu$ is defined in Lemma~\ref{L:max-phi} but the 
value of $\sigma$ given in \eqref{E:sigma} has remained unknown (and 
will be specified by the following procedure). Substituting this $q$ 
into the saddle-point equation \eqref{E:saddle0}, as approximated by 
\eqref{E:Lk-1}, and solving asymptotically for $\varrho$, we then 
obtain
\begin{align}\label{E:refw}
    \varrho =\xi +\frac{\xi_1 x}{\sqrt{n}}
    +\frac{\xi_2+\xi_3 x^2}{n}
    +O\Lpa{\frac{|x|+|x|^3}{n^{\frac32}}},
\end{align}
where, with $\tau:=2(\log 2)^2-\frac{\pi^2}{12}$ 
\begin{align}\label{E:xi1-3}
	\xi_1:=-\tfrac{\pi^4\log 2}{72\,\tau},
	\quad \xi_2:=-\tfrac{\pi^4(2\log 2-1)}{288\,\tau},
	\and 
	\xi_3:=\tfrac{\pi^6(288\tau^2
	+(\log 2)\pi^4+24\pi^2\tau-\pi^4)}{248832\,\tau^3}.
\end{align}
Then we substitute the expansions \eqref{E:qx} and \eqref{E:refw} 
into $ \phi(q,\varrho)$ (defined in Lemma \ref{L:max-phi}), giving
\begin{align*}
    \phi(q,\varrho) &=-\log\frac{\pi^2}{12}-1
    +\frac{1}{n}\Lpa{\frac{1}{2}-\log 2
    -\frac{\pi^4\sigma^2x^2}{144\tau}}
    +O\Lpa{\frac{|x|+|x|^3}{n^{\frac32}}}.
\end{align*}
So if we take $\sigma^2:=72\pi^{-4}\tau$ (which is identical to the 
expression \eqref{E:sigma}), then we see that 
\begin{align*}
    e^{n\phi(q,\varrho)}
    =\frac{\sqrt{e}}{2}
    \,\Lpa{\frac{12}{e\pi^2}}^n
    e^{-\frac12x^2}
    \left(1+O\Lpa{\frac{|x|+|x|^3}{\sqrt{n}}}\right),
\end{align*}
uniformly for $x=o\lpa{n^{\frac16}}$. This, together with 
\eqref{E:ank-sd2} and Lemma~\ref{L:max-phi}, proves 
\eqref{E:ank-sd4}.  

By monotonicity of $\phi(q,w)$ (see Lemma~\ref{L:max-phi}), the
left-hand side of (\ref{E:negli-1}) is bounded from above by
$(na_{n,k_-}+na_{n,k_+})$. In consequence, \eqref{E:negli-1} follows 
from \eqref{E:ank-sd4} with $x=\sqrt{2}\,n^{\frac18}$. 
\end{proof}

\subsection{Saddle-point method. III: Negligibility of integrals  
away from zero}

We now show that in the remaining sum ($k_\pm$ defined in Proposition 
\ref{P:poly})
\[
    \sum_{k_-\le k\le k_+}
    \frac{r^{-n}}{2\pi}\int_{-\pi}^{\pi}
    e^{-in\theta}A_k(re^{i\theta})\dd \theta,
\]
the integral over the range $\theta_0\le |\theta|\le\pi$, $\theta_0 
:= 6n^{-\frac38}$ is asymptotically negligible. Such a $\theta_0$ is 
always chosen so that $n\theta_0^2\to\infty$ and $n\theta_0^3\to0$; 
see \cite{Flajolet2009}. We begin with a uniform bound for 
$|A_k(z)|$.
\begin{lemma}
Let $\theta := \arg(z)$. Then, uniformly for $|z|>0$ and 
$|\theta|\le\pi$,
\begin{align}\label{E:Akz-ineq}
    |A_k(z)|
    \le A_k(|z|)\exp\left(-\frac{k(k+1)|z|\,\theta^2}
    {2\pi^2}\right), \quad(k=1,2,\dots).
\end{align}
\end{lemma}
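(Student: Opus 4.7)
The plan is to reduce \eqref{E:Akz-ineq} to a factor-by-factor estimate. Writing $r=|z|$ and observing that the target exponent splits as $-\frac{k(k+1)r\theta^2}{2\pi^2}=-\sum_{j=1}^k\frac{jr\theta^2}{\pi^2}$, it suffices to prove
\[
    |e^{jz}-1|\le(e^{jr}-1)\exp\Lpa{-\tfrac{jr\theta^2}{\pi^2}}
\]
for every $j\ge 1$; multiplying such inequalities over $j=1,\ldots,k$ then yields \eqref{E:Akz-ineq}.

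For the single-factor bound I would invoke the Weierstrass product for $\sinh$, namely $\sinh w=w\prod_{n\ge1}\lpa{1+w^2/(\pi^2n^2)}$, applied to $w=jz/2$. Together with $e^{jz}-1=2e^{jz/2}\sinh(jz/2)$ this gives
\[
    e^{jz}-1=jz\,e^{jz/2}\prod_{n\ge1}\Lpa{1+\tfrac{j^2z^2}{4\pi^2n^2}},
\]
and similarly for $e^{jr}-1$. Taking moduli termwise and dividing,
\[
    \frac{|e^{jz}-1|}{e^{jr}-1}
    =e^{-jr(1-\cos\theta)/2}
    \prod_{n\ge1}\frac{|1+j^2z^2/(4\pi^2n^2)|}{1+j^2r^2/(4\pi^2n^2)}.
\]
Each factor in the infinite product is at most $1$, as one sees from the one-line identity $(1+|w|^2)^2-|1+w^2|^2=4(\Im w)^2\ge0$ applied with $w=jz/(2\pi n)$. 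Consequently,
\[
    |e^{jz}-1|\le (e^{jr}-1)\exp\Lpa{-\tfrac12 jr(1-\cos\theta)}.
\]

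To upgrade the trigonometric exponent to the Gaussian one $-jr\theta^2/\pi^2$, I would appeal to Jordan's inequality: for $|\theta|\le\pi$, $\sin(|\theta|/2)\ge|\theta|/\pi$, whence $1-\cos\theta=2\sin^2(\theta/2)\ge 2\theta^2/\pi^2$. Taking the product over $j=1,\ldots,k$ and using $\sum_{j=1}^k j=k(k+1)/2$ in the exponent then concludes the proof.

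There is no real obstacle in this argument; the principal ingredients are the Hadamard product for $\sinh$ and the pointwise inequality $|1+w^2|\le 1+|w|^2$, which together encode how each factor $|e^{jz}-1|$ decays as $\arg z$ moves away from $0$. Both Taylor-type reductions (the $\sinh$ product comparison and Jordan's inequality) are tight at $\theta=0$, so no constants are lost in reaching the stated Gaussian bound, which makes \eqref{E:Akz-ineq} optimal in form for the subsequent saddle-point analysis.
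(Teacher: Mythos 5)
Your proof is correct, and it establishes the same key single-factor inequality as the paper does — namely Pittel's bound $|e^w-1|\le(e^{|w|}-1)e^{-|w|\theta^2/\pi^2}$ with $w=jz$ — but by a genuinely different route. The paper proves the intermediate estimate $|e^{jz}-1|\le(e^{jr}-1)e^{-\frac12 jr(1-\cos\theta)}$ directly from the factorization $e^{jz}-1 = e^{jz/2}\lpa{e^{jz/2}-e^{-jz/2}}$ together with the observation that $e^t-e^{-t}$ has nonnegative Taylor coefficients, so that $\bigl|e^{jz/2}-e^{-jz/2}\bigr|\le e^{jr/2}-e^{-jr/2}$ by termwise majorization; you instead expand $\sinh(jz/2)$ into its Hadamard product and bound each factor by the pointwise inequality $|1+w^2|\le 1+|w|^2$, equivalent to $(1+|w|^2)^2-|1+w^2|^2=4(\Im w)^2\ge 0$. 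Both paths isolate the same exponential decay $e^{-\frac12 jr(1-\cos\theta)}$ from the $|e^{jz/2}|$ prefactor, and the passage from $1-\cos\theta$ to $\frac{2}{\pi^2}\theta^2$ via Jordan's inequality is the same in both. The paper's argument is a hair more elementary (no infinite product machinery), while yours makes transparent, factor by factor, that the $\sinh$ zeros on the imaginary axis are exactly what pulls $|e^{jz}-1|$ below $e^{jr}-1$ once $\arg z\ne 0$; either is a perfectly acceptable proof of the lemma.
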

\begin{proof}
The uniform bound \eqref{E:Akz-ineq} is a direct consequence of 
the inequality (see \cite[Appendix]{Pittel1986})
\begin{align}\label{E:pittel}
    \bigl| e^z-1 \bigr|
    \le \lpa{e^{|z|}-1}e^{-|z|\theta^2/\pi^2},
    \quad(|\theta|\le\pi).
\end{align}
This is proved as follows. First
\begin{align*}
    \bigl| e^z-1 \bigr| 
    &= \bigl| e^{\frac12z}\bigr| \bigl| 
    e^{\frac12z}-e^{-\frac12z} \bigr|\le e^{\frac12|z|\cos\theta}
    \lpa{e^{\frac12|z|}-e^{-\frac12|z|}}= \lpa{e^{|z|}-1}e^{-\frac12|z|(1-\cos\theta)},
\end{align*}
where the inequality results from the fact that $[t^n]
\lpa{e^t-e^{-t}} \ge 0$ for all $n\ge0$. Then \eqref{E:pittel}
follows from the elementary inequality $1-\cos\theta\ge
\frac 2{\pi^2}\theta^2$ for $|\theta|\le \pi$.
\end{proof}

\begin{proposition}\label{P:central-o}
Define $k_\pm := \mu n \pm \sqrt{2}\,\sigma n^{\frac58}$ as in 
Proposition \ref{P:poly} and $\theta_0:= 6n^{-\frac38}$. Then, 
\begin{align}\label{E:negli-2}
    \sum_{k_-\le k\le k_+}
    \frac{r^{-n}}{2\pi}\int_{\theta_0\le |\theta|\le\pi}
    e^{-in\theta}A_k(re^{i\theta})\dd \theta
    =O\lpa{\rho^n n^{n-\frac1{8}}
    e^{-n^{\frac14}}}, \with \rho:=\frac{12}{e\pi^2}.
\end{align}
\end{proposition}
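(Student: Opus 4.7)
My plan is to chain together three ingredients already set up in the paper: the pointwise modulus bound \eqref{E:Akz-ineq}, the uniform saddle-point bound $r^{-n}A_k(r)=O(\rho^n n^{n+\frac12})$ implicit in Lemma~\ref{L:max-phi} and Proposition~\ref{P:poly}, and a direct summation over the $O(n^{\frac58})$ values of $k\in[k_-,k_+]$. The key numerical point is that the specific choice $\theta_0=6n^{-\frac38}$ has been calibrated so that the tail integrand carries an exponential factor $e^{-c_0 n^{\frac14}}$ with a constant $c_0>1$, which will then absorb all the polynomial overhead.

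First, for fixed $k\in[k_-,k_+]$, I apply \eqref{E:Akz-ineq} with $z=re^{i\theta}$. Since the exponent $-k(k+1)r\theta^2/(2\pi^2)$ is increasing (in absolute value) with $|\theta|$, the restricted integral is bounded by
\begin{align*}
    \int_{\theta_0\le|\theta|\le\pi}|A_k(re^{i\theta})|\dd\theta
    \;\le\; 2\pi\,A_k(r)\exp\Lpa{-\tfrac{k(k+1)r\theta_0^2}{2\pi^2}}.
\end{align*}
Multiplying by $r^{-n}/(2\pi)$ and invoking the bound $r^{-n}A_k(r)=O\lpa{\rho^n n^{n+\frac12}}$ (valid uniformly on $[k_-,k_+]$ since $e^{-x^2/2}\le 1$ in Proposition~\ref{P:poly}) reduces the problem to controlling the exponential factor.

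Next, I would quantify $k(k+1)r$ uniformly in $k\in[k_-,k_+]$. From the expansion \eqref{E:refw} of the saddle point $r=r_k$, we have $r=\xi/n\,(1+O(n^{-\frac38}))$ and $k/n=\mu+O(n^{-\frac38})$, so
\begin{align*}
    \frac{k(k+1)r\,\theta_0^2}{2\pi^2}
    = \frac{\mu^2\xi\cdot 36}{2\pi^2}\,n^{\frac14}\lpa{1+O(n^{-\frac38})}
    = c_0\,n^{\frac14}\lpa{1+o(1)},
\end{align*}
where $c_0=\tfrac{216(\log 2)^2}{\pi^4}\approx 1.065>1$ (using $\mu^2\xi=\tfrac{12(\log 2)^2}{\pi^2}$). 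Thus each term in the sum is at most $O(\rho^n n^{n+\frac12}e^{-c_0 n^{\frac14}})$, uniformly in $k$.

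Finally I sum over the $O(n^{\frac58})$ integers $k\in[k_-,k_+]$, yielding the overall bound $O(\rho^n n^{n+\frac98}e^{-c_0 n^{\frac14}})$. Writing $e^{-c_0 n^{\frac14}}=e^{-n^{\frac14}}\cdot e^{-(c_0-1)n^{\frac14}}$ and using $n^{\frac54}e^{-(c_0-1)n^{\frac14}}=O(1)$ absorbs the excess polynomial factor $n^{\frac54}$, giving exactly $O(\rho^n n^{n-\frac18}e^{-n^{\frac14}})$, which proves \eqref{E:negli-2}. The main (and essentially only) delicate point is this numerical verification that $c_0>1$: the strength of the factor $6$ in $\theta_0=6n^{-\frac38}$ was chosen precisely so that $\mu^2\xi\cdot 36/(2\pi^2)$ exceeds $1$, allowing us to swap polynomial overhead for a clean exponential decay at rate $n^{\frac14}$; every other step is a routine application of \eqref{E:Akz-ineq} and Proposition~\ref{P:poly}.
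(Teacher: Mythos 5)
Your proposal is correct and follows essentially the same route as the paper: apply the Pittel-type bound \eqref{E:Akz-ineq}, use the uniform estimate $r^{-n}A_k(r)=O\lpa{\rho^n n^{n+\frac12}}$ over the central range, observe that the calibration $\theta_0=6n^{-\frac38}$ makes the exponential rate $\frac{216(\log 2)^2}{\pi^4}\approx 1.065>1$, and absorb the polynomial overhead. The only cosmetic difference is that the paper estimates the tail integral as a Gaussian tail $\int_{\theta_0}^{\infty}e^{-k^2r\theta^2/(2\pi^2)}\dd\theta=O\lpa{n^{-\frac58}e^{-c_0 n^{1/4}(1+o(1))}}$, gaining an extra $n^{-5/8}$, whereas you simply bound it by $2\pi$ times the supremum of the integrand; both surpluses are swallowed by $e^{-(c_0-1)n^{1/4}}$, so the argument is unchanged.
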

\begin{proof}
By \eqref{E:Akz-ineq} with $z:=re^{i\theta}$,
\begin{align*}
    \sum_{k_-\le k\le k_+}
    \frac{r^{-n}}{2\pi}\int_{\theta_0\le |\theta|\le\pi}
	e^{-in\theta}A_k(re^{i\theta})\dd \theta
    &= O\llpa{\sum_{k_-\le k\le k_+}r^{-n}A_k(r)
    \int_{\theta_0}^{\infty}
    e^{-\frac{k^2r\theta^2}{2\pi^2}}\dd \theta}.
\end{align*}
Now, with $k\sim\mu n$ ($k_-\le k\le k_+$) and $rn\sim \xi$ (see 
\eqref{E:mu-xi}), we then have
\begin{align*}
    \int_{\theta_0}^{\infty}
    e^{-\frac{k^2r\theta^2}{2\pi^2}}\dd \theta
    &= O\Lpa{\frac{n^{\frac38}}{k^2r}\,
    e^{-\frac{k^2r}{2\pi^2 n^{3/4}}}}
    = O\lpa{n^{-\frac58}e^{-\frac{216(\log 2)^2}{\pi^4}
    (1+o(1))n^{\frac14}}}.
\end{align*}
Note that $\frac{216(\log 2)^2}{\pi^4} \approx 1.065>1$. 
Then \eqref{E:negli-2} follows from \eqref{E:ank-sd4}. 
\end{proof}

\subsection{Saddle-point method. IV: Proof of Theorem~\ref{T:eg1}}
\label{S:spm4}

From the two estimates \eqref{E:negli-1} and \eqref{E:negli-2}, we 
have, with $\rho = \frac{12}{e\pi^2}$ and $\theta_0:= 6n^{-\frac38}$,
\begin{align}\label{E:an-af}
    a_n = \sum_{k_-\le k\le k_+} r^{-n}A_k(r) 
    \cdot\frac{1}{2\pi}\int_{-\theta_0}^{\theta_0}
    e^{-in\theta}\frac{A_k(re^{i\theta})}{A_k(r)}\dd\theta
    +O\lpa{\rho^n n^{n+\frac32} e^{-n^{\frac14}}}.
\end{align}
We begin by evaluating asymptotically the integral . 

\begin{lemma} If $k=\mu n + x\sigma\sqrt{n}$, where $\mu$ and 
$\sigma$ are given in \eqref{E:mu-xi} and 
\eqref{E:sigma}, respectively, then
\begin{align}\label{E:JI-ae}
    J_I := \frac{1}{2\pi}\int_{-\theta_0}^{\theta_0}
    e^{-in\theta}\frac{A_k(re^{i\theta})}{A_k(r)}\dd\theta
    \simeq \frac{\sqrt{3}}{\pi^{3/2}\sigma}\,n^{-\frac12},
\end{align}
uniformly for $x=o\lpa{n^{\frac16}}$.
\end{lemma}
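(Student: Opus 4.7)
The plan is to treat $J_I$ as a classical Gaussian saddle-point integral by Taylor-expanding the phase around $\theta=0$. The key inputs are the saddle-point equation \eqref{E:saddle0}, which kills the linear term, and Corollary \eqref{E:Lk-m}, which controls the derivatives of $L_k$.

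First, I would write $A_k(re^{i\theta})/A_k(r) = e^{L_k(re^{i\theta}) - L_k(r)}$ and set $h(\theta) := -in\theta + L_k(re^{i\theta}) - L_k(r)$. Direct differentiation gives $h(0)=0$, $h'(0) = -in + irL_k'(r) = 0$ by \eqref{E:saddle0}, and $h''(0) = -rL_k'(r) - r^2 L_k''(r) =: -\kappa_k$, so that $\kappa_k = n + r^2 L_k''(r)$.

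Second, I would compute $\kappa_k$ to leading order using \eqref{E:Lk-m} with $m=2$ together with the saddle-point expansion \eqref{E:refw} for $r$. Since $kr \to \mu\xi = \log 2$ (so that $e^{-kr}\to\tfrac12$) and $nr \to \xi = \tfrac{\pi^2}{12}$, the dominant piece of $r^2 L_k''(r)$ is
\[
    kI'(kr) - \frac{2I(kr)}{r}
    \sim 2k\log 2 - 2n
    = n\cdot\frac{24(\log 2)^2 - 2\pi^2}{\pi^2},
\]
while the remaining piece of \eqref{E:Lk-m} contributes only $O(1)$. Adding the $n = rL_k'(r)$ from \eqref{E:saddle0} and invoking the definition \eqref{E:sigma} of $\sigma$ gives
\[
    \kappa_k = \frac{n\lpa{24(\log 2)^2 - \pi^2}}{\pi^2}
    \Lpa{1 + O\Lpa{\tfrac{1+|x|}{\sqrt n}}}
    = \frac{n\pi^2\sigma^2}{6}\lpa{1+o(1)}.
\]

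Third, I would use \eqref{E:Lk-m} with $m=3$ to control the cubic remainder: $r^3 L_k'''(r) = O(n)$ uniformly, so $|h'''(\theta)| = O(n)$ for $|\theta|\le\theta_0$ by analyticity in a fixed wedge. Hence $h(\theta) = -\tfrac12 \kappa_k \theta^2 + O(\theta^3 n)$. Because $\theta_0 = 6n^{-3/8}$, we have $\theta_0^3 n = O(n^{-1/8}) = o(1)$ and $\kappa_k\theta_0^2 \asymp n^{1/4}\to\infty$; these two facts justify replacing $e^{h(\theta)}$ by $e^{-\kappa_k\theta^2/2}$ in the integrand and extending the tails to $\pm\infty$ with exponentially small error. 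The resulting Gaussian evaluates to
\[
    J_I \sim \frac{1}{\sqrt{2\pi\kappa_k}}
    \sim \frac{\sqrt{3}}{\pi^{3/2}\sigma\sqrt n},
\]
which is the claimed asymptotic equivalence.

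The main obstacle I foresee is ensuring uniformity in $x$ across the whole range $x = o(n^{1/6})$. The $x$-dependent corrections to $r$ in \eqref{E:refw} propagate through every derivative $r^m L_k^{(m)}(r)$, and one needs to verify that they affect $\kappa_k$ only at order $|x|/\sqrt n$ (rather than distorting its leading constant) and that the uniform bound $r^m L_k^{(m)}(r) = O(n)$ survives as $|x|$ grows. Once this is established, pushing the Taylor expansion by one further order and tracking the $O(\theta^4)$ terms would deliver the full $\simeq$ statement with $O(n^{-1})$ relative error.
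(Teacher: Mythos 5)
Your proposal follows the same route as the paper: write the integrand as $e^{h(\theta)}$ with $h(\theta)=-in\theta+L_k(re^{i\theta})-L_k(r)$, use the saddle-point equation \eqref{E:saddle0} to kill the linear term, compute the quadratic coefficient $\kappa_k=rL_k'(r)+r^2L_k''(r)=\upsilon_2(r)\sim\frac{\pi^2\sigma^2}{6}n$ via \eqref{E:Lk-m}, bound the higher derivatives by $O(n)$, and evaluate the resulting Gaussian after extending the tails. Your explicit computation of $\kappa_k$ through $kI'(kr)-2I(kr)/r$ is correct and in fact more detailed than the paper, which merely asserts $\upsilon_2(r)=\frac{\pi^2}{6}\sigma^2 n+O(1)$; the uniformity concern you flag at the end (tracking how the $x$-dependent corrections to $r$ enter $\upsilon_2$ and the higher $\upsilon_j$) is a genuine point that the paper also elides, and both treatments leave the final $O(n^{-1})$ error verification at the same level of informality.
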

\begin{proof}
Expand $L_k(re^{i\theta})$ in $\theta$:
\begin{align*}
    \log \frac{A_k\lpa{re^{i\theta}}}{A_k(r)} 
	= L_k\lpa{re^{i\theta}}-L_k(r)
	:= \sum_{j\ge1}\frac{\upsilon_j(r)}{j!}(i\theta)^j.
\end{align*}
First of all, $\upsilon_1(r) = \frac{rA_k'(r)}{A_k(r)} = rL_k'(r) 
= n$ by our choice of $r$. Then by \eqref{E:Lk-m} with $q:=\frac kn$ 
and $\varrho := nr$ satisfying \eqref{E:qx} and \eqref{E:refw}, we 
obtain 
\[
    \upsilon_2(r) = r^2L_k''(r)+rL_k'(r)
	= \Lpa{\frac{24}{\pi^2}(\log 2)^2-1}n+O(1)
	= \frac{\pi^2}6\,\sigma^2n +O(1).
\] 
Furthermore, each $\upsilon_j(r)\asymp n$ by \eqref{E:Lk-m} when 
$k_-\le k\le k_+$. Thus $\upsilon_j(r)\theta_0^j \to0$ for 
$j=3,4,\dots$, and we then obtain 
\begin{align*}
    J_I &= \frac1{2\pi}\int_{-\theta_0}^{\theta_0}
	e^{-\frac12\upsilon_2(r) \theta^2-\frac16\upsilon_3(r)
    i\theta^3+O(n\theta^4) }\dd \theta\\
    &= \frac1{2\pi}\int_{-\infty}^{\infty}
    e^{-\frac12\upsilon_2(r) \theta^2}
    \left(1-\tfrac16{\upsilon_3(r)}
    i\theta^3+O(n\theta^4+n\theta^6)\right)\dd \theta
    +O\lpa{\upsilon_2^{-1}n^{\frac38}
    e^{-18\upsilon_2(r)n^{-\frac34}}}\\
	&= \frac1{\sqrt{2\pi\upsilon_2(r)}}
    \lpa{1+O\lpa{n^{-1}}} + O\lpa{n^{-\frac58}
    e^{-3\pi^2\sigma^2n^{\frac14}}},
\end{align*}
which proves \eqref{E:JI-ae}.
\end{proof}

\begin{proof}[Proof of Theorem~\ref{T:eg1}]
With \eqref{E:an-af} and \eqref{E:JI-ae} available, we can now 
complete the proof of Theorem \ref{T:eg1} by deriving the refined
expansion 
\[
    r^{-n}A_k(r)
	= c_0 \rho^n n^{n+\frac12}e^{-\frac12x^2}
    \left(1+\frac{g_1(x)}{\sqrt{n}}+O\lpa{n^{-1}(1+x^6)}\right),
\]
where $(c_0,\rho) := \lpa{\sqrt{\tfrac{24}{\pi}}, \tfrac{12}{e\pi^2}}$
and $g_1(x)$ is an odd polynomial in $x$ of degree three (whose 
expression being immaterial here). It follows that
\begin{equation}\label{E:ank-central}
    \begin{split}
        a_{n,k} &:= [z^n]A_k(z)
        = \frac{\sqrt{3}}{\pi^{3/2}\sigma}\,n^{-\frac12}
    	r^{-n}A_k(r)\lpa{1+O\lpa{n^{-1}}}\\
        &= \frac{c_1}{\sigma}\,\rho^n n^{n} e^{-\frac12x^2}
        \Lpa{1+\frac{g_1(x)}{\sqrt{n}}+O\lpa{n^{-1}(1+x^6)}},
    \end{split}
\end{equation}
uniformly for $k_-\le k\le k_+$, where $(c_1,\rho) 
:= \lpa{\tfrac{\sqrt{72}}{\pi^2},\tfrac{12}{e\pi^2}}$, where $\sigma$ 
is given in \eqref{E:sigma}. 

From this and the two estimates \eqref{E:negli-1} and 
\eqref{E:negli-2}, we obtain  
\[
    a_n = \frac{12}{\pi^{3/2}}\,\rho^n n^{n} 
	\sum_{k_-\le k\le k_+} 
	\frac{e^{-\frac12x^2}}{\sqrt{2\pi}\,\sigma}
	\left(1+\frac{g_1(x)}{\sqrt{n}}
	+O\lpa{n^{-1}(1+x^6)}\right) +O\lpa{\rho^n 
	n^{n+\frac32}e^{-n^{\frac14}}},
\] 
from which we deduce \eqref{E:A} by approximating the sum by 
integral. 
\end{proof}

\begin{remark}\label{R:llt}
We have proved more than the asymptotic estimate \eqref{E:A}; indeed, 
if we define the random variable $X_n$ by 
\[
    \mathbb{P}(X_n=k) := \frac{[z^n]A_k(z)}{[z^n]A(z)}
    \qquad(1\le k\le n), 
\]
then our asymptotic expansions \eqref{E:ank-central} and \eqref{E:A}
imply obviously the local limit theorem (in the form of moderate
deviations):
\[
    \mathbb{P}(X_n = \mu n + x\sigma\sqrt{n})
    = \frac{e^{-\frac12x^2}}{\sqrt{2\pi\sigma^2 n}}
    \left(1+O\Lpa{\frac{|x|+|x|^3}{\sqrt{n}}}\right),
\]
uniformly for $x=o\lpa{n^{\frac16}}$. 
\end{remark}

\section{Asymptotic expansions and change of variables}
\label{S:ae}

We examine briefly in this section two different ways to obtain 
asymptotics expansions for $a_n=[z^n]A(z)$ as defined in \eqref{E:A}, 
and then show how an argument based on change of variables leads to 
expansions for the coefficients under different parametrization of 
the underlying function. 

The first approach to deriving an expansion of the form 
\begin{align}\label{E:znAz-ae}
	[z^n]A(z) = c\rho^n n^{n+\frac12}
	\llpa{1+\sum_{1\le j<m}\nu_j n^{-j}+O\lpa{n^{-m}}}, 
\end{align}
for some computable coefficients $\nu_j$, is now straightforward 
following the same analysis detailed in the previous section. It 
consists in first computing an asymptotic expansion for $a_{n,k}$, 
which is of the form 
\begin{align*}
	a_{n,k} = \frac{c_1}{\sigma}
	\,\rho^n n^{n} e^{-\frac12x^2}
	\llpa{1+\sum_{1\le j<m}\frac{g_j(x)}{n^{\frac12j}}
	+O\lpa{n^{-\frac12m}}},
    \with  (c,\rho) 
    := \lpa{\tfrac{12}{\pi^{3/2}},\tfrac{12}{e\pi^2}}
\end{align*}
uniformly for $k=\mu n+x\sigma\sqrt{n}$, $x=o(n^{\frac16})$, where 
$g_j(x)$ is a computable polynomial in $x$ of degree $3j$ and 
contains only powers of $x$ with the same parity as $j$. From this we 
can then deduce \eqref{E:znAz-ae} by approximating the sum by an 
integral and extending the integration range to $\pm\infty$. We omit 
the details as they are more or less standard and all 
procedures can be readily coded in symbolic computation softwares.

\subsection{An asymptotic expansion via Dirichlet series}

For more methodological interest, we sketch here another approach,
based on that used in \cite{Bringmann2014}, to obtaining asymptotic
expansions for $a_n$ when more information is available.

\begin{proposition} The sequence $a_n$ in \eqref{E:A} satisfies the 
    asymptotic expansion
\begin{align}\label{E:A158690-ae}
	a_n
	= c \rho^{n}\, n!
	\llpa{1+\sum_{1\le j<m} \frac{c_j}{n(n-1)\cdots(n-j+1)}
	+ O\lpa{n^{-m}}},
\end{align}
for $m\ge2$, where $(c,\rho):=\lpa{\tfrac{6\sqrt{2}}{\pi^2}, 
\tfrac{12}{\pi^2}}$ and $c_j := \frac{1}{j!}
\lpa{-\frac{\pi^2}{288}}^j$ for $j\ge1$. 
\end{proposition}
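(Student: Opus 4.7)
The plan is to derive the expansion via the Dirichlet-series route sketched in the Introduction and developed in \cite{Bringmann2014,Cohen1988}, which gives the factorial form directly (and explains the clean exponential pattern of the coefficients $c_j$). Recall from \eqref{E:s1} that $a_n=\tfrac{(-1)^n}{2}[z^n]R(e^{-z})$ and, from the Introduction, the Mellin-type link
\[
    [z^n]e^{-z}R\lpa{e^{-24z}} = \frac{(-1)^n}{n!}\,D(-n),
	\with D(s):=\sum_{n\ge1}n^{-s}[q^{n-1}]R\lpa{q^{24}}.
\]
The scaling $z\mapsto 24z$ gives $[z^n]R(e^{-24z})=24^n[z^n]R(e^{-z})$, and expanding $e^{-z}$ as a power series yields the convolution
\[
    D(-n) = 2\cdot 24^n\,n!\sum_{0\le k\le n}
    \frac{1}{24^k\,k!}\,a_{n-k}.
\]
Solving this triangular system for $a_n$ in terms of $D(-n),D(-n+1),\dots,D(0)$ expresses $a_n$ as a finite linear combination of $D$-values, with prefactors $\frac{(-1/24)^j}{j!}\cdot 24^{n-j}$ traceable to the expansion of $e^{z/24}$.

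Next, I would establish the asymptotic expansion of $D(-n)$. The function $D(s)$ admits a meromorphic continuation and satisfies a Riemann-type functional equation relating $s$ and $1-s$ (see \cite{Cohen1988}); equivalently, the values $D(-2n-1)$ are, up to explicit elementary factors, Glaisher's $T$-numbers, whose exponential generating function $\sin(2z)/(2\cos(3z))$ is meromorphic in the whole plane with dominant poles at $z=\pm\pi/6$. A classical singularity analysis \cite{Flajolet2009} on $\sin(2z)/(2\cos(3z))$, combined with the relationship $D(-N)\leftrightarrow T_{(N-1)/2}$ for odd $N$, yields the full asymptotic expansion
\[
    D(-n) \simeq (-1)^n \tilde c\,\tilde\rho^{\,n} n!
    \llpa{1+\sum_{1\le j<m}\frac{\tilde c_j}{n(n-1)\cdots(n-j+1)}
    +O\lpa{n^{-m}}}
\]
in the factorial form with explicit $\tilde\rho$ and coefficients $\tilde c_j$ deducible from the residue calculus at $z=\pm\pi/6$.

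Assembling the two pieces, I would substitute the expansion of $D(-n+j)$ into the finite combination from the first step and rewrite the shifted factorials via $(n-j)!/n!=1/[n(n-1)\cdots(n-j+1)]$. The exponential power series $e^{-z/24}=\sum_j (-1/24)^j z^j/j!$ propagates verbatim into the leading coefficients of the asymptotic series, and combining the $1/24^j$ with the $\pi^2/12$ emerging from the dominant residue produces precisely the characteristic factor $\frac{1}{j!}\lpa{-\pi^2/288}^j$ with $\rho=12/\pi^2$ (after compensating the $24^{-n}$ from the rescaling with $\tilde\rho$). The explicit pair $(c,\rho)=\lpa{\tfrac{6\sqrt 2}{\pi^2},\tfrac{12}{\pi^2}}$ is then read off from the leading constant.

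The main obstacle is the analytic content of Step~2: obtaining the functional equation of $D(s)$ and extracting a uniform asymptotic expansion to all orders, which rests on the mock-modular / Maass-wave-form character of $R$ (see \cite{Andrews1986,Cohen1988}). A convenient independent sanity check that avoids this deep input is to translate the saddle-point expansion \eqref{E:znAz-ae} into factorial form via Stirling's series $n!\sim\sqrt{2\pi n}(n/e)^n\lpa{1+\tfrac{1}{12n}+\tfrac{1}{288n^2}-\cdots}$. A direct calculation with $\nu_1=(24-\pi^2)/288$ gives the coefficient of $1/n$ as $\nu_1-\tfrac{1}{12}=-\tfrac{\pi^2}{288}=c_1$, and with $\nu_2=\tfrac12\nu_1^2$ the coefficient of $1/n^2$ simplifies (after expanding) to $\tfrac12(-\pi^2/288)^2=c_2$, confirming the two leading terms of the claimed closed-form pattern and the internal consistency of the two approaches.
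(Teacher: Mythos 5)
Your proposal follows essentially the same route as the paper: express $a_n$ via the Dirichlet series $D$, use the functional equation from \cite{Cohen1988} to control $D(-n)$, and convolve with the Taylor series of $e^{\pm z/24}$ to extract the factorial expansion. The convolution identity you write down for $D(-n)$ in terms of the $a_{n-k}$ is correct and inverting it is indeed the right move.

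There is one conceptual imprecision worth fixing, however, because it is exactly what makes the final coefficients $c_j=\frac{1}{j!}\lpa{-\frac{\pi^2}{288}}^j$ come out in pure exponential-series form. In your Step~2 you posit a ``full asymptotic expansion'' for $D(-n)$ of the shape $\tilde c\,\tilde\rho^n\,n!\,\lpa{1+\sum_j\tilde c_j/(n\cdots(n-j+1))+\cdots}$ and then speak of assembling the $\tilde c_j$ with the $1/24^j$ in Step~3. But $\sin(2z)/(2\cos 3z)$ is meromorphic, so singularity analysis at a simple pole gives a single exponential-polynomial term with an \emph{exponentially} small error (from the next poles), not a family of polynomial correction terms. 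The paper's version of this is $D(-n)=c_0\rho_0^n\,(n!)^2\lpa{1+O(23^{-n})}$ with $(c_0,\rho_0)=\lpa{\frac{12\sqrt2}{\pi^2},\frac{288}{\pi^2}}$ — note also $(n!)^2$ rather than your $n!$, and no sign $(-1)^n$ in front of $D(-n)$ itself (the sign enters in $b_n=(-1)^nD(-n)/(n!24^n)$). The crucial consequence is that $\tilde c_j=0$ for all $j\ge1$: the asymptotic series for $a_n$ inherits its $1/n^j$ corrections \emph{solely} from the convolution with $e^{-z/24}$, which is why the $c_j$ are exactly the Taylor coefficients of $e^{w}$ at $w=-\pi^2/288$. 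Your closing remark that ``$e^{-z/24}$ propagates verbatim'' is correct, but the reason is that the $D$-side contributes nothing polynomial, not that the two contributions happen to recombine nicely. With that adjustment the argument is the same as the paper's, and your Stirling-series cross-check against $\nu_1,\nu_2$ from \eqref{E:znAz-ae} is a sound independent verification of $c_1,c_2$.
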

In particular, 
\[	
	a_n  
	= c\rho^n n!\llpa{1-\frac{\pi^2}{288 n} + 
	\frac{\pi^4}{165888\, n(n-1)}+O\lpa{n^{-3}}}.
\]
The very simple form of the coefficients $c_j$ naturally suggests the
following approximation:
\[
    a_n
	= c \rho^n n! e^{-\frac{\pi^2}{288\,n}}
	\left(1+O\lpa{n^{-3}}\right),
\] 
which has obvious numerical advantages. 
\begin{proof}
We begin with \eqref{E:s1}. As in \cite{Bringmann2014}, we define the 
Dirichlet series
\[
    D(s) := \sum_{n\ge1}n^{-s} [q^{n-1}]R\lpa{q^{24}},
\]
which converges absolutely in $\Re(s)>1$ and can be analytically 
continued into the whole $s$-plane. Together with Mellin transform 
techniques, we now have the two relations \cite{Bringmann2014}
\begin{equation*}
    \left\{
        \begin{split} \displaystyle
            a_n &= \frac{(-1)^n}2[z^n]R\lpa{e^{-z}},\\
            \displaystyle
            b_n &:=[z^n]e^{-\frac1{24}z}R\lpa{e^{-z}}
            = \frac{(-1)^n D(-n)}{n!24^n}.
        \end{split}
    \right.
\end{equation*}
Then, by the functional equation derived in \cite{Cohen1988}, one has
\[
    D(-n) = c_0\rho_0^n
	n!^2\left(1+O\lpa{23^{-n}}\right), 
    \with (c_0,\rho_0) := \lpa{\tfrac{12\sqrt{2}}{\pi^2},
    \tfrac{288}{\pi^2}},
\]
for large $n$. This implies that 
\[
    b_n = c_0(-1)^n\rho^n
    n!\left(1+O\lpa{23^{-n}}\right), 
    \with 
    (c_0,\rho) := \lpa{\tfrac{12\sqrt{2}}{\pi^2},
    \tfrac{12}{\pi^2}}.
\]
From this, we have 
\begin{align}\label{E:ratio-bn}
    \frac{b_{n-j}}{b_n} 
    = \frac{(-\rho)^{-j}}{n(n-1)\cdots(n-j+1)}
	\left(1+O\lpa{23^{-n}}\right)
	\quad(j=0,1,\dots),
\end{align}
implying that the partial sum 
\begin{align*}
	a_n
	&= \frac{(-1)^n}2\,b_n \sum_{0\le j\le n}
	\frac{b_{n-j}}{j!24^jb_n}
\end{align*}
is itself an asymptotic expansion. In this way, we obtain 
\eqref{E:A158690-ae}. 
\end{proof}
 

\subsection{From $[z^n]R(e^z)$ to $[z^n]R(1+z)$
by a change of variables}
\label{S:cov}

We sketch here a different technique to derive the asymptotic
expansion \eqref{E:blr} for $[z^n]\sum_{k\ge0}\prod_{1\le j\le
k}\lpa{(1+z)^j-1}$ from that \eqref{E:A158690-ae} for $a_n$. The
original proof by Zagier in \cite{Zagier2001} and by
Bringmann-Li-Rhoades in \cite{Bringmann2014} uses the asymtotics
of the Stirling numbers of the first kind. We give a direct approach
via change of variables, which has the advantages of being easily
codable and widely applicable in more general contexts; see
Sections~\ref{S:gf} and \ref{S:jelinek}.

Define $R(q)$ by \eqref{E:ramanujan}. Since 
\begin{align*}
    R(q)=2\sum_{k\ge 0}\prod_{1\le j\le k}\lpa{q^j-1}
\end{align*}
is true to infinite order at every root of unity which includes the 
case $q=1$ (see \cite{Cohen1988}), by the change of variables 
$1+z=e^y$, we have,
\begin{align*}
    [z^n]\sum_{k\ge0}
	\prod_{1\le j\le k}\lpa{(1+z)^j-1}
    = \frac12[z^n]R(1+z)
	= [y^n]g(y)\lpa{e^{\frac1{24}y}R\lpa{e^y}},
\end{align*}
where
\begin{align*}
	g(y) &:= \frac12\left(\frac{y}{e^y-1}\right)^{n+1}
	e^{\frac{23}{24}y}
	= \frac12\exp\llpa{-\frac n2y+\frac{11}{24}y 
	-(n+1)\sum_{j\ge1}\frac{B_{2j}}{2j\cdot (2j)!}\, y^{2j}},
\end{align*}
for small $y$. Since $b_n$ (see \eqref{E:ratio-bn}) grows factorially
with $n$, and the Taylor coefficients of $g(y)$ are small when
compared to $b_n$, we expand $g$ at $y=\eta$, where $\eta$ is small
and to be determined soon, and then carry out term by term extraction
of the coefficients, yielding
\begin{align*}
    [y^n]g(y)\lpa{e^{\frac1{24}y}R\lpa{e^y}}
    &= \sum_{j\ge0}\frac{g^{(j)}(\eta)}{j!}
	[y^n](y-\eta)^je^{\frac1{24}y}R\lpa{e^y} \\
	&= g(\eta) \bar{b}_n + g'(\eta)\lpa{\bar{b}_{n-1}
    -\eta \bar{b}_{n}} + \cdots,
\end{align*}
where $\bar{b}_n := (-1)^n b_n = [y^n]e^{\frac1{24}y}R\lpa{e^y}$. 
So if we take (see \eqref{E:ratio-bn})
\[
    \eta:= \frac{\bar b_{n-1}}{\bar b_n} = \frac{\pi^2}{12n}
	\left(1+O\lpa{23^{-n}}\right),
\]
then the terms involving $g'(\eta)$ become zero, and we have
\[
    [y^n]\lpa{e^{\frac1{24}y}R\lpa{e^y}}g(y)
    = g(\eta) \bar{b}_n\llpa{1 + \frac{g''(\eta)}{2g(\eta)}
    \Lpa{\frac{\bar b_{n-2}}{\bar b_n}-
    \frac{\bar b_{n-1}^2}{\bar b_n^2}}+\cdots}.
\]
In general, by estimating the Taylor remainders, we deduce the 
expansion
\[
    [y^n]\lpa{e^{\frac1{24}y}R\lpa{e^y}}g(y)
    = g(\eta) \bar b_n \llpa{1+\sum_{2\le j\le 2m}
	\frac{g^{(j)}(\eta)}{j!g(\eta)}\,H_j(n)+
	O\lpa{n^{-m-1}}},
\]
for $m\ge1$, where the general terms are of order $n^{\cl{\frac12j}}$ 
because $g^{(j)}(\eta)=O(n^j)$ and 
\[
    H_j(n) := \sum_{0\le \ell\le j}
	\binom{j}{\ell}\left(-\frac{\pi^2}{12n}\right)^{j-\ell}
	\frac{\bar b_{n-\ell}}{\bar{b}_n}
	= \left(\frac{\pi^2}{12}\right)^{j}
	\sum_{0\le \ell\le j}\binom{j}{\ell}
	\frac{(-1)^{j-\ell}(n-\ell)!}{n^{j-\ell}n!}
	\left(1+O\lpa{23^{-n}}\right),
\]
which decays in the order $n^{-j-\cl{\frac12j}}$. In this way, we 
obtain 
\begin{align}\label{E:blr2}
    [z^n]\sum_{k\ge0}
	\prod_{1\le j\le k}\lpa{(1+z)^j-1}
	= c \rho^{n}\, n!
	\llpa{1+\sum_{1\le j<m} \frac{c_j}{n^j}
	+O\lpa{n^{-m}}},
\end{align}
where $(c,\rho):=\lpa{\tfrac{6\sqrt{2}}{\pi^{2}}\,
e^{-\frac{\pi^2}{24}}, \tfrac{12}{\pi^2}}$ and 
\begin{align*}
	c_1 &:= \frac{\pi^2(\pi^2+66)}{1728}\approx 0.43333, \quad 
	c_2:= \frac{\pi^4(\pi^4-12\pi^2-3420)}{5971968}
	\approx -0.05612,\\
	c_3 &:= -\frac{\pi^4(95\pi^8+9360\pi^6
	-232416\pi^4-27051840\pi^2+709171200)}{1238347284480}
	\approx -0.03378.
\end{align*}


\section{A framework for matrices with $1$s}
\label{S:gf}

We consider in this section generating functions of the form 
\begin{align}\label{E:sum-d-prod-c}
	\sum_{k\ge0}d(z)^{k+\omega_0}
	\prod_{1\le j\le k}\lpa{e(z)^{j+\omega}-1}^\alpha,
\end{align}
for $\alpha\in\mathbb{Z}^+$ and $\omega_0,\omega\in\mathbb{C}$, where
$d(z)$ and $e(z)$ are formal power series satisfying $d(0)>0$,
$e(0)=1$ and $e'(0)\ne0$. Then we discuss applications to
row-Fishburn, Fishburn matrices with entry restrictions and some OEIS
sequences.

Our approach consists in examining first the asymptotics of the 
simpler pattern 
\begin{align*}
	[z^n]\sum_{k\ge0}
	\prod_{1\le j\le k}\lpa{e^{(j+\omega)z}-1}^\alpha,
\end{align*}
for $\alpha\in\mathbb{Z}^+$ and $\omega\in\mathbb{C}$, and follows
closely the detailed analysis given in Section~\ref{S:basis} for the
sequence \href{https://oeis.org/A158690}{A158690}. Then the extension 
to \eqref{E:sum-d-prod-c} will rely on the change of variables 
argument of Section~\ref{S:cov}.

\begin{proposition} \label{P:eoza}
For any $\alpha\in\mathbb{Z}^+$ and $\omega\in\mathbb{C}$,
\begin{align}\label{E:proto1}
	[z^n]\sum_{k\ge0}
	\prod_{1\le j\le k}\lpa{e^{(j+\omega)z}-1}^{\alpha}
	\simeq c\rho^{n} n^{n+\alpha\omega
	+\frac12\alpha},
\end{align}
uniformly in $\omega$, where the notation ``$\simeq$'' is defined in 
\eqref{E:a-cong-b} and
\begin{align*}
	(c,\rho) := \llpa{\frac{\sqrt{6}}{\alpha\pi}
	\llpa{\frac{2\sqrt{6}}{\sqrt{\alpha\pi}\,
	\Gamma(1+\omega)} 
	\Lpa{\frac{12}{\alpha\pi^2}}^\omega}^{\alpha},
	\frac{12}{e\alpha\pi^2}}.
\end{align*}	
\end{proposition}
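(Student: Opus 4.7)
The plan is to transplant the double saddle-point scheme of Section~\ref{S:basis} to the slightly more general integrand $\tilde A_k(z) := \prod_{1\le j\le k}\lpa{e^{(j+\omega)z}-1}^{\alpha}$, tracking the parameters $\alpha$ and $\omega$ carefully through every step. The first move is to derive the analog of \eqref{E:Lkz} for $\tilde L_k(z) := \log \tilde A_k(z)$ by peeling off the arithmetic progression:
\[
\tilde L_k(z) = \alpha k\log z + \alpha\log\frac{\Gamma(k+1+\omega)}{\Gamma(1+\omega)} + \alpha\sum_{1\le j\le k}\log\frac{e^{(j+\omega)z}-1}{(j+\omega)z},
\]
and applying the Euler--Maclaurin formula \eqref{E:emf} to the last (smooth) sum. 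Stirling's formula applied to $\Gamma(k+1+\omega)$ then injects an extra $\alpha\omega\log k + \alpha\log\lpa{\sqrt{2\pi}/\Gamma(1+\omega)}$ into the expansion, which is exactly the source of the $n^{\alpha\omega}$ power and the $\Gamma(1+\omega)^{-\alpha}$ factor in the final constant. Termwise differentiation then yields the corresponding $\alpha$-scaled analogs of \eqref{E:Lk-1} and \eqref{E:Lk-m}.

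Next, mirroring Lemma~\ref{L:max-phi}, with $r>0$ the saddle-point defined by $r\tilde L_k'(r)=n$, the leading approximation becomes $\alpha I(kr)/r\sim n$, giving the constraint $\alpha I(q\varrho)=\varrho$ for $q=k/n$ and $\varrho=nr$. The potential
\[
\phi(q,\varrho) := -\log\varrho + \alpha q\log\lpa{e^{q\varrho}-1} - 1
\]
satisfies $\partial_q^2\phi < 0$ along the constraint by exactly the same monotonicity argument as in Lemma~\ref{L:max-phi}, and its unique maximum on $q\in[0,1]$ is attained at $(\mu,\xi) = \lpa{\tfrac{12\log 2}{\alpha\pi^2},\tfrac{\alpha\pi^2}{12}}$, using the identity $I(\log 2) = \pi^2/12$. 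This pins down $e^{\phi(\mu,\xi)} = 12/(e\alpha\pi^2) = \rho$ and the factorial part $n^n$.

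I would then reproduce Propositions~\ref{P:poly} and \ref{P:central-o} essentially verbatim: Pittel's inequality \eqref{E:pittel} applies factor-by-factor to $e^{(j+\omega)z}-1$ (the shift $\omega$ being harmless since it is absorbed into the exponent on a compact set of $\omega$), while the saddle-point bound together with the quadratic expansion of $\phi$ near $(\mu,\xi)$ confines the relevant $k$'s to $|k-\mu n|\le n^{5/8}$ and the contour to $|\theta|\le n^{-3/8}$. Inside this central window, a local expansion of $\tilde L_k\lpa{re^{i\theta}}-\tilde L_k(r)$ followed by Gaussian integration in $\theta$ produces one power of $n^{-1/2}$; approximating the sum over $k$ by a Gaussian integral produces a compensating $\sqrt{n}$. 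The overall polynomial factor is then $n^n \cdot n^{\alpha\omega+\frac12\alpha}$, where the $n^{\alpha\omega}$ comes from Stirling on $\Gamma(k+1+\omega)$ and the $n^{\frac12(\alpha-1)}$ from the $\alpha$-fold $\lpa{2\pi(e^{kr}-1)/r}^{\alpha/2}$-type factor in the refined expansion of $\tilde L_k(r)$.

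The main obstacle I anticipate is clean bookkeeping of the constant $c$: one must collect the $\alpha$-dependent Gaussian normalization, the prefactor $\lpa{2\pi/\xi}^{\alpha/2}$ from the refined expansion of $\tilde L_k$ at the saddle, the explicit factor $\lpa{\xi/n}^{\alpha\omega} = \lpa{\alpha\pi^2/(12n)}^{\alpha\omega}$ arising from the $\alpha k\log r + \alpha\omega\log k$ terms, and the Gamma factor $\Gamma(1+\omega)^{-\alpha}$, and then combine them (using $e^{\mu\xi}-1 = 1$) into the stated form
\[
c = \frac{\sqrt{6}}{\alpha\pi}\llpa{\frac{2\sqrt{6}}{\sqrt{\alpha\pi}\,\Gamma(1+\omega)}\Lpa{\frac{12}{\alpha\pi^2}}^{\omega}}^{\alpha}.
\]
Uniformity in $\omega$ on compact subsets of $\mathbb{C}$ is automatic because all Euler--Maclaurin error terms are uniform in $\omega$ and the saddle-point location $(\mu,\xi)$ itself is independent of $\omega$ at leading order; the $\omega$-dependent corrections in $\varrho$ only affect the $O(n^{-1})$ remainder absorbed into \eqref{E:proto1}.
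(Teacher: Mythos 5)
Your proposal follows the paper's own double saddle-point scheme almost exactly: Euler--Maclaurin for the local expansion, the same concave potential $-\log\varrho+\alpha q\log(e^{q\varrho}-1)-1$ under the constraint $\alpha I(q\varrho)=\varrho$, and the same Gaussian window in $k$ and $\theta$ with the two-scale normalization producing $n^{n+\alpha\omega+\frac12\alpha}$. The only organizational difference is that you peel off $\prod_{1\le j\le k}(j+\omega)=\Gamma(k+1+\omega)/\Gamma(1+\omega)$ before invoking Euler--Maclaurin, whereas the paper keeps $\omega$ inside the sum and lets $\Gamma(1+\omega)^{-1}$ emerge as the shifted boundary term in \eqref{E:emf-de}; these are equivalent bookkeepings leading to the same expansion. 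The one point where your sketch is genuinely imprecise is the tail estimate: for nonreal $\omega$, Pittel's inequality \eqref{E:pittel} applied to $w=(j+\omega)z$ gives decay in $\arg w=\arg(j+\omega)+\theta$, not in $\theta$ alone, so it does not ``apply factor-by-factor'' as stated. The paper's cleaner route is the multiplicative comparison $\prod_{1\le j\le k}\bigl|e^{(j+\omega)z}-1\bigr|=O\lpa{k^{\Re(\omega)}}\prod_{1\le j\le k}\bigl|e^{jz}-1\bigr|$ for $|z|\asymp n^{-1}$ and $\omega=O(1)$, which reduces both the outer sum over $k$ and the inner integral over $\theta$ to the $\omega=0$ bounds already proved in Propositions~\ref{P:poly} and~\ref{P:central-o}. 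Your remark that the shift is ``absorbed into the exponent'' points at this reduction but leaves it to be supplied. You also leave out the case $\omega\in\mathbb{Z}^-$, where the product vanishes identically for $k\ge-\omega$, a degeneracy the paper covers by reading $\Gamma(1+\omega)^{-1}=0$ in the leading constant.
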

When $\omega\in\mathbb{Z}^-$, the leading constant $c$ is interpreted 
as zero because of $\Gamma(1+\omega)$ in the denominator, and the 
right-hand side of \eqref{E:proto1} becomes then a big-$O$ estimate. 
\begin{proof}
We sketch the major steps for obtaining the dominant term, as the error 
term follows from the same procedure with more refined 
calculations. 
\begin{itemize} 
    
	\item By the Euler-Maclaurin formula \eqref{E:emf}
	\begin{equation}\label{E:emf-de}
        \begin{split}
    	    \sum_{1\le j\le k}\log\lpa{e^{(j+\omega)z}-1}
    		&= k\log\lpa{e^{kz}-1}-\frac{I(kz)}z
    		+\Lpa{\omega+\frac12}
    		\log\frac{e^{kz}-1}{z}
			\\&\qquad -\log\Gamma(1+\omega)+ \frac{\log2\pi}2
            +O\lpa{|\omega|^2\lpa{k^{-1}+|z|}},
        \end{split}
	\end{equation}
    (compare \eqref{E:Lkz})
    which holds uniformly $k\to\infty$ and $k|z|\le2\pi-\ve$ in the 
    sector $|\arg z|\le \pi-\ve$. Here \eqref{E:emf-de} holds when 
    $\omega\ne\mathbb{R}^-$. But the asymptotic approximation, by 
    taking the exponential on both sides of \eqref{E:emf-de}, 
	\begin{align*}
        \begin{split}
    	    &\prod_{1\le j\le k}\lpa{e^{(j+\omega)z}-1}\\
    		&\qquad= \frac{\sqrt{2\pi}}{\Gamma(1+\omega)}
            \Lpa{\frac{e^{kz}-1}{z}}^{\omega+\frac12}
            \lpa{e^{kz}-1}^k e^{-I(kz)/z}
            \lpa{1+O\lpa{|\omega|^2\lpa{k^{-1}+|z|}}}
        \end{split}
	\end{align*}
    does hold for bounded $\omega$, provided we interpreted 
    the factor $\frac1{\Gamma(1+\omega)}$ as zero when 
    $\omega\in\mathbb{Z}^-$. 
    
    \item The saddle-point equation satisfies asymptotically, by the 
	same differentiation argument used for deriving \eqref{E:Lk-1}, 
    \[
        \frac\alpha r\,I(kr)
        +\frac\alpha 2(2\omega+1)
        \Lpa{\frac{kr}{1-e^{-kr}}-1}+O\lpa{k^{-1}+r}
        =n.
    \]
    Since the dominant term is independent of $\omega$, we deduce 
    that $k=qn$ with $q \sim \frac{\mu}{\alpha}$ and $rn\sim 
    \alpha\xi$, where $(\mu,\xi):=\lpa{\frac{12}{\pi^2}\log 2,
    \frac{\pi^2}{12}}$ is the same as in \eqref{E:mu-xi}.
   
   	\item Observe that for large $k\le n$ 
	\[
	    \prod_{1\le j\le k}\bigl|e^{(j+\omega)z}-1\bigr|
		= O\lpa{k^{\Re(\omega)}}
        \prod_{1\le j\le k}\bigl|e^{jz}-1\bigr|,
	\]
	when $|z|\asymp n^{-1}$ and $\omega=O(1)$. Then the smallness of 
	the sum 
    \[
        \sum_{|k-\frac\mu\alpha n|\ge \sqrt{2}\sigma n^{\frac58}} 
        [z^n]\prod_{1\le j\le k}\lpa{e^{(j+\omega)z}-1}^\alpha,
    \]
    as well as the corresponding sum of integrals
    $\sum_{|k-\frac\mu\alpha n|\le \sqrt{2}\sigma n^{\frac58}}
    \int_{6n^{-\frac38}\le|\theta|\le \pi}$ follows from the same
    bounding techniques used in the proofs of
    Propositions~\ref{P:poly} and \ref{P:central-o}.
    
    \item Inside the central range $\frac1\alpha k_-\le k\le
    \frac1\alpha{k_+}$, where $k_\pm := \mu n \pm \sqrt{2}\,
    \sigma n^{\frac58}$, write, as before, $q:=\frac1\alpha
    \lpa{\mu + \sigma \frac{x}{\sqrt{n}}}$, and solve the 
    saddle-point equation for $r$, giving
    \begin{align}\label{E:rn}
        rn = \alpha\xi + \frac{\alpha\xi_1x}{\sqrt{n}}
        +\frac{\alpha^2\xi_2(1+2\omega)+\alpha\xi_3x^2}{n}
        +O\Lpa{\frac{|x|+|x|^3}{n^{3/2}}},
    \end{align}
    where $\xi_i$ are defined in \eqref{E:xi1-3}.
	
	\item We then obtain
    \[
	    r^{-n}\prod_{1\le j\le k}\lpa{e^{(j+\omega)r}-1}^{\alpha}
		\sim c_0 \rho^n n^{n+\alpha(\frac12+\omega)},
	\]
	where 
	\[
		(c_0,\rho):=\llpa{ 
		\llpa{\frac{2\sqrt{6}}{\sqrt{\alpha\pi}\,
		\Gamma(1+\omega)} 
		\Lpa{\frac{12}{\alpha\pi^2}}^\omega}^{\alpha},
		\frac{12}{e\alpha\pi^2}}.
	\]
	
	\item The remaining saddle-point analysis is similar to that of 
	Theorem~\ref{T:eg1}. 
\end{itemize}	
\end{proof}
The uniformity in $\omega$ will be needed in Section~\ref{S:B}. We 
now consider the framework \eqref{E:sum-d-prod-c}. 

\begin{theorem}\label{T:gen}
Assume $\alpha\in\mathbb{Z}^+$ and $\omega_0, \omega\in \mathbb{C}$. 
For any two formal power series $d(z)$ and $e(z)$ satisfying 
$d(0)=e(0)=1$ and $e'(0)\ne0$, we have
\begin{align}\label{E:gen}
    [z^n]\sum_{k\ge0}d(z)^{k+\omega_0}
    \prod_{1\le j\le k}\lpa{e(z)^{j+\omega}-1}^\alpha
    \simeq c\rho^n n^{n+\alpha(\frac12+\omega)},
\end{align}
uniformly for bounded $\omega_0$ and $\omega$, where $d_j := 
[z^j]d(z)$, $e_j := [z^j]e(z)$, and 
\begin{align}\label{E:de-cre}
	(c,\rho) := \llpa{\frac{\sqrt{6}}{\alpha\pi}
	\llpa{\frac{2\sqrt{6}}{\sqrt{\alpha\pi}\,
	\Gamma(1+\omega)} 
	\Lpa{\frac{12}{\alpha\pi^2}}^\omega}^{\alpha}
	2^{\frac{d_1}{e_1}}e^{\frac{\alpha\pi^2}{12}
	\lpa{\frac{e_2}{e_1^2}-\frac12}},
	\frac{12e_1}{e\alpha\pi^2}}.
\end{align}   
\end{theorem}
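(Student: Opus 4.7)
The plan is to reduce Theorem~\ref{T:gen} to Proposition~\ref{P:eoza} by a formal change of variables, and then to track how the remaining factors contribute to the leading constant. Since $e(0)=1$ and $e_1:=e'(0)\ne 0$, the formal series $h(z):=\log e(z) = e_1 z + (e_2-\tfrac12 e_1^2)z^2+\cdots$ has a compositional inverse
\[
   \phi(y) = \frac{y}{e_1} + \alpha_2\, y^2 + O(y^3),\qquad \alpha_2:=\frac{e_1^2/2-e_2}{e_1^3}.
\]
Under $z=\phi(y)$ the inner product transforms into
\[
   \prod_{1\le j\le k}\lpa{e(\phi(y))^{j+\omega}-1}^{\alpha} = \prod_{1\le j\le k}\lpa{e^{(j+\omega)y}-1}^{\alpha},
\]
which is exactly the expression treated by Proposition~\ref{P:eoza}. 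A standard residue calculation yields
\[
   [z^n]F(z) = [y^n]\Lpa{\frac{y}{\phi(y)}}^{n+1}\phi'(y)\,F(\phi(y)),
\]
and, setting $D(y):=d(\phi(y))$ (so $D(0)=1$, $D'(0)=d_1/e_1$), Theorem~\ref{T:gen} becomes the asymptotic extraction of
\[
   [y^n]\Lpa{\frac{y}{\phi(y)}}^{n+1}\phi'(y)\sum_{k\ge 0}D(y)^{k+\omega_0}\prod_{1\le j\le k}\lpa{e^{(j+\omega)y}-1}^{\alpha}.
\]

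From here I would rerun the double saddle-point analysis of Section~\ref{S:basis}, now performed in the $y$-variable, since the inner product still governs the dominant behavior. The saddle location $y^*\sim\alpha\xi/n$ and $k^*\sim \mu n/\alpha$ is unaltered: $\log D(y)$ and $\log\phi'(y)$ are analytic with bounded derivatives near $y=0$, while $(n+1)\log(y/\phi(y))$, though carrying a factor of $n$, has a $y$-derivative that is analytic and bounded, so it perturbs the saddle-point equation only by an $O(1)$ additive term that is absorbed into the lower-order corrections of \eqref{E:rn}. Consequently, the four negligibility estimates used in Section~\ref{S:basis} (central range in $k$, tails in $k$, tails in $\theta$, and Gaussian evaluation of the central integral) transfer verbatim, and the leading-order contribution equals the Proposition~\ref{P:eoza} asymptotic multiplied by the values of the three extra factors at $(y^*,k^*)$.

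The remaining task is to evaluate those three factors. From $\phi'(0)=1/e_1$ one obtains $1/e_1$; the second-order Taylor expansion
\[
   \Lpa{\frac{y}{\phi(y)}}^{n+1} = e_1^{n+1}\exp\lpa{-(n+1)\,e_1\alpha_2\, y+O(n y^2)}
\]
at $y=y^*\sim\alpha\xi/n$ contributes $e_1^{n+1}\,e^{\frac{\alpha\pi^2}{12}(e_2/e_1^2-1/2)}$; and $\log D(y^*)\sim (d_1/e_1)y^*$ together with $k^*+\omega_0\sim\mu n/\alpha$ and the identity $\mu\xi=\log 2$ gives $D(y^*)^{k^*+\omega_0}\to 2^{d_1/e_1}$ (with $D(y^*)^{\omega_0}\to 1$). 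Combining these factors with the conclusion of Proposition~\ref{P:eoza} turns the exponential base $12/(e\alpha\pi^2)$ into $12e_1/(e\alpha\pi^2)$ and multiplies the leading constant by $2^{d_1/e_1}e^{\frac{\alpha\pi^2}{12}(e_2/e_1^2-1/2)}$, exactly reproducing \eqref{E:de-cre}. The main obstacle is maintaining uniformity in $\omega$ and $\omega_0$ (as required later for the bivariate applications in Section~\ref{S:B}) while checking that the Taylor remainders of $\phi$, $\phi'$ and $D$ around $y=0$ combine consistently with the $O(n^{-1})$ relative error already established in Proposition~\ref{P:eoza}; this is a careful but routine accounting of the error terms already present in the saddle-point bounds of Section~\ref{S:basis}.
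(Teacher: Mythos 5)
Your proposal is correct and takes essentially the same route as the paper's proof: both make the change of variables $z=\phi(y)$ (where $e(\phi(y))=e^{y}$, i.e.\ the paper's $\psi$), reduce to the exponential prototype of Proposition~\ref{P:eoza}, observe that $d(\phi(y))^{k+\omega_0}$ and $(y/\phi(y))^{n+1}\phi'(y)$ contribute only $O(1)$ shifts to the saddle equation and then evaluate those extra factors at the saddle $(k^*,y^*)\sim(\mu n/\alpha,\alpha\xi/n)$ using $\mu\xi=\log 2$ to produce the factors $2^{d_1/e_1}$, $e_1^{n}$ and $e^{\frac{\alpha\pi^2}{12}(e_2/e_1^2-\frac12)}$.
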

The situation when $d(0)\ne1$ is readily modified. Also the error
term can be further refined if needed. The case when $e'(0)=0$ but
$e''(0)>0$ will be treated in Section \ref{S:jelinek} with
particular applications to self-dual Fishburn matrices.

We see that the exponential term depends on $\alpha$ and $e_1$, the
polynomial term on $\alpha$ and $\omega$, and the leading constant
$c$ on $\alpha,\omega,d_1,e_1$ and $e_2$. Furthermore, as far as the
dominant asymptotics of the coefficients is concerned, the difference
in \eqref{E:gen} and \eqref{E:proto1} is reflected via the first 
three terms $d_1, e_1, e_2$ in the Taylor expansions of $d(z)$ and 
$e(z)$, but not on $\omega_0$.
 
\begin{proof}
By Cauchy's integral formula
\begin{align*}
	a_n &:= \frac{1}{2\pi i}\oint_{|z|=r_0}z^{-n-1}
	\sum_{1\le k\le \frac n\alpha}d(z)^{k+\omega_0}
	\prod_{1\le j\le k}\left(e(z)^{j+\omega}-1\right)^\alpha\dd z,
\end{align*}
where $r_0>0$. Without loss of generality, we assume that both $d(z)$
and $e(z)$ are analytic at zero; otherwise, we truncate both formal
power series after the $n$th terms, resulting in two polynomials and
thus analytic functions at the origin. Since $e(z)= 1+e_1z+\cdots$
with $e_1\ne0$, the function is locally invertible and we can make 
the change of variables $e(z)=e^y$, giving
\begin{align*}
	a_n=\frac{1}{2\pi i}\oint_{|y|=r}\psi'(y)\psi(y)^{-n-1}
	\sum_{1\le k\le \frac n\alpha}d(\psi(y))^{k+\omega_0} A_k(y)
	\dd y,
\end{align*}
where $A_k(y) := \prod_{1\le j\le k}\lpa{e^{(j+\omega)y}-1}^\alpha$ 
and $\psi(y)$ satisfies $\psi(0)=0$ and $e(\psi(y))=e^y$. In 
particular, 
\begin{align}\label{E:d1d2}
	\psi_1=[y]\psi(y)=\tfrac1{e_1} \and 
	\psi_2=[y^2]\psi(y)=\tfrac1{e_1}\lpa{\tfrac{1}{2}
    -\tfrac{e_2}{e_1^2}}.
\end{align}
Observe first that for small $|y|$, 
\begin{align*}
    d(\psi(y))^{k+\omega_0}
    = \left(1+d_1\psi_1y+\lpa{d_1\psi_2+d_2\psi_1^2}y^2
    +\cdots \right)^k;
\end{align*}
on the other hand, from our saddle-point analysis above, the 
integration path $|y|=r$ is very close to zero with 
$r\asymp n^{-1}$, and most contribution to $a_n$ comes from  
terms with $k$ of linear order, so we see that $d(\psi(y))^k$ is 
bounded and close to $e^{d_1\psi_1ky}$ for large $n$. Similarly, by 
\eqref{E:d1d2},
\begin{align*}
	\psi'(y)\psi(y)^{-n-1}&=\lpa{\psi_1+2\psi_2y+O\lpa{|y|^2}}
	\left(\psi_1y+\psi_2y^2+O\lpa{|y|^3}\right)^{-n-1}\\
	&=e_1^n y^{-n-1} e^{-\frac{\psi_2}{\psi_1}ny}
    \left(1+O\lpa{|y|+n|y|^2}\right).
\end{align*}
Thus the same proof of Theorem~\ref{T:eg1} extends \emph{mutatis 
mutandis} to this case, and we then obtain the asymptotic 
approximation
\begin{align*}
    a_n &= \sum_{\frac{k_-}{\alpha}\le k\le \frac{k_+}{\alpha}}
    \frac1{2\pi i}\oint_{|y|=r}
    y^{-n-1} A_k(y)e^{-\frac{\psi_2}{\psi_1}ny+d_1\psi_1ky}
    \left(1+O\lpa{|y|+n|y|^2}\right)\dd y\\
	&\qquad + O\lpa{\rho^n n^{n+\alpha(\Re(\omega)+\frac12)}
	e^{-n^{\frac14}}},
\end{align*}
where $k_\pm := \mu n \pm \sqrt{2}\,\sigma n^{\frac58}$ and $r$ 
satisfies \eqref{E:rn}. Since $q=\frac kn$ satisfies 
$q=\frac1\alpha\lpa{\mu+\sigma\frac{x}{\sqrt{n}}}$, we 
then deduce \eqref{E:gen} by noting that 
\[
    e^{-\frac{\psi_2}{\psi_1}nr+d_1\psi_1kr}
    = e^{-\frac{\psi_2}{\psi_1}\alpha\xi+d_1\psi_1\mu\xi}
    \Lpa{1+\frac{\tilde{g}_1(x)}{\sqrt{n}}+
    \frac{\tilde{g}_2(x)}{n}+\cdots},
\]
for some polynomials $\tilde{g}_1(x)$ and $\tilde{g}_1(x)$, where 
$(\mu,\xi)$ is given in \eqref{E:mu-xi}. 
\end{proof}

 
\section{Applications I. Univariate asymptotics}
\label{S:A}

We group in this section various examples (mostly from the OEIS)
according to the pair $(\alpha,\omega)$. Some of them were already 
analyzed in the OEIS by Kot\v e\v sovec, but without proofs.

\subsection{$\Lambda$-row-Fishburn matrices and 
examples with $(\alpha,\omega):=(1,0)$}
\label{S:A1}

We derive a general asymptotic approximation to the number of  
$\Lambda$-row-Fishburn matrices and discuss some other examples. 

\subsubsection{$\Lambda$-row-Fishburn matrices}

From Theorem~\ref{T:gen}, it is clear that no matter how widely we 
choose the nonnegative integers as entries, the number of the 
resulting row-Fishburn matrices of size $n$ depends only on the 
numbers of $1$s and $2$s as far as the leading order asymptotics is 
concerned, provided that the generating function satisfies 
\eqref{E:Lambda-z}. 

\begin{corollary}\label{C:1}
Let $\Lambda$ be a multiset of nonnegative integers with the 
generating function
\begin{align}\label{E:Lambda-z}
    \Lambda(z):=1+\sum_{\lambda\in \Lambda}z^{\lambda}
    =1+\lambda_1z+\lambda_2z^2+\cdots.
\end{align}
If $\Lambda'(0)=\lambda_1>0$, then the number of 
$\Lambda$-row-Fishburn matrices of size $n$ satisfies 
\begin{align}\label{E:lrf-cor}
    [z^n]\sum_{k\ge 0}\prod_{1\le j\le k}\lpa{\Lambda(z)^j-1}
    \simeq c\rho^n n^{n+\frac{1}{2}}\with
    (c,\rho):=\Lpa{\tfrac{12}{\pi^{3/2}}\,
    e^{\frac{\pi^2}{12}
    \lpa{\frac{\lambda_2}{\lambda_1^2}-\frac{1}{2}}},
    \tfrac{12\,\lambda_1}{e\pi^2}}.
\end{align}
\end{corollary}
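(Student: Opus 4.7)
The plan is to recognize that Corollary \ref{C:1} is a direct specialization of Theorem \ref{T:gen}, so essentially no new analysis is needed beyond a careful matching of parameters. Specifically, I will take $\alpha = 1$, $\omega_0 = \omega = 0$, $d(z) \equiv 1$, and $e(z) = \Lambda(z)$ in the general framework
\[
    \sum_{k\ge0}d(z)^{k+\omega_0}
    \prod_{1\le j\le k}\lpa{e(z)^{j+\omega}-1}^\alpha
\]
of \eqref{E:sum-d-prod-c}. Under these choices, the outer factor $d(z)^{k+\omega_0}$ collapses to $1$ and each factor $(e(z)^{j+\omega}-1)^\alpha$ becomes $\Lambda(z)^j-1$, recovering exactly the generating function \eqref{E:gf-lrf} appearing in Corollary \ref{C:1}.

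Next I would verify the hypotheses of Theorem \ref{T:gen}: $d(0) = 1$ is trivial; $e(0) = \Lambda(0) = 1$ holds by the definition \eqref{E:Lambda-z}; and the crucial nondegeneracy condition $e'(0) = \lambda_1 \neq 0$ is precisely the standing assumption $\lambda_1 > 0$ of the corollary. The Taylor coefficients needed for the leading constant in \eqref{E:de-cre} are then $d_1 = 0$, $e_1 = \lambda_1$, and $e_2 = \lambda_2$.

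Substituting these values into \eqref{E:de-cre} is a routine simplification. The exponential growth rate becomes
\[
    \rho = \frac{12\,e_1}{e\alpha\pi^2}
    = \frac{12\lambda_1}{e\pi^2},
\]
the polynomial exponent is $n + \alpha(\tfrac12 + \omega) = n + \tfrac12$, and the leading constant reduces to
\[
    c = \frac{\sqrt{6}}{\pi}\cdot\frac{2\sqrt{6}}{\sqrt{\pi}\,\Gamma(1)}\cdot 2^{0}\cdot
    e^{\frac{\pi^2}{12}\lpa{\frac{\lambda_2}{\lambda_1^2}-\frac12}}
    = \frac{12}{\pi^{3/2}}\,
    e^{\frac{\pi^2}{12}\lpa{\frac{\lambda_2}{\lambda_1^2}-\frac12}},
\]
matching the values claimed in \eqref{E:lrf-cor}.

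Since all the analytic heavy lifting—the double saddle-point analysis, the identification of the central range $k \sim \mu n$, the tail negligibility estimates of Propositions \ref{P:poly} and \ref{P:central-o}, and the change of variables $\Lambda(z) = e^y$ that reduces the problem to the prototype \eqref{E:proto1}—is already packaged inside Theorem \ref{T:gen}, the proof is nothing more than a verification of hypotheses and a constant computation. There is no real obstacle here; the only mild care needed is to observe that even if $\Lambda$ is an infinite multiset so that $\Lambda(z)$ is only a formal power series, one may harmlessly truncate after degree $n$ before extracting $[z^n]$, as explicitly noted in the proof of Theorem \ref{T:gen}.
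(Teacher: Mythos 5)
Your proof is correct and matches the paper's exactly: the paper's proof of Corollary~\ref{C:1} is literally the one-liner ``Apply Theorem~\ref{T:gen} with $(d(z),e(z)) := (1,\Lambda(z))$,'' and you have simply spelled out the parameter identification $\alpha=1$, $\omega_0=\omega=0$, verified the hypotheses, and carried out the constant computation \eqref{E:de-cre} correctly. Nothing to add.
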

\begin{proof}
Apply Theorem~\ref{T:gen} with $(d(z),e(z)) := (1,\Lambda(z))$.    
\end{proof}

In particular, this corollary applies to the OEIS sequences in 
Table~\ref{T:lrf}. 

\begin{table}[!ht]
\begin{center}
\begin{tabular}{cccccc} \hline
OEIS & $\Lambda$ & $\Lambda(z)$ & $(\lambda_1,\lambda_2)$ 
& $c$  & $\rho$ \\ \hline  
\href{https://oeis.org/A179525}{A179525} & $\{0,1\}$
& $1+z$ & $(1,0)$ & $\frac{12}{\pi^{3/2}}\,
e^{-\frac{\pi^2}{24}}$ & $\frac{12}{e\pi^2}$ \\

\href{https://oeis.org/A289316}{A289316} & 
$\{0\}\cup\{2k-1:k\in\mathbb{Z}^+\}$ &
$\frac{1+z-z^2}{1-z^2}$ & $(1,0)$ & 
$\frac{12}{\pi^{3/2}}\,e^{-\frac{\pi^2}{24}}$
& $\frac{12}{e\pi^2}$ \\ \hline

\href{https://oeis.org/A207433}{A207433} & $\{0,1,2\}$ &
$\frac{1-z^3}{1-z}$ & $(1,1)$
& $\frac{12}{\pi^{3/2}}\,e^{\frac{\pi^2}{24}}$
& $\frac{12}{e\pi^2}$ \\

\href{https://oeis.org/A158691}{A158691} 
& $\mathbb{Z}_{\ge0}$ & 
$\frac1{1-z}$ & $(1,1)$ 
& $\frac{12}{\pi^{3/2}}\,e^{\frac{\pi^2}{24}}$
& $\frac{12}{e\pi^2}$ \\ \hline

\href{https://oeis.org/A289313}{A289313} 
& $\{0,1,1,2,2,\dots\}$ &
$\frac{1+z}{1-z}$ & $(2,2)$ &
$\frac{12}{\pi^{3/2}}$ & $\frac{24}{e\pi^2}$\\
\hline
\end{tabular}    
\end{center}  
\medskip
\caption{The large-$n$ asymptotics \eqref{E:lrf-cor} of some OEIS 
sequences that correspond to the enumeration of 
$\Lambda$-row-Fishburn matrices with 
different $\Lambda$. Here we split the pair $(c,\rho)$ for clarity 
and group the sequences with the same pair $(\lambda_1,\lambda_2)$.}
\label{T:lrf}
\end{table}
The last sequence of Table \ref{T:lrf} can also be interpreted as the
number of upper triangular matrices with integer entries (positive
and negative) whose sum of absolute entries is $n$, and no row sums
(in absolute entries) to zero.

\subsubsection{Some OEIS sequences}
Some other OEIS examples with $(\alpha,\omega):=(1,0)$ are compiled 
in Table~\ref{T:4}, where they all satisfy the asymptotic pattern
\begin{align}\label{de-ais1}
    [z^n]\sum_{k\ge0}d(z)^k
    \prod_{1\le j\le k}\lpa{e(z)^j-1}\simeq 
    c\rho^nn^{n+\frac12}.
\end{align}
\begin{table}[!ht]
\begin{center}
\begin{tabular}{ccccc}\hline
OEIS & $d(z)$ & $e(z)$ & $(d_1,e_1,e_2)$ & $(c,\rho)$ \\ \hline  
\href{https://oeis.org/A158690}{A158690} 
& $1$ & $e^z$ & $(0,1,\frac12)$ & $\lpa{\frac{12}
{\pi^{3/2}},\frac{12}{e\pi^2}}$ \\ 
\href{https://oeis.org/A196194}{A196194} 
& $\frac z{e^z-1}$ & $e^z$ & $\lpa{-\frac12,1,\frac12}$
& $\lpa{\frac{6\sqrt{2}}{\pi^{3/2}},\frac{12}{e\pi^2}}$ \\
\href{https://oeis.org/A207214}{A207214} 
& $e^z$ & $e^z$ & $(1,1,\frac12)$
& $\lpa{\frac{24}{\pi^{3/2}},\frac{12}{e\pi^2}}$ \\
\href{https://oeis.org/A207386}{A207386} 
& $1$ & $\frac{1+z}{1+z^3}$ & $(0,1,0)$ & 
$\lpa{\frac{12}{\pi^{3/2}}\,e^{-\frac{\pi^2}{24}}, 
\frac{12}{e\pi^2}}$ \\
\href{https://oeis.org/A207397}{A207397} 
& $1$ & $\frac{1+z}{1+z^2}$ & $(0,1,-1)$ &
$\lpa{\frac{12}{\pi^{3/2}}\,e^{-\frac{\pi^2}{8}},
\frac{12}{e\pi^2}}$ \\
\href{https://oeis.org/A207556}{A207556} 
& $1+z$ & $1+z$ & $(1,1,0)$ 
& $\lpa{\frac{24}{\pi^{3/2}}\,
e^{-\frac{\pi^2}{24}},\frac{12}{e\pi^2}}$ \\
\hline
\end{tabular}    
\end{center}
\medskip
\caption{Some OEIS examples with $(\alpha,\omega):=(1,0)$; they all 
satisfy the asymptotic pattern \eqref{de-ais1} with $(c,\rho)$ given 
in the last column. All $\rho$'s are the same because $e'(0)=1$.}
\label{T:4}
\end{table}

Note that the Taylor expansions of $e(z)$ in the two cases
\href{https://oeis.org/A207386}{A207386} and
\href{https://oeis.org/A207397}{A207397} of Table \ref{T:4} both
contain negative coefficients.


\subsubsection{Minor variants}

Consider the following sequence
(\href{https://oeis.org/A207652}{A207652}) whose generating function
does not have the same pattern \eqref{E:sum-d-prod-c}; yet this
sequence has the same leading order asymptotics as
\href{https://oeis.org/A179525}{A179525} (see Table~\ref{T:lrf}):
\[
    [z^n]\sum_{k\ge0}\prod_{1\le j\le k}
	\frac{(1+z)^j-1}{1-z^j}
	\simeq c\rho^n n^{n+\frac12},
	\with
	(c,\rho):=\lpa{\tfrac{12}{\pi^{3/2}}
	e^{-\frac{\pi^2}{24}}, \tfrac{12}{e\pi^2}}.
\]
This is because the extra product
\begin{equation}\label{E:int-part}
    \prod_{1\le j\le k}\frac1{1-z^j}
	= 1+z+O(|z|^2)
\end{equation}
is asymptotically negligible when $z\asymp n^{-1}$. Similarly, the
sequence \href{https://oeis.org/A207653}{A207653} satisfies
\begin{align}\label{E:A207653}
   [z^n]\sum_{k\ge0}\prod_{1\le j\le k}
	\frac{1-(1-z)^{2j-1}}{1-z^{2j-1}}
	\simeq c\rho^n n^{n+\frac12},
	\with
	(c,\rho):=\lpa{\tfrac{12}{\pi^{3/2}}
	e^{\frac{\pi^2}{24}}, \tfrac{12}{e\pi^2}}.
\end{align}
which has the same leading-order asymptotics as
\href{https://oeis.org/A158691}{A158691}. 

Another example is \href{https://oeis.org/A207434}{A207434}, which is 
defined by 
\[
    b_n := n[z^n]\log\llpa{\sum_{k\ge0}\prod_{1\le j\le k}
	\lpa{(1+z)^j-1}}.
\]
This is not of our format \eqref{E:sum-d-prod-c} but the leading
asymptotics can be quickly linked to that of 
\href{https://oeis.org/A79525}{A179525}, the number of primitive 
row-Fishburn matrices; see \eqref{E:blr}. Let $a_n := 
[z^n]\sum_{k\ge0} \prod_{1\le j\le k}\lpa{(1+z)^j-1}$. By the relation
\[
    b_n = na_n - \sum_{1\le j<n}b_j a_{n-j}
	\qquad(n\ge1),
\]
and the factorial growth of the coefficients in \eqref{E:blr}, we 
then deduce that 
\[
    b_n \simeq na_n
	\simeq c\rho^n n^{n+\frac32},
    \with  (c,\rho):=\lpa{\tfrac{12}{\pi^{3/2}}\,
	e^{-\frac{\pi^2}{24}}, \tfrac{12}{e\pi^2}}.
\]

\subsubsection{Recursive variants}

Consider first the sequence \href{https://oeis.org/A86737}{A186737} 
whose generating function is defined recursively by 
\begin{align*}
	f(z)=\sum_{k\ge0}
	\prod_{1\le j\le k}\lpa{(1+zf(z))^j-1}
    =1+z+3z^2+14z^3+82z^4+563z^5+\cdots.
\end{align*}
This is close to the framework \eqref{E:sum-d-prod-c}. While
Theorem~\ref{T:gen} does not apply, the proof there does. More
precisely, we truncate first all terms in the Taylor expansion of $f$
with powers $k>n$, so that the resulting series becomes a polynomial,
and then perform the change of variables $e^y=f(z)$. As a result, the
local expansion of the solution is given by
\begin{align*}
	z=y-\tfrac52y^2-\tfrac76y^3-\tfrac{65}{24}y^4+\cdots,
\end{align*}
and the remaining analysis follows the same procedure as the proof of 
Theorem~\ref{T:gen}, yielding 
\begin{align*}
	[z^n]\sum_{k\ge0}
	\prod_{1\le j\le k}((1+zf(z))^j-1)
	\simeq c \rho^n n^{n+\frac12},
	\with 
	(c,\rho) := \lpa{\tfrac{12}{\pi^{3/2}}
	e^{\frac{\pi^2}{24}}, \tfrac{12}{e\pi^2}},
\end{align*}
which is consistent with the expression derived by Kot\v e\v sovec on 
the OEIS page. 

Similarly, the sequence \href{https://oeis.org/A224885}{A224885} 
defined as the coefficients of the generating function
\[
    f(z) 
	= 1+z+\sum_{k\ge2}\prod_{1\le j\le k} \lpa{f(z)^j-1}
	= 1+z+2z^2+15z^3+143z^4+1552z^5+\cdots
\]
satisfies
\[
   [z^n]f(z) \simeq c\rho^n n^{n+\frac12},
    \with (c,\rho)
    = \lpa{\tfrac{12}{\pi^{3/2}}\,e^{\frac{\pi^2}{8}},
    \tfrac{12}{e\pi^2}}.
\]

\subsection{$\Lambda$-Fishburn matrices and examples with $(\alpha,\omega):=(2,0)$}
\label{S:A2}

We now consider the case when $(\alpha,\omega):=(2,0)$, beginning 
with the asymptotics of $\Lambda$-Fishburn matrices. 

\subsubsection{$\Lambda$-Fishburn matrices}

\begin{corollary}\label{C:2}
Let $\Lambda$ be a multiset of nonnegative integers with the 
generating function $\Lambda(z)$ defined as in \eqref{E:Lambda-z}. If 
$\lambda_1>0$, then the number of Fishburn matrices of size $n$ 
satisfies 
\begin{align*}
    [z^n] \sum_{k\ge0}\prod_{1\le j\le k}
	\lpa{1-\Lambda(z)^{-j}}
    \simeq c\rho^n n^{n+1} \with 
    (c,\rho):=\Lpa{\tfrac{12\sqrt{6}}{\pi^{2}}
    e^{\frac{\pi^2}{6}
    \lpa{\frac{\lambda_2}{\lambda_1^2}-\frac{1}{2}}},
    \tfrac{6\,\lambda_1}{e\pi^2}}.
\end{align*}
\end{corollary}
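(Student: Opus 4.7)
The plan is to derive this as a direct corollary of Theorem~\ref{T:gen}, once the generating function is put in the template \eqref{E:sum-d-prod-c}. The left-hand expression $\sum_{k\ge 0}\prod_{1\le j\le k}(1-\Lambda(z)^{-j})$ does not itself fit that template, because the factors involve $\Lambda(z)^{-j}$ rather than a difference of the form $e(z)^{j+\omega}-1$. However, Proposition~\ref{P:gf-f} already supplies (via the Andrews--Jel\'{i}nek identity \eqref{E:Andrews}) the equivalent representation
$$\sum_{k\ge 0}\Lambda(z)^{k+1}\prod_{1\le j\le k}\lpa{\Lambda(z)^j-1}^2,$$
which matches \eqref{E:sum-d-prod-c} exactly with the choices $d(z)=e(z)=\Lambda(z)$, $\omega_0=1$, $\omega=0$, and $\alpha=2$.

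With these parameters identified, I would invoke Theorem~\ref{T:gen}. Its hypotheses $d(0)=e(0)=1$ and $e'(0)=\lambda_1\ne 0$ hold under the standing assumption $\lambda_1>0$. The theorem then yields the asymptotic shape $c\rho^n n^{n+\alpha(1/2+\omega)}=c\rho^n n^{n+1}$, immediately matching the claimed polynomial exponent. For the constants, I would read off \eqref{E:de-cre} using $d_1=e_1=\lambda_1$ and $e_2=\lambda_2$: the base is $\rho=12 e_1/(e\alpha\pi^2)=6\lambda_1/(e\pi^2)$, and the leading coefficient simplifies, thanks to $\Gamma(1+0)=1$, $(12/(2\pi^2))^0=1$, and $2^{d_1/e_1}=2$, to
$$c \;=\; \frac{\sqrt{6}}{2\pi}\cdot\llpa{\frac{2\sqrt{6}}{\sqrt{2\pi}}}^{\!2}\cdot 2\cdot e^{\frac{\pi^2}{6}\lpa{\frac{\lambda_2}{\lambda_1^2}-\frac12}} \;=\; \frac{12\sqrt{6}}{\pi^{2}}\,e^{\frac{\pi^2}{6}\lpa{\frac{\lambda_2}{\lambda_1^2}-\frac12}},$$
in agreement with the stated expression.

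Since this is essentially a substitution into an already-proved theorem, there is no genuine analytic obstacle; the work is bookkeeping. The only point worth flagging is that the prefactor $\Lambda(z)^{k+1}$ coming from $d(z)^{k+\omega_0}$ contributes a non-trivial multiplicative $2^{d_1/e_1}$ to the constant $c$ (this arises from the saddle-point evaluation of $d(\psi(y))^k$ at $k\sim\mu n/\alpha$, as recorded in the proof of Theorem~\ref{T:gen}); this factor collapses to the clean number $2$ here because $d_1=e_1=\lambda_1$. No change of variables beyond the one already embedded in Theorem~\ref{T:gen} is needed, precisely because $d(z)$ and $e(z)$ coincide and the transformation $e(z)=e^y$ handles both simultaneously.
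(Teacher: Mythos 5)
Your proof is correct and takes precisely the same route as the paper: the paper's proof of Corollary~\ref{C:2} simply says to use the right-hand side of \eqref{E:gf-lf} and apply Theorem~\ref{T:gen} with $d(z)=e(z)=\Lambda(z)$ and $\alpha=2$, which is exactly your identification (your $\omega_0=1$ is correct but, as noted after \eqref{E:de-cre}, immaterial for the leading asymptotics). Your arithmetic with \eqref{E:de-cre} checks out, so this is just a more explicit version of the paper's one-line argument.
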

\begin{proof}
Use \eqref{E:gf-lf} and then apply Theorem~\ref{T:gen} with 
$d(z)=e(z)=\Lambda(z)$ and $\alpha=2$. 
\end{proof} 

A few OEIS examples to which this corollary applies are collected in 
Table~\ref{T:lf}. 
\begin{table}[!ht]
\begin{center}
\begin{tabular}{cccccc}\hline
OEIS & $\Lambda$ & $\Lambda(z)$ & $(\lambda_1,\lambda_2)$
& $(c,\rho)$ \\ \hline   

\href{https://oeis.org/A022493}{A022493} 
& $\mathbb{Z}_{\ge0}$ & $\frac1{1-z}$ & $(1,1)$ &
$\lpa{\tfrac{12\sqrt{6}}{\pi^2}\,e^{\frac{\pi^2}{12}},
\tfrac{6}{e\pi^2}}$ \\ 

\href{https://oeis.org/A138265}{A138265} 
& $\{0,1\}$ & $1+z$ & $(1,0)$ 
& $\lpa{\tfrac{12\sqrt{6}}{\pi^2}\,
e^{-\frac{\pi^2}{12}},\tfrac{6}{e\pi^2}}$  \\

\href{https://oeis.org/A289317}{A289317} 
& $\{0\}\cup \{2k-1:k\in\mathbb{Z}^+\}$ & 
$\frac{1+z-z^2}{1-z^2}$  & $(1,0)$  &
$\lpa{\tfrac{12\sqrt{6}}{\pi^2}\,
e^{-\frac{\pi^2}{12}},\tfrac{6}{e\pi^2}}$ \\

\href{https://oeis.org/A289312}{A289312} 
& $\{0\}\cup 2\mathbb{Z}^+$ & $\frac{1+z}{1-z}$ 
& $(2,2)$ & 
$\lpa{\tfrac{12\sqrt{6}}{\pi^2},\tfrac{12}{e\pi^2}}$\\
\hline
\end{tabular}
\end{center}  
\medskip
\caption{The large-$n$ asymptotics (of the form $c\rho^n n^{n+1}$) of 
some OEIS sequences that correspond to the enumeration of 
$\Lambda$-Fishburn matrices with different $\Lambda$.} 
\label{T:lf}
\end{table}

In particular, we see from Table~\ref{T:lf} that Zagier's result 
\eqref{E:zag} for the asymptotics of Fishburn numbers corresponds to 
\href{https://oeis.org/A022493}{A022493}. Also the result for 
\href{https://oeis.org/A138265}{A138265}
improves the crude bound given in \cite{Khamis2012}; see also 
\cite{Brightwell2011}. 

Corollary~\ref{C:2} also implies the asymptotics of $r$-Fishburn 
numbers \cite{Garvan2015}:
\[
    [z^n]\sum_{k\ge0}\prod_{1\le j\le k}
	\lpa{1-(1-z)^{rj}}
	\simeq c\rho^n n^{n+1} \with 
	(c,\rho):=\Lpa{\tfrac{12\sqrt{6}}{\pi^{2}}\,
    e^{\frac{\pi^2}{12r}}, \tfrac{6r}{e\pi^2}},
\]
and applies to the sequence studied in \cite{Dukes2011} with 
$\Lambda(z) = 1+z+\cdots+z^{m-1}$, $m\ge3$.

\subsubsection{Other OEIS examples}

We discuss three other OEIS sequences with $(\alpha,\omega)=(2,0)$. 
Consider first \href{https://oeis.org/A079144}{A079144}, which
enumerates labelled interval orders on $n$ points 
\cite{Brightwell2011} with $d(z)=e(z)=e^z$, and we obtain 
\begin{equation*}
	\begin{split}
		[z^n]\sum_{k\ge0}\prod_{1\le j\le k}\lpa{1-e^{-jz}}
		&=[z^n]\sum_{k\ge0}
		e^{(k+1)z}\prod_{1\le j\le k}\lpa{e^{jz}-1}^2\\
		&\simeq c\rho^n n^{n+1},
		\with (c,\rho):=\lpa{\tfrac{12\sqrt{6}}{\pi^2}
		,\tfrac{6}{e\pi^2}}.
	\end{split}
\end{equation*}
Alternatively, \eqref{E:glaisher} provides a different proof for 
this asymptotic estimate and a finer expansion; see \cite{Zagier2001}.

Consider now \href{https://oeis.org/A207651}{A207651}, the generating 
function of this sequence is different from 
\href{https://oeis.org/A022493}{A022493}, the Fishburn 
numbers, but they satisfy the same asymptotic relation (see 
\eqref{E:zag})
\[
	[z^n]\sum_{k\ge0}\prod_{1\le j\le k}
	\frac{1-(1-z)^j}{1-z^j}\simeq c\rho^nn^{n+1},
	\with  (c,\rho):= \lpa{\tfrac{12\sqrt{6}}{\pi^2}
	e^{\frac{\pi^2}{12}},\tfrac{6}{e\pi^2}},
\]
since the additional product is again asymptotically negligible; see 
\eqref{E:int-part}. 

The last sequence is \href{https://oeis.org/A035378}{A035378}: 
\[
	[z^n]\sum_{k\ge1}
	\prod_{1\le j\le k}\lpa{1-(z-1)^{j}}
    = [z^n]\sum_{k\ge0}(z-1)^{-k-1}
    \prod_{1\le j\le k}\left(1-(z-1)^{-j}\right)^2.
\]
Theorem~\ref{T:gen} does not apply directly but our approach does by 
rewriting the GF as (by grouping the terms in pairs)
\[
    \sum_{k\ge0}\frac1{(1-z)^{2k+1}}
    \left(\frac1{1-z}\left(1+
    \frac1{(1-z)^{2k+1}}\right)^2-1\right)
    \prod_{1\le j\le 2k}\left(\frac1{(1-z)^j}-1\right)^2;
\]
we then derive the approximation 
\[
    [z^n]\sum_{k\ge1}
	\prod_{1\le j\le k}\lpa{1-(z-1)^{j}}
    \simeq c\rho^n n^{n+1},
    \with 
    (c,\rho):= \lpa{\tfrac{48\sqrt{3}}{\pi^2}
    \,e^{\frac{\pi^2}{48}},
    \tfrac{24}{e\pi^2}},
\]
consistent with that provided on the OEIS webpage of 
\href{https://oeis.org/A035378}{A035378} by 
Kot\v e\v sovec; see also \cite[Sec.~5]{Zagier2001}. 

\subsection{Examples with $\omega\ne0$}
\label{S:A3}

We gather some examples in the following table, where we use the form 
\[
    a_n :=
    [z^n]\sum_{k\ge0} d_k(z)\prod_{1\le j\le k}\lpa{e_j(z)-1},
\]
with $(d_k(z),e_j(z))$ given in the second column. 
\begin{center}
\begin{tabular}{cccc}\hline
OEIS & $(d_k(z),e_j(z))$ & 
$a_n n^{-n}\simeq $ & $(c,\rho)$ \\ \hline
\href{https://oeis.org/A215066}{A215066} 
& $(1,e^{(2j-1)z})$ & $c\rho^nn^n$ 
& $\lpa{\tfrac{2\sqrt{3}}{\pi},\tfrac{24}{e\pi^2}}$ \\
\href{https://oeis.org/A209832}{A209832} 
& $(e^{(k+1)z}, e^{(2j-1)z})$ & $c\rho^nn^n$ 
& $\lpa{\tfrac{2\sqrt{6}}{\pi},\tfrac{24}{e\pi^2}}$ \\
\href{https://oeis.org/A214687}{A214687} 
& $\lpa{e^{2kz}, e^{(2j-1)z}}$ & $c\rho^nn^n$ 
& $\lpa{\tfrac{4\sqrt{3}}{\pi},\tfrac{24}{e\pi^2}}$ \\
\href{https://oeis.org/A207569}{A207569} 
& $\lpa{1,(1+z)^{2j-1}}$ & $c\rho^nn^n$ 
& $\lpa{\tfrac{2\sqrt{3}}{\pi} e^{-\frac{\pi^2}{48}}, 
\tfrac{24}{e\pi^2}}$ \\
\href{https://oeis.org/A207570}{A207570} 
& $\lpa{1,(1+z)^{3j-2}}$ & $c\rho^nn^{n-\frac16}$
& $\Lpa{\tfrac{\Gamma(\frac23)
        3^{5/6}}{2^{1/3}\pi^{7/6}}e^{-\frac{\pi^2}{72}}, 
        \tfrac{36}{e\pi^2}}$\\
\href{https://oeis.org/A207571}{A207571} 
& $\lpa{1,(1+z)^{3j-1}}$ & $c\rho^nn^{n+\frac16}$
& $\Lpa{\tfrac{12^{2/3}}
        {\pi^{5/6}\Gamma(\frac23)}
        e^{-\frac{\pi^2}{72}}, 
        \tfrac{36}{e\pi^2}}$\\ 
\hline        
\end{tabular}    
\end{center}  
In general, 
\[
    [z^n]\sum_{k\ge0}\prod_{1\le j\le k}
    \lpa{(1+z)^{pj-s}-1}\simeq c\rho^nn^{n+\frac12-\frac sp},
\]
for $0<s<p$ (not necessarily integers), where
\[    
    (c,\rho):=\llpa{\frac{\sqrt{\pi}}
    {\Gamma\lpa{1-\frac{s}{p}}}\,
	\Lpa{\frac{\pi^2}{12}}^{\frac sp-1}
    e^{-\frac{\pi^2}{24p}}, 
    \frac{12p}{e\pi^2}}.
\]

A minor variant of \href{https://oeis.org/A207569}{A207569} with the 
same asymptotic approximation is the sequence 
\href{https://oeis.org/A207654}{A207654}:
\[
    [z^n]\sum_{k\ge0}\prod_{1\le j\le k}
    \frac{(1+z)^{2j-1}-1}{1-z^{2j-1}}
    \simeq c\rho^nn^n,
    \with 
    (c,\rho)=\lpa{\tfrac{2\sqrt{3}}{\pi}e^{-\frac{\pi^2}{48}},
    \tfrac{24}{e\pi^2}},
\]
because the extra product is asymptotically negligible; see
also \eqref{E:A207653}.

The last example is \href{https://oeis.org/A207557}{A207557}: 
$[z^n]f(z)$ with
\[
    f(z):=\sum_{k\ge0}(1+z)^{-k(k-1)}
	\prod_{1\le j\le k}\lpa{(1+z)^{2j-1}-1},
\]
which can be transformed, by the Rogers-Fine identity (see 
\cite{Fine1988}) into 
\[
    f(z) = 1+z^{-1}\sum_{k\ge1}(1+z)^{2k+1}
	\prod_{1\le j\le k}\lpa{(1+z)^{2j-1}-1}^2.
\]
We can then apply Theorem~\ref{T:gen}, and obtain  
\begin{align*}
    [z^n]f(z)
    &= [z^{n+1}]\sum_{k\ge1}(1+z)^{2k+1}
	\prod_{1\le j\le k}\lpa{(1+z)^{2j-1}-1}^2\\
    &\simeq c_0\rho^{n+1}(n+1)^{n+1}
    \simeq c_0 e \rho \rho^n n^{n+1},
    \with  (c_0,\rho):= \lpa{\tfrac{2\sqrt{6}}{\pi}
	e^{-\frac{\pi^2}{24}}, \tfrac{12}{e\pi^2}}.
\end{align*}
Thus $c_0e\rho = \frac{24\sqrt{6}}{\pi^3}e^{-\frac{\pi^2}{24}}$,
consistent with the expression derived by Kot\v e\v sovec in the 
OEIS; see \cite[\href{https://oeis.org/A207557}{A207557}]{oeis2019}.

\section{Applications II. Bivariate asymptotics (asymptotic distributions)}\label{S:B}

We derive in this section the various limit laws arising from the
sizes of the first row and the diagonal, as well as the number of
$1$s in random Fishburn and row-Fishburn matrices, assuming that
all matrices of the same size are equally likely to be selected. We
begin with row-Fishburn matrices because they are technically simpler.

\subsection{Statistics on $\Lambda$-row-Fishburn matrices}
\label{S:B1}

By Proposition~\ref{P:gf-f}, the number of $\Lambda$-row-Fishburn 
matrices of size $n$ is given by (see \eqref{E:gf-lrf})
\[
    a_n := [z^n]\sum_{k\ge0}\prod_{1\le j\le k} 
    \lpa{\Lambda(z)^j-1},
\]
where $\Lambda(z)$ is the generating function of the multiset 
$\Lambda$; see \eqref{E:Lz}. The asymptotics of $a_n$ is already 
examined in Corollary~\ref{C:1}. 

Recall that the probability generating function of a Poisson 
distribution with mean $\tau>0$ is given by $e^{\tau(v-1)}$, while 
that of a zero-truncated Poisson (ZTP) distribution with parameter 
$\tau$ by 
\[
	\frac{e^{\tau v}-1}{e^\tau-1}
\]
whose mean and variance equal
\[
    \frac{\tau e^\tau}{e^\tau-1}
	\and 
	\frac{\tau e^\tau(e^\tau-1-\tau)}{(e^\tau-1)^2},
\]
respectively. When $\tau:=\log 2$, these become $2\log 2$ and $2(\log
2)(1-\log 2)$, respectively. Also $\mathscr{N}(0,1)$ denotes the
standard normal distribution. The notation $X_n\stackrel{d}{\to}X$
means convergence in distribution.

\subsubsection{Limit theorems}

\begin{theorem} \label{T:lrf-stat}
Assume $\lambda_1>0$ and that all $\Lambda$-row-Fishburn matrices of
size $n$ are equally likely to be selected. Then in a random matrix,
\begin{enumerate}[(i)]
    \item the size $X_n$ of the first row is distributed
    asymptotically as zero-truncated Poisson with parameter
    $\log 2$:
    \[
        X_n \stackrel{d}{\to} \mathrm{ZTP}(\log 2),
    \]
    
    \item the size $Y_n$ of the diagonal (or the last column) is
    asymptotically normally distributed with mean and variance both
    asymptotic to $\log n$,
    \begin{equation}\label{E:Yn-clt}
        \frac{Y_n-\log n}{\sqrt{\log n}} 
        \stackrel{d}{\to} \mathscr{N}(0,1), \and 
    \end{equation} 
    \item for the number $Z_n$ of $1$s, if $\lambda_2>0$, then 
	    \begin{align}\label{E:Zn-Poi}
	        \frac{n-Z_n}2 \stackrel{d}{\to} 
            \mathrm{Poisson}(\tau)\,\,
			\with \tau:=\frac{\lambda_2\pi^2}{12\lambda_1^2},
	    \end{align}
    and $\mathbb{P}(Z_n=n)\to1$ if $\lambda_2=0$. 
\end{enumerate}
\end{theorem}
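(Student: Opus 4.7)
The strategy for each of the three statistics is to extract the asymptotic probability generating function (PGF) $\mathbb{E}[v^{\cdot}]$ from the corresponding bivariate generating function in \eqref{E:gf-fr}, \eqref{E:gf-dg}, \eqref{E:gf-1} by specializing Theorem~\ref{T:gen} (after a mild algebraic rewriting when needed), and then to identify the limit law either via the continuity theorem for PGFs or via the Quasi-Powers Framework.

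For $X_n$, I would rewrite \eqref{E:gf-fr} as $f(z,v)-a(z)+1$ with
\[
    f(z,v) := \sum_{k\ge 0}\Lambda(vz)^{k+1}
    \prod_{1\le j\le k}\lpa{\Lambda(z)^j-1},
\]
so that $f$ fits Theorem~\ref{T:gen} with $(d(z),e(z),\alpha,\omega_0,\omega)=(\Lambda(vz),\Lambda(z),1,1,0)$. Since $d_1/e_1=v$, the constant formula \eqref{E:de-cre} picks up the factor $2^v$, and dividing by $a_n$ (Corollary~\ref{C:1}) yields $\mathbb{E}[v^{X_n}]\to 2^v-1$, the PGF of $\mathrm{ZTP}(\log 2)$. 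For $Z_n$, observe that \eqref{E:gf-1} is itself a $\Lambda$-row-Fishburn BGF with $\Lambda$ replaced by $\tilde\Lambda(z;v):=\Lambda(z)+\lambda_1(v-1)z$, whose first two Taylor coefficients are $\tilde\lambda_1=\lambda_1 v$, $\tilde\lambda_2=\lambda_2$. Corollary~\ref{C:1} gives $[z^n]B(z,v)\simeq c(v)\rho(v)^n n^{n+1/2}$ with $\rho(v)/\rho(1)=v$; passing to $W_n:=n-Z_n$ cancels the $v^n$ factor, leaving $\mathbb{E}[v^{W_n}]\to c(1/v)/c(1)=e^{\tau(v^2-1)}$ with $\tau=\lambda_2\pi^2/(12\lambda_1^2)$, which is the PGF of $2\cdot\mathrm{Poisson}(\tau)$ and so proves \eqref{E:Zn-Poi}. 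When $\lambda_2=0$, the limit PGF is identically $1$ and $\mathbb{P}(Z_n=n)\to 1$.

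For $Y_n$, the BGF \eqref{E:gf-dg} does not literally fit Theorem~\ref{T:gen}, since the product factor is $\Lambda(vz)\Lambda(z)^{j-1}-1$ rather than $e(z)^{j+\omega}-1$. The key rewriting is
\[
    \Lambda(vz)\Lambda(z)^{j-1}
    = \Lambda(z)^{j+v-1}\cdot\phi(z,v),
    \with
    \phi(z,v):=\Lambda(vz)\Lambda(z)^{-v}
    = 1+v(v-1)\lpa{\lambda_2-\tfrac{\lambda_1^2}{2}}z^2+O(z^3).
\]
At the saddle $z\asymp 1/n$, $\phi(z,v)=1+O(n^{-2})$, and the same bound holds for its $v$-derivatives at $v=1$, so $\phi$ contributes only to lower-order terms. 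Running the double saddle-point analysis of Section~\ref{S:basis} along the lines of the proof of Theorem~\ref{T:gen} with effective parameters $(d,e,\alpha,\omega_0,\omega)=(1,\Lambda,1,0,v-1)$ then gives $[z^n]B(z,v)\simeq c(v)\rho^n n^{n+v-1/2}$ uniformly for $v$ in a small complex neighborhood of $1$, where $c$ is analytic at $v=1$ with $c(1)$ equal to the constant in Corollary~\ref{C:1}. Dividing by $a_n$ yields $\mathbb{E}[v^{Y_n}]\simeq A(v)\,n^{v-1}$ with $A$ analytic near $v=1$ and $A(1)=1$. The Quasi-Powers Framework with $\phi_n:=\log n$ and $B(v):=e^{v-1}$ (for which $B'(1)/B(1)=B''(1)/B(1)=1$) then delivers mean and variance both asymptotic to $\log n$ and the central limit theorem \eqref{E:Yn-clt}.

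The main obstacle is the diagonal case: Theorem~\ref{T:gen} as stated does not cover the $v$-dependent effective exponent $\omega=v-1$ together with the residual factor $\phi(z,v)$, so the uniformity in $v$ over a complex neighborhood of $1$ -- the hypothesis required by Quasi-Powers -- must be verified by propagating the $v$-dependence through the saddle-point estimates of Section~\ref{S:basis}. Parts (i) and (iii) follow by direct specialization of \eqref{E:de-cre}, while (ii) requires this refined uniform saddle-point expansion.
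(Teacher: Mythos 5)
Your proposal follows essentially the same route as the paper's proof. For part (i) you split the generating function into two applications of Theorem~\ref{T:gen} with $d=\Lambda(vz)$ and $d=1$, obtaining $2^v-1$ from the $2^{d_1/e_1}$ factor -- this is exactly what the paper does. For part (iii), your reparametrization $\tilde\Lambda(z;v)=\Lambda(z)+\lambda_1(v-1)z$ with $(\tilde\lambda_1,\tilde\lambda_2)=(\lambda_1 v,\lambda_2)$, the cancellation of $v^n$ when passing to $n-Z_n$, and the resulting $e^{\tau(v^2-1)}$ all match the paper. For part (ii), your rewriting $\Lambda(vz)\Lambda(z)^{j-1}=\Lambda(z)^{j+v-1}\phi(z,v)$ with $\phi=1+O(z^2)$ is precisely the paper's approximation \eqref{E:log-es} (where the accumulated error is bounded by $O(|z|\log k)=O(n^{-1}\log n)$), and both arrive at $\mathbb{E}[v^{Y_n}]=A(v)\,n^{v-1}(1+o(1))$ with $A$ analytic at $v=1$, feeding into the Quasi-Powers theorem.

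One small point of clarification: you flag the $v$-dependent exponent $\omega=v-1$ as part of ``the main obstacle,'' but Theorem~\ref{T:gen} is already stated to hold \emph{uniformly} for bounded $\omega$ (and $\omega_0$), so the dependence of $\omega$ on $v$ costs nothing -- the uniformity you need for Quasi-Powers is already built in. The only genuinely new ingredient for the diagonal case is the residual factor $\phi(z,v)$, which the paper dispatches with the one-line estimate \eqref{E:log-es}; your argument correctly identifies the $\phi=1+O(z^2)$ behavior but could be made a bit more careful: because $\Lambda(z)^{j+v-1}-1\asymp jz$ for $jz$ small, the multiplicative correction from $\phi$ on the $j$-th factor is $O(z/j)$ rather than $O(z^2)$, and summing over $j\le k\asymp n$ yields the $O(n^{-1}\log n)$ error rather than $O(n^{-1})$. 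This logarithmic loss is harmless but is worth recording, since it propagates into the error term of the Quasi-Powers approximation.
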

For the diagonal size, we can also express the asymptotic 
distribution as $Y_n \sim \text{Poisson}(\log n)$, which implies  
\eqref{E:Yn-clt}. Finer approximations are given in 
\eqref{E:Xn-clt} and \eqref{E:Xn-mv}.

\begin{proof}
\begin{enumerate}[(i)]

\item For the first row size $X_n$, we begin with the generating 
function (see \eqref{E:gf-fr})
\[
    f_X(z,v) := \sum_{k\ge0}\lpa{\Lambda(vz)^{k+1}-1}
    \prod_{1\le j\le k}\lpa{\Lambda(z)^j-1}.
\]
By applying \eqref{E:gen} to $(d(z),e(z)):=(\Lambda(vz),\Lambda(z))$ 
and to $(d(z),e(z)):=(1,\Lambda(z))$, we deduce that 
\[
    \mathbb{E}\lpa{v^{X_n}}
    := \frac{[z^n]f_X(z,v)}{a_n}
    \simeq 2^v-1,
\]
for $v=O(1)$. This asymptotic estimate holds \emph{a priori}
pointwise for each finite $v$, but the same proof there gives indeed 
the uniformity of the error term in $v$ when $v=O(1)$. This implies 
the convergence in distribution to the zero-truncated Poisson (ZTP) 
law with parameter $\log 2$.

\item Consider now the generating polynomial for the diagonal size 
$Y_n$
\[
    [z^n]f_Y(z,v)
    :=[z^n]\sum_{k\ge1}\prod_{1\le j\le k}
    \lpa{\Lambda(vz)\Lambda(z)^{j-1}-1}.
\]
The generating function is not of the form \eqref{E:sum-d-prod-c}, 
but observe that
\[
    \Lambda(vz) = \Lambda(z)^v \lpa{1+O(|z|^2)},
\]
when $|z|$ is small. Then, when $k\asymp n$ and $|z|\asymp n^{-1}$ 
(taking logarithm and estimating the sum of errors), we have
\begin{align}\label{E:log-es}
    \prod_{1\le j\le k}\lpa{\Lambda(vz)\Lambda(z)^{j-1}-1}
    = \llpa{\prod_{1\le j\le k}
    \lpa{\Lambda(z)^{j+v-1}-1}}\lpa{1+O\lpa{|z|\log k}},
\end{align}
and we are in a position to apply Theorem~\ref{T:gen}, giving
\begin{align*}
    [z^n]f_Y(z,v)
    = c(v)\rho^n n^{n+v-\frac12}\lpa{1+O\lpa{n^{-1}\log n}},
\end{align*}
where
\begin{align*}
    (c(v),\rho):= \llpa{\frac{\sqrt{\pi}}
    {\Gamma(v)}\Lpa{\frac{12\lambda_1}{\pi^2}}^v
    e^{\frac{\pi^2}{12}
    \lpa{\frac{\lambda_2}{\lambda_1^2}-\frac12}},
    \frac{12\lambda_1}{e\pi^2}},
\end{align*}
uniformly for $v=O(1)$. Accordingly, the probability generating 
function of $Y_n$ satisfies
\begin{align*}
    \mathbb{E}\lpa{v^{Y_n}}
    =\frac{[z^n]f(z,v)}{a_n}
    =\frac1{\Gamma(v)}
    \left(\frac{12}{\pi^2}\right)^{v-1}
    \, e^{(v-1)\log n}\lpa{1+O\lpa{n^{-1}\log n}},
\end{align*}
uniformly for $v=O(1)$. This is of the form of Quasi-Powers (see
\cite{Flajolet2009,Hwang1994}), and we then deduce the asymptotic 
normality of $Y_n$ with optimal convergence rate:
\begin{align}\label{E:Xn-clt}
    \sup_{x\in\mathbb{R}}\biggl|\mathbb{P}
    \Lpa{\frac{Y_n-\log n}
    {\sqrt{\log n}}\le x}-\Phi(x) \biggr|
    = O\lpa{(\log n)^{-\frac12}},
\end{align}
together with the asymptotic approximations to the mean and the
variance:
\begin{equation} \label{E:Xn-mv}
\begin{split}
    \mathbb{E}(Y_n)
    &= \log n + \gamma+\log\tfrac{12}{\pi^2}
    +O\lpa{n^{-1}\log n},\\
    \mathbb{V}(Y_n)
    &= \log n + \gamma-\tfrac{\pi^2}{6}
    +\log\tfrac{12}{\pi^2}+O\lpa{n^{-1}(\log n)^2},
\end{split}
\end{equation}
where $\gamma$ denotes the Euler-Mascheroni constant and $\Phi(x)$
denotes the distribution function of the standard normal
distribution. For other types of Poisson approximation, see 
\cite{Hwang1999}.

\item Applying the same proof of Theorem~\ref{T:gen} to the 
generating function \eqref{E:gf-1} for the number of $1$s gives
\[
    [z^n]\sum_{k\ge1}\prod_{1\le j\le k} 
    \lpa{(\Lambda(z)+\lambda_1(v-1)z)^{j}-1}
    \simeq c(v)v^n \rho^n n^{n+\frac12},
\]
uniformly for bounded $v$, where $(c(v),\rho) 
    := \Lpa{\tfrac{12}
    {\pi^{3/2}}\,e^{\frac{\pi^2}{12}
    \lpa{\frac{\lambda_2}{\lambda_1^2v^2}-\frac12}},
	\tfrac{12\lambda_1v}{e\pi^2}}$.
This implies that if $\lambda_2>0$, then 
\begin{equation}\label{E:Zn-Poi2}
    \mathbb{E}\lpa{v^{\frac12(n-Z_n)}}
    \simeq e^{\tau(v-1)},\with 
	\tau := \frac{\pi^2\lambda_2}
    {12\lambda_1^2},
\end{equation}
and we then obtain the limit Poisson distribution with parameter 
$\tau$. If $\lambda_2=0$, then $\mathbb{E}\lpa{v^{n-Z_n}}\to 1$,
a Dirac distribution. Furthermore, by the uniformity of 
\eqref{E:Zn-Poi2} and Cauchy's integral representation, we obtain 
\eqref{E:Zn-Poi}; see \cite{Hwang1994}.

Similarly, for the number $Z_n^{[2]}$ of $2$s, we use the generating 
function
\[
    \sum_{k\ge1}\prod_{1\le j\le k} 
    \lpa{\lpa{\Lambda(z)+\lambda_2(v-1)z^2}^{j}-1},
\]
and deduce that $\mathbb{E}\lpa{v^{Z_n^{[2]}}}\simeq e^{\tau(v-1)}$,
with the same $\tau$ as in \eqref{E:Zn-Poi}. 

\end{enumerate}
\end{proof}
Stronger results such as local limit theorems can also be derived; 
see \cite{Hwang1994} for more information.

\subsubsection{Applications}

Consider first the case of primitive row-Fishburn matrices with
$\Lambda=\{0,1\}$. Then by Theorem~\ref{T:lrf-stat}, we see that 
in a random primitive Fishburn matrix the first row size is
asymptotically ZTP$(\log 2)$ distributed, the diagonal is
asymptotically normal, while the number of $1$ is obviously the same
as the size of the matrix. In particular, the distribution of the
diagonal size corresponds to sequence 
\href{https://oeis.org/A182319}{A182319}.


On the other hand, when $\Lambda(z) := \frac1{1-z}$, we have very 
similar behaviors for the sizes of the first row and the diagonal, 
but the number $Z_n$ of $1$s is asymptotically Poisson:
\[
	\mathbb{P}(n-Z_n=2k) \to \tfrac{\tau^k}{k!}\,e^{-\tau},
	\with  \tau = \tfrac{\pi^2}{12} \, \mbox{ for } k=0,1,\dots.
\]

\begin{figure}[!ht]
\begin{center}
\begin{tabular}{ccc}
    \includegraphics[height=3cm]{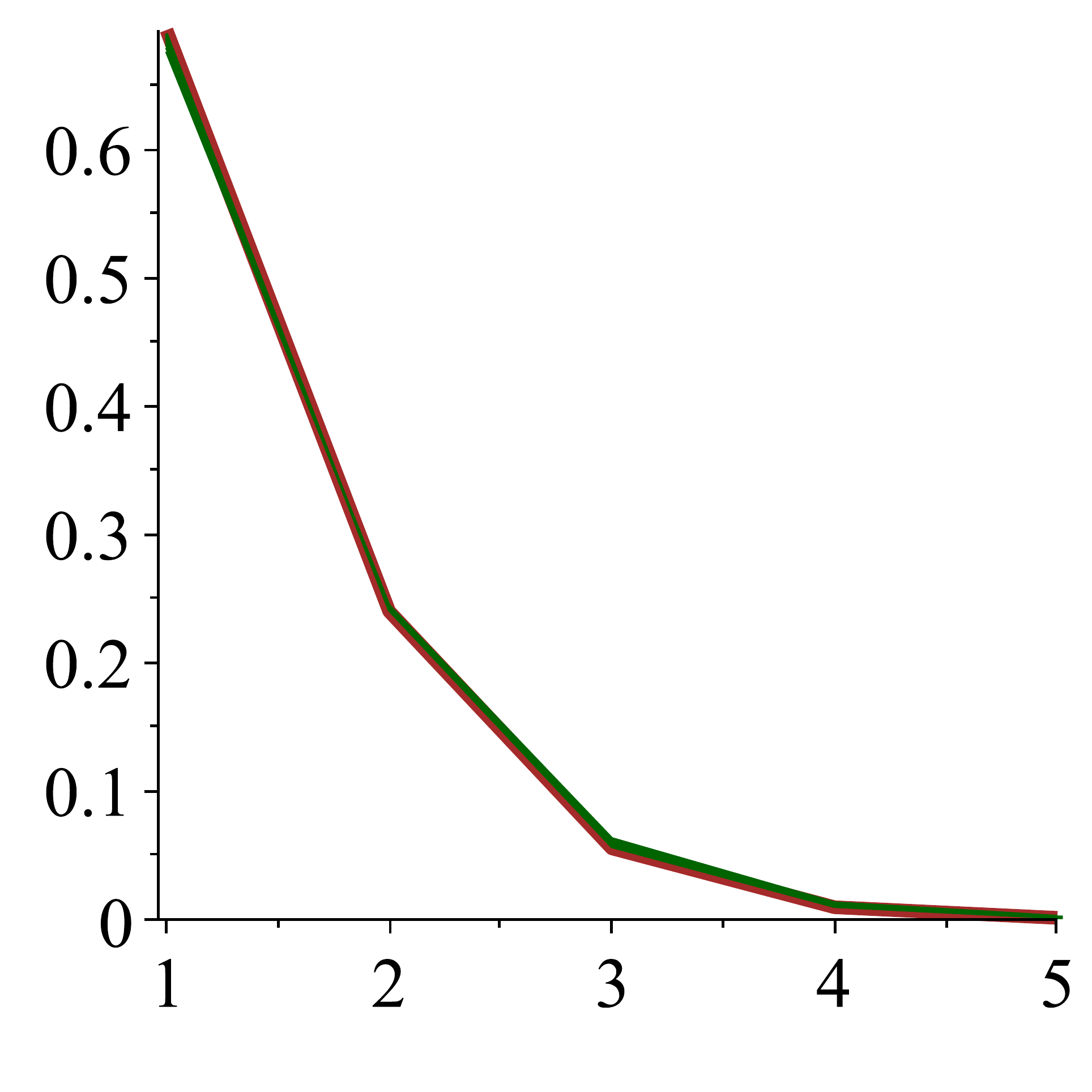}
    & \includegraphics[height=3cm]{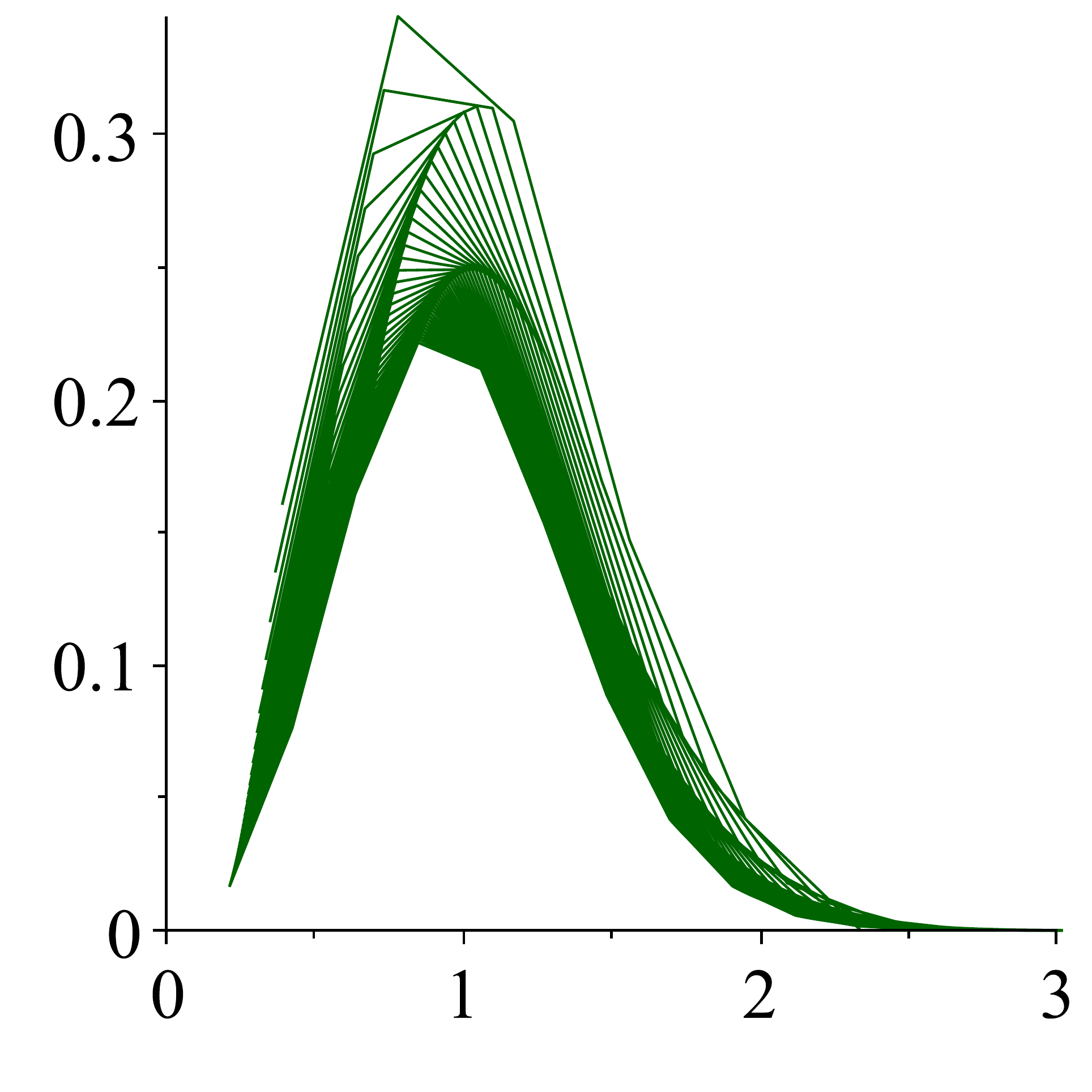} 
    & \includegraphics[height=3cm]{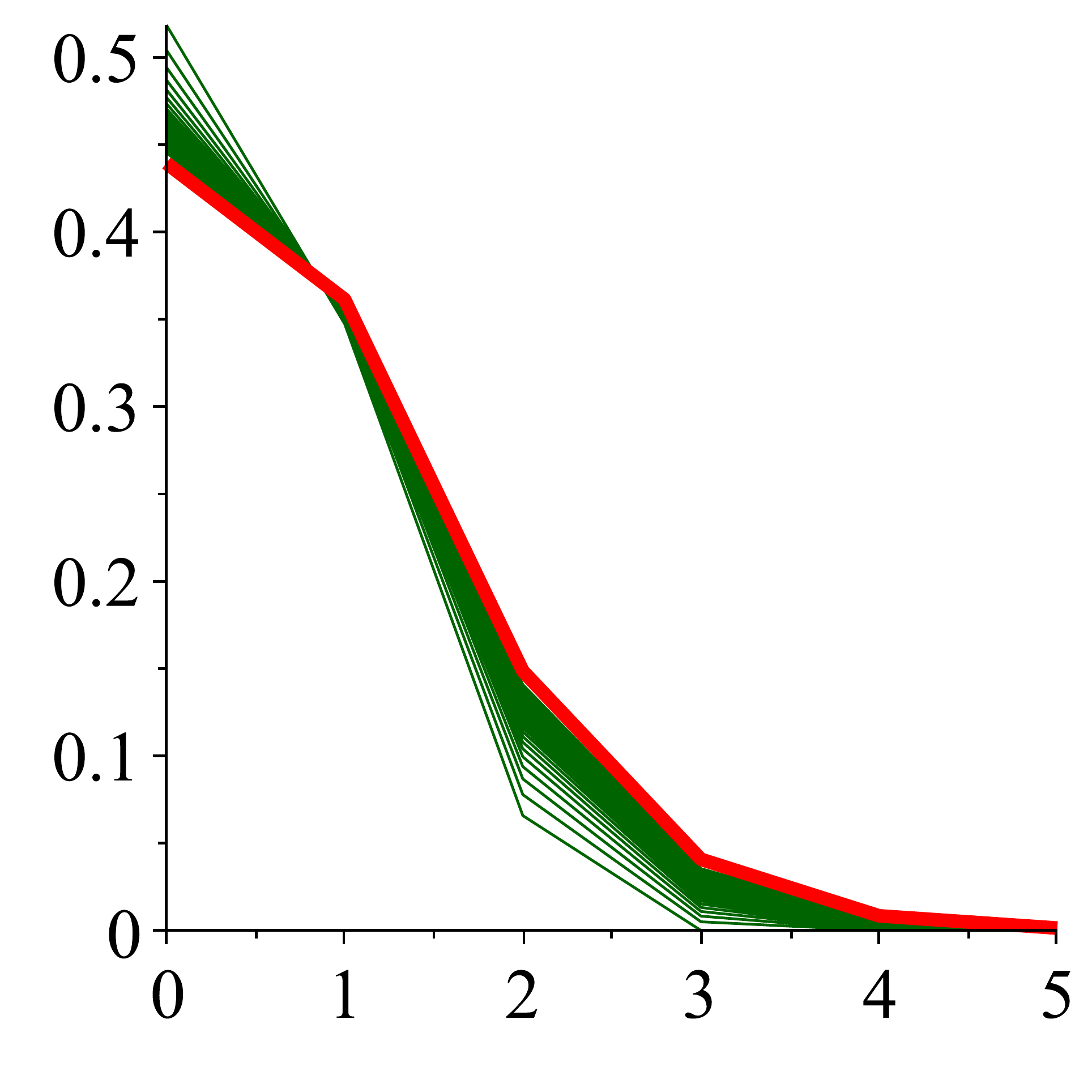} 
    \\
\end{tabular}
\end{center}
\caption{The histograms of $X_n$, $Y_n$ and $Z_n$ in the case of
row-Fishburn matrices ($\Lambda(z)=\frac1{1-z}$) for $n=6,\dots,50$:
$\mathbb{P}(X_n=k)$ (left), $\mathbb{P}(Y_n=\tr{t \mu_n})$ (middle),
and $\mathbb{P}(n-Z_n=2k)$ (right), where $\mu_n=\mathbb{E}(Y_n)$.
Then tendence to ZTP, normal and Poisson is visible in each case, as
well as the corresponding convergence rate.}
\end{figure}

\subsection{Statistics on Fishburn matrices} 
\label{S:B2}

We consider random $\Lambda$-Fishburn matrices in this subsection. By
Proposition~\ref{P:gf-f}, the number of $\Lambda$-Fishburn matrices
of size $n$ is given by (see \eqref{E:gf-lf})
\[
    a_n := [z^n]\sum_{k\ge0}\prod_{1\le j\le k} 
    \lpa{1-\Lambda(z)^{-j}},
\]
and an asymptotic approximation is already derived in 
Corollary~\ref{C:2}.

\subsubsection{Limit theorems}

\begin{theorem} \label{T:lf-stat}
Assume $\lambda_1>0$ and that all $\Lambda$-Fishburn matrices of size
$n$ are equally likely to be selected. Then in a random matrix, the
size $X_n$ of the first row (or the last column) and the diagonal
size $Y_n$ are both asymptotically normally distributed with
logarithmic mean and variance in the following sense
\begin{equation}\label{E:Yn-f-clt}
    \frac{X_n-\log n}{\sqrt{\log n}} 
    \stackrel{d}{\to} \mathscr{N}(0,1),
    \and 
    \frac{Y_n-2\log n}{\sqrt{2\log n}} 
    \stackrel{d}{\to} \mathscr{N}(0,1),
\end{equation}
and if $\lambda_2>0$, then the number $Z_n$ of $1$s is asymptotically 
Poisson distributed
\begin{align}\label{E:Znf-Poi}
    \frac{n-Z_n}2 \stackrel{d}{\to} 
    \mathrm{Poisson}(\tau)
	\with 
    \tau:=\frac{\lambda_2\pi^2}{6\lambda_1^2},
\end{align}
otherwise, $\lambda_2=0$ implies that $\mathbb{P}(Z_n=n)\to1$.
\end{theorem}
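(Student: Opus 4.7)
The plan is to mirror the proof of Theorem~\ref{T:lrf-stat} by applying the saddle-point machinery of Theorem~\ref{T:gen} to each of the three bivariate generating functions in Proposition~\ref{P:lfm-stat} (in their right-hand, $\alpha=2$ form) and then passing through the Quasi-Powers Framework to extract the limit law. In all three cases the strategy is uniform: derive, uniformly for $v$ in a fixed neighborhood of $1$, an expansion $[z^n]f(z,v)\simeq c(v)\rho^n n^{n+\beta(v)}$; divide by $a_n\simeq c(1)\rho^n n^{n+\beta(1)}$ to obtain a Quasi-Powers representation of $\mathbb{E}(v^{\cdot})$; and read off the limit law via \cite{Flajolet2009, Hwang1994}.

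For the first-row size $X_n$, I would start from the right-hand side of \eqref{E:gf-f-fr} and use the approximation $\Lambda(vz)\Lambda(z)^{j-1}=\Lambda(z)^{j-1+v}\lpa{1+O(|z|^2)}$ (in the spirit of \eqref{E:log-es}) to replace every factor $\Lambda(vz)\Lambda(z)^{j-1}-1$ by $\Lambda(z)^{j-1+v}-1$ at the cost of an overall multiplicative factor $1+O(n^{-1}\log n)$ across the saddle-point range $|z|\asymp n^{-1}$, $k\asymp n$. The product then takes the mixed-exponent form $\prod_{1\le j\le k}\lpa{\Lambda(z)^{j+v-1}-1}\lpa{\Lambda(z)^j-1}$, which is covered by the same Euler--Maclaurin splitting of \eqref{E:emf-de} applied once with shift $\omega=v-1$ and once with $\omega=0$; this sets up exactly the double saddle-point analysis of Theorem~\ref{T:gen} with effective $\alpha=2$, producing polynomial exponent $n^{n+v}$ and a constant $c_X(v)$ that carries a factor $1/\Gamma(v)$. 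Dividing by $a_n$ gives $\mathbb{E}(v^{X_n})\simeq \bigl(c_X(v)/c_X(1)\bigr)e^{(v-1)\log n}$ uniformly for bounded $v$, which is a Quasi-Powers representation with growth parameter $\log n$ and hence yields $X_n\sim\mathscr{N}(\log n,\log n)$ with optimal Berry--Esseen rate $O((\log n)^{-1/2})$. The diagonal $Y_n$ is treated identically starting from \eqref{E:gf-f-dg}, but the factor $\Lambda(vz)\Lambda(z)^{j-1}-1$ is already squared, so Theorem~\ref{T:gen} applies directly with $\alpha=2$ and $\omega=v-1$, producing polynomial exponent $n^{n+2v-1}$ and hence the Quasi-Powers representation with doubled logarithmic mean and variance $2\log n$.

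For the number $Z_n$ of $1$s, the substitution $\Lambda(z)\leadsto\tilde\Lambda(z):=\Lambda(z)+\lambda_1(v-1)z$ in the right-hand side of \eqref{E:gf-f-1} puts the bivariate generating function in exactly the format of Corollary~\ref{C:2}, with modified coefficients $\tilde\lambda_1=\lambda_1 v$ and $\tilde\lambda_2=\lambda_2$. Applying Corollary~\ref{C:2} (with uniformity in $v$, which is inherited from the uniformity built into Theorem~\ref{T:gen}) yields $[z^n]f_Z(z,v)\simeq c_Z(v)\,v^n\rho^n n^{n+1}$ with $c_Z(v)/c_Z(1)=\exp\bigl(\tau(v^{-2}-1)\bigr)$ and $\tau:=\lambda_2\pi^2/(6\lambda_1^2)$, since $\tilde\lambda_2/\tilde\lambda_1^2=\lambda_2/(\lambda_1 v)^2$ in the exponent of the leading constant of Corollary~\ref{C:2}. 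Substituting $u=v^{-2}$ then converts $\mathbb{E}(v^{Z_n})\simeq v^n\,e^{\tau(v^{-2}-1)}$ into $\mathbb{E}(u^{(n-Z_n)/2})\simeq e^{\tau(u-1)}$, the Poisson probability generating function with parameter $\tau$; continuity of probability generating functions delivers \eqref{E:Znf-Poi}. When $\lambda_2=0$ we have $\tau=0$, the limit degenerates at zero, and $\mathbb{P}(Z_n=n)\to 1$.

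The main obstacle is the same one already handled in Theorem~\ref{T:lrf-stat}(ii): securing uniformity in $v$ of all expansions near $v=1$, particularly controlling the cumulative effect of the relative error in $\Lambda(vz)\Lambda(z)^{j-1}=\Lambda(z)^{j-1+v}\lpa{1+O(|z|^2)}$ when iterated across $k\asymp n$ factors and tracking how $\omega=v-1$ enters the higher-order Euler--Maclaurin correction \eqref{E:emf-de}. This is routine bookkeeping because the $\omega$-dependence in \eqref{E:emf-de} enters only through $|\omega|^2\lpa{k^{-1}+|z|}$ and the saddle-point localization estimates of Propositions~\ref{P:poly}--\ref{P:central-o} are insensitive to bounded perturbations of the exponent. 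Hence no new analytic ideas are needed beyond those already developed in Sections~\ref{S:basis}--\ref{S:gf}.
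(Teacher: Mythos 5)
Your proposal is correct and follows essentially the same route as the paper's own proof: pass to the $\alpha=2$ forms of the bivariate generating functions, linearize $\Lambda(vz)\Lambda(z)^{j-1}$ into $\Lambda(z)^{j+v-1}$ with a controlled multiplicative error at the saddle point, apply the Theorem~\ref{T:gen} machinery (with uniformity in $v$) to get $[z^n]f(z,v)\simeq c(v)\rho(v)^n n^{n+\beta(v)}$, and finish via the Quasi-Powers framework for $X_n,Y_n$ and probability-generating-function convergence for $Z_n$. The only minor cosmetic difference is that the paper extracts the mixed-exponent product $\prod\lpa{e^{(j+\omega)z}-1}\lpa{e^{jz}-1}$ as a separate estimate \eqref{E:frwk-dr} derived in the style of Proposition~\ref{P:eoza}, whereas you fold it into an informal appeal to ``Theorem~\ref{T:gen} with effective $\alpha=2$''; and for $Z_n$ you compute $\mathbb{E}(v^{Z_n})\simeq v^n e^{\tau(v^{-2}-1)}$ and substitute $u=v^{-2}$, whereas the paper directly writes $\mathbb{E}\lpa{v^{\frac12(n-Z_n)}}\simeq e^{\tau(v-1)}$ --- equivalent bookkeeping.
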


\begin{proof}
\begin{enumerate}[(i)]

\item We begin with the generating function (see \eqref{E:gf-f-fr}) 
for the first row size
\[
    f_X(z,v) := \Lambda(vz)\sum_{k\ge 0}\Lambda(z)^k
	\prod_{1\le j\le k}\lpa{\lpa{\Lambda(vz)\Lambda(z)^{j-1}-1}
	\lpa{\Lambda(z)^j-1}}.
\]
By \eqref{E:log-es} and the expansion $\Lambda(vz)=1+O(|z|)$ for 
small $|z|$, we have 
\[
    f_X(z,v) = \llpa{\sum_{k\ge 0}\Lambda(z)^k
	\prod_{1\le j\le k}\lpa{\lpa{\Lambda(z)^{j+v-1}-1}
	\lpa{\Lambda(z)^j-1}}}
    \lpa{1+O(|z|\log n)},
\]
when $|z|\asymp n^{-1}$. 

Similar to Theorem~\ref{T:gen}, we first derive, by the same methods 
of proof used in Proposition~\ref{P:eoza}, that 
\begin{align}\label{E:frwk-dr}
    [z^n]\sum_{k\ge0}e^{kz}\prod_{1\le j\le k}
    \lpa{e^{(j+\omega)z}-1}\lpa{e^{jz}-1}
    \simeq c_0(\omega)\rho^n n^{n+\omega+1},
\end{align}
where
\[
    (c_0(\omega),\rho)
    := \llpa{\frac{2\sqrt{6}}
    {\Gamma(1+\omega)}
    \Lpa{\frac{6}{\pi^2}}^{1+\omega},
    \frac{6}{e\pi^2}}.
\]
Briefly, $\alpha$ is almost $2$ in the proof of
Proposition~\ref{P:eoza}, and the largest terms occur when $k\sim \mu
n$ and $n|z|\sim \xi$ with $(\mu,\xi)$ as in \eqref{E:mu-xi}, so that
$e^{kz}$ contributes an extra factor $2$.

We now make the change of variables $\Lambda(z) = e^y$, and follow 
the same proof procedure of Theorem~\ref{T:gen}, yielding 
\begin{align*}
    [z^n]\sum_{k\ge 0}\Lambda(z)^k
	\prod_{1\le j\le k}\lpa{\lpa{\Lambda(z)^{j+v-1}-1}
	\lpa{\Lambda(z)^j-1}}
    \simeq c(v)\rho^n n^{n+v},
\end{align*}
where
\[
    (c(v),\rho)
    := \llpa{\frac{2\sqrt{6}}
    {\Gamma(v)}\Lpa{\frac{6}{\pi^2}}^{v}
    \,e^{\frac{\pi^2}{12}
    \lpa{\frac{\lambda_2}{\lambda_1^2}-\frac12}},
    \frac{6\lambda_1}{e\pi^2}}.
\]
We then deduce that 
\[
    \mathbb{E}\lpa{v^{X_n}}
    = \frac1{\Gamma(v)}\Lpa{\frac{6}{\pi^2}}^{v-1}
    \,n^{v-1}\lpa{1+O\lpa{n^{-1}\log n}},
\]
uniformly for $v=O(1)$, and the asymptotic normality of $X_n$ then
follows again from the Quasi-Powers theorem \cite{Flajolet2009,
Hwang1994} or a standard characteristic function argument. Finer
results such as \eqref{E:Xn-clt} and \eqref{E:Xn-mv} can also be
derived.

\item For the size of the diagonal $Y_n$, we now have the generating 
function (see \eqref{E:gf-f-dg})
\[
    f_Y(z,v) := \Lambda(vz)
    \sum_{k\ge 0}\Lambda(z)^{k}
    \prod_{1\le j\le k}\lpa{\Lambda(vz)\Lambda(z)^{j-1}-1}^2.
\]
By \eqref{E:log-es}, the same arguments used in (i) for $X_n$ and 
Theorem~\ref{T:gen}, we deduce that 
\[
    [z^n]f_Y(z,v)
    = c(v)\rho^n n^{n+2v-1}\lpa{1+O\lpa{n^{-1}\log n}},
\]
where
\[
    (c(v),\rho) := \llpa{\frac{2\sqrt{6}}
    {\Gamma(v)^2}\Lpa{\frac{6}{\pi^2}}^{2v-1}
    \,e^{\frac{\pi^2}{12}
    \lpa{\frac{\lambda_2}{\lambda_1^2}-\frac12}},
    \frac{6\lambda_1}{e\pi^2}}.
\]
It follows that 
\[
    \mathbb{E}\lpa{v^{Y_n}}
    = \frac1{\Gamma(v)^2}\Lpa{\frac{6}{\pi^2}}^{2(v-1)}
    \,n^{2(v-1)}\lpa{1+O\lpa{n^{-1}\log n}},
\]
uniformly for $v=O(1)$. The asymptotic normality then follows from 
Quasi-Powers Theorem. 

\item Since $\lambda_1>0$, we can apply Theorem~\ref{T:gen} to the 
generating function \eqref{E:gf-f-1} for the number $Z_n$ of $1$s, 
which is 
\[
    f_Z(z,v):= \sum_{k\ge0}(\Lambda(z)+\lambda_1(v-1)z)^{k+1}
    \prod_{1\le j\le k}
    \lpa{(\Lambda(z)+\lambda_1(v-1)z)^j-1}^2,
\]
and we deduce that $\mathbb{E}\lpa{v^{\frac12(n-Z_n)}} \simeq
e^{\tau(v-1)}$, where $\tau := \tfrac{\pi^2\lambda_2}{6\lambda_1^2}$,
which leads to a degenerate limit law when $\lambda_2=0$ and a
Poisson limit law otherwise. The number of $2$s follows the same law.

\end{enumerate}    
\end{proof}

The most widely studied parameter is the size $X_n$ of the first row of uniformly random Fishburn matrices, i.e., the case $\Lambda(z) := \frac1{1-z}$. It appeared
in Stoimenow's study \cite{Stoimenow1998} on chord diagrams, and 
later examined by Zagier in \cite{Zagier2001}. Then the limiting
distribution of $X_n$ was raised as an open question in
\cite{Bringmann2014, Jelinek2012}. The generating function $f_X(z,v)$
for the first row size has been derived in several papers; see, for
example, \cite{Andrews2014, Bousquet-Melou2010, Fu2020, Jelinek2012,
Yan2011}, \href{https://oeis.org/A175579}{A175579} and 
Section~\ref{S:comb} for several other quantities with the same 
distribution as $X_n$. See also Table~\ref{Tab:A175579} and 
Figure~\ref{F:XY} for the distribution of small $n$ and graphical 
renderings. 

\begin{table}[!ht]\small 
\def\arraystretch{0.9}
\begin{tabular}{c|ccccccc}
    $n\backslash k$
    & $1$ & $2$ & $3$ & $4$ & $5$ & $6$ & $7$\\ \hline
    $1$ & $1$ &&&&& \\
    $2$ & $1$ & $1$ &&&& \\
    $3$ & $2$ & $2$ & $1$ &&& \\
    $4$ & $5$ & $6$ & $3$ & $1$ && \\
    $5$ & $15$ & $21$ & $12$ & $4$ & $1$ & \\
    $6$ & $53$ & $84$ & $54$ & $20$ & $5$ & $1$ \\
    $7$ & $217$ & $380$ & $270$ & $110$ & $30$ & $6$ & $1$ \\
\end{tabular}\quad
\begin{tabular}{c|ccccccc}
$n\backslash k$
& $1$ & $2$ & $3$ & $4$ & $5$ & $6$ & $7$\\ \hline
$1$ & $1$\\
$2$ & $0$ & $2$\\
$3$ & $0$ & $1$ & $4$\\
$4$ & $0$ & $2$ & $5$ & $8$\\
$5$ & $0$ & $5$ & $14$ & $18$ & $16$\\
$6$ & $0$ & $15$ & $47$ & $67$ & $56$ & $32$\\
$7$ & $0$ & $53$ & $183$ & $287$ & $267$ & $160$ & $64$\\
\end{tabular}
\medskip
\caption{The number of Fishburn matrices of size $n$ with first row 
size equal to $k$ (left) and the diagonal size to $k$ (right) for 
$n=1,\dots,7$. The table on the left corresponds to 
\href{https://oeis.org/A175579}{A175579}.}
\label{Tab:A175579}
\end{table}

The mean and the variance of $X_n$ satisfy
\begin{equation*} 
\begin{split}
    \mathbb{E}(X_n)
    &= \log n + \gamma-\log\tfrac{\pi^2}{6}
    +O\lpa{n^{-1}\log n},\\
    \mathbb{V}(X_n)
    &= \log n + \gamma-\tfrac{\pi^2}{6}
    -\log\tfrac{\pi^2}{6}+O\lpa{n^{-1}(\log n)^2}.
\end{split}
\end{equation*}

\begin{figure}[!ht]
\begin{center}
\begin{tabular}{cccc}
    \includegraphics[height=3cm]{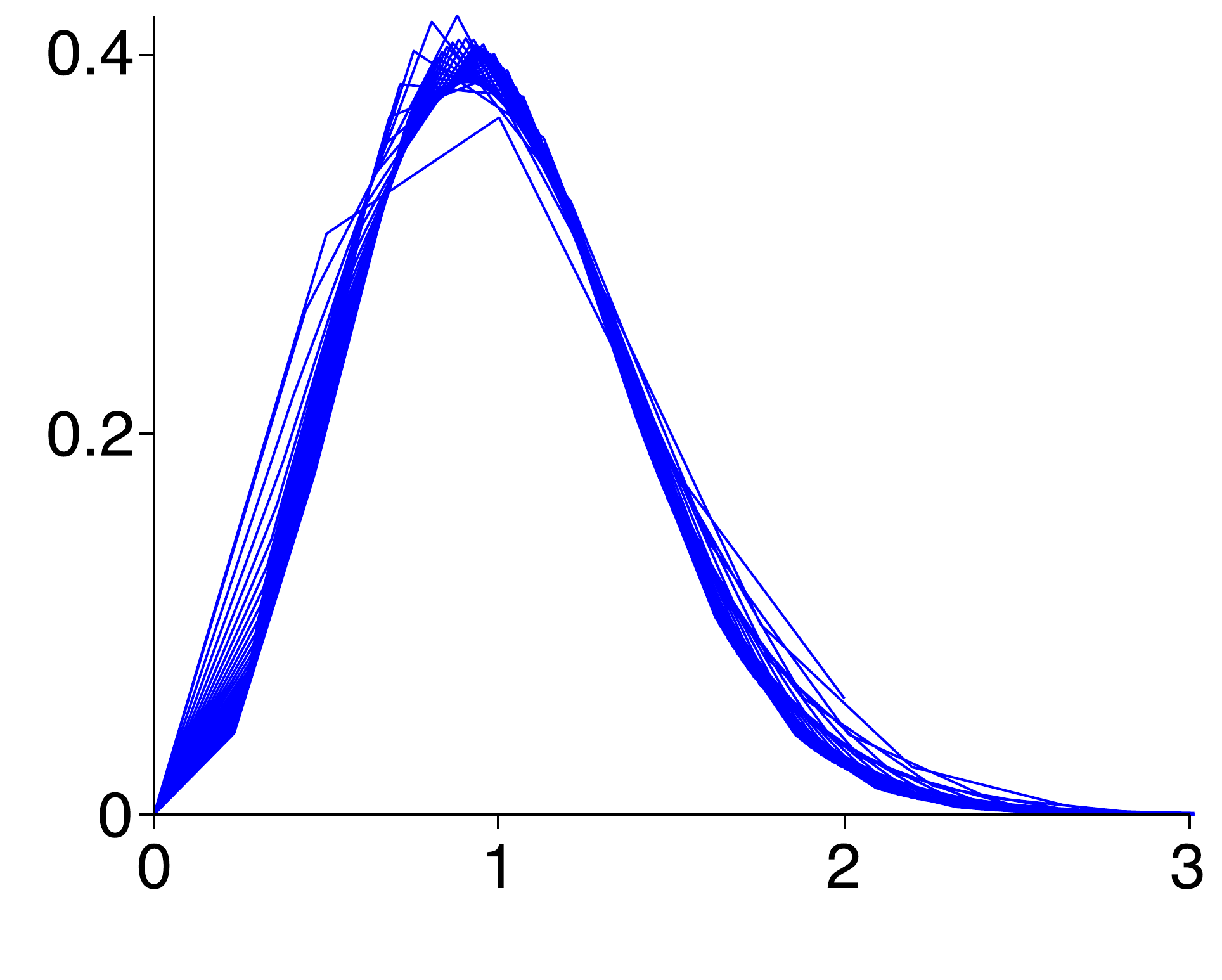}
    & \includegraphics[height=3cm]{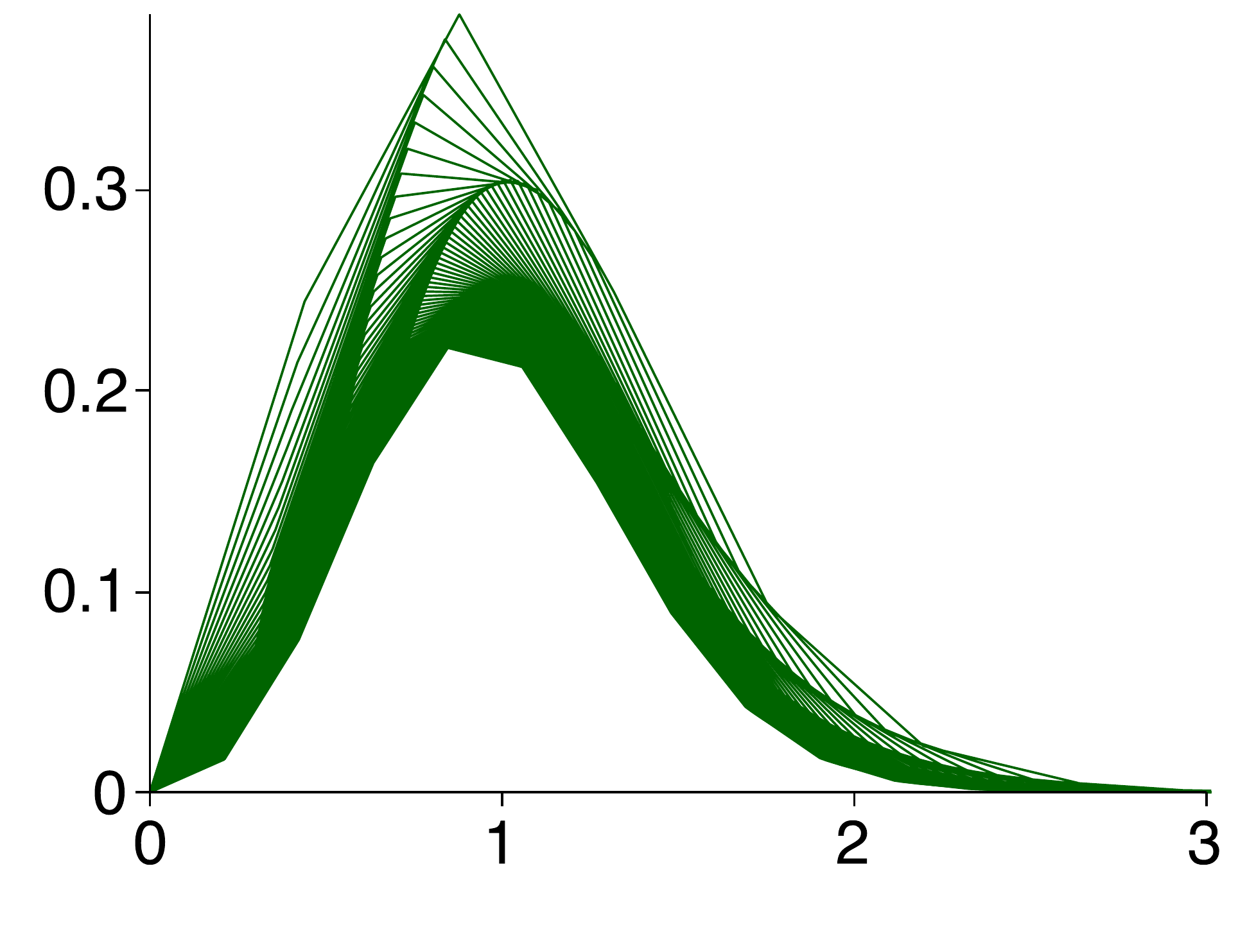} 
    & \includegraphics[height=3cm]{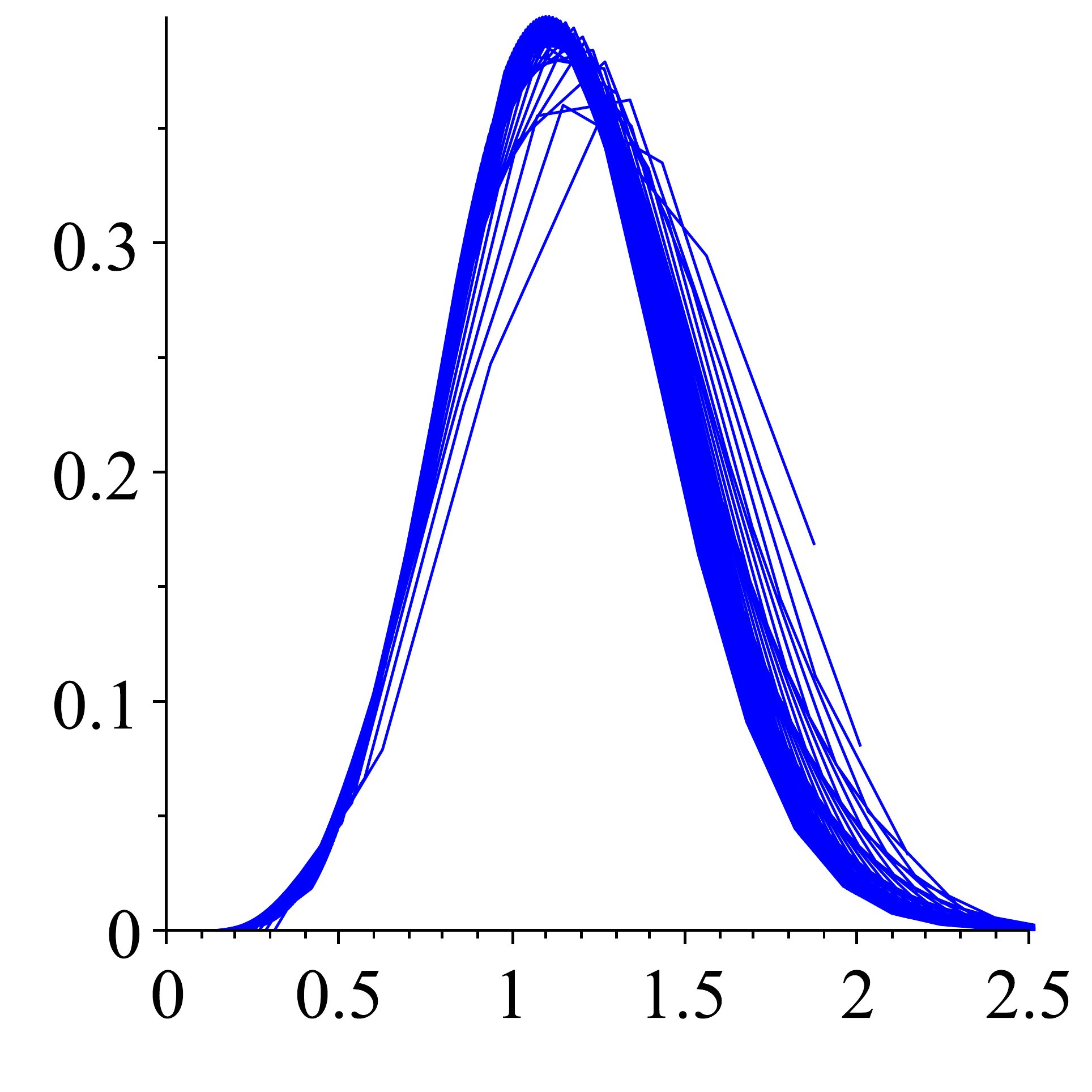}
    & \includegraphics[height=3cm]{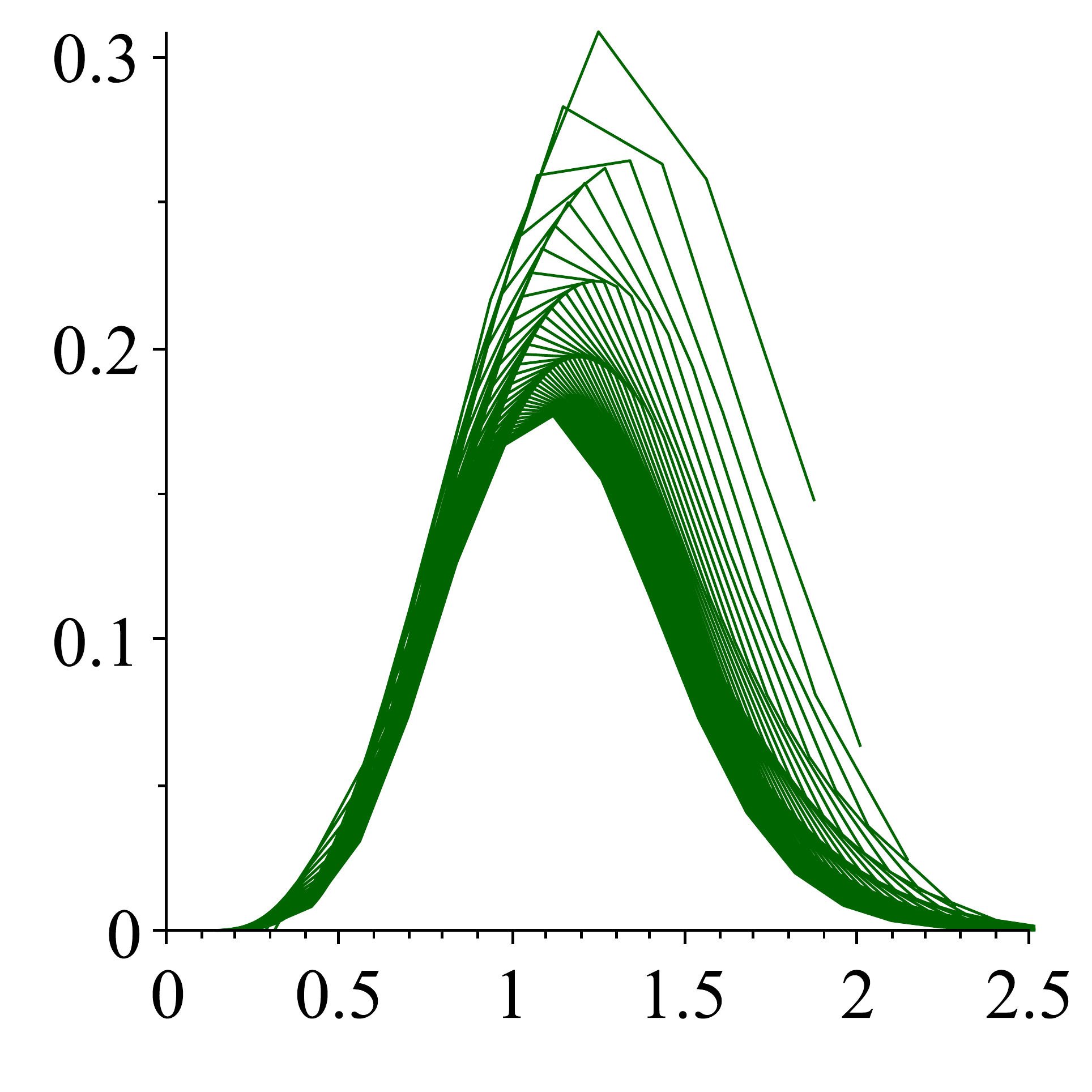}
    \\
\end{tabular}
\end{center}
\caption{The histograms of $X_n$ and $Y_n$ (Fishburn matrices) for
$n=6,\dots,100$: $\sigma_n(X)\mathbb{P}(X_n=\tr{t \mu_n(X)})$
(first), $\mathbb{P}(X_n=\tr{t \mu_n(X)})$ (second),
$\sigma_n(Y)\mathbb{P}(Y_n=\tr{t \mu_n(Y)})$ (third),
$\mathbb{P}(Y_n=\tr{t \mu_n(Y)})$ (fourth), where $\mu_n(W)$ and
$\sigma_n^2(W)$ denote the corresponding mean and variance of $W_n$,
respectively.}\label{F:XY}
\end{figure}

\section{A framework for matrices without $1$s and self-dual matrices}
\label{S:jelinek}

We discuss in this section the extension to the situation when
$e_1=0$ and $e_2>0$ of the general framework
\eqref{E:sum-d-prod-c}. The general asymptotic expressions
\eqref{E:gen} and \eqref{E:de-cre} certainly fail in such a case as
the leading constant involves $e_1$ in the denominator.

In addition to providing a better understanding of Fishburn matrices
in more general situations, our consideration of
\eqref{E:sum-d-prod-c} with $e_1=0$ and $e_2>0$ was also motivated by
asymptotic enumeration of the self-dual Fishburn matrices, a
conjecture raised by Jel\'{i}nek (Conjecture 5.4 of
\cite{Jelinek2012}). In particular, the asymptotic approximations
of non-primitive and primitive self-dual Fishburn matrices
(given in \eqref{E:selfdualfish} and \eqref{E:primselfdualfish}) will
follow readily from our general result Theorem~\ref{T:ext} or
Corollary~\ref{C:self-dual_lam1}. Furthermore, as in
Sections~\ref{S:A} and \ref{S:B}, our framework will be equally
useful in characterizing the asymptotic distributions of a few 
statistics in random self-dual Fishburn matrices, which we briefly
explore in this section.

While most proofs in this section follow similar ideas to the ones we
employed in Section \ref{S:basis}--\ref{S:B}, the technical details
in these proofs are more involved with generally lengthier
expressions. Thus we will indicate the major differences.

\subsection{Asymptotics of \eqref{E:sum-d-prod-c} with $e_1=0$ 
and $e_2>0$}\label{S:8-1}

\begin{theorem}\label{T:ext}
Assume $\alpha\in\mathbb{Z}^+$ and $\omega_0,\omega\in\mathbb{C}$. 
Given any two formal power series $e(z):=1+\sum_{j\ge1}e_jz^j$ and
$d(z):=1+\sum_{j\ge1}d_jz^j$ satisfying $e_1=0$, $e_2>0$, and 
\begin{align}\label{E:de-conds}
    \alpha e_3\pi^2+12 d_1e_2\log 2>0,
\end{align}
we have
\begin{equation}\label{E:ext8}
    [z^n]\sum_{k\ge0}d(z)^{k+\omega_0}\prod_{1\le j\le k}
    \lpa{e(z)^{j+\omega}-1}^\alpha
    = ce^{\beta\sqrt{n}}\rho^{\frac12n}
    n^{\frac12(n+\alpha)+\alpha\omega}
    \lpa{1+O\lpa{n^{-\frac12}}},
\end{equation}
the $O$-term holding uniformly for bounded $\omega_0$ and $\omega$, 
where $\beta:=\frac{\sqrt{6}d_1\log 2}{\sqrt{e_2\alpha}\,\pi}
+\frac{\sqrt{\alpha}\,e_3\pi}{2\sqrt{6}\,e_2^{3/2}}$, 
$\rho:=\frac{6e_2}{e\pi^2\alpha}$, and
\begin{align*}
    c:=\tfrac{\sqrt{3}}{\sqrt{2}\,\alpha\pi}
    \Lpa{\tfrac1{\Gamma(1+\omega)}
    \sqrt{\tfrac{12}{\alpha\pi}}
    \left(\tfrac{6}{\alpha\pi^2}\right)^\omega}^\alpha
    2^{-\frac{d_1^2}{2e_2}-\frac{3d_1e_3}{4e_2^2}+
    \frac{d_2}{e_2}}
    e^{-\frac{d_1^2}{4\alpha e_2}
    -\frac{\alpha\pi^2}{12}\lpa{\frac{7e_3^2}{8e_2^3}
    -\frac{e_4}{e_2^2}+\frac12}    
    +\frac{3d_1^2}{2e_2\alpha\pi^2}(\log2)^2}.
\end{align*}
\end{theorem}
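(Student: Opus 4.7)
The plan is to adapt the double saddle-point analysis of Section~\ref{S:gf} to the degenerate case $e_1=0,\,e_2>0$, for which the linearizing substitution $e(z)=e^y$ used in Theorem~\ref{T:gen} is no longer locally invertible. The natural replacement is the \emph{half-linearizing} substitution $\log e(z)=y^2/2$: since $\log e(z)=e_2z^2+e_3z^3+\cdots$ has a simple quadratic zero at the origin, there is a unique analytic branch $z=\psi(y)$ with $\psi'(0)=1/\sqrt{2e_2}>0$ and
\[
    \psi(y)=\frac{y}{\sqrt{2e_2}}-\frac{e_3}{4e_2^2}\,y^2+\psi_3 y^3+\cdots.
\]
After truncating $d$ and $e$ to polynomials of degree $\le n$ (so that both are entire) and applying Cauchy's formula followed by this change of variables, the target coefficient becomes
\[
    a_n=\frac{(2e_2)^{(n+1)/2}}{2\pi i}\oint y^{-n-1}\psi'(y)\phi(y)^{-n-1}\sum_{0\le k\le n/\alpha}d(\psi(y))^{k+\omega_0}\prod_{1\le j\le k}\lpa{e^{(j+\omega)y^2/2}-1}^\alpha\dd y,
\]
where $\phi(y):=\sqrt{2e_2}\,\psi(y)/y=1+O(y)$.

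The inner product is structurally identical to the prototype $\prod_j(e^{(j+\omega)\eta}-1)^\alpha$ of Proposition~\ref{P:eoza}, now with $\eta=y^2/2$. Applying the Euler-Maclaurin expansion \eqref{E:emf-de} with $z$ replaced by $y^2/2$, first maximising in $k$ (which gives $ky^2/2\sim\log 2$) and then solving the saddle-point equation in $y$, yields the dominant saddle point
\[
    y^2\sim\frac{\alpha\pi^2}{3n},\qquad k\sim\frac{6n\log 2}{\alpha\pi^2}.
\]
Thus the integration radius $r\asymp n^{-1/2}$ is substantially larger than the $r\asymp n^{-1}$ of Section~\ref{S:basis}, and every factor of the form $\exp(Cky)$ or $\exp(Cny)$ in the integrand now contributes an $e^{O(\sqrt n)}$ term rather than $O(1)$. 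Explicitly, $d(\psi(y))^{k+\omega_0}=\exp\bigl((k+\omega_0)[d_1 y/\sqrt{2e_2}+O(y^2)]\bigr)$ produces a factor $e^{\beta_1\sqrt n+O(1)}$ with $\beta_1=\sqrt 6\,d_1\log 2/(\pi\sqrt{\alpha e_2})$, while $\phi(y)^{-n-1}$ yields $e^{\beta_2\sqrt n+O(1)}$ with $\beta_2=\sqrt{\alpha}\,e_3\pi/(2\sqrt 6\,e_2^{3/2})$ through the coefficient $\psi_2=-e_3/(4e_2^2)$. Their sum is exactly the announced $\beta$; clearing denominators shows that the hypothesis \eqref{E:de-conds} is equivalent to $\beta>0$, guaranteeing that the interior saddle point is dominant.

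The remaining steps parallel Sections~\ref{S:basis}--\ref{S:gf}. Negligibility of $k$ outside the central window $|k-6n\log 2/(\alpha\pi^2)|\le C n^{5/8}$ and of $\theta:=\arg y$ outside $|\theta|\le n^{-3/8}$ follows from the saddle-point bound \eqref{E:ank-sd1} combined with an unchanged version of \eqref{E:pittel} applied to $|e^{(j+\omega)y^2/2}-1|$; the concavity argument of Lemma~\ref{L:max-phi} carries over once $y^2$ is treated as the analogue of the former $r$. Inside the central ranges, one expands $\psi$, $\phi$, $\psi'$ and $d\circ\psi$ to order $y^3$, solves the saddle-point equation for $y=y(n)$ to the corresponding order, and then integrates the resulting Gaussian factor termwise against the Riemann sum over $k$, exactly as in Section~\ref{S:spm4}. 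The exponential-in-$n$ factor $\rho^{n/2}=(6e_2/(e\alpha\pi^2))^{n/2}$ arises from combining $(2e_2)^{n/2}$ from the change of variables with the $(12/(e\alpha\pi^2))^{n/2}$ produced by Proposition~\ref{P:eoza} through $\eta=y^2/2$, and the polynomial factor $n^{(n+\alpha)/2+\alpha\omega}$ comes from the same proposition after the replacement $n\to n/2$.

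The hard part is the precise bookkeeping for the leading constant $c$. Because $e^{\beta\sqrt n}$ is itself produced by the \emph{first-order} correction in the local expansion of $\phi$ and $d\circ\psi$ near $y=0$, the $O(1)$ corrections that determine $c$ require third-order expansions of $\psi$ (producing the dependence on $e_4$ via the coefficient $\psi_3$), second-order expansions of $d(\psi(y))^{k+\omega_0}$ (producing the factors $2^{d_2/e_2}$ and $e^{-d_1^2/(4\alpha e_2)}$ as well as $2^{-d_1^2/(2e_2)}$), the next correction to the saddle-point location (producing the $e_3^2/e_2^3$ and $e_4/e_2^2$ terms), and second-order corrections to $\phi(y)^{-n-1}$ (producing $2^{-3d_1 e_3/(4e_2^2)}$ through the interaction of $\psi_2$ with $d_1$). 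Tracking these contributions is intricate but amounts to a finite symbolic calculation that mirrors---on a larger scale---the constant derivation in \eqref{E:de-cre}. Once all expansions are collected, the usual Gaussian integration in $\theta$ and Euler-Maclaurin approximation of the sum in $k$ yield the displayed $c$, with the error term $O(n^{-1/2})$ reflecting the first omitted correction (of order $y$) at the saddle point.
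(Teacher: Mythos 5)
Your route — reduce to the exponential prototype via a square-root-linearizing substitution $\log e(z)\propto y^2$, then re-run the double saddle-point machinery with the new radius $|y|\asymp n^{-1/2}$ so that linear-in-$y$ factors generate $e^{\Theta(\sqrt n)}$ corrections — is exactly the approach the paper takes (the paper uses $e(z)=e^{y^2}$, you use $e(z)=e^{y^2/2}$; these differ only by rescaling $y$). Your identification of the two contributions $\beta_1,\beta_2$ to $\beta$, the factorization $\rho^{n/2}=(2e_2)^{n/2}\cdot 2^{-n/2}\cdot(12/(e\alpha\pi^2))^{n/2}$, the polynomial exponent $n^{(n+\alpha)/2+\alpha\omega}$, and the interpretation of \eqref{E:de-conds} as $\beta>0$ all agree with what the paper says and does.

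There is, however, one genuine gap. You assert that negligibility of the arcs with $|\theta|=|\arg y|$ outside $O(n^{-3/8})$ ``follows from the saddle-point bound \eqref{E:ank-sd1} combined with an unchanged version of \eqref{E:pittel} applied to $|e^{(j+\omega)y^2/2}-1|$.'' That is not enough: \eqref{E:pittel} applied to $e^{(j+\omega)y^2/2}-1$ controls the integrand via $\arg(y^2)=2\theta$ taken modulo $2\pi$, and this vanishes when $\theta\to\pm\pi$ — the product $\prod_j\bigl(e^{(j+\omega)y^2/2}-1\bigr)^\alpha$ has the \emph{same} modulus near $\theta=\pi$ as near $\theta=0$. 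The paper flags exactly this in the proof of Proposition~\ref{P:sec8-proto}: ``$A_k(z^2)$ has the same order of magnitude as $A_k(r^2)$ when $\theta\sim\pi$, but due to the presence of $e^{kz}$, the corresponding Cauchy integral remains asymptotically negligible.'' In your notation, the damping near $\theta=\pi$ must come from the factor $d(\psi(y))^{k+\omega_0}\phi(y)^{-n-1}$, whose real part contributes $\approx e^{+\beta\sqrt n}$ at the positive saddle but $\approx e^{-\beta\sqrt n}$ at the antipodal point $y=-y_*$; the hypothesis \eqref{E:de-conds}, i.e.\ $\beta>0$, is what makes that contribution exponentially small. You do mention $\beta>0$, but you invoke it only to say ``the interior saddle point is dominant'' rather than deploying it where it is actually needed — to replace the failed Pittel bound on the far arc. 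The fix is short, but the step as written would not go through.

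The remaining bookkeeping for the constant $c$, which you leave as a finite symbolic computation after listing which orders of expansion feed which factors, is at the same level of detail as the paper's own sketch.
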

Note specially the change of the dominant exponential part
$\rho^{\frac12n} n^{\frac12(n+\alpha)+\alpha\omega}$ in
\eqref{E:ext8}, as well as the presence of the extra factor $e^{\beta
\sqrt{n}}$ when compared to \eqref{E:zag} and \eqref{E:blr}. On the
other hand, $\beta>0$ is equivalent to the condition
\eqref{E:de-conds}. When $\beta=0$ (and $e_2>0$), asymptotic
periodicities emerge (depending on the parity of $n$), which
complicate the corresponding expressions. Instead of formulating a
general heavy result, we will be content with ourselves with the
study of Fishburn matrices with
$\lambda_{2i-1}=0$ for $1\le i\le m$ but
$\lambda_2,\lambda_{2m+1}>0$ in Section~\ref{S:130}.

The proof of Theorem~\ref{T:ext} is similar to that of 
Theorem~\ref{T:eg1} and Theorem \ref{T:gen}, beginning first with the
corresponding exponential version and following by the change of 
variables $e(z)=e^{y^2}$ (locally invertible).

\begin{proposition} \label{P:sec8-proto}
For large $n$, $\alpha\in \mathbb{Z}^+$, and $\omega\in\mathbb{C}$, 
\begin{align}\label{E:jelinek-exp}
    [z^n]\sum_{k\ge0}e^{kz}
    \prod_{1\le j\le k}\lpa{e^{(j+\omega)z^2}-1}^{\alpha}
    = c e^{\beta \sqrt{n}}
    \rho^{\frac12n} n^{\frac12(n+\alpha)+\alpha\omega}
    \lpa{1+O\lpa{n^{-\frac12}}},
\end{align}
the $O$-term holding uniformly for bounded $\omega$, 
where $\beta :=\tfrac{\sqrt{6}\log2}{\sqrt{\alpha}\pi}$, and
\begin{align*}
    (c,\rho):=\Lpa{\tfrac{\sqrt{3}}{\sqrt{2}\,\alpha\pi}
    \Lpa{\tfrac1{\Gamma(1+\omega)}
    \sqrt{\tfrac{12}{\alpha\pi}}
    \left(\tfrac{6}{\alpha\pi^2}\right)^\omega}^\alpha
    e^{-\frac1{4\alpha}+\frac3{2\alpha \pi^2}
    (\log 2)^2},\tfrac6{e\pi^2\alpha}}.
\end{align*}
\end{proposition}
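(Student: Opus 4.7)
The plan is to adapt the double saddle-point scheme of Section~\ref{S:basis}, now with the $z$ in the inner exponentials replaced by $z^2$ and an extra $e^{kz}$ factor. Set $A_k(z):=e^{kz}\prod_{1\le j\le k}\lpa{e^{(j+\omega)z^2}-1}^\alpha$, $L_k(z):=\log A_k(z)$ and $a_{n,k}:=[z^n]A_k(z)$. First, by substituting $z\mapsto z^2$ in \eqref{E:emf-de} and adding the contribution $\log e^{kz}=kz$, one obtains, uniformly for $k|z|^2\le 2\pi-\ve$ with $|\arg z^2|\le\pi-\ve$,
\begin{align*}
    L_k(z)
    &= kz+\alpha k\log\lpa{e^{kz^2}-1}-\frac{\alpha I(kz^2)}{z^2}
    +\alpha\Lpa{\omega+\tfrac12}\log\frac{e^{kz^2}-1}{z^2}\\
    &\qquad +\tfrac{\alpha}2\log2\pi-\alpha\log\Gamma(1+\omega)+O(1),
\end{align*}
together with the asymptotics of its complex derivatives by term-by-term differentiation. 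Because the target polynomial factor is $n^{n/2}$ (not $n^n$), the correct scaling is $r\asymp n^{-1/2}$; writing $k=qn$ and $r=\varrho/\sqrt n$, the saddle-point equation $rL_k'(r)=n$ reads $q\varrho\sqrt n+2\alpha nI(q\varrho^2)/\varrho^2=n+O(1)$, so to leading order $2\alpha I(q\varrho^2)=\varrho^2$, the $q\varrho\sqrt n$ piece contributing the $n^{-1/2}$-correction to $\varrho$ that ultimately produces the prefactor $e^{\beta\sqrt n}$ in \eqref{E:jelinek-exp}.

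Following Lemma~\ref{L:max-phi}, I next maximize
\[
    \phi(q,\varrho):=-\log\varrho+\alpha q\log\lpa{e^{q\varrho^2}-1}-\frac{\alpha I(q\varrho^2)}{\varrho^2}
\]
over the admissible pairs. Using $I'(u)=u/(1-e^{-u})$, the partial derivatives simplify remarkably to
\[
    \partial_q\phi=\alpha\log\lpa{e^{q\varrho^2}-1}
    \and
    \partial_\varrho\phi=-\tfrac{1}{\varrho}+\tfrac{2\alpha I(q\varrho^2)}{\varrho^3},
\]
whose unique positive solution satisfies $q\varrho^2=\log 2$ and $\varrho^2=2\alpha I(\log 2)=\tfrac{\alpha\pi^2}{6}$; here the classical evaluation $I(\log 2)=\tfrac{\pi^2}{12}$ is a consequence of $\operatorname{Li}_2(\tfrac12)=\tfrac{\pi^2}{12}-\tfrac12(\log 2)^2$. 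This yields $(\mu,\xi):=\lpa{\tfrac{6\log 2}{\alpha\pi^2},\pi\sqrt{\alpha/6}}$, at which $e^{2\phi(\mu,\xi)}=\tfrac{6}{e\alpha\pi^2}=\rho$ and the value of $kr$ at the saddle equals $\mu\xi\sqrt n=\tfrac{\sqrt 6\log 2}{\sqrt\alpha\,\pi}\sqrt n=\beta\sqrt n$. Concavity of the reduced function $\phi_1(q):=\phi(q,\varrho(q))$ along the curve $I(q\varrho^2)=\varrho^2/(2\alpha)$ follows by the same implicit differentiation argument as in Lemma~\ref{L:max-phi}. The remainder of the argument parallels Propositions~\ref{P:poly} and~\ref{P:central-o} and Section~\ref{S:spm4}: writing $k=\mu n+x\sigma\sqrt n$, with $\sigma$ determined by $\phi_1''(\mu)$, and expanding $\varrho(q)$ and $r$ through order $n^{-1}$, the outer sum concentrates on $|k-\mu n|\le\sqrt 2\,\sigma n^{5/8}$ by concavity, the angular integral $\int_{|\theta|\ge 6n^{-3/8}}$ is bounded by Pittel's inequality \eqref{E:pittel} applied to the factors $|e^{(j+\omega)z^2}-1|$ with $\arg z^2=2\theta$, and inside the central regime $L_k(re^{i\theta})-L_k(r)=-\tfrac12\upsilon_2(r)\theta^2+O(n|\theta|^3)$ with $\upsilon_2(r)\asymp n$, so Gaussian integration in $\theta$ together with the replacement of the $k$-sum by its Gaussian integral produces \eqref{E:jelinek-exp}.

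The principal difficulty is bookkeeping. Because $e^{kz}$ contributes at the $\sqrt n$-scale, the saddle-point coordinate $r$ must be expanded up through order $n^{-1}$ to capture both its effect on the exponent and on the Gaussian curvature; one then has to collect the $\frac1{12}$-Bernoulli subleading terms, the $\alpha(\omega+\tfrac12)\log\tfrac{e^{kz^2}-1}{z^2}$ piece (which produces the $\alpha(\omega+\tfrac12)\log n$ term in the final exponent), and the $\Gamma(1+\omega)$ contribution. These, combined with the Gaussian prefactor $(2\pi\upsilon_2(r))^{-1/2}$ in $\theta$ and the curvature $|\phi_1''(\mu)|$ in $k$, yield the stated leading constant $c$ together with the residual factor $\exp\lpa{-\tfrac1{4\alpha}+\tfrac{3(\log 2)^2}{2\alpha\pi^2}}$; verifying the latter is a finite symbolic computation once the first-order expansions of $r$ and $\varrho(q)$ are fixed, and uniformity in bounded $\omega$ is inherited from the uniformity of the Euler-Maclaurin expansion above.
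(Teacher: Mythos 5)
The structure of your argument---Euler--Maclaurin for $L_k$, saddle at $q\varrho^2=\log 2$, $\varrho^2=\alpha\pi^2/6$, the evaluation $I(\log 2)=\pi^2/12$, the identification $\mu\xi=\beta$ and $e^{2\phi(\mu,\xi)}=\rho$, and the Gaussian double integration---is the right one and the computed constants check out. However, there is a genuine gap in your treatment of the angular tail, and it is exactly the point the paper's own (one-sentence) proof flags. You write that the integral over $|\theta|\ge 6n^{-3/8}$ is controlled by Pittel's inequality \eqref{E:pittel} applied to the factors $|e^{(j+\omega)z^2}-1|$ ``with $\arg z^2=2\theta$.'' This is incorrect for $|\theta|>\pi/2$: the argument of $z^2=r^2e^{2i\theta}$ reduced to $(-\pi,\pi]$ is $2\theta\mp 2\pi$, so as $\theta\to\pm\pi$ the effective argument tends to $0$ and the Pittel decay factor degenerates to $1$. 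Concretely, at $\theta=\pi$ one has $z^2=r^2>0$, so $\prod_{1\le j\le k}|e^{(j+\omega)z^2}-1|^\alpha$ has the \emph{same} modulus as at $\theta=0$; the product alone does not make the tail small there.

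The correct argument must bring in the $e^{kz}$ factor you included in $A_k(z)$: $|e^{kz}|=e^{kr\cos\theta}$ is maximal at $\theta=0$ and equals $e^{-kr}=e^{-\beta\sqrt n(1+o(1))}$ at $\theta=\pm\pi$, so on the arc where the reduced $\arg z^2$ is too small for \eqref{E:pittel} to help (a window of width $O(n^{-1/2})$ around $\theta=\pm\pi$, since $k^2r^2\asymp n$) the integrand is already smaller than the central contribution by the super-polynomial factor $e^{-2kr}$. Elsewhere in the tail one may combine the Pittel decay of the inner product (with the reduced argument) with the monotone decay of $\cos\theta$. Once this correction is made, the remainder of your sketch---the concentration of the $k$-sum around $\mu n$, the Gaussian analysis in $\theta$, and the bookkeeping of the $\sqrt n$-scale terms coming from $kr$ and from the first-order shift in the saddle---goes through and reproduces \eqref{E:jelinek-exp} with the stated $\beta$, $\rho$ and $c$.
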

\begin{proof} (Sketch)
Similar to the proof of Proposition~\ref{P:eoza}; note that $A_k(z^2)
= \prod_{1\le j\le k}\lpa{e^{jz^2}-1}$ has the same order of
magnitude as $A_k(r^2)$ when $z=re^{i\theta}$ with $\theta\sim\pi$,
but due to the presence of $e^{kz}$, the corresponding Cauchy
integral remains asymptotically negligible.
\end{proof}

%

Note that the proof of Theorem \ref{T:ext} can be extended to the
situation when $m$ ($m\ge 2$) is the smallest nonzero entry, that is,
$\lambda_j=0$ for $1\le j<m$ and $\lambda_m>0$, $m\ge2$.

\subsection{Self-dual $\Lambda$-Fishburn matrices with $\lambda_1>0$}
\label{S:no-1}

We now consider general self-dual $\Lambda$-Fishburn matrices with 
$\lambda_1>0$. 

\begin{lemma}\label{L:self}
The generating function for self-dual $\Lambda$-Fishburn matrices is 
given by ($z$ marking the matrix size)
\begin{align*}
    \sum_{k\ge 0}\Lambda(z)^{k+1}\prod_{1\le j\le k}
    \lpa{\Lambda\lpa{z^2}^{j}-1}.
\end{align*}
\end{lemma}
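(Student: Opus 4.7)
The plan is to prove the lemma by extending the already-known generating function for primitive self-dual Fishburn matrices to arbitrary entries in $\Lambda$, via a substitution argument paralleling the proof of Proposition \ref{P:gf-f}.

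First I would invoke the generating function for primitive self-dual Fishburn matrices, already displayed in Section \ref{S:comb} and attributable to Jel\'{i}nek \cite{Jelinek2012}:
\[
    \sum_{k\ge 0}(1+z)^{k+1}\prod_{1\le j\le k}\lpa{(1+z^2)^j - 1}.
\]
This arises from an iterative construction (in the spirit of Jel\'{i}nek's Lemma 2.8) that adds a persymmetric ``ring'' at each step $k$. Within such a ring, each entry on the anti-diagonal is self-paired under persymmetry and carries the $\{0,1\}$-generating function $1+z$ (since it contributes its own value to the size), whereas each entry off the anti-diagonal comes in a persymmetric pair $M_{i,j}=M_{k+2-j,k+2-i}\in\{0,1\}$ and the pair jointly contributes $z^{2M_{i,j}}$, giving the per-pair generating function $1+z^2$. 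The subtraction $-1$ in $(1+z^2)^j-1$ enforces the Fishburn condition that the newly added row is not all zero; by persymmetry, forbidding zero rows suffices (it automatically forbids zero columns).

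Second, I would extend to general $\Lambda$ by the substitutions $1+z\mapsto \Lambda(z)$ and $1+z^2\mapsto\Lambda(z^2)$. Combinatorially, a self-paired entry taking values in the multiset with generating function $\Lambda(z)$ contributes $z^{\lambda}$ to the size for each choice $\lambda$, hence has generating function $\Lambda(z)$; a persymmetrically paired $\Lambda$-entry has generating function $\Lambda(z^2)$ because both entries of the pair are forced equal and each contributes $z^{\lambda}$. Since the Fishburn no-zero-row condition depends only on whether individual entries are zero, the $-1$ in $\Lambda(z^2)^j-1$ plays exactly the same role as $-1$ in $(1+z^2)^j-1$ in the primitive case, namely, forbidding the configuration where all $j$ paired entries of the new ring vanish. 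Assembling these factors over the $k$-th ring and summing over $k\ge 0$ yields the claimed generating function.

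The main obstacle will be to verify rigorously that the ``ring-by-ring'' decomposition underlying the primitive formula is compatible with the substitution, i.e., that each $(1+z)$ and $(1+z^2)$ factor in the primitive formula is attached to a single well-defined matrix entry or to a single persymmetric pair, and that the enforcement of the Fishburn condition is preserved factor by factor. If this factor-tracking proves delicate, a fallback I would take is a direct combinatorial derivation for general $\Lambda$ modeled on the proof of Proposition \ref{P:gf-f} (iteratively peeling off persymmetric rings from a self-dual $\Lambda$-Fishburn matrix), which yields the same product form without recourse to the primitive case. As a sanity check, the formula recovers the two explicit expressions displayed in Section \ref{S:comb} upon specializing to $\Lambda(z)=1+z$ and $\Lambda(z)=(1-z)^{-1}$.
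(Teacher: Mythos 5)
Your approach is essentially the same as the paper's, which states in one sentence that the lemma ``is a direct consequence of the case $\Lambda=\mathbb{Z}_{\ge0}$ given in \cite{Jelinek2012}''; you start instead from the primitive case $\Lambda=\{0,1\}$, but the load-bearing step is the same substitution $1+z\mapsto\Lambda(z)$, $1+z^2\mapsto\Lambda(z^2)$ applied to Jel\'inek's factored formula, and your observation that persymmetry makes the no-zero-row and no-zero-column constraints equivalent is a correct and useful supporting point. Your ``ring-by-ring'' gloss on the primitive formula should not be taken too literally (in the $k$-th summand $k$ does not track a dimension or a number of rings, as even the $k=1$ term $(1+z)^2z^2$, which contributes to sizes $2,3,4$, shows), but you already flag this uncertainty yourself, and the substitution --- not the geometric picture --- is what actually carries the lemma, exactly as in the paper's treatment of Proposition~\ref{P:gf-f}.
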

This lemma is a direct consequence of the case
$\Lambda=\mathbb{Z}_{\ge0}$ given in \cite{Jelinek2012}.

\begin{corollary}\label{C:self-dual_lam1}
If $\lambda_1>0$, then the number of self-dual $\Lambda$-Fishburn 
matrices of size $n$ satisfies
\begin{align*}
    [z^n]\sum_{k\ge0}\Lambda(z)^{k+1}
    \prod_{1\le j\le k}\lpa{\Lambda(z^2)^{j}-1}
    = c e^{\beta \sqrt{n}}
    \rho^{\frac12n} n^{\frac12(n+1)}
    \lpa{1+O\lpa{n^{-\frac12}}},
\end{align*}
where $\beta := \frac{\sqrt{6\lambda_1}}{\pi}\log 2$, and
$
    (c,\rho):=
    \Lpa{\tfrac{3\sqrt{2}}{\pi^{3/2}}\,
    2^{\frac{\lambda_2}{\lambda_1}-\frac{\lambda_1}{2}}
    e^{-\frac{\lambda_1}{4}-\frac{\pi^2}{24}
    +\frac{\pi^2\lambda_2}{12\lambda_1^2}
    +\frac{3\lambda_1}{2\pi^2}(\log 2)^2}
    , \tfrac{6\lambda_1}{e\pi^2}}.
$
\end{corollary}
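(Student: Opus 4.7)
The plan is to recognize the generating function supplied by Lemma~\ref{L:self} as a specialization of the template \eqref{E:sum-d-prod-c} and then invoke Theorem~\ref{T:ext} directly. Setting
\[
    d(z) := \Lambda(z), \quad e(z) := \Lambda(z^2), \quad
    \alpha := 1, \quad \omega_0 := 1, \quad \omega := 0,
\]
the sum $\sum_{k\ge0}\Lambda(z)^{k+1}\prod_{1\le j\le k}(\Lambda(z^2)^j-1)$ assumes exactly the form $\sum_{k\ge0}d(z)^{k+\omega_0}\prod_{1\le j\le k}(e(z)^{j+\omega}-1)^\alpha$, so all the saddle-point machinery already developed in Section~\ref{S:8-1} is at hand.

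Next I would read off the relevant Taylor coefficients: $d_1=\lambda_1$, $d_2=\lambda_2$, and, since $e(z)=1+\lambda_1 z^2+\lambda_2 z^4+\cdots$, we get $e_1=0$, $e_2=\lambda_1$, $e_3=0$, $e_4=\lambda_2$. I then verify the hypotheses of Theorem~\ref{T:ext}: $e_1=0$ is immediate, $e_2=\lambda_1>0$ by assumption, and the positivity condition \eqref{E:de-conds} reduces to $\alpha e_3\pi^2+12 d_1 e_2\log 2 = 12\lambda_1^2\log 2>0$, which clearly holds.

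With the hypotheses checked, the rest is a substitution exercise. The exponential coefficient from Theorem~\ref{T:ext} collapses to
\[
    \beta = \frac{\sqrt{6}\,d_1\log 2}{\sqrt{e_2\alpha}\,\pi}
    +\frac{\sqrt{\alpha}\,e_3\pi}{2\sqrt{6}\,e_2^{3/2}}
    = \frac{\sqrt{6\lambda_1}\,\log 2}{\pi},
\]
the geometric rate becomes $\rho=6\lambda_1/(e\pi^2)$, and the factorial-polynomial factor $n^{\frac12(n+\alpha)+\alpha\omega}$ matches the stated $n^{\frac12(n+1)}$. The leading constant $c$ is then obtained by plugging the values above into the formula for $c$ in Theorem~\ref{T:ext}: the prefactor simplifies via $\frac{\sqrt{3}}{\sqrt{2}\,\pi}\sqrt{12/\pi}=3\sqrt{2}/\pi^{3/2}$; the base-$2$ exponent becomes $-\lambda_1/2+\lambda_2/\lambda_1$ (the $e_3$-term drops); and the exponent of $e$ collapses to $-\lambda_1/4-\pi^2/24+\pi^2\lambda_2/(12\lambda_1^2)+3\lambda_1(\log 2)^2/(2\pi^2)$, matching the statement exactly.

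Since all the analytic content — the double saddle-point analysis together with the change of variables $e(z)=e^{y^2}$ (locally invertible because $e_2>0$) — is absorbed into Theorem~\ref{T:ext}, there is no genuine obstacle beyond careful bookkeeping. The only subtle point worth flagging is that $e_3=0$ must not produce any singular behavior in the formulas of Theorem~\ref{T:ext}, which is the case since $e_3$ appears only in numerators in both $\beta$ and $c$; in particular, the condition \eqref{E:de-conds} is still satisfied thanks to the $12 d_1 e_2\log 2$ term.
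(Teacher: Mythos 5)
Your proposal is correct and takes exactly the same route as the paper: identify $d(z)=\Lambda(z)$, $e(z)=\Lambda(z^2)$, $\alpha=\omega_0=1$, $\omega=0$, check condition \eqref{E:de-conds} via $d_1>0$ and $e_3=0$, and substitute into Theorem~\ref{T:ext}. The arithmetic of $\beta$, $\rho$, and $c$ is carried out correctly and matches the stated constants.
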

\begin{proof}
Condition~\eqref{E:de-conds} holds because $d_1>0$ and $e_3=0$. Apply 
Theorem~\ref{T:ext} with $\omega_0=\alpha=1$, $\omega=0$, 
$d_1=e_2=\lambda_1$, $d_2=e_4=\lambda_2$. 
\end{proof}
This implies that if $\lambda_1$ is fixed, then no matter how many
copies other positive integers are used as entries, the resulting
asymptotic count of self-dual matrices of large size differs only in
the leading constant.

\begin{corollary}\label{C:Jelinek}(Conjecture 5.4 of
\cite{Jelinek2012}) The number of self-dual and primitive self-dual
$\Lambda$-Fishburn matrices of size $n$ are asymptotically given by
\begin{align}\label{E:primselfdualfish}
	[z^n]\sum_{k\ge0}(1+z)^{k+1}
	\prod_{1\le j\le k}\left((1+z^2)^{j}-1\right)
	&= \frac{c}{2}\, e^{-\frac{\pi^2}{12}} e^{\beta \sqrt{n}}
	\rho^{\frac12n} n^{\frac12(n+1)}
	\lpa{1+O\lpa{n^{-\frac12}}},\\
	\label{E:selfdualfish}
	[z^n]\sum_{k\ge0}(1-z)^{-k-1}
	\prod_{1\le j\le k}\lpa{(1-z^2)^{-j}-1}
	&= c e^{\beta \sqrt{n}}
	\rho^{\frac12n} n^{\frac12(n+1)}
	\lpa{1+O\lpa{n^{-\frac12}}}, 
\end{align}
where $\beta :=\frac{\sqrt{6}\log2}\pi$ and $ (c,\rho)
:=\Lpa{\frac{6}{\pi^{3/2}}\,e^{\frac{\pi^2}{24}-\frac{1}{4}
+\frac{3}{2\pi^2}(\log2)^2},\frac6{e\pi^2}}$.
\end{corollary}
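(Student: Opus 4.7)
The plan is to deduce both asymptotic identities as immediate specializations of Corollary~\ref{C:self-dual_lam1}, whose hypotheses are tailored exactly to this situation. Both generating functions appearing in \eqref{E:primselfdualfish} and \eqref{E:selfdualfish} are instances of the schema of Lemma~\ref{L:self} for self-dual $\Lambda$-Fishburn matrices: the primitive case corresponds to $\Lambda=\{0,1\}$, giving $\Lambda(z)=1+z$ and $\Lambda(z^2)=1+z^2$, so $(\lambda_1,\lambda_2)=(1,0)$; the non-primitive case corresponds to $\Lambda=\mathbb{Z}_{\ge 0}$, giving $\Lambda(z)=(1-z)^{-1}$ and $\Lambda(z^2)=(1-z^2)^{-1}$, so $(\lambda_1,\lambda_2)=(1,1)$. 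In both settings $\lambda_1=1>0$, and the technical condition \eqref{E:de-conds} of Theorem~\ref{T:ext} reduces to $12\log 2>0$ since $e_3=0$ and $d_1=e_2=1$, so Corollary~\ref{C:self-dual_lam1} applies with no further verification.

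First I would read off $\beta$ and $\rho$ from Corollary~\ref{C:self-dual_lam1}: with $\lambda_1=1$ one gets $\beta=\tfrac{\sqrt{6}\log 2}{\pi}$ and $\rho=\tfrac{6}{e\pi^2}$, and the factor $\rho^{n/2}n^{(n+1)/2}$ appears verbatim. Hence the exponential and polynomial parts of \eqref{E:primselfdualfish} and \eqref{E:selfdualfish} are already pinned down, and what remains is to show that the leading multiplicative constants take the stated form.

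The only substantive task is then the algebraic simplification of the constant in the two cases. In the non-primitive case, $(\lambda_1,\lambda_2)=(1,1)$ makes $2^{\lambda_2/\lambda_1-\lambda_1/2}=\sqrt{2}$, which combines with the prefactor $\tfrac{3\sqrt{2}}{\pi^{3/2}}$ from Corollary~\ref{C:self-dual_lam1} to produce $\tfrac{6}{\pi^{3/2}}$, while the exponential factor becomes $\exp\!\bigl(-\tfrac14+\tfrac{\pi^2}{24}+\tfrac{3(\log 2)^2}{2\pi^2}\bigr)$, reproducing the stated $c$ exactly. In the primitive case $(\lambda_1,\lambda_2)=(1,0)$ the same computation yields $\tfrac{3}{\pi^{3/2}}\exp\!\bigl(-\tfrac14-\tfrac{\pi^2}{24}+\tfrac{3(\log 2)^2}{2\pi^2}\bigr)$, and a one-line rearrangement identifies this with $\tfrac{1}{2}\,c\,e^{-\pi^2/12}$, matching \eqref{E:primselfdualfish}.

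No new saddle-point analysis is needed, since all the heavy lifting is already absorbed into Theorem~\ref{T:ext}; consequently I foresee no real obstacle. The only subtlety is the somewhat opaque rewriting of the primitive constant in the form $\tfrac{c}{2}e^{-\pi^2/12}$, which exploits the fact that the $\tfrac{\pi^2\lambda_2}{12\lambda_1^2}$-contribution in the exponent of Corollary~\ref{C:self-dual_lam1} differs by precisely $\tfrac{\pi^2}{12}$ between the primitive and non-primitive cases.
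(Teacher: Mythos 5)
Your proposal is correct and follows the same route the paper intends: both formulas are direct specializations of Corollary~\ref{C:self-dual_lam1} with $\Lambda=\{0,1\}$ (giving $(\lambda_1,\lambda_2)=(1,0)$) and $\Lambda=\mathbb{Z}_{\ge0}$ (giving $(\lambda_1,\lambda_2)=(1,1)$), and your constant simplifications — $\tfrac{3\sqrt{2}}{\pi^{3/2}}\cdot 2^{1/2}=\tfrac{6}{\pi^{3/2}}$ with exponent $-\tfrac14+\tfrac{\pi^2}{24}+\tfrac{3(\log2)^2}{2\pi^2}$ in the non-primitive case, and $\tfrac{3}{\pi^{3/2}}\,e^{-\frac14-\frac{\pi^2}{24}+\frac{3(\log2)^2}{2\pi^2}}=\tfrac{c}{2}e^{-\pi^2/12}$ in the primitive case, the difference being entirely in the terms $2^{\lambda_2/\lambda_1}$ and $e^{\pi^2\lambda_2/(12\lambda_1^2)}$ — check out exactly.
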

\begin{remark}
The constant $c\approx 1.361951039$ (see Figure~\ref{F:sdf}) is given
in an approximate numerical form in \cite{Jelinek2012}. By comparing
these estimates with \eqref{E:zag}, we see that the proportion of
self-dual Fishburn matrices is asymptotically negligible (indeed
factorially small).
\end{remark}

\renewcommand{\arraystretch}{1.5}
\begin{figure}[!ht]
    \begin{center}
        \begin{tabular}{cc}
            $\frac{\text{LHS of \eqref{E:selfdualfish}}}
            {e^{\beta\sqrt{n}}
                \rho^{\frac12n} n^{\frac12(n+1)}}
            \left(1-\frac{0.2}{\sqrt{n}}\right)$
            & $\frac{\text{LHS of \eqref{E:primselfdualfish}}}
            {e^{\beta\sqrt{n}}
                \rho^{\frac12n} n^{\frac12(n+1)}}
            \left(1-\frac{1.5}{\sqrt{n}}\right)$ \\ [10pt]
            \includegraphics[height=3.5cm]{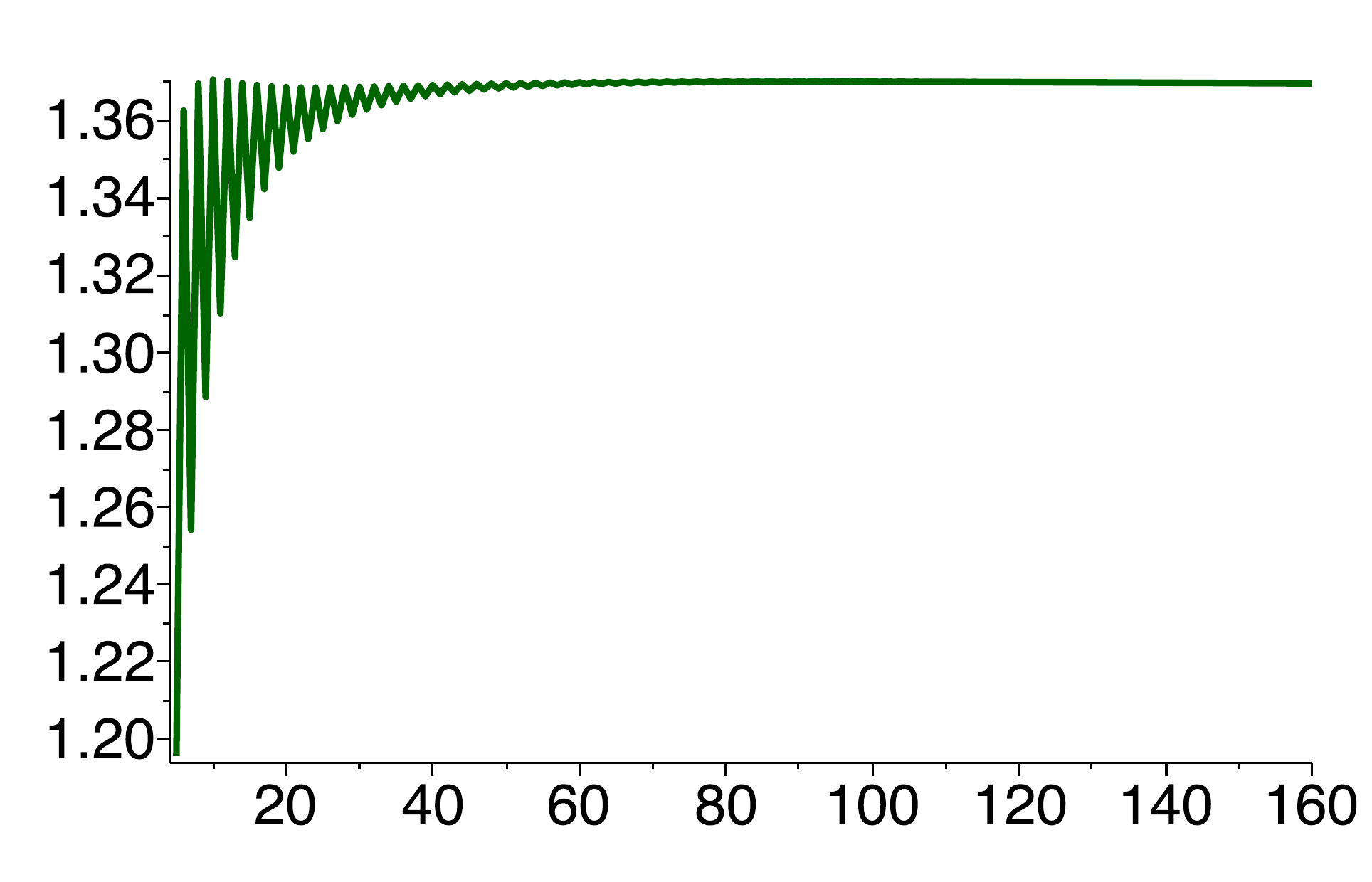} &
            \includegraphics[height=3.5cm]{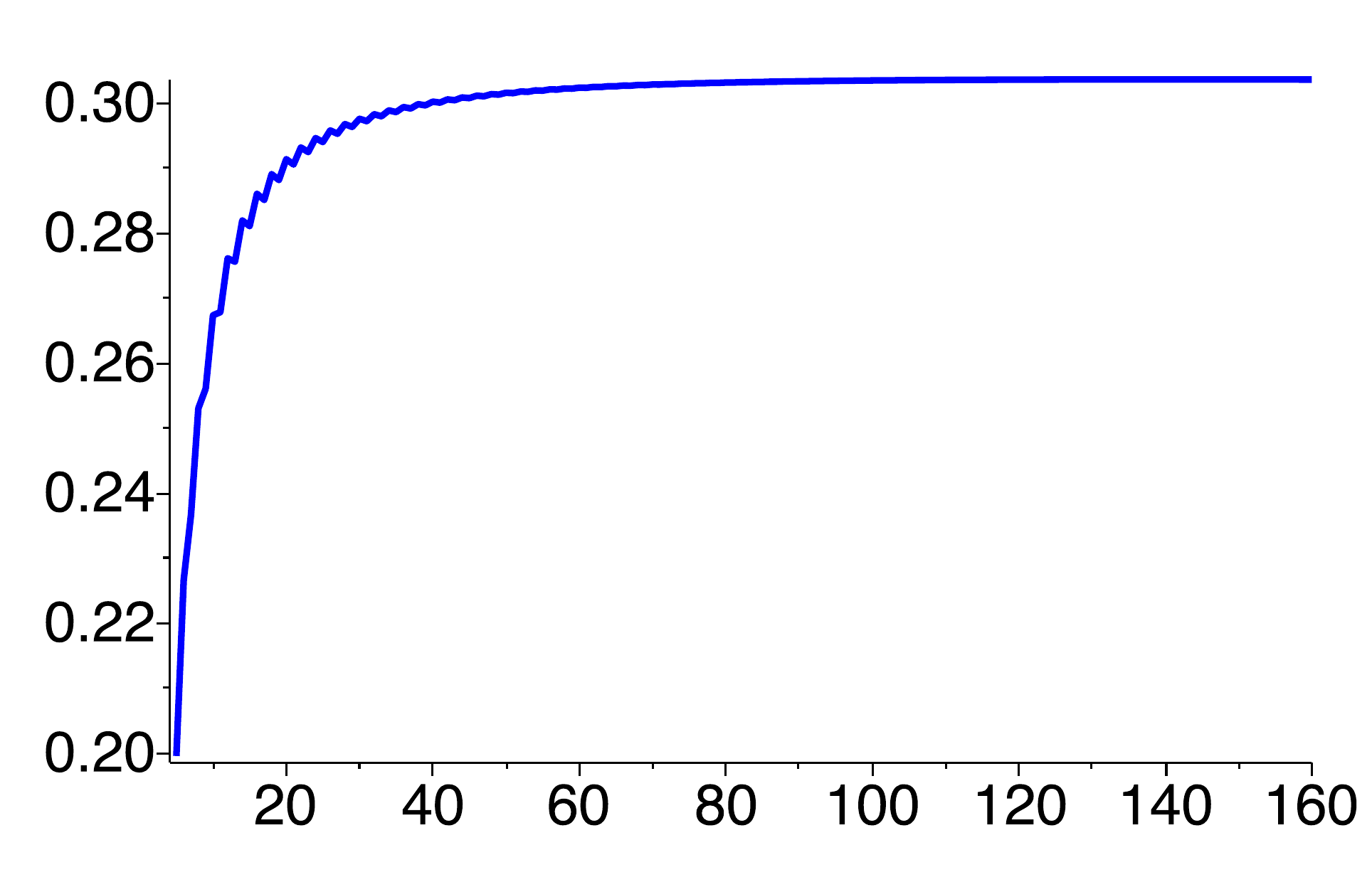} \\
            $\tfrac{6}{\pi^{3/2}}\,e^{\frac{\pi^2}{24}-\frac{1}{4}
                +\frac{3}{2\pi^2}(\log2)^2}\approx 1.362$ &
            $\tfrac{3}{\pi^{3/2}}\,e^{-\frac{\pi^2}{24}-\frac{1}{4}
                +\frac{3}{2\pi^2}(\log 2)^2}\approx 0.299$ \\
            &
        \end{tabular}
    \end{center}
    \vspace*{-.3cm}
    \caption{\emph{Numerical convergence of the two ratios
            $\frac{\text{LHS of \eqref{E:selfdualfish}}}
            {e^{\beta\sqrt{n}}
                \rho^{\frac12n} n^{\frac12(n+1)}}$
            and $\frac{\text{LHS of \eqref{E:primselfdualfish}}}
            {e^{\beta\sqrt{n}}
                \rho^{\frac12n} n^{\frac12(n+1)}}$
            (with proper corrections for the $O$-terms)
            to their respective limit $c$.}}\label{F:sdf}
\end{figure}

We now examine the three statistics (first row-size, diagonal sum,
and the number of $1$s) on random self-dual $\Lambda$-Fishburn
matrices, beginning with the corresponding bivariate generating
functions. For convenience, we include the empty matrix with size $0$.

\begin{proposition}[Statistics on self-dual $\Lambda$-Fishburn
matrices] For self-dual $\Lambda$-Fishburn matrices, we have the 
following bivariate generating functions with $z$ marking the matrix 
size and $v$ marking respectively
\begin{enumerate}[(i)]
\item the size of the first row
\begin{align}\label{E:sd-fr}
    \Lambda(vz)\sum_{k\ge 0}\Lambda(z)^k
    \prod_{1\le j\le k}\lpa{\Lambda\lpa{vz^2}
    \Lambda\lpa{z^2}^{j-1}-1},
\end{align}

\item the size of the diagonal
\begin{align}\label{E:sd-dg}
    \Lambda(vz)\sum_{k\ge 0}
    \Lambda(z)^{k}\prod_{0\le j<k}
    \lpa{\Lambda\lpa{v^2z^2}\Lambda\lpa{z^2}^{j} -1}, \and 
\end{align}

\item the number of $1$s 
\begin{align}\label{E:sd-1}
    \sum_{k\ge 0}(\Lambda(z)+\lambda_1(v-1)z)^{k+1}
    \prod_{1\le j\le k}
    \lpa{\lpa{\Lambda(z^2)+\lambda_1(v^2-1)z^2}^j -1}.
\end{align}
\end{enumerate}
\end{proposition}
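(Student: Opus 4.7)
The plan is to derive \eqref{E:sd-fr}, \eqref{E:sd-dg}, and \eqref{E:sd-1} as bivariate refinements of Lemma~\ref{L:self}, in parallel with the proof of Proposition~\ref{P:lfm-stat} for the non-self-dual case.

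The simplest is (iii), the number of $1$s. In the decomposition underlying Lemma~\ref{L:self}, each $\Lambda(z)$ factor is associated with an ``unpaired slot'' (a persymmetrically fixed entry, so a value $m$ contributes $z^m$ as a single matrix entry), while each $\Lambda(z^2)^j-1$ factor corresponds to a block of $j$ symmetric pairs (a value $m$ contributing $z^{2m}$ and representing two matrix entries). A $1$ in an unpaired slot adds one $1$ to the matrix (marked by $v$), whereas a $1$ in a paired slot adds two $1$s (marked by $v^2$). Performing the substitutions $\Lambda(z)\mapsto\Lambda(z)+\lambda_1(v-1)z$ and $\Lambda(z^2)\mapsto\Lambda(z^2)+\lambda_1(v^2-1)z^2$ throughout Lemma~\ref{L:self} then yields \eqref{E:sd-1}, exactly paralleling the derivation of \eqref{E:gf-f-1}.

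For (i) and (ii), the plan is to construct a self-dual analog of the super-generating-function \eqref{E:F-super-gf} by refining Jel\'{\i}nek's recursive derivation of Lemma~\ref{L:self} with marker variables for the top-right corner, the remaining first-row entries, and the diagonal entries. Specializing the markers then gives (i) via $\Lambda(z)\mapsto\Lambda(vz)$ on the factor corresponding to the first-row unpaired slot together with $\Lambda(z^2)\mapsto\Lambda(vz^2)$ on exactly one $\Lambda(z^2)$ per paired-block factor (representing the single first-row entry in that block, whose symmetric partner lies in the last column). For (ii) the substitutions are $\Lambda(z)\mapsto\Lambda(vz)$ together with $\Lambda(z^2)\mapsto\Lambda(v^2z^2)$; the power $v^2$ reflects that both members of a diagonal pair lie on the main diagonal and so together contribute $2m$ to the diagonal sum.

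The main obstacle will be rigorously constructing the persymmetric super-GF. Unlike the non-self-dual case, persymmetry couples the upper and lower halves of the matrix, so the recursion of \cite[Lemma~2.8]{Jelinek2012} does not lift verbatim and must be adapted to peel off anti-diagonal shells together with their symmetric images while tracking how each of the three statistics updates. An attractive alternative is to find an Andrews--Jel\'{\i}nek-type identity \eqref{E:Andrews} specialized at persymmetric parameters (for example $s=t$ after the rescaling $z\mapsto z^2$), which would yield \eqref{E:sd-fr} and \eqref{E:sd-dg} by direct substitution; the striking parallel between \eqref{E:gf-f-fr} and \eqref{E:sd-fr} is strong evidence that such an identity exists. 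In either case, the proofs of (i), (ii), and (iii) ultimately reduce to a sequence of generating-function substitutions, just as in Proposition~\ref{P:lfm-stat}.
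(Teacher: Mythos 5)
Your proposal is correct and follows essentially the same route as the paper, whose entire proof is the one-line remark that the result ``is in analogy to Proposition~\ref{P:lfm-stat}, using the same ideas in \cite{Jelinek2012} for counting self-dual matrices.'' Your reading of the $\Lambda(z)$ factors as persymmetrically fixed slots and the $\Lambda(z^2)^j-1$ factors as blocks of symmetric pairs---together with the resulting substitution pattern that reproduces \eqref{E:sd-fr}--\eqref{E:sd-1}---is exactly the intended analogy, and your caveat that a rigorous derivation of (i)--(ii) should pass through a self-dual super-generating function or an Andrews--Jel\'{i}nek-type identity (since the index $k$ in the sum-of-products form is not literally the matrix dimension) is well placed.
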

This is in analogy to Proposition~\ref{P:lfm-stat}, using 
the same ideas in \cite{Jelinek2012} for counting self-dual 
matrices.

\begin{theorem}\label{T:sec8-sd}
Assume $\lambda_1>0$ and that all self-dual $\Lambda$-Fishburn
matrices of size $n$ are equally likely to be selected. Then in a
random matrix, the size $X_n$ of the first row (or the last column) and the half of the 
diagonal size $\frac12Y_n$ both satisfy a central limit theorem with 
logarithmic mean and variance:
\begin{align*}
    \frac{X_n-\log n}{\sqrt{\log n}}
    \xrightarrow{d} \mathscr{N}(0,1),
    \and
    \frac{\frac12Y_n-\log n}{\sqrt{\log n}}
    \xrightarrow{d} \mathscr{N}(0,1),
\end{align*}
and for the number $Z_n$ of $1$s, if $\lambda_2>0$, then $n-Z_n$ is 
the convolution of two Poisson variates:
\begin{align*}
    n-Z_n\xrightarrow{d} 
    2\mathrm{Poisson}\lpa{\tfrac{\lambda_2}{\lambda_1}\log 2}
    * 4\mathrm{Poisson}\lpa{\tfrac{\lambda_2\pi^2}{12\lambda_1}},
\end{align*}
while if $\lambda_2=0$, then $\mathbb{P}(Z_n=n)\to 1$.
\end{theorem}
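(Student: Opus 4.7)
The plan is to apply Theorem~\ref{T:ext} to each of the three bivariate generating functions \eqref{E:sd-fr}, \eqref{E:sd-dg}, \eqref{E:sd-1}, and then invoke the Quasi-Powers framework (see \cite{Flajolet2009, Hwang1994}) to convert the resulting pgf asymptotics into the stated limit laws. The overall strategy parallels that of Theorems~\ref{T:lrf-stat} and \ref{T:lf-stat}, but built on the ``without-$1$'s'' framework \eqref{E:ext8} in place of \eqref{E:gen}: since for self-dual matrices the inner factor involves $\Lambda(z^2)$, we are in the regime $e_1=0$, $e_2=\lambda_1>0$, with the saddle point sitting at $|z|\asymp n^{-1/2}$ and $k\asymp n$, so $k|z|^2=O(1)$.

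For the first row size $X_n$ and the diagonal size $Y_n$, I would first replace the inner factor in \eqref{E:sd-fr} (resp.\ \eqref{E:sd-dg}) by its ``power'' proxy: since $\Lambda(vz^2)=\Lambda(z^2)^v\lpa{1+O(|z|^4)}$, a logarithmic-error accounting modeled on \eqref{E:log-es} allows one to replace $\prod_{1\le j\le k}\lpa{\Lambda(vz^2)\Lambda(z^2)^{j-1}-1}$ by $\prod_{1\le j\le k}\lpa{\Lambda(z^2)^{j+v-1}-1}$ at relative error $O(n^{-1}\log n)$. The result fits \eqref{E:sum-d-prod-c} with $d(z)=\Lambda(z)$, $e(z)=\Lambda(z^2)$, $\alpha=\omega_0=1$ and $\omega=v-1$, so Theorem~\ref{T:ext} yields $\mathbb{E}(v^{X_n})\simeq\tilde C(v)\,n^{v-1}$ uniformly for $v$ in a complex neighborhood of $1$; Quasi-Powers then delivers the CLT with mean and variance both asymptotic to $\log n$. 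The same reasoning applied to \eqref{E:sd-dg}, with $v$ replaced by $v^2$, gives $\omega=v^2-1$ and $\mathbb{E}(v^{Y_n})\simeq\tilde D(v)\,n^{v^2-1}$; since $v^2-1=2(v-1)+(v-1)^2$, Quasi-Powers produces asymptotic normality for $Y_n$ with mean $\sim 2\log n$ and variance $\sim 4\log n$, equivalent to the stated form for $\tfrac12Y_n$.

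For the number $Z_n$ of $1$s, the generating function \eqref{E:sd-1} fits \eqref{E:sum-d-prod-c} directly with $d(z)=\Lambda(z)+\lambda_1(v-1)z$, $e(z)=\Lambda(z^2)+\lambda_1(v^2-1)z^2$, $\alpha=\omega_0=1$, $\omega=0$. Plugging $d_1=\lambda_1v$, $d_2=\lambda_2$, $e_2=\lambda_1v^2$, $e_3=0$, $e_4=\lambda_2$ into \eqref{E:ext8} shows that both $\beta$ and $\tfrac{d_1^2}{e_2}$ are independent of $v$, while $\rho(v)^{n/2}=\rho(1)^{n/2}v^n$ and the $v$-dependent part of the leading constant collapses to $2^{\lambda_2/(\lambda_1v^2)}e^{\pi^2\lambda_2/(12\lambda_1^2v^4)}$. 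Dividing by the $v=1$ total from Corollary~\ref{C:self-dual_lam1} and writing $w=1/v$ yields
\[
    \mathbb{E}(w^{n-Z_n})\simeq
    2^{(\lambda_2/\lambda_1)(w^2-1)}\,
    e^{(\lambda_2\pi^2/(12\lambda_1^2))(w^4-1)},
\]
which is exactly the pgf of $2\,\mathrm{Poisson}(\tfrac{\lambda_2}{\lambda_1}\log 2)*4\,\mathrm{Poisson}(\tfrac{\lambda_2\pi^2}{12\lambda_1^2})$. When $\lambda_2=0$ the right-hand side degenerates to $1$, so $\mathbb{P}(Z_n=n)\to 1$.

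The main technical obstacle will be the uniformity in $v$ of all the above: Theorem~\ref{T:ext} supplies uniformity in the parameter $\omega$, which must be upgraded to hold throughout a complex neighborhood of $v=1$, together with the logarithmic-replacement step needed for $X_n$ and $Y_n$. This is where the saddle-point localization $k|z|^2=O(1)$ is crucial, since it bounds the accumulated product error by $O(|z|^2\log k)=O(n^{-1}\log n)$, small enough to keep Quasi-Powers valid. Once that uniformity is in place, converting the pgf asymptotics into CLTs or Poisson-convolution limits is routine.
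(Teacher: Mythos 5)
Your computation reproduces the paper's proof sketch essentially step for step: use \eqref{E:sd-fr} and \eqref{E:sd-dg} with the replacement $\Lambda(vz^2)\mapsto\Lambda(z^2)^v$ (modeled on \eqref{E:log-es}) to land in \eqref{E:sum-d-prod-c} with $d(z)=\Lambda(z)$, $e(z)=\Lambda(z^2)$ and $\omega=v-1$ (resp.\ $v^2-1$), apply Theorem~\ref{T:ext} and then Quasi-Powers; and for $Z_n$ plug the Taylor data of \eqref{E:sd-1} formally into \eqref{E:ext8} and divide by Corollary~\ref{C:self-dual_lam1}. Your variance bookkeeping for $Y_n$ ($v^2-1$ gives mean $2\log n$ and variance $4\log n$, hence the $\tfrac12Y_n$ normalization) and your pgf for $n-Z_n$, with parameter $\tfrac{\lambda_2\pi^2}{12\lambda_1^2}$ for the second Poisson factor, both agree with what the paper actually derives (the exponent $\lambda_1^2$ matches the paper's proof and Table~\ref{T:laws-2}; the statement's $\lambda_1$ is a misprint).

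There is, however, one genuine gap in the $Z_n$ step. You write that \eqref{E:sd-1} ``fits \eqref{E:sum-d-prod-c} directly'' and then ``plug into \eqref{E:ext8}''. But Theorem~\ref{T:ext} is stated under the hypothesis $e_2>0$, and here $e_2=\lambda_1v^2$, which is \emph{not} a positive real once $v$ is allowed to range over a complex (or even just real negative) neighborhood of $1$, as is required for a distributional limit via pgfs or characteristic functions. So the theorem as stated simply does not apply to \eqref{E:sd-1}. This is precisely the obstacle the paper singles out; the remedy there is to rescale the argument, considering $e(z/\sqrt{e_2})$ so that the rescaled $e$ again has coefficient $1$ on $z^2$, and to re-run the proof of Theorem~\ref{T:ext} for that rescaled function (analogous to the change-of-variables trick in the proof of Theorem~\ref{T:gen}). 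Your closing paragraph does flag a ``uniformity in $v$'' concern, but frames it as needing to extend a uniform-in-$\omega$ estimate to a neighborhood of $v=1$; that is the right concern for the $X_n$ and $Y_n$ cases (where $e(z)=\Lambda(z^2)$ has fixed real $e_2=\lambda_1>0$), but for $Z_n$ the problem is more basic: a hypothesis of the theorem you invoke fails, and you need the rescaling argument to recover it. Once that is supplied, the rest of your computation is correct and matches the paper.
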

\begin{proof}
(Sketch) The proofs for the random variables $X_n$ and $\frac{1}{2}Y_n$ rely on Theorem~\ref{T:ext}, in parallel of Theorem~\ref{T:lf-stat}. For the number of $1$s, Theorem~\ref{T:ext} does
not apply to \eqref{E:sd-1} because $e_2=\lambda_1v^2$ is a
complex number in general and $e_2>0$ may not hold. However, the
proof there does apply by considering $e(z/\sqrt{e_2})$, similar
to Theorem~\ref{T:gen}. The result is the same as if we apply
formally Theorem~\ref{T:gen} with $\omega_0=\alpha=1$,
$\omega=0$, $d_1=\lambda_1v$, $d_2=\lambda_2$, $e_2 =
\lambda_1v^2$, $e_3=0$, $e_4 = \lambda_2$, yielding
\[
\mathbb{E}\lpa{v^{n-Z_n}}
= 2^{\frac{\lambda_2}{\lambda_1}(v^2-1)}
e^{\frac{\lambda_2\pi^2}{12\lambda_1^2}(v^4-1)}
\lpa{1+O\lpa{n^{-\frac12}}},
\]
where the first term on the right-hand side is the probability
generating function of two Poisson distributions if
$\lambda_2>0$. The right-side becomes $1$ when $\lambda_2=0$.

\end{proof}

\subsection{Asymptotics of $\Lambda$-Fishburn matrices whose smallest nonzero entry is $2$}\label{S:8-3}

We consider Fishburn matrices whose smallest nonzero entry is $2$. 
We assume that there is at least an odd number in $\Lambda$, namely,
\begin{equation}\label{E:odd-entries}
    \lambda_{2k-1}=0, \mbox{ for } 1\le k\le m
    \and \lambda_2,\lambda_{2m+1}>0,
\end{equation}
for $m\ge1$. Otherwise, if $\Lambda$ contains only even numbers,
then, by dividing all entries by $2$, the corresponding asymptotics
and distributional properties can be dealt with by the same framework
of Section~\ref{S:gf}. It turns out that $m=1$ (that is,
$\lambda_1=0$ but $\lambda_2, \lambda_3>0$) and $m\ge2$ have
different asymptotic behaviors, and in the latter case the dependence
on the parity of $n$ is more pronounced, one technical reason being
that the condition \eqref{E:de-conds} fails when $m\ge2$, and the odd
case needs special treatment.

\begin{lemma} \label{L:Bz}
Given a formal power series $B(z) = \sum_{n\ge0}b_n z^n$ with $b_n
\simeq c_0 \rho_0^n n^{n+t}$, $\rho_0\ne0$, we have, for even $n$,
\begin{align*}
	[z^{\frac12n}] e^{\beta n z} B(z) 
	\simeq c\rho^{\frac12n} n^{\frac12n+t} ,
	\with 
	(c,\rho) := \lpa{c_02^{-t}e^{\frac{2\beta}{e\rho_0^{}}},
	\tfrac12\rho_0}.
\end{align*}
\end{lemma}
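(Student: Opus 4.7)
The plan is to evaluate the coefficient as the finite convolution
\begin{equation*}
    [z^{n/2}] e^{\beta n z} B(z)
    = \sum_{0\le k\le n/2} \frac{(\beta n)^k}{k!}\, b_{n/2-k},
\end{equation*}
and show that the sum is, to leading order, a Poisson-type series whose mass is concentrated on bounded $k$. Indeed, from $b_m \simeq c_0 \rho_0^m m^{m+t}$ one readily gets $b_{m-1}/b_m \sim (e\rho_0 m)^{-1}$, so the ratio of successive summands at $m=n/2-k$ is
\begin{equation*}
    \frac{\beta n}{k+1}\cdot\frac{b_{n/2-k-1}}{b_{n/2-k}}
    \sim \frac{2\beta}{e\rho_0(k+1)},
\end{equation*}
confirming that the summands decay factorially once $k$ exceeds the constant $\lambda := 2\beta/(e\rho_0)$, the saddle-point location.

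Next I would carry out the local expansion for bounded $k$. Using
\begin{equation*}
    (n/2-k+t)\log(n/2-k)
    = (n/2+t)\log(n/2) - k\log(n/2) - k + O\lpa{k^2/n},
\end{equation*}
I get, uniformly for bounded $k$,
\begin{equation*}
    \frac{b_{n/2-k}}{b_{n/2}}
    = \lpa{e\rho_0\cdot n/2}^{-k}\lpa{1+O_k\lpa{n^{-1}}},
\end{equation*}
and combining with $(\beta n)^k/k!$ yields
\begin{equation*}
    \frac{(\beta n)^k}{k!}\,b_{n/2-k}
    = b_{n/2}\cdot\frac{\lambda^k}{k!}\lpa{1+O_k\lpa{n^{-1}}}.
\end{equation*}
Summing the Poisson series $\sum_{k\ge0}\lambda^k/k! = e^\lambda$ and using $b_{n/2}\simeq c_0 \rho_0^{n/2}(n/2)^{n/2+t}$ together with the rewriting $\rho_0^{n/2}(n/2)^{n/2+t} = 2^{-t}\lpa{\rho_0/2}^{n/2}n^{n/2+t}$ delivers the claimed asymptotic
\begin{equation*}
    [z^{n/2}] e^{\beta n z} B(z)
    \simeq c_0\, 2^{-t} e^{2\beta/(e\rho_0)}\lpa{\tfrac12\rho_0}^{n/2}n^{n/2+t}.
\end{equation*}

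The main obstacle will be to bound the tail $k>K$ uniformly so as to justify pushing the summation to infinity with an $O(n^{-1})$ remainder. I would invoke a uniform upper bound $|b_m|\le C|\rho_0|^m m^{m+t}$ (which follows from the given asymptotic together with the trivial bound on finitely many initial terms) and then use the ratio estimate above to dominate the tail by $b_{n/2}$ times $\sum_{k>K}\lambda^k/k!$; the latter is super-polynomially small for $K\to\infty$. Balancing this against the accumulated local-expansion error, taking for instance $K$ to grow like $\log n/\log\log n$, gives the required $O(n^{-1})$ relative error and completes the argument. The sole delicate point is this joint choice of truncation threshold; once chosen, everything else is bookkeeping on the Stirling-type expansion of $(n/2-k)^{n/2-k+t}$.
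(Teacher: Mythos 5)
Your convolution approach is correct in its main thrust and arrives at the right constant, but it is a genuinely different route from the paper's. The paper (in the one-line proof it gives for this lemma, invoking Section~\ref{S:cov}) expands the cofactor $g(z):=e^{\beta nz}$ in a Taylor series about $\eta := \tfrac{2}{e\rho_0 n}$, the asymptotic saddle of $z^{-n/2}B(z)$, so that the linear correction term vanishes by the choice of $\eta$ (since $\eta \sim b_{n/2-1}/b_{n/2}$) and the leading contribution is simply $g(\eta)\,b_{n/2} = e^{2\beta/(e\rho_0)}\,b_{n/2}$; the quadratic and higher corrections are $O(n^{-1})$. You instead evaluate the finite convolution $\sum_k\frac{(\beta n)^k}{k!}b_{n/2-k}$ and recognise it as an approximate Poisson series in $k$ with parameter $\lambda = 2\beta/(e\rho_0)$, concentrated on bounded $k$; summing over $k$ produces the same factor $e^\lambda$. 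Both are saddle-point computations at bottom; the paper's is more automatic for pushing to higher-order expansions (as needed elsewhere in Section~\ref{S:cov}), while your version makes the appearance of the Poisson constant transparent.

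One point in your tail estimate needs repair. The statement that you can dominate the tail by $b_{n/2}\sum_{k>K}\lambda^k/k!$ is only valid in a middle range of $k$: the approximation $b_{n/2-k}/b_{n/2}\approx (e\rho_0 n/2)^{-k}$ carries a multiplicative correction on the order of $e^{k^2/n}$, so it degrades once $k\gtrsim \sqrt n$, and for $k$ close to $n/2$ it is useless. In fact the function $k\mapsto\frac{(\beta n)^k}{k!}\,b_{n/2-k}$ has a \emph{second} local maximum near $k = n/2 - \lambda$ (solving $k(n/2-k)\approx\lambda\,n/2$), which your stated bound does not capture. The fix is to split the tail: for $K<k\le n^{1/2}$ (say) the Poisson-type domination you describe works; for $k>n^{1/2}$ one compares the contribution $\sum_{\ell<n/2-n^{1/2}}\frac{(\beta n)^{n/2-\ell}}{(n/2-\ell)!}|b_\ell|$ directly against $b_{n/2}$ and finds, using Stirling and the uniform bound $|b_\ell|\le C\rho_0^\ell\ell^{\ell+t}$, that its order is $(2e\beta)^{n/2}\,\mathrm{poly}(n)$, which is superexponentially small relative to $\rho_0^{n/2}(n/2)^{n/2+t}$. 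Once this second regime is dealt with separately, the rest of your argument goes through as you describe.
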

\begin{proof}
Expand $e^{n\beta z}$ at $z=\frac{2}{e\rho_0 n}$, the asymptotic 
saddle-point of $z^{-\frac12n}B(z)$, and estimate the error as in 
Section~\ref{S:cov}. 
\end{proof}

\begin{theorem}[$\Lambda$-Fishburn matrices with $2$ as the smallest
entries]\label{T:small2} Assume that $\Lambda$ is a multiset of
nonnegative integers satisfying \eqref{E:odd-entries} with
$\Lambda(0)=1$. If $m=1$, then the number of $\Lambda$-Fishburn
matrices of size $n$ satisfies
\begin{align*}
    [z^n]\sum_{k\ge0}
    \prod_{1\le j\le k}\lpa{1-\Lambda(z)^{-j}}
    = c e^{\beta \sqrt{n}}
    \rho^{\frac12n} n^{\frac12n+1}
    \lpa{1+O\lpa{n^{-\frac12}}},
\end{align*}
where $\beta := \frac{\lambda_3\pi}{2\sqrt{3}\,\lambda_2^{3/2}}$, and
$(c,\rho)
    := \Lpa{\tfrac{3\sqrt{6}}{\pi^2}\,
    e^{\frac{\pi^2}{6}\lpa{\frac{\lambda_4}{\lambda_2^2}-\frac12
    -\frac{7\lambda_3^2}{8\lambda_2^3}}}
    , \tfrac{3\lambda_2}{e\pi^2}}$;
and if $m\ge2$, then 
\begin{align}\label{E:fm13}
    [z^n]\sum_{k\ge0}\prod_{1\le j\le k}
	\lpa{1-\Lambda(z)^{-j}}
    =\begin{cases}
        c'e^{\beta \sqrt{n}}\rho^{\frac12n}
        n^{\frac12n+1}\lpa{1+O\lpa{n^{-\frac12}}}, 
        & \text{ if $n$ is even};\\
        c_me^{\beta \sqrt{n}}\rho^{\frac12n}
        n^{\frac12n-m+\frac52}
        \lpa{1+O\lpa{n^{-\frac12}}}, 
        & \text{ if $n$ is odd},
    \end{cases}
\end{align}
where $\rho$ and $\beta$ remain the same, 
$c' :=\tfrac{6\sqrt{6}}{\pi^2}\, 
    e^{\frac{\pi^2}{6}
    \lpa{\frac{\lambda_4}{\lambda_2^2}-\frac12}}, $ and $
	c_m := \tfrac{\sqrt{2}\pi^{2m-3} }
	{3^{m-2}}\cdot\tfrac{\lambda_{2m+1}}
	{\lambda_2^{m+1/2}} \,e^{\frac{\pi^2}{6}
    \lpa{\frac{\lambda_4}{\lambda_2^2}-\frac12}}.
$
\end{theorem}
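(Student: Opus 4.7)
The plan is to reduce to the already-established Theorems~\ref{T:ext} and \ref{T:gen} via an even/odd decomposition of $\Lambda(z)$. By Proposition~\ref{P:gf-f}, the target is $[z^n]G(z)$ with
\[
    G(z):=\sum_{k\ge 0}\Lambda(z)^{k+1}\prod_{1\le j\le k}\lpa{\Lambda(z)^j-1}^2=h(\Lambda(z)),
\]
where $h(\lambda):=\sum_{k\ge 0}\lambda^{k+1}\prod_{1\le j\le k}(\lambda^j-1)^2$ is viewed as a formal power series in the indeterminate $\lambda$. This fits Theorem~\ref{T:ext} with $(\alpha,\omega,\omega_0)=(2,0,1)$ and $d(z)=e(z)=\Lambda(z)$. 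For $m=1$, one has $e_3=\lambda_3>0$, so condition~\eqref{E:de-conds} holds even though $d_1=0$, and substituting $d_1=0$, $d_2=e_2=\lambda_2$, $e_3=\lambda_3$, $e_4=\lambda_4$ into \eqref{E:ext8} directly produces the claimed $\beta$, $\rho$ and $c$.

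For $m\ge 2$, the vanishing $\lambda_3=0$ forces $e_3=0$ and $\beta=0$ in Theorem~\ref{T:ext}, which thus becomes insufficient. The plan is to split $\Lambda(z)=P(z^2)+R(z)$ with
\[
    P(u):=1+\lambda_2 u+\lambda_4 u^2+\cdots,\quad R(z):=\lambda_{2m+1}z^{2m+1}+\lambda_{2m+3}z^{2m+3}+\cdots,
\]
so that $P$ (in the variable $u$) satisfies $p_1=\lambda_2>0$ and fits Theorem~\ref{T:gen}, while $R$ collects the odd-degree terms. Taylor-expanding $G(z)=h(P(z^2)+R(z))$ around $R=0$ yields
\[
    G(z)=h(P(z^2))+h'(P(z^2))R(z)+\tfrac12 h''(P(z^2))R(z)^2+\cdots,
\]
with $h(P(z^2))$ and $h'(P(z^2))$ both even in $z$. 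For even $n$, the $R^1$-term is odd in $z$, while the $R^2$-correction shifts the even base by $z^{4m+2}$ and is of relative order $O(n^{-(2m-3)})$ by a direct weight count; hence $[z^n]G(z)\sim[z^n]h(P(z^2))=[w^{n/2}]h(P(w))$. Theorem~\ref{T:gen} applied to $\sum_k P(w)^{k+1}\prod_j(P(w)^j-1)^2$ with $(\alpha,\omega,\omega_0)=(2,0,1)$, $e_1=\lambda_2$, $e_2=\lambda_4$ gives $\sim c_P\rho_P^{n/2}(n/2)^{n/2+1}$ with $\rho_P=\tfrac{6\lambda_2}{e\pi^2}=2\rho$ and $c_P=\tfrac{12\sqrt{6}}{\pi^2}e^{\frac{\pi^2}{6}(\lambda_4/\lambda_2^2-1/2)}$. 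The conversion $(n/2)^{n/2+1}\rho_P^{n/2}=\rho^{n/2}n^{n/2+1}/2$ then absorbs all factors of $2$ and produces $c'\rho^{n/2}n^{n/2+1}$ as stated.

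For odd $n$, both $h(P(z^2))$ and $h'(P(z^2))$ contribute zero by parity, and at leading order only the smallest-degree odd monomial $\lambda_{2m+1}z^{2m+1}$ of $R$ matters (higher odd terms in $R$ and the $R^3,R^5,\dots$ contributions are shown to be of relative order $O(N^{-(2m-3)(\ell-1)/2})$ by the analogous weight count). Thus
\[
    [z^n]G(z)\simeq\lambda_{2m+1}[w^N]h'(P(w)),\qquad N:=\tfrac{n-2m-1}{2}.
\]
The next step is to run the double saddle-point analysis of Section~\ref{S:basis} directly on $[w^N]h'(P(w))$: the saddle is inherited from the unweighted case, namely $k\sim\mu N/2$ and $w\sim 2\xi/(\lambda_2 N)$, and the bracketed weight factor in
\[
    h'(\lambda)=\sum_{k\ge 0}\lambda^k\prod_{j=1}^k(\lambda^j-1)^2\Lpa{(k+1)+2\lambda\sum_{i=1}^k\frac{i\lambda^{i-1}}{\lambda^i-1}}
\]
evaluates at the saddle (via a Riemann-sum approximation) to
\[
    2P(w)\sum_{i=1}^{k}\frac{iP(w)^{i-1}}{P(w)^i-1}\sim\frac{N^2}{2\xi^2}\int_0^{\log 2}\frac{te^t}{e^t-1}\,\dd t=\frac{6N^2}{\pi^2},
\]
using the identity $\int_0^{\log 2}\tfrac{te^t}{e^t-1}\dd t=\pi^2/12$ (which by integration by parts reduces to the classical $-\int_0^1\log(s)/(1+s)\,\dd s=\pi^2/12$, i.e., $\mathrm{Li}_2(1/2)$). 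This factor $6N^2/\pi^2$ multiplies the underlying $[w^N]h(P(w))\sim c_P\rho_P^N N^{N+1}$. Converting $N$-scale to $n$-scale via $N^{N+3}\sim(n/2)^{n/2-m+5/2}e^{-(2m+1)/2}$ and $\rho_P^{-(2m+1)/2}e^{-(2m+1)/2}=(\pi^2/(6\lambda_2))^{(2m+1)/2}$, combined with elementary simplification (such as $72\sqrt{6}\cdot 2^{m-5/2}/6^{m+1/2}=9\sqrt{2}/3^m$), yields exactly the stated constant $c_m$.

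The main obstacle will be the odd-$n$, $m\ge 2$ case: one must rigorously extract the pointwise saddle-point value of the weight as a constant over the central range $|k-\mu N/2|\le N^{5/8}$ (valid since each derivative of the weight in $k$ or $w$ only brings a factor $O(N)$, well within the Gaussian tolerance of Section~\ref{S:basis}), and control the neglected higher-order perturbations in $R$ by the bounds sketched above. The remaining work is careful bookkeeping of constants through the chain of substitutions, where the identity $\mathrm{Li}_2(1/2)=\pi^2/12$ once again incarnates the ubiquitous factor $\zeta(2)$ permeating the paper.
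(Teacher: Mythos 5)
Your plan for the $m=1$ case coincides with the paper's: verify condition~\eqref{E:de-conds} (which holds because $e_3=\lambda_3>0$ even though $d_1=0$) and apply Theorem~\ref{T:ext} with $(\alpha,\omega,\omega_0)=(2,0,1)$ and $d=e=\Lambda$; the resulting $\beta$, $\rho$, $c$ check out.

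For $m\ge2$ your route is genuinely different from the one sketched in the paper. The paper proceeds via the change of variables $\Lambda(z)=e^{y^2}$, Lagrange inversion in the form $[y^k]z=\tfrac1k[t^{k-1}]\lpa{t/\log\Lambda(t)}^k$, and then Lemma~\ref{L:Bz} to recover the even-$n$ asymptotics before deducing $c_m$ in the odd case. You instead split $\Lambda(z)=P(z^2)+R(z)$ with $R$ collecting the odd-degree terms, Taylor-expand $h(\Lambda(z))=\sum_{j\ge0}\tfrac{1}{j!}h^{(j)}(P(z^2))R(z)^j$ formally, and show the $R^1$-term governs odd $n$ while the $R^0$-term governs even $n$, with the $R^{\ge2}$ contributions of relative order $O(n^{3-2m})$ by the weight count $h^{(j)}$ vs.\ $h$ combined with the shift by $z^{2(2m+1)}$. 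This is correct and arguably more elementary, since it avoids the square-root branch in $\Lambda(z)=e^{y^2}$ and the Lagrange inversion, while making transparent exactly where the parity distinction comes from. The computation of the saddle-point weight
\[
    2P(w)\sum_{i=1}^{k}\frac{iP(w)^{i-1}}{P(w)^i-1}
    \sim \frac{2}{y^2}\int_0^{\log 2}\frac{te^t}{e^t-1}\,\dd t
    = \frac{2}{y^2}\cdot\frac{\pi^2}{12}=\frac{6N^2}{\pi^2}
\]
is correct (the integral reduces to $\int_0^1\log(1+u)/u\,\dd u=\eta(2)=\pi^2/12$), and I have verified that the ensuing bookkeeping, e.g.\ $\tfrac{72\sqrt6\cdot2^{m-5/2}}{6^{m+1/2}}=\tfrac{\sqrt2}{3^{m-2}}$, reproduces exactly the stated $c_m$, $c'$, $\rho$. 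What buys you the same result by a different path: the paper's Lemma~\ref{L:Bz} packages the $e^{\beta\sqrt{n}}$ shift and the $\rho\mapsto\rho/2$ adjustment into one extraction step; your approach handles $\beta=0$ automatically (it never appears for $m\ge2$) and makes the parity-driven power drop $n^{-(m-3/2)}$ visible directly from the degree of the leading odd monomial $\lambda_{2m+1}z^{2m+1}$.

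One point worth making rigorous, which you correctly flag as the main obstacle: treating $h'(P(w))$ by inserting the weight into the double saddle-point analysis of Section~\ref{S:basis} requires showing the weight is a smooth $O(N^2)$-scale perturbation over the central range so that the saddle location and Gaussian width are unchanged to the claimed order. This is true here since each $k$- or $w$-derivative of the weight brings a factor $O(N)$, well inside the Gaussian tolerance, but a referee would want the uniform error estimate spelled out — as you would do anyway in the final write-up.
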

\begin{proof} (Sketch)
When $m=1$, apply Theorem~\ref{T:ext} to the right-hand side of
\eqref{E:gf-lf}. When $m=2$, following the proof of
Theorem~\ref{T:ext}, we begin with the change of variables
$\Lambda(z)=e^{y^2}$ and then apply Theorem~\ref{T:gen} and
Lemma~\ref{L:Bz} to prove \eqref{E:fm13}. 
In particular, when $m\ge2$, by splitting $\Lambda(z)$ into odd and 
even parts, using Lagrange's inversion formula in the form
\[
    [y^k]z = \frac1k[t^{k-1}]
	\Lpa{\frac t{\log\Lambda(t)}}^k \qquad(k=1,2,\dots),
\]
we deduce \eqref{E:fm13} when $n$ is even; the expression of $c_m$
then follows from that in the even case.

\end{proof}

In particular, the number of Fishburn matrices without occurrence of 
$1$ as entries ($\Lambda=Z_{\ge0}\setminus \{1\}$) satisfies 
\begin{align*}
    [z^n]\sum_{k\ge0}
    \prod_{1\le j\le k}\llpa{1-\Lpa{\frac{1-z}{1-z+z^2}}^j}
    = c e^{\beta \sqrt{n}}
    \rho^{\frac12n} n^{\frac12n+1}
    \lpa{1+O\lpa{n^{-\frac12}}},
\end{align*}
where $\beta := \frac{\pi}{2\sqrt{3}}$, and $(c,\rho)
:= \lpa{\tfrac{3\sqrt{6}}{\pi^2}\, e^{-\frac{\pi^2}{16}}, 
\tfrac{3}{e\pi^2}}$, which marks a significant difference with that 
containing $1$ as entries, as given in \eqref{E:zag}. Similar behaviors are also exhibited in the asymptotics of row-Fishburn matrices without entry $1$.

On the other hand, asymptotics of $\Lambda$-row-Fishburn matrices can
be similarly treated, and exhibits a very similar behavior.

\subsection{Statistics on $\Lambda$-Fishburn matrices whose smallest nonzero entry is $2$}\label{S:8-4}
\label{S:130}

Based on the generating functions of Proposition~\ref{P:lfm-stat}, we 
now consider the behavior of a general random $\Lambda$-Fishburn 
matrix with $2$ being the smallest nonzero entry. 

\begin{theorem}\label{T:phase}
Assume that $\Lambda$ satisfies \eqref{E:odd-entries}. If all
$\Lambda$-Fishburn matrices of size $n$ are equally likely to be
selected, then, in a random matrix under this distribution,
the size $X_n$ of the first row (or the last column) and the diagonal size $Y_n$ are 
both asymptotically normally distributed in the following sense:
\begin{align*}
	\frac{X_n-\log n}{\sqrt{\log n}}
	&\xrightarrow{d} \mathscr{N}(0,1),
    \and
    \frac{Y_n-2\log n}{\sqrt{2\log n}}
    \xrightarrow{d} \mathscr{N}(0,1),
\end{align*}
while the limiting distribution of the number $Z_n$ of occurrences of 
$2$ depends on $m$: if $m=1$, then 
\begin{align}\label{E:root-n-clt}
    \frac{\frac13(n-2Z_n)-\tau\sqrt{n}}{\sqrt{\tau\sqrt{n}}}
    \xrightarrow{d} \mathscr{N}(0,1),
    \qquad\lpa{\tau := \tfrac{\lambda_3\pi}
    {2\sqrt{3}\lambda_2^{3/2}}},
\end{align}
and if $m\ge2$, then 
\[
    Z_n^*\xrightarrow{d}\mathrm{Poisson}
	\lpa{\tfrac{\lambda_4\pi^2}{6\lambda_2}},
    \with 
    Z_n^* := \begin{cases}
        \frac12\lpa{\frac12n-Z_n},&\text{ if $n$ is even};\\
        \frac12\lpa{\frac12(n-2m-1)-Z_n},&\text{ if $n$ is odd}.
    \end{cases}
\]
\end{theorem}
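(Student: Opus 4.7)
The plan is to use the bivariate generating functions of Proposition~\ref{P:lfm-stat} (with the substitution $\Lambda(z)\mapsto\Lambda(z)+\lambda_2(v-1)z^2$ in~\eqref{E:gf-lf} for the number of $2$s), combine them with the asymptotic formulas of Theorems~\ref{T:ext} and~\ref{T:small2}, and extract the limit laws from the ratio $[z^n]f(z,v)/a_n$ either through the Quasi-Powers Framework~\cite{Hwang1994,Flajolet2009} or by direct identification of the probability generating function. This mirrors the template of Theorems~\ref{T:lrf-stat} and~\ref{T:lf-stat}, but with the saddle-point scale $|z|\asymp n^{-1/2}$ induced by $e_1=0$, $e_2>0$ in place of $|z|\asymp n^{-1}$.

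For $X_n$ and $Y_n$, I would work from \eqref{E:gf-f-fr} and \eqref{E:gf-f-dg}, expand $\Lambda(vz)=\Lambda(z)+\lambda_2(v^2-1)z^2+O(|z|^3)$, and sum the resulting logarithmic corrections over the product by Euler-Maclaurin. At the saddle-point the $v$-independent contributions reproduce Theorem~\ref{T:ext} or Theorem~\ref{T:small2}(ii), while the $v$-dependent corrections collapse to a Quasi-Powers factor of the form $\Gamma(v)^{-c}(Cn)^{c(v-1)}$ with $c\in\{1,2\}$. The two central limit theorems with logarithmic means and variances, together with an optimal convergence rate analogous to~\eqref{E:Xn-clt}, then follow.

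The main novelty is the phase transition for $Z_n$, the number of $2$s. The substitution $\lambda_2\mapsto\lambda_2 v$ in $\tilde\Lambda(z):=\Lambda(z)+\lambda_2(v-1)z^2$ preserves $\tilde\lambda_{2i-1}=0$ for $1\le i\le m$. When $m=1$, condition~\eqref{E:de-conds} persists since $\tilde\lambda_3=\lambda_3>0$, so Theorem~\ref{T:ext} applies, with $\tilde\rho(v)=v\rho(1)$ and $\tilde\beta(v)=\tau v^{-3/2}$ for $\tau$ as in~\eqref{E:root-n-clt}. The ratio to $a_n$ then reduces, uniformly for $v$ in a complex neighborhood of~$1$, to
\begin{align*}
    \mathbb{E}\lpa{v^{Z_n}}
    \simeq v^{n/2}\exp\lpa{\tau(v^{-3/2}-1)\sqrt{n}}.
\end{align*}
Setting $W_n:=\tfrac13(n-2Z_n)$ and reindexing via $w=v^{-3/2}$ (equivalently $v=w^{-2/3}$) causes the $v^{n/2}$ factor to cancel, yielding $\mathbb{E}[w^{W_n}]\simeq\exp(\tau(w-1)\sqrt{n})$, the pgf of $\mathrm{Poisson}(\tau\sqrt{n})$. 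Since $\tau\sqrt{n}\to\infty$, the classical Poisson CLT combined with uniform pgf convergence in $w$ yields~\eqref{E:root-n-clt}. When $m\ge 2$, $\tilde\lambda_3=0$ forces $\tilde\beta(v)\equiv 0$ and Theorem~\ref{T:ext} fails; Theorem~\ref{T:small2}(ii) is used instead. For even $n$ the ratio simplifies to $v^{n/2}\exp(\tfrac{\lambda_4\pi^2}{6\lambda_2^2}(v^{-2}-1))$, giving, after reindexing via $w=v^{-2}$, the stated Poisson limit for $Z_n^*$; for odd $n$ the extra factor $v^{-(2m+1)/2}$ coming from $\tilde\lambda_{2m+1}/\tilde\lambda_2^{m+1/2}$ in $\tilde c_m(v)$ accounts exactly for the shift $n\mapsto n-2m-1$ built into the odd-$n$ definition of $Z_n^*$, so both parities produce the same Poisson limit.

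The main obstacle will be the uniform control, in a complex neighborhood of~$v=1$, of the asymptotic expansions from Theorems~\ref{T:ext} and~\ref{T:small2}, since the $v$-dependent change of variables $\tilde e(z)=e^{y^2}$ and the various saddle-point locations depend analytically on~$v$. This requires careful tracking of the implied constants and error terms as $v$ varies, but introduces no fundamentally new idea beyond the analyses already carried out in Sections~\ref{S:basis}--\ref{S:jelinek}.
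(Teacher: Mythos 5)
Your plan mirrors the paper's own proof closely: both use the bivariate generating functions of Proposition~\ref{P:lfm-stat} (with the entry-substitution $\Lambda(z)\mapsto\Lambda(z)+\lambda_2(v-1)z^2$ for $Z_n$), feed them into Theorems~\ref{T:ext} and~\ref{T:small2} with $v$-dependent $e_2,e_3,e_4,d_1,d_2$, and extract the limit laws through the Quasi-Powers framework. Your reindexing $w=v^{-3/2}$ (respectively $w=v^{-2}$) for $Z_n$, which makes the ratio literally a Poisson pgf, is a cosmetic reformulation of the paper's step ``the asymptotic normality then results from the Quasi-Powers theorems''; your parity bookkeeping via $\tilde\lambda_{2m+1}/\tilde\lambda_2^{m+1/2}$ in $\tilde c_m(v)$ is likewise exactly what the paper does with its $c_m$. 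So on the $Z_n$ part the proposal is essentially the paper's proof.

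There is, however, an internal inconsistency in your $X_n$ and $Y_n$ sketch that is worth flagging because the paper's own proof contains the same jump. You correctly record $\Lambda(vz)=\Lambda(z)+\lambda_2(v^2-1)z^2+O(|z|^3)$ when $\lambda_1=0$; this is equivalent to $\Lambda(vz)=\Lambda(z)^{v^2}\bigl(1+O(|z|^3)\bigr)$, so the natural value of the exponent $\omega$ in $\Lambda(vz)\Lambda(z)^{j-1}\approx\Lambda(z)^{j+\omega}$ is $\omega=v^2-1$, not $v-1$. Summing the per-factor logarithmic corrections $\sum_{j}\lambda_2(v^2-1)z^2\Lambda(z)^{j-1}/(\Lambda(z)^j-1)\sim(v^2-1)\log n$ confirms that the $v$-dependent correction exponentiates to $n^{v^2-1}/\Gamma(v^2)$, giving CLTs with mean $2\log n$, variance $4\log n$ for $X_n$ (and $4\log n$, $16\log n$ for $Y_n$), rather than the $\Gamma(v)^{-c}(Cn)^{c(v-1)}$ you assert and the theorem states. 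The paper's own proof invokes \eqref{E:log-es}, which was derived only under $\lambda_1>0$ (where $\Lambda(vz)\approx\Lambda(z)^v$), so this discrepancy is inherited rather than introduced by you; still, a crude sanity check with $\Lambda(z)=1+z^2$ (matrices that are just primitive Fishburn matrices of size $n/2$ with entries doubled) suggests the first-row sum should indeed concentrate around $2\log n$. You should either locate the cancellation that restores the $n^{v-1}$ behaviour or acknowledge that the normalisations in the $X_n$, $Y_n$ statements may need the $v\mapsto v^2$ correction. (Minor: there is no ``Theorem~\ref{T:small2}(ii)''; you presumably mean the $m\ge2$ case \eqref{E:fm13}, and the Poisson parameter in the theorem should read $\lambda_4\pi^2/(6\lambda_2^2)$ as in the paper's own proof and Table~\ref{T:laws-2}.)
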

\begin{proof}


\begin{enumerate}[(i)]
   \item When $m=1$, for the first row sum, we have, by the
   generating function \eqref{E:gf-f-fr}, the approximation
   \eqref{E:log-es} and a modification of the proof of
   Theorem~\ref{T:ext},
   \begin{align*}
       [z^n] \Lambda(vz)\sum_{k\ge 0}\Lambda(z)^k
       \prod_{1\le j\le k}\lpa{\lpa{\Lambda(vz)\Lambda(z)^{j-1}-1}
      \lpa{\Lambda(z)^j-1}}
       = c(v) \rho^{\frac12n} n^{\frac12n+v}
       \lpa{1+O\lpa{n^{-\frac12}}},
   \end{align*}
   where
   $
       (c(v),\rho) :=
       \Lpa{\tfrac{\sqrt{6}}{\Gamma(v)}\lpa{\tfrac 3{\pi^2}}^v
       e^{\frac{\pi^2}6\lpa{\frac{\lambda_4}{\lambda_2^2}
       -\frac{7\lambda_3^2}{8\lambda_2^3}-\frac12}},
       \tfrac{3\lambda_2}{e\pi^2}}.
   $
   Thus
   \begin{align}\label{Esec8-vXn}
       \mathbb{E}\lpa{v^{X_n}} = \frac1{\Gamma(v)}
       \Lpa{\frac{3}{\pi^2}}^{v-1}n^{v-1}
       \lpa{1+O\lpa{n^{-\frac12}}},
   \end{align}
   uniformly for bounded $v$. Then the asymptotic normality
   (or Poisson$(\log n)$) follows from Quasi-Powers theorem.
   When $m\ge2$, by the same procedure as in the proof of
   Theorem~\ref{T:small2}, 
   we then deduce the same asymptotic approximation
   \eqref{Esec8-vXn} when $n$ is even. When $n$ is odd,
   the corresponding asymptotic approximation differs by a factor of
   $n^{-m}$ as in Theorem~\ref{T:small2} but the resulting
   normalized expression is still \eqref{Esec8-vXn}.

   \item Very similarly, for the diagonal size, by applying 
   Theorem~\ref{T:ext} to the generating function \eqref{E:gf-f-dg},
   we deduce that 
  \[
       \mathbb{E}\lpa{v^{Y_n}}
      = \frac1{\Gamma(v)^2}\Lpa{\frac{3}{\pi^2}}^{2(v-1)}
       n^{2(v-1)}\lpa{1+O\lpa{n^{-\frac12}}},
   \]
   uniformly for bounded $v$. The same expression remains true when
   $m\ge2$ and the proof proceeds along the lines of that of
   Theorem~\ref{T:small2}.

   \item Regarding the number of $2$s, it is more involved. Consider first $m=1$.
   Parallel to \eqref{E:gf-f-1} for the number of $1$s, we now have
   the generating function
   \begin{equation} \label{E:nb-2s}
   \begin{split}
       &\sum_{k\ge0}\prod_{1\le j\le k}
       \lpa{1-\lpa{\Lambda(z)+\lambda_2(v-1)z^2}^{-j}}\\
       &\qquad=\sum_{k\ge0}
       \lpa{\Lambda(z)+\lambda_2(v-1)z^2}^{k+1}
       \prod_{1\le j\le k}
       \lpa{\lpa{\Lambda(z)+\lambda_2(v-1)z^2}^j-1}^2.
   \end{split}
   \end{equation}
   Then if $\lambda_3>0$, we get, by a similar modification of the
   proof of Theorem~\ref{T:ext} (see Theorem~\ref{T:sec8-sd}), the
   Quasi-Powers approximations,
   \[
       \mathbb{E}\lpa{v^{\frac12 n-Z_n}}
       = c(v) e^{\tau\sqrt{n}(v^{\frac32}-1)}
       \lpa{1+O\lpa{n^{-\frac12}}},
   \]
   where $\tau$ is given in \eqref{E:root-n-clt} and
   $
       c(v) :=
       \,e^{-\frac{7\lambda_3^2\pi^2}{48\lambda_2^3}
       (v^3-1)+\frac{\lambda_4\pi^2}{6\lambda_2^2}
       (v^2-1)}.
   $
   The asymptotic normality then results from the Quasi-Powers
   theorems; see \cite{Flajolet2009, Hwang1998}. Indeed, $Z_n^*$ is
   asymptotically Poisson distributed with parameter $\tau\sqrt{n}$.

   When $m\ge2$, we obtain, by \eqref{E:nb-2s}, the change of
   variables $\Lambda(z)+(\lambda_2-1)vz^2 = e^{y^2}$ and modifying
   the proof of \eqref{E:fm13} (see also the proof of
   Theorem~\ref{T:sec8-sd}),
   \[
       \mathbb{E}\lpa{v^{\frac12n-Z_n}}
       =\begin{cases}
           e^{\frac{\lambda_4\pi^2}{6\lambda_2^2}(v^2-1)},
           &\text{if $n$ is even};\\
           v^{m+\frac12}e^{\frac{\lambda_4\pi^2}
           {6\lambda_2^2}(v^2-1)},
           &\text{if $n$ is odd}.
       \end{cases}
   \]
   This proves the Poisson limit law.
\end{enumerate} 
\end{proof}

\begin{figure}[!ht]
\begin{center}
\begin{tabular}{cc}
    \includegraphics[height=3.5cm]{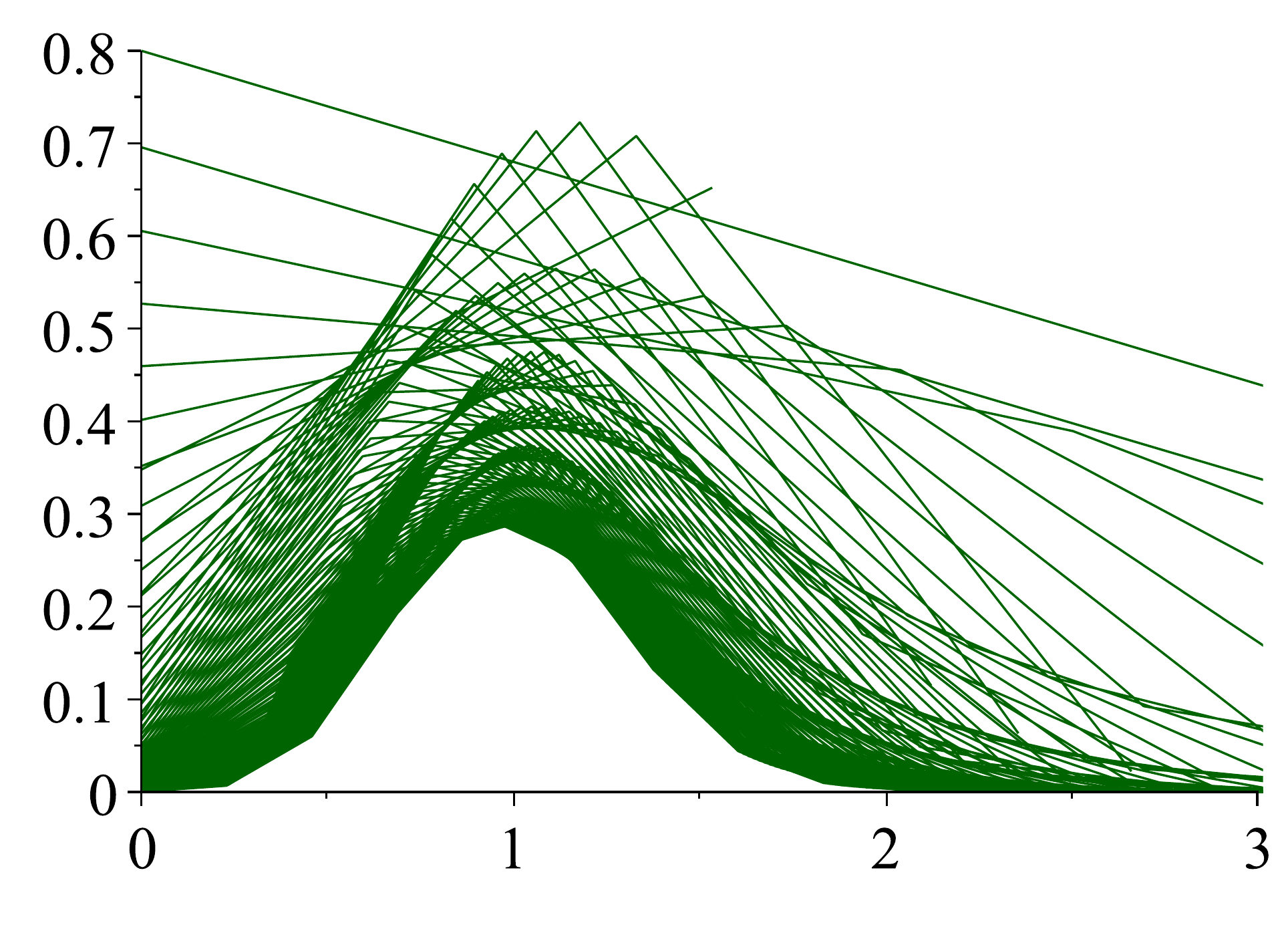} 
    & \includegraphics[height=3.5cm]{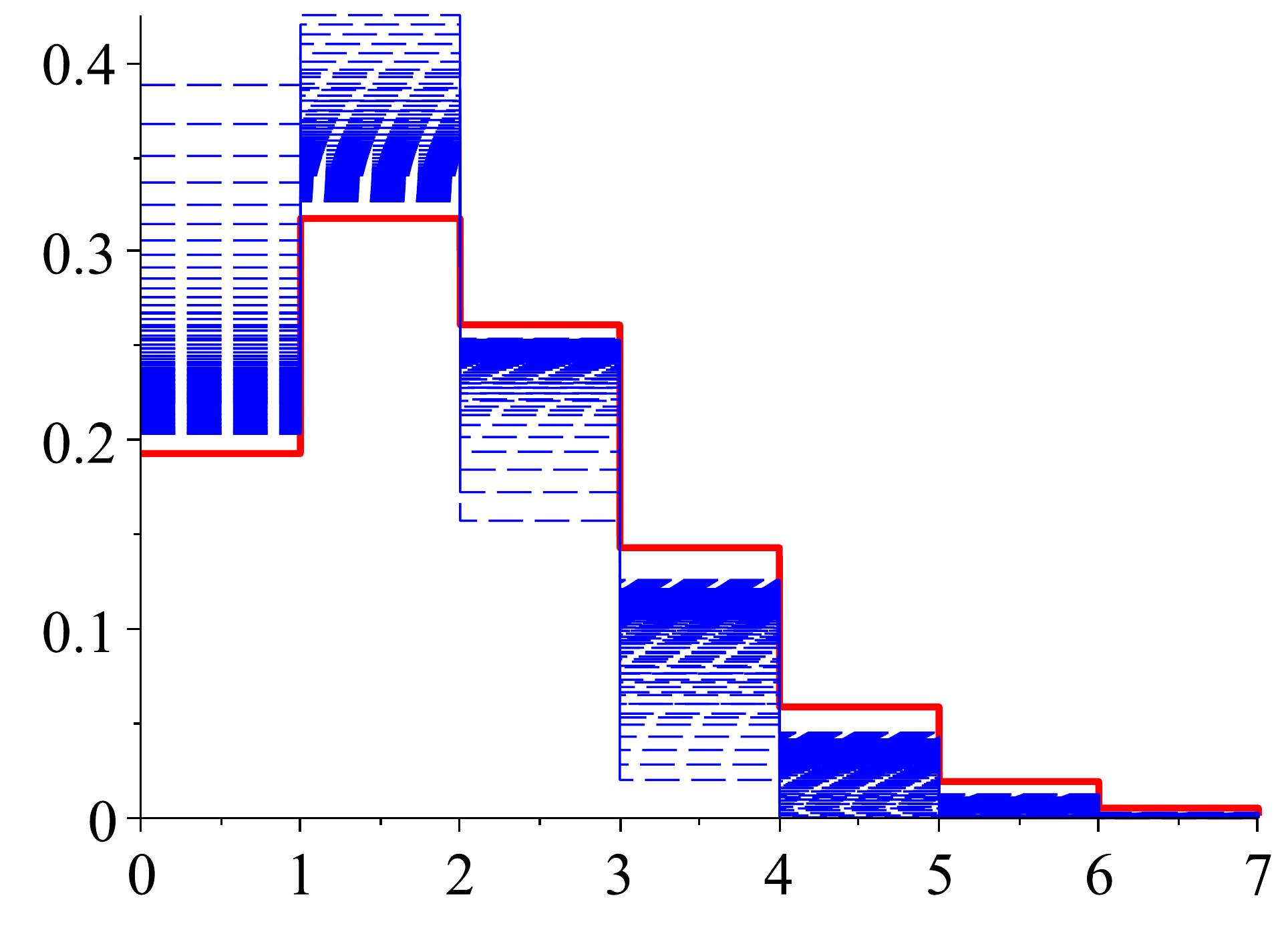}\\
    $\Lambda(z) = 1+z^2+z^3$ & $\Lambda(z) = 1+z^2+z^4+z^5$ \\
    $\mathscr{N}\lpa{\frac{\pi\sqrt{n}}{2\sqrt{3}}, 
    \frac{\pi\sqrt{n}}{2\sqrt{3}}}$ & 
    Poisson$\lpa{\frac{\pi^2}{6}}$ 
\end{tabular}    
\end{center}    
\caption{Histograms of the number of $2$s in two different
compositions of random $\Lambda$-Fishburn matrices. Left: the
distributions $\mathbb{P}\lpa{\frac13\lpa{\frac n2-Z_n}=\tr{x\mu_n}}$
with $\mu_n$ denoting the exact mean, which is asymptotic to
$\frac{\pi\sqrt{n}}{2\sqrt{3}}$; right: $\mathbb{P}\lpa{Z_n^*=k}$,
where $Z_n^* := \frac12\lpa{\frac n2-Z_n}$ when $n$ is even, and
$Z_n^* := \frac12\lpa{\frac n2-2-Z_n}$ when $n$ is odd, where the red
line represents the corresponding Poisson distribution.}
\end{figure}

For random $\Lambda$-row-Fishburn matrices, one can derive very 
similar types of results: zero-truncated Poisson with parameter 
$\frac{\lambda_1}{\lambda_2}\log 2$ for the first row size, 
$\mathscr{N}(\log n,\log n)$ for the diagonal size, and 
$\mathscr{N}\lpa{\tau\sqrt{n},\tau\sqrt{n}}$ with $\tau := 
\tfrac{\lambda_3\pi}{2\sqrt{6}\lambda_2^{3/2}}$ or  
Poisson($\frac{\lambda_4\pi^2}{12\lambda_2^2})$ limit law 
when $m=1$ or $m\ge2$, respectively, for the number of $2$s.

\section{Conclusions}
\label{S:conclusions}

Motivated by the asymptotic enumeration of and statistics on Fishburn
matrices and their variants, we developed in this paper a
saddle-point approach to computing the asymptotics of the
coefficients of generating functions with a sum-of-product form, and
applied it to several dozens of examples. The approach is not only
useful for the usual large-$n$ asymptotics but also effective in
understanding the stochastic behaviors of random Fishburn matrices,
with or without further constraints on the entries or on the
structure of the matrices. In particular, we identified a simple yet
general framework and showed its versatile usefulness in this paper.
Many new asymptotic distributions of statistics on random matrices
are derived in a systematic and unified manner, which in turn demand
further structural interpretations; for example, since the normal 
approximations we derived in this paper can indeed all be
approximated by Poisson distributions with parameters depending on 
$n$ (equal to the asymptotic mean), a natural question is why 
Poisson laws with bounded or unbounded parameters are ubiquitous in 
the random $\Lambda$-Fishburn matrices.

Other frameworks will be examined in a follow-up paper. In addition to
different sum-of-product patterns, we will also work out cases for
which our approach in this paper does not directly apply. For
example, we have not found transformations for the series 
$[z^n]\sum_{k\ge0}\prod_{1\le j\le k}\tanh(2jz)$,
a special case of general theorems in \cite{Andrews2001}, such that 
our saddle-point method works, although it is known (see 
\cite{Andrews2001}) that 
\[
		\sum_{k\ge0}\prod_{1\le j\le k}\tanh(2jz)
	    =\sum_{n\ge0}\frac{a_{2n+1}}{n!}\,z^n,\and
	    \sum_{n\ge0}\frac{a_{2n+1}}{(2n+1)!}\,z^{2n+1}
	    =\tan(z),
\]
where the $a_{2n+1}$'s are the tangent numbers. For similar pairs of
series of this type, see \cite{Andrews2001, Hikami2006a, Ono2004}.

Finally, the rank (or dimension) of a random $\Lambda$-Fishburn
matrices represents another important statistic on random matrices,
which is expected to follow a central limit theorem with linear mean
and variance. Indeed, if we replace $e^z$ in Remark~\ref{R:llt} by
$1/(1-z)$, then the local limit theorem given there still holds,
which can be interpreted as the distribution of the dimension in a
random row-Fishburn matrix of size $n$. The situation for random
Fishburn matrices is however less clear as we do not have a simple
decomposition as in row-Fishburn ones. This and related quantities
will be investigated and clarified elsewhere.

\section*{Acknowledgements}
We thank Cyril Banderier, Michael Drmota, Christian Krattenthaler, 
Yingkun Li, Shuhei Mano for helpful comments. 

\bibliographystyle{abbrv}
\bibliography{fishburn}

\begin{thebibliography}{10}

\bibitem{Andresen1976}
E.~Andresen and K.~Kjeldsen.
\newblock On certain subgraphs of a complete transitively directed graph.
\newblock {\em Discrete Math.}, 14(2):103--119, 1976.

\bibitem{Andrews1986}
George~E. Andrews.
\newblock Ramanujan's ``lost'' notebook. {V}. {E}uler's partition identity.
\newblock {\em Adv. in Math.}, 61(2):156--164, 1986.

\bibitem{Andrews1988}
George~E. Andrews, Freeman~J. Dyson, and Dean Hickerson.
\newblock Partitions and indefinite quadratic forms.
\newblock {\em Invent. Math.}, 91(3):391--407, 1988.

\bibitem{Andrews2014}
George~E. Andrews and V\'{\i}t Jel\'{\i}nek.
\newblock On {$q$}-series identities related to interval orders.
\newblock {\em European J. Combin.}, 39:178--187, 2014.

\bibitem{Andrews2001}
George~E. Andrews, Jorge Jim\'{e}nez-Urroz, and Ken Ono.
\newblock {$q$}-series identities and values of certain {$L$}-functions.
\newblock {\em Duke Math. J.}, 108(3):395--419, 2001.

\bibitem{Bousquet-Melou2010}
Mireille Bousquet-M\'{e}lou, Anders Claesson, Mark Dukes, and Sergey Kitaev.
\newblock {$(2+2)$}-free posets, ascent sequences and pattern avoiding
  permutations.
\newblock {\em J. Combin. Theory Ser. A}, 117(7):884--909, 2010.

\bibitem{Brightwell2011}
Graham Brightwell and Mitchel~T Keller.
\newblock Asymptotic enumeration of labelled interval orders.
\newblock {\em arXiv preprint arXiv:1111.6766}, 2011.

\bibitem{Bringmann2014}
Kathrin Bringmann, Yingkun Li, and Robert~C. Rhoades.
\newblock Asymptotics for the number of row-{F}ishburn matrices.
\newblock {\em European J. Combin.}, 41:183--196, 2014.

\bibitem{Claesson2011}
Anders Claesson and Svante Linusson.
\newblock {$n!$} matchings, {$n!$} posets.
\newblock {\em Proc. Amer. Math. Soc.}, 139(2):435--449, 2011.

\bibitem{Cohen1988}
H.~Cohen.
\newblock {$q$}-identities for {M}aass waveforms.
\newblock {\em Invent. Math.}, 91(3):409--422, 1988.

\bibitem{Costin2011}
Ovidiu Costin and Stavros Garoufalidis.
\newblock Resurgence of the {K}ontsevich-{Z}agier series.
\newblock {\em Ann. Inst. Fourier (Grenoble)}, 61(3):1225--1258, 2011.

\bibitem{Dukes2011a}
Mark Dukes, V\'{\i}t Jel\'{\i}nek, and Martina Kubitzke.
\newblock Composition matrices, {$({\bf 2}+{\bf 2})$}-free posets and their
  specializations.
\newblock {\em Electron. J. Combin.}, 18(1):Paper 44, 9, 2011.

\bibitem{Dukes2011}
Mark Dukes, Sergey Kitaev, Jeffrey Remmel, and Einar Steingr\'{\i}msson.
\newblock Enumerating {$(2+2)$}-free posets by indistinguishable elements.
\newblock {\em J. Comb.}, 2(1):139--163, 2011.

\bibitem{Dukes2010}
Mark Dukes and Robert Parviainen.
\newblock Ascent sequences and upper triangular matrices containing
  non-negative integers.
\newblock {\em Electron. J. Combin.}, 17(1):Research Paper 53, 16, 2010.

\bibitem{Fine1988}
Nathan~J. Fine.
\newblock {\em Basic hypergeometric series and applications}, volume~27.
\newblock American Mathematical Society, Providence, RI, 1988.

\bibitem{Fishburn1970}
Peter~C. Fishburn.
\newblock Intransitive indifference with unequal indifference intervals.
\newblock {\em J. Mathematical Psychology}, 7:144--149, 1970.

\bibitem{Fishburn1983}
Peter~C Fishburn.
\newblock Interval lengths for interval orders: A minimization problem.
\newblock {\em Discrete Math.}, 47:63--82, 1983.

\bibitem{Fishburn1985}
Peter~C. Fishburn.
\newblock {\em Interval Orders and Interval Graphs}.
\newblock John Wiley \& Sons, Ltd., Chichester, 1985.

\bibitem{Flajolet1995}
Philippe Flajolet, Xavier Gourdon, and Philippe Dumas.
\newblock Mellin transforms and asymptotics: harmonic sums.
\newblock {\em Theoret. Comput. Sci.}, 144(1-2):3--58, 1995.
\newblock Special volume on mathematical analysis of algorithms.

\bibitem{Flajolet2009}
Philippe Flajolet and Robert Sedgewick.
\newblock {\em Analytic combinatorics}.
\newblock Cambridge University Press, Cambridge, 2009.

\bibitem{Fu2020}
Shishuo Fu, Emma~Yu Jin, Zhicong Lin, Sherry~HF Yan, and Robin~DP Zhou.
\newblock A new decomposition of ascent sequences and {E}uler-{S}tirling
  statistics.
\newblock {\em J. Combin. Theory Ser. A}, 170:105--141, 2020.

\bibitem{Garvan2015}
F.~G. Garvan.
\newblock Congruences and relations for r-fishburn numbers.
\newblock {\em J. Combin. Theory Ser. A}, 134:147--165, 2015.

\bibitem{Greenough1976}
T.~L. Greenough.
\newblock {\em Representation and enumeration of interval orders and
  semiorders}.
\newblock PhD thesis, Dartmouth College, 1976.

\bibitem{Haxell1987}
P.~E. Haxell, J.~J. McDonald, and S.~K. Thomason.
\newblock Counting interval orders.
\newblock {\em Order}, 4(3):269--272, 1987.

\bibitem{Hikami2006a}
Kazuhiro Hikami.
\newblock {$q$}-series and {$L$}-functions related to half-derivatives of the
  {A}ndrews-{G}ordon identity.
\newblock {\em Ramanujan J.}, 11(2):175--197, 2006.

\bibitem{Hwang1994}
Hsien-Kuei Hwang.
\newblock {\em Th\'{e}or\`{e}mes limites pour les structures combinatoires et
  les fonctions arithm\'{e}tiques}.
\newblock PhD thesis, LIX, Ecole polytechnique, 1994.

\bibitem{Hwang1998}
Hsien-Kuei Hwang.
\newblock On convergence rates in the central limit theorems for combinatorial
  structures.
\newblock {\em European J. Combin.}, 19(3):329--343, 1998.

\bibitem{Hwang1999}
Hsien-Kuei Hwang.
\newblock Asymptotics of poisson approximation to random discrete
  distributions: an analytic approach.
\newblock {\em Advances in Applied Probability}, 31(2):448--491, 1999.

\bibitem{oeis2019}
OEIS~Foundation Inc.
\newblock The on-line encyclopedia of integer sequences.

\bibitem{Jelinek2012}
V\'{\i}t Jel\'{\i}nek.
\newblock Counting general and self-dual interval orders.
\newblock {\em J. Combin. Theory Ser. A}, 119(3):599--614, 2012.

\bibitem{Jelinek2015}
V\'{\i}t Jel\'{\i}nek.
\newblock Catalan pairs and {F}ishburn triples.
\newblock {\em Adv. in Appl. Math.}, 70:1--31, 2015.

\bibitem{Khamis2012}
Soheir~Mohamed Khamis.
\newblock Exact counting of unlabeled rigid interval posets regarding or
  disregarding height.
\newblock {\em Order}, 29(3):443--461, 2012.

\bibitem{Kitaev2011}
Sergey Kitaev and Jeffrey Remmel.
\newblock Enumerating {$({\bf 2}+{\bf 2})$}-free posets by the number of
  minimal elements and other statistics.
\newblock {\em Discrete Appl. Math.}, 159(17):2098--2108, 2011.

\bibitem{Kitaev2017}
Sergey Kitaev and Jeffrey~B. Remmel.
\newblock A note on {$p$}-ascent sequences.
\newblock {\em J. Comb.}, 8(3):487--506, 2017.

\bibitem{Levande2013}
Paul Levande.
\newblock Fishburn diagrams, {F}ishburn numbers and their refined generating
  functions.
\newblock {\em J. Combin. Theory Ser. A}, 120(1):194--217, 2013.

\bibitem{Olver1974}
F.~W.~J. Olver.
\newblock {\em Asymptotics and special functions}.
\newblock Academic Press [A subsidiary of Harcourt Brace Jovanovich,
  Publishers], New York-London, 1974.
\newblock Computer Science and Applied Mathematics.

\bibitem{Ono2004}
Ken Ono.
\newblock {\em The web of modularity: arithmetic of the coefficients of modular
  forms and {$q$}-series}, volume 102 of {\em CBMS Regional Conference Series
  in Mathematics}.
\newblock Published for the Conference Board of the Mathematical Sciences,
  Washington, DC; by the American Mathematical Society, Providence, RI, 2004.

\bibitem{Pittel1986}
Boris Pittel.
\newblock Paths in a random digital tree: limiting distributions.
\newblock {\em Adv. in Appl. Probab.}, 18(1):139--155, 1986.

\bibitem{Stoimenow1998}
A.~Stoimenow.
\newblock Enumeration of chord diagrams and an upper bound for {V}assiliev
  invariants.
\newblock {\em J. Knot Theory Ramifications}, 7(1):93--114, 1998.

\bibitem{Szekeres1953}
G.~Szekeres.
\newblock Some asymptotic formulae in the theory of partitions. {II}.
\newblock {\em Quart. J. Math., Oxford Ser. (2)}, 4:96--111, 1953.

\bibitem{Greenough1976a}
K.~P.~Bogart T.~L.~Greenough.
\newblock The representation and enumeration of interval orders.
\newblock 1976.

\bibitem{Wong2001}
R.~Wong.
\newblock {\em Asymptotic approximations of integrals}, volume~34 of {\em
  Classics in Applied Mathematics}.
\newblock Society for Industrial and Applied Mathematics (SIAM), Philadelphia,
  PA, 2001.
\newblock Corrected reprint of the 1989 original.

\bibitem{Yan2011}
Sherry H.~F. Yan.
\newblock On a conjecture about enumerating {$(2+2)$}-free posets.
\newblock {\em European J. Combin.}, 32(2):282--287, 2011.

\bibitem{Zagier2001}
Don Zagier.
\newblock Vassiliev invariants and a strange identity related to the {D}edekind
  eta-function.
\newblock {\em Topology}, 40(5):945--960, 2001.

\bibitem{Zagier2010}
Don Zagier.
\newblock Quantum modular forms.
\newblock In {\em Quanta of maths}, volume~11 of {\em Clay Math. Proc.}, pages
  659--675. Amer. Math. Soc., Providence, RI, 2010.

\end{thebibliography}

\end{document}